\newcommand{\old}[1]{}
\theoremstyle{plain}
\newtheorem{thm}{Theorem}[section]
\newtheorem{lem}[thm]{Lemma}
\newtheorem{conj}{Conjecture}
\newtheorem{cor}[thm]{Corollary}
\newtheorem{prop}[thm]{Proposition}
\theoremstyle{definition}
\newtheorem{defn}[thm]{Definition}
\newtheorem{ex}[thm]{Example}
\newtheorem{rk}[thm]{Remark}
\newtheorem{qn}[thm]{Question}
\numberwithin{equation}{section}
\def\la{{{\lambda}}}
\title{Variations on the $S_n$-module $Lie_n$}
\author{Sheila Sundaram}
\address{Pierrepont School, One Sylvan Road North, Westport, CT 06880}
\email{shsund@comcast.net}
\date{\today}
\thanks{(Combined Version)}
\subjclass[2010]{05E10, 20C30, 52B30}
\begin{document}
\begin{abstract} We define, for each subset $S$ of primes, an $S_n$-module $Lie_n^S$ 
with interesting properties. When $S=\emptyset,$ this is the well-known representation $Lie_n$ of $S_n$ afforded by the free Lie algebra.

The most intriguing case is $S=\{2\},$  giving a decomposition of the regular representation as a sum of {exterior} powers of modules $Lie_n^{(2)}.$ This is in contrast to the theorems of Poincar\'e-Birkhoff-Witt and Thrall  which decompose the regular representation into a sum of symmetrised $Lie$ modules. We show that nearly every known property of $Lie_n$ has a counterpart for the module $Lie_n^{(2)},$ suggesting connections to the cohomology of configuration spaces via the character formulas of Sundaram and Welker, to the Eulerian idempotents of Gerstenhaber and Schack, and to the Hodge decomposition of the complex of injective words arising from  Hochschild homology, due to  Hanlon and Hersh. 

For arbitrary $S,$ the symmetric and exterior powers of the module $Lie_n^S$ allow us to deduce Schur positivity for a new class of multiplicity-free sums of power sums.

\emph{Keywords:}   Higher Lie modules, Configuration space, Poincar\'e-Birkhoff-Witt, Schur positivity, symmetric power, exterior power, plethysm.
\end{abstract}
\maketitle


\section{Introduction}
In this paper we present 
 the unexpected discovery of a curious variant of  the $S_n$-module    
 $Lie_n$ afforded by the multilinear component of the free Lie algebra with $n$ generators. The theorems of Poincar\'e-Birkhoff-Witt  and Thrall (see, e.g. \cite{R}) state that the universal enveloping algebra of the free Lie algebra is the symmetric algebra over the free Lie algebra, and hence coincides with the full tensor algebra. This is equivalent, via Schur-Weyl duality, to Thrall's decomposition of the regular representation into a sum of symmetric powers of the representations $Lie_n.$ By contrast, here we obtain a decomposition of the regular representation as a sum of exterior powers of modules (Theorem \ref{ExtReg}).  The key ingredient is our variant of $Lie_n,$ an $S_n$-module that we denote by $Lie_n^{(2)},$ which turns out to possess remarkable properties akin to those of  $Lie_n.$  Our results (see Theorems \ref{Compare},  
 \ref{LehrerHLLieSup2}, \ref{EquivLieSup2}) 
 bear a striking resemblance to properties of the Whitney homology of the partition lattice (and hence the Orlik-Solomon algebra for the root system $A_n$),  and to the computation of the cohomology of  the configuration space for the braid arrangement found in \cite[Theorem 4.4]{SW}.
 In particular these properties indicate the possibility of an 
underlying algebra structure for $Lie_n^{(2)}$ involving an acyclic complex. Theorem \ref{EquivLieSup2} furthers this analogy;  we show that $Lie_n^{(2)}$ admits a filtration close to the one arising from the derived series of the free Lie algebra.  There is an interesting  action on derangements  arising from $Lie_n^{(2)}$ as well (Theorem \ref{HodgeFilt}); we prove that $Lie_n^{(2)}$ gives rise to a new decomposition of the homology of the complex of injective words studied by Reiner and Webb \cite{RW}, one that is different from the Hodge decomposition of Hanlon and Hersh \cite{HH}. These results are collected in Section 2,  showing that for every well-known property of  $Lie_n,$  the representation $Lie_n^{(2)}$  offers an interesting counterpart.

A characteristic feature of the  complement of the $A_{n-1}$-hyperplane arrangement in complex space, and hence the configuration spaces associated to the braid arrangement, is that the cohomology ring has the structure of a symmetric or exterior algebra over the top  cohomology as an $S_n$-module  (see Theorem \ref{SWThm4.4} and Theorem \ref{Compare}).   Moreover this top cohomology is  $Lie_n$ or its sign-tensored version, and thus its character values are supported on a specific class of permutations: those whose cycles all have the same length.  This is precisely the framework of the symmetric function identities  developed in \cite{Su1}; consequently these identities constitute the crucial methods of this paper, described in Section 5.  Here we  state the \lq\lq meta theorem\rq\rq from \cite{Su1} and develop  further methods that are directly applicable to studying  modules induced from centralisers.  

The module $Lie_n^{(2)}$ is a special case of a family of variations of $Lie_n,$ whose discovery arises from the original motivation for this paper, namely the investigation begun in \cite{Su1} on the positivity of the row sums in the character table of $S_n.$  
For each irreducible character $\chi^\lambda$ 
and any subset $T$ of the conjugacy classes, one can form the sum $\sum_{\mu\in T} \chi^\lambda(\mu),$ and ask when this sum is nonnegative.   
A  method for generating such  classes of subsets $T$ was presented in \cite{Su1}.  This paper  generalises and extends these results with classes indexed by subsets of primes (Theorems 3.6 and 3.9).  The approach in Section 3 is to define certain $S_n$-modules $Lie_n^S$ induced from the cyclic subgroup  generated by an $n$-cycle, indexed by subsets $S$ of primes, and show that their symmetric powers 
can be expressed as the product $\prod_{n\in P( S)} (1-p_n)^{-1},$  for an appropriate subset $P(S)$ determined by $S.$  The interesting choices of $S$ other than $Lie^{(2)}$ are recorded in Theorem \ref{OnePrime} and Theorems \ref{Spositivity1} and \ref{Spositivity2}.  These results  were announced in \cite{SuFPSAC2018}.

 A second approach to the Schur positivity problem for power sums is developed in Section 4.  We give a general formula which expresses the product $\prod_{n\in T} (1-p_n)^{-1}$ as a symmetrised module over a sequence of  possibly virtual representations $f_n^T,$ having the specific property that their characters vanish unless the conjugacy class has all cycles of equal length.  The goal then is to determine for what choices of the set $T$ the $f_n^T$ are true 
$S_n$-modules, thereby establishing the Schur positivity of the 
product $\prod_{n\in T} (1-p_n)^{-1}.$    The module $Lie_n$ plays a prominent role in the construction.
In the course of these calculations many interesting plethystic identities emerge, as well as many  new conjectures on Schur positivity.  The results of this section were announced in \cite{SuFPSAC2019}.

The paper \cite{Su1} develops machinery which greatly facilitates and unifies the plethystic computations involved in finding symmetric and exterior powers, or equivalently, the higher Lie analogues, of a certain class of $S_n$-modules. Section~\ref{Secmetathms} summarises these important tools, which are key to the results of this paper.  We formulate and unify, in a general setting, many of the properties that have been discovered in the literature for various representations.  In particular this section  may be viewed as a toolkit for dealing with the type of representations that seem to arise in the cohomology of configuration spaces.  One important consequence of the results of this section is a fact that does not appear to have been previousy observed, namely the \textit{equivalence} of all  the known representation-theoretic properties of $Lie$ (the formulas of Thrall and Cadogan, the filtration arising from the derived series, the appearance of the $Lie$ character in the action on derangements).   This is explained in Theorem \ref{EquivPBW}.  Finally Section 6 contains a miscellany of useful plethystic facts, and identities that have frequently come up in different homological contexts.

\subsection{Preliminaries}
We follow \cite{M} and \cite{St4EC2} for notation regarding symmetric functions.  In particular, $h_n,$ $e_n$ and $p_n$ denote respectively the complete homogeneous, elementary and power-sum symmetric functions.  If $\mathrm{ch}$ is the Frobenius characteristic map from the representation ring of the symmetric group $S_n$ to the ring of symmetric functions with real coefficients, then 
$h_n=\mathrm{ch}(1_{S_n})$ is the characteristic of the trivial representation, and $e_n=\mathrm{ch}({\rm sgn}_{S_n})$ is the characteristic of the sign representation of $S_n.$   If $\mu$ is a partition of $n$ then 
define $p_\mu =\prod_i p_{\mu_i};$ $h_\mu$ and $e_\mu$ are defined multiplicatively in analogous fashion.  As in \cite{M}, 
the Schur function $s_\mu$  indexed by the partition $\mu$ is the Frobenius characteristic of the $S_n$-irreducible indexed by $\mu.$  
  Finally, $\omega$ is the involution on the ring of symmetric functions which takes $h_n$ to $e_n,$ corresponding to tensoring with the sign representation.

If $q$ and $r$ are characteristics of representations of $S_m$ and $S_n$ respectively, they yield a representation of the wreath product $S_m[S_n]$ in a natural way, with the property that when this representation is induced up to $S_{mn},$ its Frobenius characteristic is the plethysm $q[r].$ For more background about this operation, see \cite{M}.  We will make extensive use of the properties of this operation, in particular the fact that plethysm with a symmetric function $r$ is an endomorphism on the ring of symmetric functions \cite[(8.3)]{M}.  See also \cite[Chapter 7, Appendix 2, A2.6]{St4EC2}.
Define \begin{align}\label{defHE} &H(t)=\sum_{i\geq 0}t^i  h_i, \quad E(t) = \sum_{i\geq 0} t^i  e_i; \\
&H=\sum_{i\geq 0}  h_i, \quad E= \sum_{i\geq 0}  e_i; \quad H^{\pm}=\sum_{r\geq 0} (-1)^r h_r, \quad E^{\pm}=\sum_{r\geq 0} (-1)^r e_r. 
\end{align} 
Now let $\{q_i\}_{i\geq 1}$ be a sequence of symmetric functions, each $q_i$ homogeneous of degree $i.$  
Let $Q=\sum_{i\geq 1}q_i,$  $Q(t)=\sum_{n\geq 1} t^n q_n.$
For each partition $\lambda$ of $n\geq 1$  with $m_i(\lambda)=m_i$ parts equal to $i\geq 1,$ let $|\lambda|=n = \sum_{i\geq 1} i m_i$ be the size of $\lambda,$ and 
$\ell(\lambda)=\sum_{i\geq 1} m_i(\lambda)=\sum_{i\geq 1} m_i$ be the length (total number of parts) of $\lambda.$ 

Define 
\begin{equation}\label{HigherQ}
 H_\lambda[Q]=\prod_{i:m_i(\lambda)\geq 1} h_{m_i}[q_i],\qquad  \qquad E_\lambda[Q]=\prod_{i:m_i(\lambda)\geq 1} e_{m_i}[q_i].
 \end{equation}

For the empty partition (of zero) we define $H_\emptyset [Q]=1=
E_\emptyset[Q]=H^{\pm}_\emptyset [Q]=
E^{\pm}_\emptyset[Q].$

%

 Consider the generating functions 
$H[Q](t)$ and $E[Q](t).$ 
With the convention that $Par$, the set of all partitions of nonnegative integers,
 includes the unique empty partition of zero,  by the preceding observations and standard properties of plethysm \cite{M} we have 
 
 \begin{equation}\label{defhrofQ}h_r[Q]|_{{\rm deg\ }n}=\sum_{\stackrel {\lambda\vdash n}{\ell(\lambda)=r}} H_\lambda[Q], 
 \qquad \text{and } \quad 
 e_r[Q]|_{{\rm deg\ }n}=\sum_{\stackrel{\lambda\vdash n}{\ell(\lambda)=r}}E_\lambda[Q];
 \end{equation}

$$H[Q](t)=\sum_{\lambda\in Par} t^{|\lambda|} H_\lambda[Q], \qquad \text{and } \quad E[Q](t)=\sum_{\lambda\in Par} t^{|\lambda|} E_\lambda[Q].$$  
We will also let $D\!Par$ denote the subset of $Par$ consisting of the empty partition and all partitions with distinct parts.

Also write $Q^{alt}(t)$ for the alternating sum $\sum_{n\geq 1} (-1)^{i-1} t^i q_i = t q_1-t^2 q_2+t^3 q_3-\ldots.$  

Let $\psi(n)$ be any real-valued function defined on the positive integers. 
Define symmetric functions $f_n$ by 
 \begin{equation}\label{definef_n}f_n = \dfrac{1}{n} \sum_{d|n} \psi(d) p_d^{\frac{n}{d}},
\quad \text{so that} \quad 
\omega(f_n) =  \dfrac{1}{n} \sum_{d|n} \psi(d) (-1)^{n-\frac{n}{d}} p_d^{\frac{n}{d}}.\end{equation}
Note that,  when $\psi(1)$ is a positive integer,  this makes $f_n$ the Frobenius characteristic of a possibly virtual $S_n$-module whose dimension is $(n-1)!\psi(1).$

Also define the associated polynomial in one variable, $t,$ by
\begin{equation}\label{definepolyf_n}
f_n(t) =\dfrac{1}{n} \sum_{d|n} \psi(d) t^{\frac{n}{d}}.
\end{equation}


\section{A comparison of $Lie_n$ and the variant $Lie_n^{(2)}$}

The results of this section and the next were announced in \cite{SuFPSAC2018}.

In this section we define  the $S_n$-module $Lie_n^{(2)}$ and describe some of its remarkable properties.  The goal here is to highlight and analyse this module in a representation-theoretic and homological context, and to explain how to interpret the plethystic identities.  The  properties are established using symmetric function techniques applied to the Frobenius characteristic of $Lie_n^{(2)}$;  we have relegated the technical details of the proofs to end of the next section, following Theorem \ref{PlInv-E}.   More details appear in the last two sections of the paper.  The present section has been written to be self-contained.

Recall \cite{R} that the $S_n$-module $Lie_n$ is the action of $S_n$ on the multilinear component of the free Lie algebra, and coincides with the  induced representation $\exp(\frac{2i\pi}{n})\uparrow_{C_n}^{S_n},$ where $C_n$ is the cyclic group generated by an $n$-cycle in $S_n$.  Its Frobenius characteristic is obtained by taking $\psi(d)=\mu(d)$ (the number-theoretic M\"obius function) in equation~(\ref{definef_n}).

Another module that will be of interest is the $S_n$-module $Conj_n$ afforded by the conjugacy action of $S_n$ on the class of $n$-cycles.  Clearly we have $Conj_n\simeq {\mathbf 1}\uparrow_{C_n}^{S_n}.$  Its Frobenius characteristic is obtained by taking $\psi(d)=\phi(d)$ (Euler's totient function) in equation~(\ref{definef_n}).


\begin{defn}\label{defLieSup2} Let $k_n$ be the highest power of 2 dividing $n.$ Define $Lie_n^{(2)}$ to be the induced module 
$$\exp(\frac{2i\pi}{n}\cdot 2^{k_n})\uparrow_{C_n}^{S_n}.$$
\end{defn}

The first two of the following facts are immediate.  $Lie_n^{(2)}$ is $S_n$-isomorphic to 
\begin{itemize} 
\item   $ Lie_n$ if $n$ is odd;
\item $Conj_n$ if $n$ is a power of 2;
\item $Lie_n\otimes {\bf sgn}_{S_n}$ if $n$ is twice an odd number.
\end{itemize}

The third fact follows, for example, by first establishing the isomorphism 
$${\bf sgn}_{S_n}\otimes \chi\uparrow_{C_n}^{S_n}\simeq ({\bf sgn}^{n-1}_{C_n}\otimes \chi)\uparrow_{C_n}^{S_n}.$$
(A different proof using Frobenius characteristics is also possible.)

   In order to elaborate on the properties of $Lie_n^{(2)}$ described in the Introduction, it is most convenient to use the language of symmetric functions.  Thus we will often abuse notation and use $Lie_n$ and $Lie_n^{(2)}$ to mean both the module and its Frobenius characteristic.
   
 We write $Lie$ for the sum of symmetric functions $\sum_{n\geq 1} Lie_n$ and $Lie^{(2)}$ for the sum 
of symmetric functions $\sum_{n\geq 1} Lie_n^{(2)}.$
Recall from Section 1.1 that we define,  for each partition $\lambda$ of $n\geq 1$  with $m_i(\lambda)=m_i$ parts equal to $i,$
 $H_\lambda[Q]=\prod_{i:m_i(\lambda)\geq 1} h_{m_i}[q_i]$ and $E_\lambda[Q]=\prod_{i:m_i(\lambda)\geq 1} e_{m_i}[q_i];$  see also equation (\ref{defhrofQ}).
 Finally, recall that $p_1^n=h_1^n=e_1^n$ is the Frobenius characteristic of the regular representation ${\mathbf 1}\uparrow_{S_1}^{S_n}$ of $S_n.$
 
 The $H_\lambda[Lie]$ are the (Frobenius characteristics of) the \textit{higher Lie} modules appearing in Thrall's decomposition of the regular representation (see below and later sections for more details).  
 We denote the wreath product of $S_a$ with $a$ copies of $S_b$ by $S_a[S_b];$ explicitly it is the normaliser of the direct product 
 $\underbrace{S_b\times \ldots \times S_b}_{a}$  in $S_{ab}.$ 
 If $V_a$ and $V_b$ are respectively representations of $S_a, S_b,$ 
there is an obvious associated representation $V_a[V_b]$ of the wreath product $S_a[S_b]$, whose Frobenius characteristic is given by the plethysm ${\rm ch\,}V_a[{\rm ch\,}V_b].$  The higher Lie module 
$H_\lambda[Lie],$ for a partition $\lambda$ of $n$ with 
$m_i$ parts equal to $i,$  is the characteristic of  the induced representation 
\begin{equation*}{\Large\otimes}_i {\bf 1}_{S_{m_i}}[Lie_i]
{\large\uparrow}_{\prod_i S_{m_i}[S_i]}^{S_n}.\end{equation*}

If $X$ is any topological space, then the ordered configuration space $Con\! f_n \, X$ of $n$ distinct points in $X$ is defined to be the set 
$\{(x_1,\ldots, x_n): i\neq j \Longrightarrow x_i\neq x_j\}.$ The symmetric group $S_n$ acts on $Con\! f_n\, X$ by permuting coordinates, and hence induces an action on the cohomology $H^k(Con\! f_n\,  X, {\mathbb Q}), k\geq 0.$  

\begin{thm}\label{SWThm4.4}\cite[Theorem 4.4, Corollary 4.5]{SW} For all $d\geq 1, $  and $0\leq k\leq n-1,$ the Frobenius characteristic of 
\begin{itemize}
\item 
$H^k( Con\! f_n\, {\mathbb R}^{2},{\mathbb Q} ) \simeq H^{(2d-1)k}(Con\! f_n\, {\mathbb R}^{2d},{\mathbb Q})$ is   
$\omega\left(e_{n-k}[Lie]|_{\text{deg }n} \right).$
\item
$H^{2k}( Con\! f_n\, {\mathbb R}^{3} ,{\mathbb Q}  ) \simeq H^{2dk}(Con\! f_n\, {\mathbb R}^{2d+1}, {\mathbb Q})$   is 
$
h_{n-k}[Lie]|_{\text{deg }n} .$
\end{itemize}
The cohomology vanishes in all other degrees.

When $d=1,$ $H^{0}( Con\! f_n\, {\mathbb R} ,{\mathbb Q}  )$ carries the regular representation of $S_n.$
\end{thm}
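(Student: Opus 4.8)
The plan is to realise $Con\! f_n\, {\mathbb R}^m$ as the complement of a linear subspace arrangement and to apply the $S_n$-equivariant Goresky--MacPherson formula, which reduces everything to the $S_n$-equivariant Whitney homology of the partition lattice $\Pi_n$; that, in turn, is governed by $Lie$. So, fixing $m\ge 2$, the first step is to observe that $Con\! f_n\, {\mathbb R}^m$ is the complement in $V=({\mathbb R}^m)^n$ of the arrangement of the codimension-$m$ subspaces $V_{ij}=\{x_i=x_j\}$, $1\le i<j\le n$; its intersection lattice, ordered by reverse inclusion, is $\Pi_n$, the subgroup $S_n$ acting by permuting the $n$ copies of ${\mathbb R}^m$. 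For $x\in\Pi_n$ with blocks $B$ (so with $n-\mathrm{rk}(x)$ blocks), the subspace $V_x=\{x_i=x_j\text{ whenever }i\sim_x j\}$ has real codimension $m\cdot\mathrm{rk}(x)$, and its normal space is $S_n$-equivariantly $\bigoplus_B({\mathbb R}^m)^{|B|-1}$, on which the inner permutation group $S_{|B|}$ of a block acts through $\mathrm{std}_{S_{|B|}}\otimes{\mathbb R}^m$.

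Next I would invoke the $S_n$-equivariant Goresky--MacPherson formula for complements of real subspace arrangements established in \cite{SW}: as $S_n$-modules,
\[
\widetilde H^{\,i}(Con\! f_n\,{\mathbb R}^m;{\mathbb Q})\ \cong_{S_n}\ \bigoplus_{\hat 0<x\in\Pi_n}\big(\widetilde H_{\,m\,\mathrm{rk}(x)-2-i}(\Delta(\hat 0,x);{\mathbb Q})\otimes\mathrm{or}_x\big)\!\!\uparrow_{\mathrm{Stab}_{S_n}(x)}^{S_n},
\]
where $\mathrm{or}_x$ is the one-dimensional character of $\mathrm{Stab}_{S_n}(x)$ that is the determinant of its action on the normal space $\bigoplus_B({\mathbb R}^m)^{|B|-1}$. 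Since $\Pi_n$ and each lower interval $[\hat 0,x]\cong\prod_B\Pi_{|B|}$ is Cohen--Macaulay, $\widetilde H_\ast(\Delta(\hat 0,x))$ is concentrated in the top degree $\mathrm{rk}(x)-2$; hence $\widetilde H^{\,i}$ vanishes unless $i=(m-1)k$ for some $0\le k\le n-1$ --- which, together with connectedness, $H^0(Con\! f_n\,{\mathbb R}^m)={\mathbb Q}$, for $m\ge 2$, is the vanishing assertion --- and
\[
H^{(m-1)k}(Con\! f_n\,{\mathbb R}^m;{\mathbb Q})\ \cong_{S_n}\ \bigoplus_{\mathrm{rk}(x)=k}\big(\widetilde H_{\mathrm{top}}(\Delta(\hat 0,x))\otimes\mathrm{or}_x\big)\!\!\uparrow^{S_n}.
\]

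The heart of the matter is then bookkeeping on the right-hand side. Write $\overline\Pi_i$ for the order complex of $\Pi_i$ with $\hat0,\hat1$ deleted. For $x$ of type $\lambda\vdash n$ with $m_i=m_i(\lambda)$ blocks of size $i$, iterating the standard description of the top homology of the proper part of a product of bounded posets identifies $\widetilde H_{\mathrm{top}}(\Delta(\hat 0,x))$, as a $\mathrm{Stab}_{S_n}(x)=\prod_i S_{m_i}[S_i]$-module, with the plethystic tensor product of the $\widetilde H_{i-3}(\overline\Pi_i)$, subject to the sign rule that interchanging two blocks of the same size $i$ picks up a factor $(-1)^{i-1}$; and the classical theorem that $\widetilde H_{i-3}(\overline\Pi_i)\cong_{S_i}Lie_i\otimes\mathrm{sgn}_i$ gives $\mathrm{ch}\,\widetilde H_{i-3}(\overline\Pi_i)=\omega(Lie_i)$ (this is the formula for $\omega(f_n)$ of Section~1.1 with $\psi=\mu$). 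Meanwhile $\mathrm{or}_x$ acts on each inner $S_i$ through $(\mathrm{sgn}_{S_i})^m$ (the determinant of $S_i$ on $\mathrm{std}_{S_i}\otimes{\mathbb R}^m$) and contributes $(-1)^{m(i-1)}$ to each interchange of two size-$i$ blocks. If $m$ is even, then $\mathrm{or}_x$ is trivial, so by \eqref{HigherQ} the $k$-th cohomology has Frobenius characteristic
\[
\sum_{\stackrel{\lambda\vdash n}{\ell(\lambda)=n-k}}\Big(\prod_{i\text{ odd}}h_{m_i}[\omega(Lie_i)]\Big)\Big(\prod_{i\text{ even}}e_{m_i}[\omega(Lie_i)]\Big),
\]
which, by the rule $\omega(e_m[g])=h_m[\omega g]$ when $\deg g$ is odd and $\omega(e_m[g])=e_m[\omega g]$ when $\deg g$ is even, applied termwise to $e_{n-k}[Lie]|_{\deg n}=\sum_{\ell(\lambda)=n-k}\prod_i e_{m_i}[Lie_i]$ (from \eqref{defhrofQ}, \eqref{HigherQ}), is exactly $\omega\big(e_{n-k}[Lie]|_{\deg n}\big)$. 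If $m$ is odd, then the inner twist $(\mathrm{sgn}_{S_i})^m=\mathrm{sgn}_{S_i}$ replaces each $\omega(Lie_i)$ by $Lie_i$, while the additional $(-1)^{m(i-1)}=(-1)^{i-1}$ on block interchanges cancels the intrinsic $(-1)^{i-1}$, so every size-$i$ block now contributes $h_{m_i}[\,\cdot\,]$, and the $k$-th cohomology has Frobenius characteristic $\sum_{\ell(\lambda)=n-k}\prod_i h_{m_i}[Lie_i]=h_{n-k}[Lie]|_{\deg n}$, again by \eqref{defhrofQ}, \eqref{HigherQ}. Since neither answer depends on $d$, setting $d=1$ and using that $S_n$-modules with equal characteristic are isomorphic yields the stated isomorphisms $H^k(Con\! f_n\,{\mathbb R}^2)\cong_{S_n}H^{(2d-1)k}(Con\! f_n\,{\mathbb R}^{2d})$ and $H^{2k}(Con\! f_n\,{\mathbb R}^3)\cong_{S_n}H^{2dk}(Con\! f_n\,{\mathbb R}^{2d+1})$.

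Finally $m=1$: the space $Con\! f_n\,{\mathbb R}$ is a disjoint union of $n!$ contractible open chambers, indexed by the linear orders of $\{1,\dots,n\}$ and permuted simply transitively by $S_n$, so $H^0(Con\! f_n\,{\mathbb R};{\mathbb Q})$ is the regular representation and $H^{>0}$ vanishes; equivalently this is the $m=1$ instance of the odd case, where $H^0$ collects all ranks and $\sum_k h_{n-k}[Lie]|_{\deg n}=\sum_{\lambda\vdash n}H_\lambda[Lie]=h_1^n$ by Thrall's theorem. The step I expect to be the main obstacle is the sign bookkeeping of the third paragraph --- pinning down $\mathrm{or}_x$, and especially the intrinsic factor $(-1)^{i-1}$ attached to interchanging equal-size blocks inside $\widetilde H_{\mathrm{top}}(\Delta(\hat 0,x))$, which is produced by the suspension coordinates hidden in the iterated join decomposition of the proper part of a product poset and is easy to get wrong --- once those signs are correct, the reduction to \eqref{defhrofQ} via the $\omega$-plethysm rule is purely formal.
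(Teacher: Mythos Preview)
The paper does not supply its own proof of this statement: Theorem~\ref{SWThm4.4} is quoted verbatim from \cite[Theorem 4.4, Corollary 4.5]{SW} and used as a known input. Your proposal is precisely the argument of that source --- realise $Con\!f_n\,{\mathbb R}^m$ as the complement of the diagonal subspace arrangement, apply the $S_n$-equivariant Goresky--MacPherson formula of \cite{SW}, reduce to the Whitney homology of $\Pi_n$, and identify the resulting plethystic pieces via $\widetilde H_{\rm top}(\overline\Pi_i)\cong Lie_i\otimes\mathrm{sgn}$ --- so there is nothing to compare against in the present paper beyond noting that you have correctly reconstructed the cited proof.
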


We will use the cohomology of $ Con\! f_n\, {\mathbb R}^{2} $ as the prototype for the configuration spaces of even-dimensional Euclidean space, and  $ Con\! f_n\, {\mathbb R}^{3} $ as the prototype for 
the configuration spaces of odd-dimensional Euclidean space.  Note that cohomology is concentrated in all degrees in the former (more generally in all multiples of $(2d-1)$ for $2d$-dimensional space), and only in even degrees in the latter.

The results of this section will show that the representation $Lie_n^{(2)}$ has properties  curiously parallelling those of $Lie_n.$ 
Theorem \ref{SWThm4.4} above states the ``Lie" identities of
Theorem \ref{Compare} below in the context of the configuration spaces of $X={\mathbb R}^2$ and $X={\mathbb R}^3.$   The module $Lie_n$ arises as the highest nonvanishing cohomology for the configuration space of ${\mathbb R}^d, d$ odd, and when tensored with the sign, as the highest nonvanishing cohomology for the configuration space of ${\mathbb R}^d, d$ even.  This is the classically known 
prototype;  the variant $Lie_n^{(2)}$ will be shown to closely follow its example.

\begin{thm}\label{Compare}  The  symmetric function $Lie_n^{(2)}$ satisfies the following plethystic identities, analogous to $Lie_n$.  

\begin{equation}\label{SymExt}
\sum_{\lambda\vdash n}H_\lambda[Lie]=p_1^n; \qquad\qquad \sum_{\lambda\vdash n}E_\lambda[Lie^{(2)}]=p_1^n;
\end{equation}
\begin{equation}\label{PlInvHE}
H[\sum_{n\geq 1} (-1)^{n-1}\omega(Lie_n)]=1+p_1;
\qquad E[\sum_{n\geq 1} (-1)^{n-1}\omega(Lie^{(2)}_n)]=1+p_1
\end{equation}
\begin{equation}\label{AcycEH}
\text{If } n\geq 2, \ \sum_{\lambda\vdash n}(-1)^{n-\ell(\lambda)}E_\lambda[Lie]=0; \qquad\qquad \sum_{\lambda\vdash n}(-1)^{n-\ell(\lambda)}H_\lambda[Lie^{(2)}]=0;
\end{equation}
%
\begin{equation}\label{TotalCoh}
\text{If } n\geq 2, \ \sum_{\lambda\vdash n}E_\lambda[Lie]=2e_2 p_1^{n-2}; \qquad\qquad \sum_{\lambda\vdash n}H_\lambda[Lie^{(2)}]=\sum_{\lambda\vdash n, \lambda_i=2^{a_i}} p_\lambda.
\end{equation}
 Moreover,  the $Lie$ identities are all equivalent, and the $Lie^{(2)}$ identities are also equivalent.
\end{thm}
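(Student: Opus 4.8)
The plan is to organize the equivalences around the generating-function identity \eqref{PlInvHE}, which I expect to be the central hub in both the $Lie$ and $Lie^{(2)}$ cases. For the $Lie$ identities, the classical starting point is the Poincaré--Birkhoff--Witt/Thrall decomposition $H[Lie] = \sum_{\lambda\in Par} H_\lambda[Lie] = \prod_{i\geq 1}(1-p_i)^{-1} = \sum_\mu z_\mu^{-1}p_\mu$, whose degree-$n$ part is exactly the left identity in \eqref{SymExt}, since $p_1^n = \sum_{\mu\vdash n}(\text{mult})\cdots$ — more precisely $\sum_{\lambda\vdash n}H_\lambda[Lie]=h_1^n=p_1^n$. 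From $H[Lie](t)=\prod_i(1-t^i p_i)^{-1}$ one passes to \eqref{PlInvHE} by the standard fact that $H$ and $H^{\pm}$ (equivalently, plethystic composition with $\sum(-1)^{n-1}\omega(\cdot)$) are inverse operations on the completed ring, using $\omega(p_d^{n/d}) = (-1)^{n-n/d}p_d^{n/d}$; this is where the involution $\omega$ enters and where the sign $(-1)^{n-1}$ in the argument of $H$ comes from. The identity \eqref{AcycEH} (acyclicity) is the degree-$n$ part of $E[Lie^{alt}]$-type manipulation: it says $E[\sum(-1)^{n-1}Lie_n]$ has no part in degree $\geq 2$, which is the $\omega$-dual of \eqref{PlInvHE} combined with $\omega(Lie_n) = (-1)^{n-1}\cdot(\text{something})$; concretely, apply $\omega$ to \eqref{PlInvHE} and read off coefficients. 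Finally \eqref{TotalCoh} is obtained by setting all $p_i \mapsto p_1$ (the principal-type specialization that computes total cohomology / total dimension of the relevant complex) inside $E[Lie]$: since $E[Lie] = \prod_i(1+p_i+\cdots)$-style expression dualizes $H[Lie]$, one gets $\sum_{\lambda\vdash n}E_\lambda[Lie] = $ the coefficient extraction that collapses to $2e_2 p_1^{n-2}$; I would verify this last collapse directly from $\omega(H[Lie](t)) = E[\omega Lie](t)$ and the known $\sum_{\lambda\vdash n}E_\lambda[Lie]$ formula, or cite the configuration-space statement (Theorem~\ref{SWThm4.4}) which identifies the alternating sum of cohomologies.

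For the $Lie^{(2)}$ side, the same logic runs with $H$ and $E$ interchanged. The key input is an explicit product formula for $E[Lie^{(2)}](t) = \sum_\lambda t^{|\lambda|}E_\lambda[Lie^{(2)}]$; from Definition~\ref{defLieSup2} and the computation of $\mathrm{ch}(\exp(2i\pi 2^{k_n}/n)\uparrow_{C_n}^{S_n})$ one gets $Lie_n^{(2)} = \frac1n\sum_{d\mid n}\psi(d)p_d^{n/d}$ for the appropriate multiplicative arithmetic function $\psi$ (the one from \eqref{definef_n}), and then $E[Lie^{(2)}](t) = \prod_{i:\ i\text{ not a power of }2}(1-t^i p_i)^{-1}\cdot\prod_{j\geq 0}(\text{factor for }2^j)$; the net effect should be $E[Lie^{(2)}](t) = \prod_{i\geq 1}(1 - t^i p_i)^{-1}$ again in the relevant normalization, giving the right identity of \eqref{SymExt}, which is precisely the "regular representation as a sum of exterior powers" headline of the paper. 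The chain $\eqref{SymExt}_{Lie^{(2)}} \Leftrightarrow \eqref{PlInvHE}_{Lie^{(2)}} \Leftrightarrow \eqref{AcycEH}_{Lie^{(2)}} \Leftrightarrow \eqref{TotalCoh}_{Lie^{(2)}}$ then mirrors the $Lie$ chain verbatim, with $H\leftrightarrow E$, using the involutory nature of the relevant plethystic operators and $\omega$. The right side of \eqref{TotalCoh}, $\sum_{\lambda\vdash n,\ \lambda_i=2^{a_i}}p_\lambda$, emerges because the $p_1\mapsto$ specialization of $E[Lie^{(2)}]$ only retains the "powers-of-2" part of the product — the arithmetic function $\psi$ is supported on prime powers $2^j$ in exactly the way that makes the surviving terms indexed by partitions into powers of $2$.

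The main obstacle will be making precise the assertion "all equivalent" — i.e.\ exhibiting the implications as genuine logical equivalences rather than merely deriving all four from one master product formula. I would do this by showing each pairwise equivalence through an invertible operation: \eqref{SymExt} $\Leftrightarrow$ \eqref{PlInvHE} via the mutually inverse plethysms $F\mapsto H[F]$ and $F\mapsto F[\sum(-1)^{n-1}\omega(\cdot)]$ (and its $E$-analogue), which is exactly the content I expect to be packaged in the later "meta theorem" of Section~5 and in Theorem~\ref{PlInv-E} referenced in the text; \eqref{PlInvHE} $\Leftrightarrow$ \eqref{AcycEH} via applying $\omega$ and extracting homogeneous components (invertible since $\omega^2=\mathrm{id}$); and \eqref{AcycEH} $\Leftrightarrow$ \eqref{TotalCoh} via the observation that knowing the alternating sum vanishes together with one more piece of data (e.g.\ the degree-1 and degree-2 behavior) pins down the non-alternating sum, the "inclusion-exclusion" being reversible. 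The genuinely delicate point is that \eqref{AcycEH}$\Rightarrow$\eqref{TotalCoh} is not formal — it requires the extra structural input that the complex is concentrated so that Euler characteristic plus acyclicity determines the total — so I would either invoke the hard Lefschetz-type / configuration-space identity of Theorem~\ref{SWThm4.4} to supply \eqref{TotalCoh} independently and then note consistency, or push through the explicit symmetric-function computation $\sum_{\lambda\vdash n}E_\lambda[Lie] = [t^n]\,E[Lie](t)\big|_{p_i\to p_1}$ directly. Everything else is bookkeeping with plethysm, $\omega$, and the arithmetic function $\psi$ from \eqref{definef_n}.
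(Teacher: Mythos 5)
Your overall architecture (generating functions, plethystic inversion, $\omega$-duality) is in the right spirit, but there are two genuine gaps. First, the computational heart of the $Lie^{(2)}$ identities is missing: everything rests on knowing the two scalar sequences $Lie^{(2)}_m(1)$ and $Lie^{(2)}_m(-1)$ obtained by evaluating the Foulkes/Ramanujan character sums for the induced character $\exp(\frac{2\pi i}{n}2^{k_n})\uparrow_{C_n}^{S_n}$ (in the paper this is Lemma \ref{Ramanujan}, Corollary \ref{LSvalues}, Lemma \ref{LSnegvalues}); these give $Lie^{(2)}_m(-1)=-\delta_{m,1}$ and $Lie^{(2)}_m(1)=1$ exactly when $m$ is a power of $2$, whence $E[Lie^{(2)}]=(1-p_1)^{-1}$ and $H[Lie^{(2)}]=\prod_{k\geq 0}(1-p_{2^k})^{-1}$ via the meta theorem \ref{metathm}. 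You only gesture at "the appropriate multiplicative arithmetic function $\psi$," and your description of it is off: $\psi$ itself is not supported on powers of $2$ (only the averages $f_m(1)$ are), and you twice conflate $(1-p_1)^{-1}$ with $\prod_{i\geq 1}(1-p_i)^{-1}$ (the latter equals $\sum_\lambda p_\lambda$, i.e. the symmetric powers of $Conj$, not of $Lie$ or $Lie^{(2)}$), writing $H[Lie]=\prod_i(1-p_i)^{-1}$ and later $E[Lie^{(2)}]=\prod_i(1-t^ip_i)^{-1}$; both are false and would derail \eqref{SymExt}. The auxiliary identity $\omega(H[Lie](t))=E[\omega Lie](t)$ you invoke for \eqref{TotalCoh} is also false in general, because $\omega(h_m[f])$ is $h_m[\omega f]$ or $e_m[\omega f]$ according to the parity of $\deg f$.

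Second, your mechanism for the "all equivalent" claim does not work where it matters. You correctly sense that \eqref{AcycEH}$\Rightarrow$\eqref{TotalCoh} is not a formal symmetric-function manipulation, but neither of your fixes closes it: citing Theorem \ref{SWThm4.4} would at best prove the $Lie$ instance of \eqref{TotalCoh} (not the equivalence, and there is no configuration-space theorem to cite on the $Lie^{(2)}$ side), and "the alternating sum plus a little low-degree data pins down the plain sum" is simply untrue for arbitrary sequences of symmetric functions. The paper's point (Propositions \ref{metaf} and \ref{metaequiv}) is that the equivalence is special to sequences of the centraliser-induced form \eqref{definef_n}: such an $f_n=\frac1n\sum_{d|n}\psi(d)p_d^{n/d}$ is completely determined by the scalar sequence $f_n(1)$ (equivalently $f_n(-1)$), so each one of the four generating-function identities already determines the whole family $\{f_n\}$ and hence implies the other three. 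Without this rigidity input your chain of "invertible operations" only links \eqref{SymExt}, \eqref{PlInvHE}, \eqref{AcycEH} pairwise and leaves \eqref{TotalCoh}, and the equivalence assertion, unproved.
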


We now discuss the implications of Theorem \ref{Compare}. 

\vspace{.07in}
\noindent {\bf \textit{Equation (\ref{SymExt})}:}

  The first equation in (\ref{SymExt}) is simply Thrall's classical theorem \cite{T}, rederived in Theorem \ref{ThrallPBWCadoganSolomon}, stating that the regular representation of $S_n$ decomposes into a sum of symmetrised modules induced from the centralisers of $S_n,$ the Lie modules.  Thrall's theorem in this context is equivalent to the Poincar\'e-Birkhoff-Witt Theorem, which states that the universal enveloping algebra of the free Lie algebra is its symmetric algebra \cite{R}.  Recall that the Lefschetz module of a complex is the alternating sum by degree of the homology modules. In view of Theorem \ref{SWThm4.4}, since cohomology is nonzero only in even degrees, the Lefschetz module is in fact a sum of homology modules, and this can in turn be reinterpreted as saying that:
  \begin{prop}\label{LefschetzR3}
   The regular representation of $S_n$  is carried by the Lefschetz module of  $Con\!f_n\, {\mathbb R}^3,$ and more generally $Con\!f_n\, {\mathbb R}^d$ for odd $d$, which coincides with its cohomology ring and  is  isomorphic to the symmetric algebra over the top cohomology.
   \end{prop}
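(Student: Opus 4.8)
The plan is to deduce Proposition~\ref{LefschetzR3} by combining the even-degree concentration of the cohomology in Theorem~\ref{SWThm4.4} with the first identity of~(\ref{SymExt}) (Thrall's theorem) and the expansion~(\ref{defhrofQ}). First I would note that for $X=Con\! f_n\,{\mathbb R}^d$ with $d$ odd, Theorem~\ref{SWThm4.4} gives $H^k(X,{\mathbb Q})=0$ unless $k$ is a multiple of $d-1$; in particular all nonvanishing cohomology sits in even degree (and for $d=1$ it all sits in degree $0$, where $H^0$ is the regular representation outright). Hence in the Lefschetz module $\sum_k(-1)^k[H^k(X,{\mathbb Q})]$ every sign is $+1$, so this class equals that of the total cohomology $\bigoplus_k H^k(X,{\mathbb Q})$, i.e.\ of the underlying $S_n$-module of the cohomology ring; this is the assertion that the Lefschetz module coincides with the cohomology ring.

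Next I would compute the Frobenius characteristic. For $X=Con\! f_n\,{\mathbb R}^3$, Theorem~\ref{SWThm4.4} gives it as $\sum_{j=0}^{n-1}h_{n-j}[Lie]\big|_{{\rm deg\ }n}$; the substitution $r=n-j$ turns this into $\sum_{r=1}^{n}h_r[Lie]\big|_{{\rm deg\ }n}$, and (\ref{defhrofQ}) with $Q=Lie$ rewrites the $r$-th term as $\sum_{\lambda\vdash n,\,\ell(\lambda)=r}H_\lambda[Lie]$. Since the length of a partition of $n$ takes exactly the values $1,\dots,n$, the sum collapses to $\sum_{\lambda\vdash n}H_\lambda[Lie]$, which by the first identity of~(\ref{SymExt}) equals $p_1^n={\rm ch}\,({\mathbf 1}\uparrow_{S_1}^{S_n})$; so the Lefschetz module carries the regular representation. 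The case of general odd $d$ follows verbatim, since by Theorem~\ref{SWThm4.4} the cohomology of $Con\! f_n\,{\mathbb R}^{2d+1}$ is again concentrated in even degrees with the same Frobenius characteristic in each (reindexed) degree as that of $Con\! f_n\,{\mathbb R}^3$.

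For the final clause I would read ``the top cohomology'' as the family $\{Lie_i\}_{i\ge1}$, where $Lie_i$, with Frobenius characteristic $h_1[Lie]\big|_{{\rm deg\ }i}=Lie_i$, is the top nonvanishing group $H^{(d-1)(i-1)}$ of $Con\! f_i\,{\mathbb R}^d$; these are precisely the multilinear components of the free Lie algebra. Then the statement is exactly the Poincar\'e-Birkhoff-Witt theorem---equivalently, the first identity of~(\ref{SymExt}), as explained immediately after that equation---which identifies the symmetric algebra on the free Lie algebra with the tensor algebra, whose degree-$n$ multilinear component is ${\mathbb C}S_n$, with Frobenius characteristic $\sum_{\lambda\vdash n}H_\lambda[Lie]=p_1^n$. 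The one point that is not purely formal, and which I expect to be the main obstacle, is that this clause is about the \emph{algebra} structure on $\bigoplus_n H^*(Con\! f_n\,{\mathbb R}^d)$ and not just the $S_n$-module structure in each degree: to promote the degreewise module isomorphism to an algebra isomorphism one must invoke the known multiplicative description of $H^*(Con\! f_n\,{\mathbb R}^d)$ for odd $d$ (the Arnold/F.~Cohen relations), under which the generators lie in the \emph{even} degree $d-1$ and hence commute, so the ring is a symmetric and not an exterior algebra; the Frobenius-characteristic computation above then identifies which symmetric algebra it is. Everything else in the proposition is bookkeeping with Theorem~\ref{SWThm4.4}, (\ref{defhrofQ}) and~(\ref{SymExt}).
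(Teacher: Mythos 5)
Your proposal is correct and follows essentially the same route as the paper: the proposition is obtained by combining Thrall's identity $\sum_{\lambda\vdash n}H_\lambda[Lie]=p_1^n$ from (\ref{SymExt}) with the even-degree concentration of cohomology in Theorem \ref{SWThm4.4}, so that the Lefschetz module equals the total cohomology and its characteristic collapses to $p_1^n$, the symmetric-algebra statement being the PBW/Thrall interpretation already invoked there. Your extra remark about needing the known multiplicative (Arnold/Cohen) description to upgrade the degreewise module identification to a ring statement is a reasonable precision, consistent with what the paper assumes implicitly.
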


  The second equation in (\ref{SymExt}) is our new result.  It gives a new decomposition of the regular representation: 
  
  \begin{thm}\label{ExtReg}   The regular representation decomposes into  a sum of \textit{exterior} powers of modules induced from the centralisers of $S_n,$ namely the modules $Lie_n^{(2)}$.
  \end{thm}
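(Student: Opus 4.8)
The plan is to deduce Theorem~\ref{ExtReg} directly from the second identity in (\ref{SymExt}) by reading it representation-theoretically. First I would recall that $p_1^n = e_1^n$ is the Frobenius characteristic of the regular representation ${\mathbf 1}\uparrow_{S_1}^{S_n}$, so that Theorem~\ref{Compare} already supplies the equality of symmetric functions
\begin{equation*}
\mathrm{ch}\bigl({\mathbf 1}\uparrow_{S_1}^{S_n}\bigr) \;=\; p_1^n \;=\; \sum_{\lambda\vdash n} E_\lambda[Lie^{(2)}].
\end{equation*}
The remaining work is to exhibit, for each $\lambda\vdash n$, a genuine $S_n$-module with Frobenius characteristic $E_\lambda[Lie^{(2)}]$ and to identify it as a tensor product of exterior powers of the $Lie_i^{(2)}$, induced from the centraliser of a permutation of cycle type $\lambda$; since $\mathrm{ch}$ is a ring isomorphism carrying genuine modules to a spanning cone of symmetric functions, matching the identity above termwise then yields the asserted $S_n$-module isomorphism.

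Next I would carry out this identification. Fix $\lambda\vdash n$ with $m_i = m_i(\lambda)$ parts equal to $i$, and write $Lie_i^{(2)} = \chi_i\uparrow_{C_i}^{S_i}$, where $\chi_i$ is the one-dimensional character of $C_i$ from Definition~\ref{defLieSup2}. By the wreath-product interpretation of plethysm recalled in Section~1.1, $e_m[Lie_i^{(2)}]$ is the characteristic of $\mathrm{sgn}_{S_m}[Lie_i^{(2)}]\uparrow_{S_m[S_i]}^{S_{mi}}$, which I will call the $m$-th exterior power of $Lie_i^{(2)}$ in the plethystic sense; this is a \emph{genuine} module, being $(Lie_i^{(2)})^{\otimes m}$ twisted by the sign of the base $S_m$. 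Using the standard compatibility of plethysm with induction in the inner slot, $\mathrm{sgn}_{S_m}[Lie_i^{(2)}] \simeq \bigl(\mathrm{sgn}_{S_m}[\chi_i]\bigr)\uparrow_{S_m[C_i]}^{S_m[S_i]}$, and since a product of Frobenius characteristics corresponds to an induction product from the appropriate Young subgroup, transitivity of induction gives
\begin{equation*}
E_\lambda[Lie^{(2)}] \;=\; \prod_{i:\,m_i\geq 1} e_{m_i}[Lie_i^{(2)}]
\;=\; \mathrm{ch}\Bigl( \bigl({\textstyle\bigotimes_{i:\,m_i\geq 1}\mathrm{sgn}_{S_{m_i}}[\chi_i]}\bigr)\uparrow_{Z_\lambda}^{S_n} \Bigr),
\end{equation*}
where $Z_\lambda = \prod_i S_{m_i}[C_i]$ is precisely the centraliser in $S_n$ of a permutation of cycle type $\lambda$; equivalently this module is $\bigl(\bigotimes_i \bigwedge^{m_i} Lie_i^{(2)}\bigr)\uparrow^{S_n}$, a tensor product of exterior powers of the $Lie_i^{(2)}$.

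Finally, summing over all $\lambda\vdash n$ and invoking once more that $\mathrm{ch}$ identifies (virtual) $S_n$-modules with degree-$n$ symmetric functions, I would upgrade $\sum_{\lambda\vdash n}E_\lambda[Lie^{(2)}] = p_1^n$ to the module isomorphism
\begin{equation*}
{\mathbf 1}\uparrow_{S_1}^{S_n} \;\simeq\; \bigoplus_{\lambda\vdash n}\Bigl({\textstyle\bigotimes_{i:\,m_i(\lambda)\geq 1}\bigwedge^{m_i(\lambda)} Lie_i^{(2)}}\Bigr)\uparrow_{Z_\lambda}^{S_n},
\end{equation*}
which is the assertion of the theorem. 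I expect the main obstacle to be bookkeeping rather than anything substantive: fixing exactly which plethystic construction the operator $e_m[-]$ realises, checking that it produces genuine (not merely virtual) modules, and tracking the sign twist. In particular, for odd $i$ one has $Lie_i^{(2)} = Lie_i$ but the summand here involves $e_{m_i}[Lie_i]$ rather than the $h_{m_i}[Lie_i]$ of Thrall's theorem, so this decomposition of the regular representation genuinely differs, term by term, from the classical one. The substantive analytic input --- the identity (\ref{SymExt}) itself --- is already furnished by Theorem~\ref{Compare} (proved in Section~3), so apart from that there is no real obstacle; alternatively one could prove $\sum_{\lambda\vdash n}E_\lambda[Lie^{(2)}] = p_1^n$ from scratch via the ``meta theorem'' machinery of Section~5, but that is exactly what Theorem~\ref{Compare} packages.
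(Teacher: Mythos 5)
Your proposal is correct and follows the same route as the paper: Theorem~\ref{ExtReg} is there obtained precisely by reading the second identity of (\ref{SymExt}) (equivalently $E[Lie^{(2)}]=(1-p_1)^{-1}$ from Theorem~\ref{PlInv-E}, proved via the meta theorem) as a statement about genuine modules, with each $E_\lambda[Lie^{(2)}]$ the characteristic of a sign-twisted wreath-product module induced from the centraliser $\prod_i S_{m_i}[C_i]$. The only difference is that you spell out the wreath-product/centraliser bookkeeping which the paper treats as standard (it records the analogous description explicitly only for $H_\lambda[Lie]$), and that is a harmless elaboration rather than a different argument.
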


\noindent {\bf \textit{Equation (\ref{PlInvHE})}:}

In (\ref{PlInvHE}), the second equation is new, giving the plethystic inverse of the elementary symmetric functions $\sum_{n\geq 1} e_n,$  while 
the first equation contains the known result of Cadogan \cite{C} (see Theorem \ref{ThrallPBWCadoganSolomon}) giving the plethystic inverse of the homogeneous symmetric functions $\sum_{n\geq 1} h_n$.

\vspace{.07in}
\noindent {\bf \textit{Equation (\ref{AcycEH})}:}

The equations in  (\ref{AcycEH}) and (\ref{TotalCoh}) are particularly significant.  It is well known that the degree $n$ term in the plethysm $e_{n-r}[Lie]$ is the Frobenius characteristic of the $r$th-Whitney homology $W\!H_r(\Pi_n)$ of the partition lattice $\Pi_n,$ tensored with the sign (see \cite[Remark 1.8.1]{Su0}),  and hence of the sign-tensored $r$th cohomology $H^r( Con\! f_n\,{\mathbb R}^{2} )$ of Theorem \ref{SWThm4.4}. The $r$th Whitney homology also coincides as an $S_n$-module with the $r$th cohomology of the pure braid group, see \cite{HL}.
The first equation in \ref{AcycEH} therefore restates the acyclicity of Whitney homology for the partition lattice \cite{Su0}, and hence also says  (in contrast to the odd case $ Con\! f_n\,{\mathbb R}^{3} $ of Proposition~\ref{LefschetzR3} above) that :
\begin{prop}\label{LefschetzR2}
The Lefschetz module of $Con\! f_n\,{\mathbb R}^{2}$ (and more generally $Con\! f_n\,{\mathbb R}^{2d}$ for even $d$) vanishes identically.
\end{prop}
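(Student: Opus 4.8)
The plan is to read the Frobenius characteristic of the equivariant Lefschetz module off Theorem \ref{SWThm4.4} and then invoke the acyclicity identity (\ref{AcycEH}) of Theorem \ref{Compare}; no auxiliary construction is needed, only bookkeeping with degrees and signs. Recall that the Lefschetz module of a space $X$ carrying an $S_n$-action is the virtual $S_n$-module $\sum_{k\ge 0}(-1)^k H^k(X,\mathbb{Q})$. For $X=Con\!f_n\,\mathbb{R}^{2d}$ with $d\ge 1$, Theorem \ref{SWThm4.4} identifies $H^{(2d-1)k}(Con\!f_n\,\mathbb{R}^{2d},\mathbb{Q})$ with the $S_n$-module of Frobenius characteristic $\omega\!\left(e_{n-k}[Lie]\big|_{\deg n}\right)$ for $0\le k\le n-1$, the cohomology vanishing in every other degree. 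Hence the characteristic of the Lefschetz module is
\[
\sum_{k=0}^{n-1}(-1)^{(2d-1)k}\,\omega\!\left(e_{n-k}[Lie]\big|_{\deg n}\right).
\]
Since $2d-1$ is odd we have $(-1)^{(2d-1)k}=(-1)^k$, so this expression is independent of $d$ and coincides with the $d=1$ value; this is the step that lets the even-dimensional case $\mathbb{R}^{2d}$ be absorbed into the motivating case $\mathbb{R}^2$.

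Next I would reindex by $r=n-k$ (so $r$ runs over $1,\dots,n$ and $(-1)^k=(-1)^{n-r}$) and use the linearity of $\omega$ to rewrite the characteristic as $\omega\!\left(\sum_{r=1}^{n}(-1)^{n-r}\,e_r[Lie]\big|_{\deg n}\right)$. Applying equation (\ref{defhrofQ}) with $Q=Lie$, namely $e_r[Lie]\big|_{\deg n}=\sum_{\lambda\vdash n,\ \ell(\lambda)=r}E_\lambda[Lie]$, turns the argument of $\omega$ into $\sum_{\lambda\vdash n}(-1)^{n-\ell(\lambda)}E_\lambda[Lie]$. By the first identity in (\ref{AcycEH}) this vanishes for $n\ge 2$; since $\omega$ is an involution, hence linear, the characteristic of the Lefschetz module is $0$, and therefore the Lefschetz module itself is the zero virtual $S_n$-module.

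I do not anticipate a genuine obstacle: the statement is a formal consequence of Theorem \ref{SWThm4.4} together with the already-established identity (\ref{AcycEH}), whose own proof (carried out later in the paper, and ultimately resting on Sundaram's vanishing theorem for the Whitney homology of $\Pi_n$) is where all the real content lies. The only points that require a little care are the parity computation $(-1)^{(2d-1)k}=(-1)^k$ and the tacit convention that the claim is meant for $n\ge 2$: for $n=1$ the space $Con\!f_1\,\mathbb{R}^{2d}$ is contractible, so its Lefschetz module is the trivial representation rather than zero, consistent with (\ref{AcycEH}) being stated only for $n\ge2$. I would also add a sentence pointing out that the same computation with the even-degree-only cohomology of $Con\!f_n\,\mathbb{R}^{2d+1}$ recovers Proposition \ref{LefschetzR3}, so that the dichotomy between the two propositions is exactly the dichotomy between (\ref{SymExt}) and (\ref{AcycEH}).
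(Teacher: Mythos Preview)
Your proposal is correct and follows essentially the same route as the paper: the proposition is presented there as an immediate restatement of the first identity in (\ref{AcycEH}), read through Theorem \ref{SWThm4.4}, and you have simply spelled out the degree and sign bookkeeping that the paper leaves implicit. Your remarks on the parity $(-1)^{(2d-1)k}=(-1)^k$ and on the $n\ge 2$ convention are apt refinements of the same one-line argument.
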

Writing $W\!H_{odd}(\Pi_n)$ for $\oplus_{k=0}^{n/2} W\!H_{2k+1}(\Pi_n),$ and $W\!H_{even}(\Pi_n)$ for $\oplus_{k=0}^{n/2} W\!H_{2k}(\Pi_n),$ we have the isomorphism of $S_n$-modules
\begin{equation} \label{EvenOdd}W\!H_{odd}(\Pi_n)\simeq W\!H_{even}(\Pi_n), \quad n\geq 2.\end{equation}
\noindent {\bf \textit{Equation (\ref{TotalCoh})}:}

  Denote by $W\!H(\Pi_n)$ the sum of all the graded pieces of the Whitney homology of $\Pi_n.$ 
The first equation in (\ref{TotalCoh}) says (recall that we have tensored with the sign representation) that 
 $W\!H(\Pi_n)=2\, ({\mathbf 1}\uparrow_{S_2}^{S_n}), \quad n\geq 2,$ 
 a result originally due to Lehrer, who proved that this is the $S_n$-representation on the cohomology ring $H^*( Con\! f_n\,{\mathbb R}^{2})$  (Lehrer actually considers the cohomology of the complement of the braid arrangement of type $A_{n-1}$ \cite[Proposition 5.6 (i)]{Le}).  We may rewrite this in our notation as 
\begin{equation}\label{Lehrer-a}   H^*( Con\! f_n\,{\mathbb R}^{2} )=W\!H(\Pi_n)={\rm ch}^{-1}(2 h_2 p_1^{n-2})=2\ ({\mathbf 1}\uparrow_{S_2}^{S_n}) , \quad n\geq 2.\end{equation}

Note that the first equation in (\ref{TotalCoh}) also confirms the following theorem of Orlik and Solomon.
\begin{prop}\label{Orlik-Solomon} \cite{LS}
$H^*( Con\! f_n\,{\mathbb R}^{2}) $ 
has the structure of an exterior algebra over the top cohomology.
\end{prop}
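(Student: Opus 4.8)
The plan is to pass from the ring-theoretic assertion to a plethystic one via Schur--Weyl duality, and then to read it off from Theorem~\ref{SWThm4.4} together with the first identity in~(\ref{TotalCoh}). To begin, the first item of Theorem~\ref{SWThm4.4} gives the Frobenius characteristic of the whole cohomology $\bigoplus_{k=0}^{n-1}H^k(Con\! f_n\, {\mathbb R}^{2},{\mathbb Q})$ as
$$\sum_{k=0}^{n-1}\omega\bigl(e_{n-k}[Lie]\big|_{\text{deg }n}\bigr)=\omega\Bigl(\sum_{r\geq 1}e_r[Lie]\big|_{\text{deg }n}\Bigr)=\omega\Bigl(\sum_{\lambda\vdash n}E_\lambda[Lie]\Bigr),$$
the middle equality being the defining relation~(\ref{defhrofQ}) that expresses $e_r[Q]$ through the $E_\lambda[Q]$; and the same theorem, for $k=i-1$, gives $\omega(Lie_i)$ as the Frobenius characteristic of the top cohomology $H^{i-1}(Con\! f_i\, {\mathbb R}^{2})$.

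Next I would unfold $\sum_{\lambda\vdash n}E_\lambda[Lie]$ combinatorially. By the comultiplication of the elementary symmetric functions under plethysm, $\sum_{r\geq 0}e_r[A+B]=\bigl(\sum_{a\geq 0}e_a[A]\bigr)\bigl(\sum_{b\geq 0}e_b[B]\bigr)$, applied to $Lie=\sum_{i\geq 1}Lie_i$, the full sum $\sum_{\lambda\in Par}E_\lambda[Lie]=\sum_{r\geq 0}e_r[Lie]$ factors as $\prod_{i\geq 1}\bigl(\sum_{m\geq 0}e_m[Lie_i]\bigr)$. Each factor $\sum_{m\geq 0}e_m[Lie_i]$ is the characteristic of the exterior algebra on $Lie_i$, and, by the first paragraph, $Lie_i$ agrees, up to the sign twist $\omega$, with the top cohomology $H^{i-1}(Con\! f_i\, {\mathbb R}^{2})$ --- the twist being the one by which Theorem~\ref{SWThm4.4} passes from odd- to even-dimensional Euclidean space. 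Taking the degree-$n$ piece and applying $\omega$ therefore exhibits $H^*(Con\! f_n\, {\mathbb R}^{2})$ as the degree-$n$ piece of the exterior algebra generated by the collected top cohomologies $\bigoplus_{i\geq 1}H^{i-1}(Con\! f_i\, {\mathbb R}^{2})$; this is the $S_n$-equivariant form of the Orlik--Solomon decomposition asserted by the proposition. Finally, substituting the first identity of~(\ref{TotalCoh}) gives $\omega\bigl(\sum_{\lambda\vdash n}E_\lambda[Lie]\bigr)=\omega\bigl(2e_2p_1^{n-2}\bigr)=2h_2p_1^{n-2}$, which by~(\ref{Lehrer-a}) equals $\mathrm{ch}\bigl(2\,({\mathbf 1}\uparrow_{S_2}^{S_n})\bigr)$, confirming that this exterior-algebra decomposition carries the correct $S_n$-character and the correct total dimension $n!$.

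The step I expect to be the real obstacle is conceptual rather than computational: a plethystic identity among Frobenius characteristics pins down only the graded $S_n$-module, not the cup product, so passing from ``equality of characteristics'' to ``structure of an exterior algebra'' needs the honest ring presentation of $H^*(Con\! f_n\, {\mathbb R}^{2})$. That presentation is Orlik and Solomon's --- Arnold's degree-one generators $\omega_{ij}$ subject to $\omega_{ij}^{2}=0$ and the triangle relations, equivalently the supersolvable (fiber-type) structure of the braid arrangement, under which the Fadell--Neuwirth fibration $Con\! f_n\, {\mathbb R}^{2}\to Con\! f_{n-1}\, {\mathbb R}^{2}$ with fibre ${\mathbb R}^{2}$ minus $n-1$ points induces the tensor factorisation of the cohomology ring. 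With that geometric input the module identity above upgrades to the asserted algebra statement; the role of Theorem~\ref{SWThm4.4} and~(\ref{TotalCoh}) is precisely to supply the $S_n$-equivariant refinement --- the even-dimensional analogue of the way Thrall's theorem~(\ref{SymExt}) realises $H^*(Con\! f_n\, {\mathbb R}^{3})$ as a symmetric algebra over its top cohomology in Proposition~\ref{LefschetzR3}.
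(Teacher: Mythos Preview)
Your elaboration is correct and essentially matches the paper's treatment --- which is in fact no proof at all. In the paper, Proposition~\ref{Orlik-Solomon} is stated as a cited result from \cite{LS}; the only justification offered is the sentence immediately preceding it: ``Note that the first equation in (\ref{TotalCoh}) also confirms the following theorem of Orlik and Solomon.'' That is exactly the observation you spell out: via Theorem~\ref{SWThm4.4}, the graded $S_n$-module $H^*(Con\!f_n\,\mathbb{R}^2)$ has Frobenius characteristic $\omega\bigl(\sum_{\lambda\vdash n}E_\lambda[Lie]\bigr)$, which is the characteristic of an exterior algebra over the (sign-twisted) top cohomologies.

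You go further than the paper by making explicit the point the paper leaves implicit: the plethystic identity only determines the graded $S_n$-module, so upgrading to a statement about the ring structure genuinely requires the Orlik--Solomon presentation (or the Fadell--Neuwirth/supersolvable input) as an external ingredient. This is a correct and worthwhile caveat; the paper simply defers to the citation for that step.
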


By combining equation (\ref{Lehrer-a}) with (\ref{EvenOdd}), we obtain
\begin{equation}\label{Lehrer-b} H^{odd}(Con\! f_n\,{\mathbb R}^{2} )
\simeq H^{even}(Con\! f_n\,{\mathbb R}^{2})\simeq {\mathbf 1}\uparrow_{S_2}^{S_n}, \quad n\geq 2.
\end{equation}
yielding the decomposition of the regular representation noticed by   Hyde and Lagarias \cite{HL}:
\begin{equation}\label{HL} 
H^{odd}( Con\! f_n\,{\mathbb R}^{2})\oplus 
\textbf{ sgn}\otimes H^{even}(Con\! f_n\,{\mathbb R}^{2}) \simeq {\mathbf 1}\uparrow_{S_1}^{S_n}. \end{equation}


From \ref{Lehrer-a} it also follows that 
\begin{equation}\label{IndConf}
H^*( Con\! f_{n+1}\,{\mathbb R}^{2})\simeq H^*(Con\! f_n\, {\mathbb R}^{2}) \uparrow_{S_{n}}^{S_{n+1}}.
\end{equation}



We now describe results of a similar flavour for the new  representation $Lie_n^{(2)}.$ 
Define a new module $V\!h_r(n)$ whose Frobenius characteristic is the degree $n$ term in $h_{n-r}[Lie^{(2)}];$ this is a true $S_n$-module. The second equation of (\ref{AcycEH}) can now be interpreted as an acylicity statement:
$$V\!h_n(n)-V\!h_{n-1}(n) + V\!h_{n-2}(n)-\ldots +(-1)^r V\!h_r(n) +\ldots=0, \quad n\geq 2.$$
and hence, in analogy with (\ref{EvenOdd}), letting $V\!h_{odd}(n)=\oplus_{k=0}^{n/2} V\!h_{2k+1}$ and $V\!h_{even}=\oplus_{k=0}^{n/2} V\!h_{2k}:$
\begin{equation}\label{EvenOddLieSup2} V\!h_{odd}(n)\simeq V\!h_{even}(n), \quad n\geq 2.\end{equation}

The second equation in (\ref{TotalCoh}) gives, similarly, 
\begin{equation} {\rm ch\,}(V\!h_{odd}(n)\oplus V\!h_{even}(n))=
\sum_{\lambda\vdash n; \lambda_i=2^{a_i}} p_\lambda
\end{equation}
Hence we have established the following results, analogous to (\ref{Lehrer-a})-(\ref{IndConf}):

\begin{thm}\label{LehrerHLLieSup2}  The following $S_n$-equivariant isomorphisms hold for the modules $Vh_r(n)={\rm ch}^{-1}\, h_{n-r}[Lie^{(2)}]\vert_{{\rm deg\ }n}$, giving   Schur-positive  functions with integer coefficients.
\begin{equation}\label{LehrerSup2b}V\!h_{odd}(n)\simeq V\!h_{even}(n)={\rm ch}^{-1}\,
\frac{1}{2}\sum_{\lambda\vdash n; \lambda_i=2^{a_i}} p_\lambda
\end{equation}
\vskip-.2in
\begin{equation}\label{LehrerSup2a} V\!h(n)=V\!h_{odd}(n)\oplus V\!h_{even}(n)={\rm ch}^{-1}\,
\sum_{\lambda\vdash n; \lambda_i=2^{a_i}} p_\lambda
\end{equation}
\vskip-.2in
\begin{equation}\label{HLSup2}
V\!h_{odd}(n)\oplus {\,\bf sgn}_{S_n}\otimes V\!h_{odd}(n)={\rm ch}^{-1}\,
\sum_{\lambda\vdash n; n-\ell(\lambda) {\text even };\lambda_i=2^{a_i}} p_\lambda
\end{equation}
\vskip-.2in
\begin{equation}\label{IndSup2}
V\!h(2n+1)\simeq V\!h(2n)\uparrow_{S_{2n}}^{S_{2n+1}}.
\end{equation}
\end{thm}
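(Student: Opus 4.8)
The plan is to deduce all four displayed isomorphisms from Theorem \ref{Compare}, specifically from the second identities in (\ref{AcycEH}) and (\ref{TotalCoh}), by translating between $S_n$-modules and their Frobenius characteristics. First I would record the dictionary: grouping the partitions $\lambda\vdash n$ by $\ell(\lambda)=n-r$ in (\ref{defhrofQ}) gives $\mathrm{ch}(V\!h_r(n))=h_{n-r}[Lie^{(2)}]|_{\text{deg }n}=\sum_{\lambda\vdash n,\,\ell(\lambda)=n-r}H_\lambda[Lie^{(2)}]$. Since each $Lie^{(2)}_i$ is an induced module (Definition \ref{defLieSup2}), hence genuine, and each $H_\lambda[Lie^{(2)}]=\prod_i h_{m_i}[Lie^{(2)}_i]$ is a plethysm of genuine characteristics, every $V\!h_r(n)$ is a genuine $S_n$-module, and therefore so are $V\!h_{odd}(n)$ and $V\!h_{even}(n)$. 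This is what upgrades the symmetric-function identities below to genuine module isomorphisms and guarantees Schur-positivity with integer coefficients.

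Next, the second equation of (\ref{AcycEH}) reads $\sum_r(-1)^r\,\mathrm{ch}(V\!h_r(n))=0$ for $n\ge 2$, which is exactly $\mathrm{ch}(V\!h_{even}(n))=\mathrm{ch}(V\!h_{odd}(n))$, hence the isomorphism $V\!h_{even}(n)\simeq V\!h_{odd}(n)$; and the second equation of (\ref{TotalCoh}) reads $\mathrm{ch}(V\!h(n))=\sum_r\mathrm{ch}(V\!h_r(n))=\sum_{\lambda\vdash n,\,\lambda_i=2^{a_i}}p_\lambda$, which is (\ref{LehrerSup2a}). Combining the two gives $2\,\mathrm{ch}(V\!h_{odd}(n))=\sum_{\lambda\vdash n,\,\lambda_i=2^{a_i}}p_\lambda$, i.e. (\ref{LehrerSup2b}) for $n\ge 2$; the case $n=1$ is genuinely excluded, since there $V\!h_{odd}(1)=0$. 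For (\ref{HLSup2}) I would apply the involution $\omega$, which corresponds to tensoring with $\textbf{sgn}$ and acts by $\omega(p_\lambda)=(-1)^{n-\ell(\lambda)}p_\lambda$ for $\lambda\vdash n$: then $\mathrm{ch}(\textbf{sgn}\otimes V\!h_{odd}(n))=\tfrac12\sum_{\lambda\vdash n,\,\lambda_i=2^{a_i}}(-1)^{n-\ell(\lambda)}p_\lambda$, and adding $\mathrm{ch}(V\!h_{odd}(n))$ the terms with $n-\ell(\lambda)$ odd cancel in pairs while those with $n-\ell(\lambda)$ even double, leaving $\sum_{\lambda\vdash n,\,n-\ell(\lambda)\text{ even},\,\lambda_i=2^{a_i}}p_\lambda$, as required.

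For (\ref{IndSup2}) I would use that inducing a module from $S_{2n}$ to $S_{2n+1}$ multiplies its Frobenius characteristic by $h_1=p_1$, so by (\ref{LehrerSup2a}) it suffices to verify the symmetric-function identity $\sum_{\lambda\vdash 2n+1,\,\lambda_i=2^{a_i}}p_\lambda=p_1\cdot\sum_{\mu\vdash 2n,\,\mu_i=2^{a_i}}p_\mu$. This follows from a one-line bijection: a partition of the odd integer $2n+1$ all of whose parts are powers of $2$ must contain at least one part equal to $1$, the only odd power of $2$, and deleting one such part is a bijection onto the partitions of $2n$ into powers of $2$, with inverse given by appending a part $1$. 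Equivalently, at the level of generating functions $\sum_{m\ge 0}\big(\sum_{\lambda\vdash m,\,\lambda_i=2^{a_i}}p_\lambda\big)=(1-p_1)^{-1}\prod_{k\ge 1}(1-p_{2^k})^{-1}$, and since the product over $k\ge 1$ involves only $p_2,p_4,\dots$ it is concentrated in even degrees, which forces the degree-$(2n+1)$ component of the whole series to equal $p_1$ times its degree-$2n$ component.

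I do not anticipate a substantial obstacle: the entire analytic content already lives in Theorem \ref{Compare}, and what remains is bookkeeping. The two points that need a little care are (i) insisting that the even/odd identities are genuine module isomorphisms, which is why one must record that each $V\!h_r(n)$ is a genuine $S_n$-module and note the restriction $n\ge 2$ in (\ref{LehrerSup2b}) and (\ref{HLSup2}); and (ii) the small combinatorial bijection on partitions into powers of $2$ underlying (\ref{IndSup2}).
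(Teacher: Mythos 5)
Your proposal is correct and follows essentially the same route as the paper: the paper also defines $V\!h_r(n)$ as a true module via the plethysm $h_{n-r}[Lie^{(2)}]$, reads the second equations of (\ref{AcycEH}) and (\ref{TotalCoh}) in Theorem \ref{Compare} as the acyclicity statement $V\!h_{odd}(n)\simeq V\!h_{even}(n)$ and the total-sum formula, and then obtains (\ref{HLSup2}) and (\ref{IndSup2}) exactly as the analogues of the Whitney-homology identities (\ref{HL}) and (\ref{IndConf}), i.e. by sign-tensoring and by the multiplication-by-$p_1$ interpretation of induction. Your explicit bookkeeping (the $\omega$ computation on $p_\lambda$, the delete-a-part-$1$ bijection for partitions into powers of $2$, and the remark that $n\ge 2$ is needed in (\ref{LehrerSup2b}) and (\ref{HLSup2})) only spells out details the paper leaves implicit.
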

We now have at least four decompositions of the regular representation, namely the two in (\ref{SymExt}) and two from (\ref{HL}) (tensoring the latter with the sign representation gives two),  into sums of modules indexed by the conjugacy classes,  each module  obtained by  inducing a linear character from a centraliser of $S_n.$  We write these out  for $S_4$ and $S_5$ to show that they are indeed all distinct.    In the two tables below, each column adds up to the regular representation. Note that  $Lie_4^{(2)}$ coincides with $Conj_4,$ while $Lie_5^{(2)}$ is just $Lie_5.$ Hence these modules appear in the last row of each table.
%
\begin{ex} The first two decompositions are from equation (\ref{SymExt})  of Theorem \ref{Compare}; the third is from equation (\ref{HL}).
In all cases, of course,  the four pieces for $S_4$ (respectively, the five pieces for $S_5$)  each have the same dimension, equal to the sum of the sizes of the constituent conjugacy classes, namely,  $1,6,11,6$ (respectively $1,10, 35, 50, 24$).  Note that the conjugacy classes are grouped together by number of disjoint cycles, i.e. by length $\ell$ of the corresponding partition.  That these four decompositions are all distinct is clear, since each has a distinguishing feature.  E.g. for  $S_4,$ both copies of the irreducible for the partition $(2^2)$ appear only in one graded piece for {\bf [PBW]}, while the reflection representation is a submodule of one graded piece only in the third.  
\vskip.05in
\begin{center} 
{\small Table 1: The regular representation of $S_4$}\\  \nopagebreak
{\small (Poincar\'e polynomial $1+6t+11t^2+6t^3$)}\end{center}
\begin{center}
\begin{tabular}{|c|c|c|c|}\hline
{\small Conjugacy}  &  {\small\bf PBW} ($Con\!f\,{\mathbb R}^3$)
    &  {\small\bf Ext} & {\small\bf Whitney} ($Con\!f\,{\mathbb R}^2$) \\
    {\small classes} &{\small irreducibles} &{\small irreducibles} &{\small irreducibles } \\ \hline
    ${\scriptstyle (1^4)}$ & ${\scriptstyle h_4[Lie]|_{\text{deg }4}}$ 
    & ${\scriptstyle e_4[Lie^{(2)}]|_{\text{deg }4}}$
     & ${\scriptstyle\omega(W\!H_0)}$\\
 ${\scriptstyle \ell=4}$   &${\scriptstyle (4)}$ &${\scriptstyle (1^4)}$ &${\scriptstyle (1^4)}$\\ \hline
    ${\scriptstyle (2,1^2)}$ & ${\scriptstyle h_3[Lie]|_{\text{deg }4}}$ & ${\scriptstyle e_3[Lie^{(2)}]|_{\text{deg }4}}$ 
    & ${\scriptstyle W\!H_1}$\\
  ${\scriptstyle \ell=3}$  &${\scriptstyle (3,1)+(2,1^2)}$ & ${\scriptstyle (3,1)+(2,1^2)}$ & ${\scriptstyle (4)+(3,1)+(2^2)}$\\ \hline
    ${\scriptstyle (3,1)\text{ and }(2^2)}$ &${\scriptstyle h_2[Lie]|_{\text{deg }4}}$
    &${\scriptstyle e_2[Lie^{(2)}]|_{\text{deg }4}}$
    &${\scriptstyle\omega(W\!H_2)}$\\
 ${\scriptstyle \ell=2}$   & ${\scriptstyle (3,1)+2(2^2)+(2,1^2) +(1^4)}$
    &${\scriptstyle 2(3,1)+(2^2)+(2,1^2)}$ 
    &${\scriptstyle (3,1)+2 (2,1^2)+(2^2)}$\\ \hline
    ${\scriptstyle (4)}$ & ${\scriptstyle h_1[Lie]|_{\text{deg }4}}$ 
    &${\scriptstyle e_1[Lie^{(2)}]|_{\text{deg }4}}$
    &${\scriptstyle W\!H_3=\omega(Lie_4)}$ \\ 
 ${\scriptstyle \ell=1}$   & ${\scriptstyle (3,1)+(2,1^2)}$ &${\scriptstyle (4)+(2^2)+(2,1^2)} $ &${\scriptstyle (3,1)+(2,1^2)}$\\ \hline
\end{tabular}
\end{center}
%
\vfill\eject
%
\begin{center} {\small Table 2: The regular representation of $S_5$ }\\ {\small (Poincar\'e polynomial $1+10t+35t^2+50t^3+24t^4 $)}\end{center}
\nopagebreak
\begin{center}
\begin{tabular}{|c|c|c|c|}\hline
{\small Conjugacy}  &  {\small\bf PBW}($Con\!f\,{\mathbb R}^3$)
    &  {\small\bf Ext} & {\small\bf Whitney}($Con\!f\,{\mathbb R}^2$) \\
   {\small  classes} &{\small irreducibles} &{\small irreducibles} &{\small irreducibles } \\ \hline
    ${\scriptstyle (1^5)}$ & ${\scriptstyle h_5[Lie]|_{\text{deg }5}}$ 
    & ${\scriptstyle e_5[Lie^{(2)}]|_{\text{deg }5}}$
     & ${\scriptstyle\omega(W\!H_0)}$\\
 ${\scriptstyle \ell=5}$   &${\scriptstyle (5)}$ &${\scriptstyle (1^5)}$ &${\scriptstyle (1^5)}$\\ \hline
    ${\scriptstyle (2,1^3)}$ & ${\scriptstyle h_4[Lie]|_{\text{deg }5}}$ & ${\scriptstyle e_4[Lie^{(2)}]|_{\text{deg }5}}$ 
    & ${\scriptstyle W\!H_1}$\\
${\scriptstyle \ell=4}$    &${\scriptstyle (4,1)+(3,1^2)}$ & ${\scriptstyle (3,1^2)+(2,1^3)}$ & ${\scriptstyle (5)+(4,1)+(3,2)}$\\ \hline
    ${\scriptstyle (3,1^2) \text{ and }(2^2,1)}$ &${\scriptstyle h_3[Lie]|_{\text{deg }5}}$
    &${\scriptstyle e_3[Lie^{(2)}]|_{\text{deg }5}}$
    &${\scriptstyle\omega(W\!H_2)}$\\
 ${\scriptstyle \ell=3}$   & ${\scriptstyle (4,1)+2(3,2)+(3,1^2)}$ 
    &${\scriptstyle (4,1)+2(3,2)+2(3,1^2)}$
    &${\scriptstyle (3,2)+2(3,1^2)}$\\ 
&${\scriptstyle +2(2^2,1)+(2,1^3)+(1^5)}$ &${\scriptstyle +(2^2,1)+(2,1^3)}$ & ${\scriptstyle +2(2^2,1)+2(2,1^3)
}$\\
\hline
      ${\scriptstyle (4,1)\text{ and }(3,2)}$ &${\scriptstyle h_2[Lie]|_{\text{deg }5}}$  & ${\scriptstyle e_2[Lie^{(2)}]|_{\text{deg }5}}$
      &${\scriptstyle W\!H_3}$ \\ 
 ${\scriptstyle \ell=2}$     &${\scriptstyle (4,1)+2(3,2)+3(3,1^2)}$ &${\scriptstyle (5)+2(4,1)+2(3,2)}$ &${\scriptstyle 2(4,1)+2(3,2)+3(3,1^2)}$\\ 
      &${\scriptstyle +2(2^2,1)+2(2,1^3)}$ & ${\scriptstyle +2(3,1^2)+3(2^2,1)+(2,1^3)}$ 
      &${\scriptstyle +2(2^2,1)+(2,1^3)}$\\
      \hline
    ${\scriptstyle (5)}$ & ${\scriptstyle h_1[Lie]|_{\text{deg }5}}$ 
    &${\scriptstyle e_1[Lie^{(2)}]|_{\text{deg }5}}$
    &${\scriptstyle  \omega W\!H_4=Lie_5}$ \\ 
 ${\scriptstyle \ell=1}$   & ${\scriptstyle (4,1)+(3,2)+(3,1^2)}$ 
    &${\scriptstyle (4,1)+(3,2)+(3,1^2)}$ 
    & ${\scriptstyle (4,1)+(3,2)+(3,1^2)}$\\ 
    &${\scriptstyle +(2^2,1)+(2,1^3)}$ &${\scriptstyle +(2^2,1)+(2,1^3)}$ &${\scriptstyle +(2^2,1)+(2,1^3)}$ \\
    \hline
\end{tabular}
\end{center}
\end{ex}
Note from the above example that the two identities in equation (\ref{SymExt}) of Theorem \ref{Compare}, corresponding respectively to (\ref{PBW}) and (\ref{Ext}) below,  themselves yield the following four distinct decompositions of the regular representation, obtained by tensoring each graded piece with the sign representation.  
The decomposition in equation (\ref{Eulerian idempotents}) below is precisely that obtained from the Eulerian idempotents of Gerstenhaber and 
Schack   \cite{GS}; this fact was proved by Hanlon \cite[Theorem 5.1 and Definition 3.6]{Ha}.  Curiously it also appears in a paper of Gessel, Restivo and Reutenauer \cite[Lemma 5.3, Theorem 5.1]{GRR}, where the authors give a combinatorial decomposition of the full tensor algebra as the enveloping algebra of the \textit{oddly generated free Lie superalgebra}; they call equation (\ref{Eulerian idempotents}) below a \lq\lq super\rq\rq version of the Poincar\'e-Birkhoff-Witt theorem.

We have, for $n\geq 1:$
\begin{align}
p_1^n&= \sum_{k\geq 1}\sum_{\stackrel{\lambda\vdash n}{\ell(\lambda)=k}} H_\lambda[Lie]\qquad\qquad \text{(PBW)}\label{PBW}\\
&= \sum_{k\geq 1}\sum_{\stackrel{\lambda\vdash n}{\ell(\lambda)=k}} \omega(H_\lambda[Lie])\qquad \text{(Eulerian idempotents)}\label{Eulerian idempotents}\\
&= \sum_{k\geq 1}\sum_{\stackrel{\lambda\vdash n}{\ell(\lambda)=k}} E_\lambda[Lie^{(2)}]\qquad\quad \text{(Ext)}\label{Ext}\\
&= \sum_{k\geq 1}\sum_{\stackrel{\lambda\vdash n}{\ell(\lambda)=k}} \omega(E_\lambda[Lie^{(2)}])
\end{align}
Example 5.3 shows that these four decompositions are themselves distinct, and also distinct from the two decompositions arising from the Whitney homology of the partition lattice.

We point out one more analogy between $WH_k(\Pi_n)\simeq H^k(Con\!f\, {\mathbb R}^2)$ and the modules $V_k(n)$  arising from the identities of Theorem \ref{Compare}.  In \cite{Su0}, it was shown that the Whitney homology of the partition lattice (and more generally of any  Cohen-Macaulay poset) has the following important property:
\begin{thm}\label{betas}\cite[Proposition 1.9]{Su0} For $0\leq k\leq n-1,$ the truncated alternating sum 
$$W\!H_k(\Pi_n)-W\!H_{k-1}(\Pi_n)+\ldots+(-1)^k W\!H_0(\Pi_n)$$ is a true $S_n$-module, and is isomorphic as an $S_n$-module to the unique nonvanishing homology of the rank-selected subposet of $\Pi_n$ obtained by selecting the first $k$ ranks. Equivalently, 
the degree $n$ term in the plethysm 
$$(e_{n-k}-e_{n-k+1}+\ldots +(-1)^{k} e_n)[Lie]$$ is Schur-positive. In particular, 
the $k$th Whitney homology decomposes into a sum of two $S_n$-modules as follows:
$${\rm ch\,} W\!H_k(\Pi_n)=\omega\left(e_{n-k}[Lie]|_{\text{deg }n}\right)=\beta_n([1,k])+\beta_n([1,k-1]),$$ 
where  $\beta_n([1,k])$ denotes  the Frobenius characteristic of the rank-selected homology of the first $k$ ranks of $\Pi_n$ as in \cite[Proposition 1.9]{Su0}.
\end{thm}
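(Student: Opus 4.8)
The plan is to establish the sharper $S_n$-module isomorphism $W\!H_k(\Pi_n)\cong\beta_n([1,k])\oplus\beta_n([1,k-1])$ for $0\le k\le n-1$ (with the conventions $\beta_n([1,-1])=0$ and $\beta_n([1,0])=h_n$), from which the rest of the statement follows formally. Telescoping this relation down to the base case gives $\beta_n([1,k])=\sum_{i=0}^k(-1)^i W\!H_{k-i}(\Pi_n)$, which identifies the truncated alternating sum with $\beta_n([1,k])$; the latter is a genuine $S_n$-module because it will be realized below as the top reduced homology of a Cohen--Macaulay complex carrying a simplicial $S_n$-action; and substituting the known identity $\mathrm{ch}\,W\!H_m(\Pi_n)=\omega(e_{n-m}[Lie]|_{\text{deg }n})$ (recalled just after equation (\ref{AcycEH}); compare Theorem~\ref{SWThm4.4}) together with the linearity of $\omega$ yields $\beta_n([1,k])=\omega((e_{n-k}-e_{n-k+1}+\cdots+(-1)^k e_n)[Lie])|_{\text{deg }n}$, so that Schur-positivity of the displayed plethysm term is equivalent to $\beta_n([1,k])$ being a true module, since $\omega$ permutes the basis of Schur functions.

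For $0\le k\le n-1$ I would let $\Gamma_k$ be the order complex of the proper part of the rank-selected subposet $(\Pi_n)_{[1,k]}$; its faces are the chains of set partitions of $[n]$ all of whose ranks lie in $\{1,\dots,k\}$. Since $\Pi_n$ is EL-shellable and rank selection preserves shellability (Bj\"orner--Wachs), each $\Gamma_k$ is Cohen--Macaulay, so over $\mathbb{Q}$ its reduced homology is concentrated in the top degree; carrying a simplicial $S_n$-action, that top homology is a genuine $S_n$-module, and I write $\beta_n([1,k])$ for its Frobenius characteristic. The two degenerate cases are immediate: $\Gamma_0=\{\emptyset\}$ gives $\beta_n([1,0])=\mathrm{ch}\,\tilde H_{-1}(\Gamma_0)=h_n$, agreeing with $\mathrm{ch}\,W\!H_0(\Pi_n)$; and since the only element of rank $n-1$ is $\hat 1$, we have $\Gamma_{n-1}=\Gamma_{n-2}=\Delta(\bar\Pi_n)$ and $\beta_n([1,n-1])=\beta_n([1,n-2])=\mathrm{ch}\,\tilde H_{n-3}(\bar\Pi_n)=\omega(Lie_n)=\mathrm{ch}\,W\!H_{n-1}(\Pi_n)$.

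The core step is a short exact sequence relating consecutive truncations. Fix $1\le k\le n-2$; then $\Gamma_{k-1}\subseteq\Gamma_k$ is the $S_n$-stable subcomplex of chains avoiding rank $k$. A generating face of the relative chain complex is a chain $\hat 0<y_0<\cdots<y_{j-1}<x$ with $\mathrm{rk}(x)=k$ and $\mathrm{rk}(y_i)<k$; since no element of $\Gamma_k$ has rank exceeding $k$, such an $x$ must be the maximum of its chain, so the relative complex decomposes, $S_n$-equivariantly, as $\bigoplus_{\mathrm{rk}(x)=k}$ of the chain complex of $\Delta(\hat 0,x)$ shifted up by one (the group permutes the rank-$k$ elements $x$ and acts on the corresponding interval homologies by induction from their stabilizers). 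Each interval $(\hat 0,x)$ is a product of partition lattices, Cohen--Macaulay of rank $k$, so $\tilde H_j(\Gamma_k,\Gamma_{k-1})\cong\bigoplus_{\mathrm{rk}(x)=k}\tilde H_{j-1}(\Delta(\hat 0,x))$ vanishes unless $j=k-1$, where it equals $\bigoplus_{\mathrm{rk}(x)=k}\tilde H_{k-2}(\Delta(\hat 0,x))=W\!H_k(\Pi_n)$ by definition of Whitney homology. Feeding the three concentration facts --- $\tilde H_*(\Gamma_k)$ in degree $k-1$, $\tilde H_*(\Gamma_{k-1})$ in degree $k-2$, $\tilde H_*(\Gamma_k,\Gamma_{k-1})$ in degree $k-1$ --- into the long exact sequence of the pair collapses it to the $S_n$-equivariant short exact sequence $0\to\tilde H_{k-1}(\Gamma_k)\to\tilde H_{k-1}(\Gamma_k,\Gamma_{k-1})\to\tilde H_{k-2}(\Gamma_{k-1})\to 0$, that is, $0\to\beta_n([1,k])\to W\!H_k(\Pi_n)\to\beta_n([1,k-1])\to 0$ on characteristics. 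As $\mathbb{Q}S_n$ is semisimple the sequence splits, giving the desired $W\!H_k(\Pi_n)\cong\beta_n([1,k])\oplus\beta_n([1,k-1])$; the boundary values $k=0,n-1$ were handled above.

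Assembling: the short exact sequence exhibits $\beta_n([1,k])$ as a submodule of $W\!H_k(\Pi_n)$, hence a genuine module; recursing $\beta_n([1,k])=W\!H_k(\Pi_n)-\beta_n([1,k-1])$ down to $\beta_n([1,0])$ gives the alternating-sum formula, which is by construction the unique nonvanishing reduced homology of $(\Pi_n)_{[1,k]}$; and the plethystic reformulation and the ``in particular'' decomposition follow as in the first paragraph. I expect the main obstacle to be the bookkeeping in the relative-homology step: pinning down the degree shift, checking that the splitting of the relative complex by rank-$k$ elements is genuinely $S_n$-equivariant, and handling the degenerate cases $k=0,1$ and the top case $k=n-1$ with the correct conventions for empty intervals and $(-1)$-dimensional reduced homology. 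The remaining inputs --- EL-shellability of $\Pi_n$, rank selection preserving Cohen--Macaulayness, semisimplicity of $\mathbb{Q}S_n$, and the already-recalled formula $\mathrm{ch}\,W\!H_m(\Pi_n)=\omega(e_{n-m}[Lie]|_{\text{deg }n})$ --- are standard.
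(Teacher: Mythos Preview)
Your core argument—the long exact sequence of the pair $(\Gamma_k,\Gamma_{k-1})$, concentration of homology from Cohen--Macaulayness of rank-selected subposets, and splitting by semisimplicity of $\mathbb{Q}S_n$—is precisely the method of \cite[Proposition~1.9]{Su0} (stated there for arbitrary Cohen--Macaulay posets); the present paper simply cites that result without reproducing its proof. For $0\le k\le n-2$ your argument is correct and matches the source.

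Your treatment of $k=n-1$, however, is internally inconsistent. From $\Gamma_{n-1}=\Gamma_{n-2}=\Delta(\bar\Pi_n)$ you conclude $\beta_n([1,n-1])=\beta_n([1,n-2])=\omega(Lie_n)$; substituting into the decomposition you are proving would give $W\!H_{n-1}(\Pi_n)=2\,\omega(Lie_n)$, contradicting $W\!H_{n-1}(\Pi_n)=\omega(Lie_n)$. The slip is in the homological degree: your own short exact sequence uses $\beta_n([1,k])=\tilde H_{k-1}(\Gamma_k)$, not ``the top homology in whatever degree it happens to sit.'' With that convention, $\beta_n([1,n-1])=\tilde H_{n-2}(\Gamma_{n-1})=0$ since $\dim\Gamma_{n-1}=n-3$, and then $W\!H_{n-1}=\beta_n([1,n-2])+0$ as required. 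Equivalently, the full alternating sum $\sum_{i=0}^{n-1}(-1)^iW\!H_i(\Pi_n)$ vanishes by acyclicity (equation~(\ref{AcycEH})), which forces $\beta_n([1,n-1])=0$ via your telescoping formula. Once this indexing is corrected the proof goes through uniformly.
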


We conjecture that a similar decomposition exists for the $S_n$-modules $V\!h_k(n).$ More precisely, we have 

\begin{conj}\label{LieSup2betas}  Let $V\!h_k(n)$ be the $S_n$-module whose Frobenius characteristic is the degree $n$ term in the plethysm $h_{n-k}[Lie^{(2)}],$ for $k=0,1,\ldots, n-1.$  Then for $0\leq k\leq n-1,$ the truncated alternating sum 
$$V\!h_k(n)-V\!h_{k-1}(n)+\ldots+(-1)^k V\!h_0(n)=U_k(n)$$ is a true $S_n$-module, and hence one has the $S_n$-module decomposition
$$ V\!h_k(n)={\rm ch}^{-1}\,h_{n-k}[Lie^{(2)}]|_{\text{deg }n}\simeq  U_k(n) + U_{k-1}(n).$$
(Here we define $U_{-1}(n)$ to be the zero module and $U_0(n)$ to be the trivial $S_n$-module.
Equivalently, the degree $n$ term in the  plethysm
$$(h_{n-k}-h_{n-k+1}+\ldots +(-1)^{k} h_n)[Lie^{(2)}]$$ is Schur-positive  for $0\leq k \leq n-1.$
\end{conj}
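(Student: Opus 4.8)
The relation $V\!h_k(n)\simeq U_k(n)+U_{k-1}(n)$ is purely formal: since $U_k(n)=\sum_{j=0}^{k}(-1)^{k-j}V\!h_j(n)$, the sum $U_k(n)+U_{k-1}(n)$ telescopes to $V\!h_k(n)$ regardless of what the $V\!h_j(n)$ are, so the entire content of the conjecture is that $U_k(n)$ is a genuine $S_n$-module, i.e. that the degree-$n$ component of $(h_{n-k}-h_{n-k+1}+\cdots+(-1)^{k}h_n)[Lie^{(2)}]$ is Schur-positive for $0\le k\le n-1$. Two preliminary observations sharpen the target. First, combining (\ref{defhrofQ}) with the second identity in (\ref{AcycEH}) gives $H^{\pm}[Lie^{(2)}]=\sum_{\lambda\in Par}(-1)^{\ell(\lambda)}H_\lambda[Lie^{(2)}]=1-p_1$; hence for $n\ge 2$ the \emph{full} alternating sum $\sum_{j=0}^{n-1}(-1)^{j}V\!h_j(n)$ vanishes, so $U_{n-1}(n)=0$, and more generally $U_k(n)$ coincides with the complementary truncation $V\!h_{k+1}(n)-V\!h_{k+2}(n)+\cdots+(-1)^{n-k}V\!h_{n-1}(n)$. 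In particular $U_{n-2}(n)=V\!h_{n-1}(n)=Lie^{(2)}_n$ and $U_0(n)=h_n$, while $U_1(n)=V\!h_1(n)-V\!h_0(n)=h_2h_{n-2}-h_n$ is Schur-positive by Pieri; so the cases $k\in\{0,1,n-2,n-1\}$ are already settled and the work lies in the middle range $2\le k\le n-3$. Second, $H_\lambda[Lie^{(2)}]=\prod_i h_{m_i}[Lie^{(2)}_i]$ is assembled blockwise from the modules $Lie^{(2)}_i$, which equal $Lie_i$ for $i$ odd, $Lie_i\otimes\mathrm{sgn}$ for $i\equiv 2\pmod 4$, and $Conj_i$ for $i$ a power of $2$; and (\ref{TotalCoh}) prescribes the total module $\bigoplus_r V\!h_r(n)$ to be $\mathrm{ch}^{-1}\big(\sum_{\lambda\vdash n,\ \lambda_i=2^{a_i}}p_\lambda\big)$.

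The main line of attack is to replicate the mechanism behind Theorem \ref{betas}. There the modules $W\!H_r(\Pi_n)$ --- which play the role of the $V\!h_r(n)$ --- are the Whitney homology of a Cohen--Macaulay poset, the truncated alternating sums are exactly the rank-selected homology modules (true $S_n$-modules by shellability, via \cite[Proposition 1.9]{Su0}), and acyclicity of Whitney homology is the endpoint case. The plan is therefore to construct a graded $S_n$-poset (or regular $S_n$-CW complex) $\mathcal P_n$ of length $n$ whose equivariant Whitney homology in rank $r$ has Frobenius characteristic $h_{n-r}[Lie^{(2)}]|_{\deg n}$ (up to the $\omega$-twist), and to show $\mathcal P_n$ is Cohen--Macaulay, ideally by exhibiting an EL- or CL-shelling. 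Granting this, \cite[Proposition 1.9]{Su0} identifies $U_k(n)$ with the homology of the rank-selected subposet on the first $k$ ranks, hence a true module, while (\ref{AcycEH}) and (\ref{TotalCoh}) are precisely the consistency checks the construction must satisfy (vanishing reduced Euler characteristic for $n\ge 2$; prescribed total Betti module). Natural candidates for $\mathcal P_n$ are: (a) a ``dyadic'' analogue of $\Pi_n$ in which blocks may be merged only in patterns governed by powers of $2$, so that lower-interval homology yields $Conj$ on $2$-power chunks and $Lie$ elsewhere; (b) the complex underlying the derived-series-type filtration of $Lie^{(2)}_n$ supplied by Theorem \ref{EquivLieSup2}; (c) a $Lie^{(2)}$-twisted variant of the complex of injective words, in the spirit of Theorem \ref{HodgeFilt} and of \cite{HH}, \cite{RW}.

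The main obstacle is exactly this construction together with its shellability. Two difficulties stand out. First, $Lie^{(2)}_{2^k}=Conj_{2^k}$ is a genuine permutation module, not the top homology of any obvious poset, so the $2$-power building blocks are not of partition-lattice type and $\mathcal P_n$ is very unlikely to be pure; one should expect to need the nonpure shellability / sequential Cohen--Macaulay theory of Bj\"orner and Wachs in place of the classical framework, and to verify that the corresponding refinement of \cite[Proposition 1.9]{Su0} still outputs Schur-positive rank-selected modules. Second, once a candidate $\mathcal P_n$ is in hand, matching its equivariant Whitney homology against $h_{n-r}[Lie^{(2)}]|_{\deg n}$ will have to be done through the plethystic identities of Theorem \ref{Compare}, so the topological and symmetric-function computations must be reconciled. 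A fallback avoiding any complex is to induct on $n$: restrict $V\!h_k(n)$ along $S_{n-1}\hookrightarrow S_n$, or exploit the stability isomorphism (\ref{IndSup2}), and decompose $H_\lambda[Lie^{(2)}]$ according to the part of $\lambda$ carrying a distinguished point, producing a recursion for $U_k(n)$ in terms of inductions of products $Lie^{(2)}_d\cdot U_\bullet(n-d)$; for odd $d$ the resulting base positivity reduces to Theorem \ref{betas}, and for $2$-power $d$ to manifestly positive statements about the permutation modules $Conj_d$. The catch, and where I expect the real work to be, is that forming truncated alternating sums does not obviously commute with this block recursion, so the interplay of the signs with the decomposition has to be controlled by hand.
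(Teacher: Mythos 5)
There is a genuine gap: what you have written is a research programme, not a proof, and the statement you were asked to prove is in fact stated in the paper as a \emph{conjecture} (Conjecture \ref{LieSup2betas}) --- the paper itself offers no proof, only explicit verifications for $0\leq k\leq 3$ (via Pieri-rule computations of ${\rm ch\,}U_0,\ldots,{\rm ch\,}U_3$) and tabulated data for $n=6,7$, together with computer checks. Your formal reductions are correct and even slightly sharper than what the paper records: the telescoping identity $V\!h_k(n)\simeq U_k(n)+U_{k-1}(n)$ is indeed automatic; the acyclicity identity $H^{\pm}[Lie^{(2)}]=1-p_1$ (Theorem \ref{PlInv-E}) does give $U_{n-1}(n)=0$ for $n\geq 2$ and hence $U_{n-2}(n)=V\!h_{n-1}(n)=Lie_n^{(2)}$, and $U_0(n)=h_n$, $U_1(n)=h_2h_{n-2}-h_n=s_{(n-1,1)}+s_{(n-2,2)}$ dispose of the boundary cases. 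But the whole content of the conjecture is Schur-positivity in the middle range $2\leq k\leq n-3$, and there your argument consists of a hoped-for construction of a Cohen--Macaulay (or sequentially Cohen--Macaulay) $S_n$-poset whose rank-selected homology realises $U_k(n)$, which you explicitly leave open. That missing construction is precisely the open problem the paper poses as Question 2.11 (``Is there a cohomological context for the $Lie^{(2)}$ identities\ldots''), so invoking it is circular as a proof strategy: the analogue of \cite[Proposition 1.9]{Su0} cannot be applied until such a complex exists and is shown to be shellable, and for the $2$-power blocks $Lie^{(2)}_{2^k}=Conj_{2^k}$ there is no known partition-lattice-type model, as you yourself note.

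Your fallback induction has the same status. Restriction along $S_{n-1}\subset S_n$ or the stability isomorphism (\ref{IndSup2}) controls $V\!h_k(n)$ blockwise, but, as you concede, the truncated alternating sums do not commute with the block recursion, and no mechanism is supplied to control the signs; so no case beyond those already settled by the explicit computations is actually established. To be useful, a write-up of this material should be presented as evidence and a proposed line of attack (which is essentially how the paper presents it), not as a proof; alternatively, you could extend the paper's partial result by pushing the explicit Pieri-type computation of ${\rm ch\,}U_k(n)$ to $k=4$ or by proving Schur-positivity of the complementary truncation $V\!h_{k+1}(n)-V\!h_{k+2}(n)+\cdots+(-1)^{n-k}V\!h_{n-1}(n)$ for $k$ near $n$, but neither is done here.
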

 
This conjecture is easily verified for $0\leq k\leq 3;$ in the latter case there are relatively simple formulas for ${\rm ch\,} V\!h_k(n),$ giving the following for $U_k(n),$ (for $n\geq 4$). Note that if $n=4,$ the last alternating sum vanishes because of the acyclicity property of equation (5.3).
\begin{align*}{\rm ch\,}U_0(n)&={\rm ch\,}V\!h_0(n)=h_n;\\
 {\rm ch\,}U_1(n)&=(h_{n-1}-h_n)[Lie^{(2)}]|_n= h_2h_{n-2}-h_n= s_{(n-1,1)}+s_{(n-2,2)};\\
{\rm ch\,}U_2(n)&={\rm ch\,}V\!h_2(n)-{\rm ch\,}U_1(n)\\ &=Lie_3^{(2)} h_{n-3} + h_2[Lie^{(2)}_2]h_{n-4} -(s_{(n-1,1)}+s_{(n-2,2)})\\
&= h_{n-3}(h_2h_1-h_3) +h_{n-4} (h_4+h_2^2-h_1h_3)-h_{n-2}h_2+h_n\\
&=h_{n-2}s_{(2,1)}-s_{(n-1,1)}-s_{(n-2,2)} +h_{n-4}(h_4+s_{(2,2)}),\\
&\text{which is clearly Schur-positive by the Pieri rule;}
\end{align*}
\begin{align*}
{\rm ch\,}U_3(n)&={\rm ch\,}V\!h_3(n)-{\rm ch\,}U_2(n)\\
&=
h_{n-4}Lie_4^{(2)}+h_{n-5}h_2Lie_3+h_{n-6} h_3[h_2]-{\rm ch\,}U_2(n)\\
&=h_{n-4}(h_4+s_{(2,2)}+s_{(2,1^2)}) +h_{n-5} h_2 s_{(2,1)} +
h_{n-6}(h_6+s_{(4,2)}+s_{(2^3)})\\
&-h_{n-3}s_{(2,1)} -h_{n-4} h_4 -h_{n-4} s_{(2^2)} +s_{(n-1,1})+s_{(n-2,2)}\\
&=h_{n-4} s_{(2,1^2)} +s_{(2,1)} (h_{n-5} h_2-h_{n-3}) +h_{n-6}(h_6+s_{(4,2)}+s_{(2^3)})\\
&+s_{(n-1,1)}+s_{(n-2,2)},
\text{ which is again clearly Schur-positive for }n\geq 5.
\end{align*}
%
\vskip .2in
We include the data for $n=6$ and $n=7$ below.
\vskip .2in
\begin{center}{\small Table 3: Alternating sums $U_k(n)$ of $h_k[Lie^{(2)}]$ for $n=6$}\end{center}
 %
\begin{tabular}{|c|c|}\hline
$k$ & $U_k(6)$\\[3.5pt] \hline
0& {$(6)$}\\[3pt] \hline
1  &${(5,1)}+{(4,2)}$\\[3.5pt] \hline
2  &${(6)}+{(5,1)}+2{(4,2)}
 +{(4,1^2)} +2{(3,2,1)}+{(2^3)}$\\[3.5pt] \hline
3 & ${(6)}+{(5,1)}+3{(4,2)}+2{(4,1^2)}
+{(3^2)}+3{(3,2,1)}+2{(3,1^3)}+2 {(2^2,1^2)}$\\[3.5pt] \hline
4 & $Lie_6^{(2)}={(5,1)}+2{(4,2)}+{(4,1^2)}+3{(3,2,1)}+2{(3,1^3)}+{(2^3)}+ {(2^2,1^2)}+{(2,1^4)}$\\[3.5pt] \hline
\end{tabular}

\begin{center}{\small  Table 4: Alternating sums $U_k(n)$ of $h_k[Lie^{(2)}]$ for $n=7$}\end{center}
\nopagebreak
\begin{tabular}{|c|c|} \hline
$k$ & $U_k(7)$\\ \hline 
0 & ${(7)}$\\ \hline
1 & ${(6,1)}+{(5,2)}$\\ \hline
 2 & $(7)+(6,1)+2(5,2)+(5,1^2) +(4,3)+2(4,2,1)+(3,2^2)$\\ \hline
3 & $(7) +2 (6, 1) +3 (5, 2) +2 (5, 1^2) + 3 (4, 3) +5 (4, 2, 1) 
+2 (4, 1^3)+  2 (3^2, 1)+3 (3, 2^2) $\\
&$ + 3 (3, 2, 1^2)
+ 2 (2^3, 1)$\\  \hline
4 & $2 (6, 1)+ 4 (5, 2)+3 (5, 1^2)+3 (4,3) +8 (4,2,1) +3 (4,1^3)   +  4 (3^2, 1)+  5 (3, 2^2)$\\
&$ + 7 (3, 2, 1^2)
 + 3 (3, 1^4)+ 3 (2^3, 1) +    2 (2^2, 1^3)$\\ \hline
 5 & $ Lie^{(2)}_7=Lie_7=(6, 1)+2 (5, 2)+2 (5, 1^2) + 2 (4, 3) +5 (4, 2, 1) +3 (4, 1^3)$\\
 &$+3 (3^2, 1)+3 (3, 2^2) + 5 (3, 2, 1^2) +2 (3, 1^4)+2 (2^2, 1)+ 2 (2^2, 1^3)   +(2, 1^5) $\\ \hline
\end{tabular}

\vskip .2in

  Recent work of Hyde and Lagarias \cite{HL} rediscovers the representations $\beta_n([1,k])$ of Theorem \ref{betas} in a cohomological setting.
Our results  suggest the existence of a similar topological context in which the modules $V\!h_k(n)$ and $U_k(n)$ appear.

\begin{qn} Is there a cohomological context for the ``$Lie^{(2)}$" identities of Theorem \ref{SymExt}, as there is for the $Lie$ identities in the context of configuration spaces (Theorem \ref{SWThm4.4}), or as in  \cite{HL}?
\end{qn}

Recall from Section 2 and Theorem \ref{EquivPBW} the following facts.  The free Lie algebra has a filtration arising from its derived series \cite[Section 8.6.12]{R}, which in our notation may be described as follows.
Let $\kappa=\sum_{n\geq 2} s_{(n-1,1)}.$  Then     
$Lie_{\geq 2}=\kappa+\kappa[\kappa]+ \kappa[\kappa[\kappa]]+\ldots.$ 

Theorem \ref{SymExt} allows us to  deduce a similar   decomposition for $Lie_n^{(2)}.$  In fact we have the following exact analogue of Theorem \ref{EquivPBW}:

\begin{thm}\label{EquivLieSup2}  The following identities hold, and are equivalent:
\begin{equation}\label{ExtLieSup2}  (E-1)[Lie^{(2)}]=(\sum_{r\geq 1} e_r)[Lie^{(2)}]=\sum_{n\geq 1} p_1^n. \end{equation}
\begin{equation}\label{PlInvLieSup2} (1-H^{\pm})[Lie^{(2)}]=(\sum_{r\geq 1} (-1)^{r-1} h_r)[Lie^{(2)}]=p_1. \end{equation}
\begin{equation}\label{Extge2} (1-H^{\pm})[Lie_{\geq 2}^{(2)}]=(\sum_{r\geq 1} (-1)^{r-1} h_r)[Lie_{\geq 2}^{(2)}]=\omega(\kappa) \end{equation}
Equivalently,
\begin{equation}\label{LieSup2cochain}  \text{ The degree $n$ term in }\sum_{r\geq 0} (-1)^{n-r} h_{n-r}[Lie_{\geq 2}^{(2)}] \text{ is } (-1)^{n-1} s_{(2,1^{n-2})}.
\end{equation}
\begin{equation}\label{LieSup2Filt_a}   Lie_{\geq 2}^{(2)}=Lie^{(2)}[\omega(\kappa)]\end{equation}
\begin{equation} \label{LieSup2Filt_b}
 Lie_{\geq 2}^{(2)}=\omega(\kappa)+\omega(\kappa)[\omega(\kappa)]+\omega(\kappa)[\omega(\kappa)[\omega(\kappa)]]+\ldots  \end{equation}
 \begin{equation}\label{HodgeLieSup2} (E-1)[Lie^{(2)}_{\geq 2}]=\sum_{r\geq 1} e_r[Lie^{(2)}_{\geq 2}]=(1-p_1)^{-1}\cdot H^{\pm}-1
 =\sum_{n\geq 2}\sum_{k=0}^n (-1)^k p_1^{n-k}h_k. \end{equation}

\end{thm}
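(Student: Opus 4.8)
The plan is to treat~(\ref{ExtLieSup2}) as the sole substantive input and obtain the rest by formal plethystic algebra in the completed ring $\widehat{\Lambda}=\prod_{n\geq 0}\Lambda_n$. First I would observe that (\ref{ExtLieSup2}) is nothing but the generating-function form of the second identity in~(\ref{SymExt}) of Theorem~\ref{Compare}: by~(\ref{defhrofQ}), $(E-1)[Lie^{(2)}]\big|_{{\rm deg\ }n}=\sum_{r\geq 1}e_r[Lie^{(2)}]\big|_{{\rm deg\ }n}=\sum_{\lambda\vdash n}E_\lambda[Lie^{(2)}]=p_1^n$. Then I would record the three standing tools: plethysm $f\mapsto f[A]$ is a continuous ring endomorphism of $\widehat{\Lambda}$ whenever $A$ has zero constant term \cite[(8.3)]{M}; $E$ is ``exponential'', $E[A+B]=E[A]E[B]$ with $E[p_1]=E$; and $E$ and $H^{\pm}$ are mutually inverse units, $EH^{\pm}=1$.

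Next I would run the computation. Rewriting~(\ref{ExtLieSup2}) as $E[Lie^{(2)}]=(1-p_1)^{-1}$ and applying $f\mapsto f[Lie^{(2)}]$ to $H^{\pm}=E^{-1}$ gives $H^{\pm}[Lie^{(2)}]=1-p_1$, hence $(1-H^{\pm})[Lie^{(2)}]=p_1$, which is~(\ref{PlInvLieSup2}). Splitting $Lie^{(2)}=p_1+Lie_{\geq 2}^{(2)}$ and using the exponential identity, $(1-p_1)^{-1}=E[Lie^{(2)}]=E[p_1]E[Lie_{\geq 2}^{(2)}]=E\cdot E[Lie_{\geq 2}^{(2)}]$, so $E[Lie_{\geq 2}^{(2)}]=(1-p_1)^{-1}H^{\pm}$; subtracting $1$ and expanding $(1-p_1)^{-1}$ and $H^{\pm}$ as series yields exactly~(\ref{HodgeLieSup2}). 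Inverting once more, $H^{\pm}[Lie_{\geq 2}^{(2)}]=(1-p_1)E$, so $(1-H^{\pm})[Lie_{\geq 2}^{(2)}]=1-(1-p_1)E=p_1E-E+1$; to identify this with $\omega(\kappa)$ — i.e.\ to prove~(\ref{Extge2}) — I would compute $\kappa$ in closed form from the Jacobi--Trudi relation $s_{(k,1)}=h_1h_k-h_{k+1}$, summing over $k\geq 1$ to get $\kappa=p_1H-H+1$, hence $\omega(\kappa)=p_1E-E+1$. Finally~(\ref{LieSup2cochain}) comes out by extracting the degree-$n$ component: since $h_j[Lie_{\geq 2}^{(2)}]$ lives in degrees $\geq 2j$, the degree-$n$ part of $\sum_{r\geq 0}(-1)^{n-r}h_{n-r}[Lie_{\geq 2}^{(2)}]$ equals $(-1)^n H^{\pm}[Lie_{\geq 2}^{(2)}]\big|_{{\rm deg\ }n}=(-1)^n(e_n-p_1e_{n-1})=(-1)^{n-1}s_{(2,1^{n-2})}$ by the Pieri rule $p_1e_{n-1}=e_n+s_{(2,1^{n-2})}$.

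For the two filtration statements I would use that~(\ref{PlInvLieSup2}) exhibits $1-H^{\pm}$ and $Lie^{(2)}$ as plethystic inverses, the inverse being two-sided because the degree-one part of $Lie^{(2)}$ is $p_1$. Applying $Lie^{(2)}[\,\cdot\,]$ to~(\ref{Extge2}) and invoking associativity of plethysm, $Lie^{(2)}\!\big[(1-H^{\pm})[Lie_{\geq 2}^{(2)}]\big]=\big(Lie^{(2)}[1-H^{\pm}]\big)[Lie_{\geq 2}^{(2)}]=p_1[Lie_{\geq 2}^{(2)}]=Lie_{\geq 2}^{(2)}$, while the right-hand side is $Lie^{(2)}[\omega(\kappa)]$; this is~(\ref{LieSup2Filt_a}). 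Substituting $Lie^{(2)}=p_1+Lie_{\geq 2}^{(2)}$ turns~(\ref{LieSup2Filt_a}) into the fixed-point equation $Lie_{\geq 2}^{(2)}=\omega(\kappa)+Lie_{\geq 2}^{(2)}[\omega(\kappa)]$, and since $\omega(\kappa)$ has no term below degree $2$ its $k$-fold plethystic power lies in degrees $\geq 2^k$, so the iteration converges and produces~(\ref{LieSup2Filt_b}). All the steps above are reversible — or, put differently, each of the seven identities yields~(\ref{ExtLieSup2}) when the same manipulations are run backwards — which gives the asserted equivalence; the whole argument mirrors Theorem~\ref{EquivPBW} with the roles of $H,E$ and of $\kappa,\omega(\kappa)$ swapped.

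I expect the main obstacle to be bookkeeping rather than genuine difficulty. One has to keep everything inside $\widehat{\Lambda}$ and verify that every symmetric function substituted plethystically — $Lie^{(2)}$, $Lie_{\geq 2}^{(2)}$, $\omega(\kappa)$, $1-H^{\pm}$ — has vanishing constant term, so that taking multiplicative inverses, using the exponential identity, and using associativity are all legitimate; and one has to pin down the closed form $\omega(\kappa)=p_1E-E+1$ (equivalently the hook identity $\sum_{b\geq 0}s_{(2,1^b)}=\big(\sum_{r\geq 2}(-1)^rh_r\big)E$), the single spot where a short Jacobi--Trudi/Pieri calculation is unavoidable. Everything else is an unwinding of $EH^{\pm}=1$ and the exponential property of $E$.
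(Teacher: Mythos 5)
Your proposal is correct and takes essentially the same route as the paper: starting from $E[Lie^{(2)}]=(1-p_1)^{-1}$, you invert via $E\cdot H^{\pm}=1$ to get (\ref{PlInvLieSup2}), split off $Lie^{(2)}=p_1+Lie^{(2)}_{\geq 2}$ to obtain (\ref{Extge2}) and (\ref{HodgeLieSup2}) after identifying $\omega(\kappa)=p_1E-E+1$, and deduce the filtration statements by plethystic inversion and iteration, which is precisely the paper's argument (it simply cites Lemma \ref{pm}, Theorem \ref{metage2} and Theorem \ref{EquivPBW} where you re-derive these facts inline). The only wrinkle is the sign in (\ref{LieSup2cochain}): read literally, the degree-$n$ part of $\sum_{r\geq 0}(-1)^{n-r}h_{n-r}[Lie^{(2)}_{\geq 2}]$ is $H^{\pm}[Lie^{(2)}_{\geq 2}]|_{{\rm deg\ }n}=e_n-p_1e_{n-1}=-s_{(2,1^{n-2})}$ with no extra factor $(-1)^n$, so your inserted $(-1)^n$ is matching the paper's stated right-hand side and its Whitney-homology-style indexing (cf.\ (\ref{EquivPBWAltExtge2b})) rather than the displayed sum --- a bookkeeping ambiguity shared with the paper, not a gap in your argument.
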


  We offer two more contrasting results for $Lie_n$ and $Lie_n^{(2)}:$
\begin{prop}\label{AltHLieAltELieSup2}  Let $D\!Par$ be the set of partitions with distinct parts.
\begin{enumerate}[itemsep=8pt]
\item $\sum_{r\geq1} (-1)^{r-1} h_r[Lie]|_{{\rm deg\ }n}=
\begin{cases} p_1 p_2^k, & n=2k+1 \text{ is odd}\\
                    -p_2^k, & n=2k \text{ is even}
                    \end{cases}$
\item     $\sum_{r\geq1} (-1)^{r-1} e_r[Lie^{(2)}]|_{{\rm deg\ }n}=               
\sum_{\stackrel{\lambda\vdash n:\lambda_i=2^{k_i}, k_i\geq 0}{\lambda\in DPar}}
(-1)^{\ell(\lambda)-1} p_\lambda.$

\end{enumerate}
\end{prop}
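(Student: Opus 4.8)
The plan is to reduce both identities to plethystic generating-function statements and then extract coefficients in the $p_i$. First, from the definitions $H^{\pm}=\sum_{r\ge 0}(-1)^r h_r$ and $E^{\pm}=\sum_{r\ge 0}(-1)^r e_r$ one has $\sum_{r\ge 1}(-1)^{r-1}h_r=1-H^{\pm}$ and $\sum_{r\ge 1}(-1)^{r-1}e_r=1-E^{\pm}$; since $Lie$ and $Lie^{(2)}$ have zero constant term, the left-hand side of (1) is the degree-$n$ part of $(1-H^{\pm})[Lie]$ and the left-hand side of (2) is the degree-$n$ part of $(1-E^{\pm})[Lie^{(2)}]$. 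The main engine is that plethysm by a fixed argument is a ring endomorphism, so the classical relations $H\cdot E^{\pm}=1$ and $H^{\pm}\cdot E=1$ (both equivalent to $H(t)E(-t)=1$) survive substitution: for any $Q$ with zero constant term, $H^{\pm}[Q]=E[Q]^{-1}$ and $E^{\pm}[Q]=H[Q]^{-1}$. It therefore suffices to identify $E[Lie]$ and $H[Lie^{(2)}]$ explicitly, and for these I would use Theorem \ref{Compare}.

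For (1): using the identities of Section 1.1 one has $E[Lie]=\sum_{\lambda\in Par}E_\lambda[Lie]=\sum_{n\ge 0}\bigl(\sum_{\lambda\vdash n}E_\lambda[Lie]\bigr)$. The degree $0$ and $1$ contributions are $1$ and $Lie_1=p_1$; for $n\ge 2$ the first identity in (\ref{TotalCoh}) gives $2e_2\,p_1^{n-2}$. Summing the geometric series and writing $2e_2=p_1^2-p_2$ yields
$$E[Lie]=1+p_1+\frac{p_1^2-p_2}{1-p_1}=\frac{1-p_2}{1-p_1},$$
hence $H^{\pm}[Lie]=\frac{1-p_1}{1-p_2}$ and
$$(1-H^{\pm})[Lie]=\frac{p_1-p_2}{1-p_2}=\sum_{k\ge 0}p_1 p_2^{\,k}\;-\;\sum_{k\ge 1}p_2^{\,k}.$$
The terms $p_1p_2^{k}$ have odd degree $2k+1$ and the terms $p_2^{k}$ have even degree $2k$, so reading off the degree-$n$ component gives exactly the stated case distinction.

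For (2): analogously $H[Lie^{(2)}]=\sum_{n\ge 0}\bigl(\sum_{\lambda\vdash n}H_\lambda[Lie^{(2)}]\bigr)$, with degree $0,1$ contributions $1$ and $Lie_1^{(2)}=p_1$, and for $n\ge 2$ the second identity in (\ref{TotalCoh}) gives $\sum_{\lambda\vdash n,\ \lambda_i=2^{a_i}}p_\lambda$. Since $1$ and $p_1$ also fit this pattern (the empty partition and $(1)=(2^{0})$),
$$H[Lie^{(2)}]=\sum_{\lambda:\ \lambda_i=2^{a_i}}p_\lambda=\prod_{k\ge 0}\frac{1}{1-p_{2^k}},$$
whence $E^{\pm}[Lie^{(2)}]=\prod_{k\ge 0}(1-p_{2^k})$ and $(1-E^{\pm})[Lie^{(2)}]=1-\prod_{k\ge 0}(1-p_{2^k})$. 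Expanding the degreewise-finite product, each nonempty finite subset $S\subseteq\{2^k:k\ge 0\}$ corresponds to a partition $\lambda\in DPar$ with $\lambda_i=2^{k_i}$ and $\ell(\lambda)=|S|$, giving
$$(1-E^{\pm})[Lie^{(2)}]=\sum_{\substack{\lambda\in DPar,\ \lambda\neq\emptyset\\ \lambda_i=2^{k_i}}}(-1)^{\ell(\lambda)-1}p_\lambda,$$
and its degree-$n$ part is precisely the asserted sum.

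Given Theorem \ref{Compare} this is essentially a bookkeeping argument, so I do not expect a genuine obstacle; the points needing a little care are (i) confirming that $H\cdot E^{\pm}=1$ and $H^{\pm}\cdot E=1$ really do persist under plethystic substitution — this uses the ring-endomorphism property together with the fact that $Lie$ and $Lie^{(2)}$ begin in degree $1$, which guarantees convergence in the degree filtration — and (ii) checking the degree-$0$ and degree-$1$ terms of $E[Lie]$ and $H[Lie^{(2)}]$, which are not covered by the ``$n\ge 2$'' clauses of (\ref{TotalCoh}) but follow immediately from $Lie_1=Lie_1^{(2)}=p_1$. One could instead derive $E[Lie]$ and $H[Lie^{(2)}]$ directly from the meta-theorem machinery of Section 5 (they are special cases of general formulas for $\sum_r(\pm1)^r e_r[f_n]$ and $\sum_r(\pm1)^r h_r[f_n]$), but routing through the already-established Theorem \ref{Compare} is the most economical.
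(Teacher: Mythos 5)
Your argument is correct, and it lives in the same circle of identities the paper uses: you derive the statement from the un-alternated formulas $E[Lie]=(1-p_2)(1-p_1)^{-1}$ and $H[Lie^{(2)}]=\prod_{k\geq 0}(1-p_{2^k})^{-1}$ (equation (\ref{TotalCoh}), i.e.\ Proposition \ref{ExtLieConj}(1) and Theorem \ref{PlInv-E}(3)) together with the inversion $H^{\pm}[Q]=E[Q]^{-1}$, $E^{\pm}[Q]=H[Q]^{-1}$ of Lemma \ref{pm}, whereas the paper simply cites the already-alternated versions (Proposition \ref{ExtLieConj}(3) and Theorem \ref{PlInv-E}(4)). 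Since the paper itself shows those alternated identities are equivalent to the plain ones by exactly this kind of inversion (Lemma \ref{pmalt}, Proposition \ref{metaequiv}), your route is essentially the paper's argument with the equivalence step carried out explicitly rather than quoted.
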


Next we examine more closely the action on  derangements, i.e. fixed-point-free permutations.
  Reiner and Webb study the Cohen-Macaulay complex of injective words, and compute the $S_n$-action on its top homology \cite{RW}. Theorem \ref{EquivLieSup2} shows that the representations $Lie_n^{(2)}$ make an appearance here as well:



\begin{thm}\label{Lie2Hodge}
Let $n\geq 2.$ For $k\geq 1$ let 
$\Delta_n^k$ denote the degree $n$ term in $e_k[Lie_{\geq 2}^{(2)}].$ Define $\Delta_n=\sum_{k\geq 1} \Delta_n^k  \text{ for } n\geq 2,$ and $ \Delta_1=0, \Delta_0=1.$ 
Then  \begin{enumerate} 
\item  $\Delta_n=\sum_{k=0}^n (-1)^k p_1^{n-k} h_k=p_1\Delta_{n-1}+(-1)^n h_n;$ and hence 
\item For $n\geq 2,$ $\Delta_n$ coincides with the Frobenius characteristic of the homology representation on the complex of injective words in the alphabet $\{1,2,\ldots,n\}.$ 
\end{enumerate}
\end{thm}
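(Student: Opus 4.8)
The plan is to establish part (1) by a direct symmetric-function computation and then derive part (2) by invoking the known generating-function description of the homology of the complex of injective words.

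First I would prove part (1). By definition $\Delta_n = \sum_{k\geq 1}\Delta_n^k$ collects the degree $n$ terms of $\sum_{k\geq 1} e_k[Lie^{(2)}_{\geq 2}] = (E-1)[Lie^{(2)}_{\geq 2}]$. By equation (\ref{HodgeLieSup2}) of Theorem \ref{EquivLieSup2} (which I may assume), this equals $(1-p_1)^{-1}\cdot H^{\pm} - 1 = \sum_{n\geq 2}\sum_{k=0}^{n}(-1)^k p_1^{n-k} h_k$. Reading off the degree $n$ term gives $\Delta_n = \sum_{k=0}^n (-1)^k p_1^{n-k} h_k$ for $n\geq 2$, and one checks the boundary cases: the degree $0$ term of $(1-p_1)^{-1}H^\pm-1$ vanishes against the stated $\Delta_0=1$, so I should be slightly careful — the sum $\sum_{k=0}^0(-1)^0 p_1^0 h_0 = 1$ matches $\Delta_0=1$, while for $n=1$ the term is $p_1 - h_1 = 0 = \Delta_1$, consistent with the definitions. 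The recursion $\Delta_n = p_1\Delta_{n-1} + (-1)^n h_n$ is then immediate by splitting off the $k=n$ summand: $\sum_{k=0}^n(-1)^k p_1^{n-k}h_k = p_1\left(\sum_{k=0}^{n-1}(-1)^k p_1^{n-1-k}h_k\right) + (-1)^n h_n = p_1\Delta_{n-1} + (-1)^n h_n$.

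For part (2), the key input is the result of Reiner and Webb \cite{RW}: the complex of injective words on $n$ letters is Cohen–Macaulay with homology concentrated in top degree, and its $S_n$-homology representation $\beta_n$ has Frobenius characteristic satisfying $\sum_{n\geq 0}\mathrm{ch}(\beta_n) = \left(\sum_{n\geq 0}(-1)^n h_n\right)^{-1}$ in an appropriate sense — more precisely, if $d_n = \mathrm{ch}(\beta_n)$ then $\sum_{k=0}^n (-1)^k h_k\, d_{n-k} = \delta_{n,0}$ (this is the symmetric-function lift of the classical derangement recursion, and is exactly how $\Delta_n$ is characterized in the injective-words literature; see also the discussion around \cite{HH}). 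So it suffices to verify that my $\Delta_n$ satisfies the same recursion and initial condition. From $\Delta_n = \sum_{k=0}^n(-1)^k p_1^{n-k}h_k$ one computes $\sum_{j=0}^n(-1)^j h_j \Delta_{n-j} = \sum_{j=0}^n\sum_{k=0}^{n-j}(-1)^{j+k} h_j h_k p_1^{n-j-k}$; grouping by $m=j+k$ and using $\sum_{j+k=m}h_j h_k = h_m[h_1 \cdot (\text{two copies})]$... here I would instead argue more cleanly: since $\Delta = \sum_n \Delta_n = (1-p_1)^{-1}H^\pm$ (reinstating the degree-$<2$ terms, which just shifts by a known amount), and $H^\pm = (\sum_k(-1)^k h_k)$ is precisely the multiplicative inverse of $H = \sum_k h_k$ under the product of symmetric functions, we get $H\cdot\Delta = H\cdot(1-p_1)^{-1}H^\pm = (1-p_1)^{-1}$, i.e. $\sum_{k}(-1)^k h_k \cdot \Delta = \sum_n p_1^n$. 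Extracting degree $n$ for $n\geq 1$: $\sum_{k=0}^n (-1)^k h_k \Delta_{n-k}' = p_1^n$ where $\Delta'$ is the version including low-degree terms; this is exactly the injective-words recursion (in its $p_1^n$-inhomogeneous form, equivalently the statement that $H^\pm\cdot(\text{injective-words characteristic}) = (1-p_1)^{-1}$), and together with $\Delta_0=1$ it pins down $\Delta_n$ uniquely. Hence $\Delta_n = \mathrm{ch}$ of the injective-words homology for $n\geq 2$.

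The main obstacle I anticipate is purely bookkeeping: matching conventions for the complex of injective words (reduced versus unreduced homology, whether the empty word contributes, and the precise form in which \cite{RW} or \cite{HH} state the generating function) with the normalization of $\Delta_n$ used here, so that the shift between $\Delta = \sum_{n\geq 0}\Delta_n$ and the ``honest'' injective-words generating function is accounted for correctly in low degrees. Once the recursion $\sum_{k=0}^n(-1)^k h_k \cdot(\text{char}_{n-k}) = p_1^n$ is identified on both sides, uniqueness of the solution does the rest; the representation-theoretic content — that $\Delta_n$ is genuinely a (non-virtual) character — follows because it is manifestly the characteristic of the top homology of a Cohen–Macaulay complex, or alternatively from the Schur-positivity already recorded for the $e_k[Lie^{(2)}_{\geq 2}]$ pieces in Theorem \ref{EquivLieSup2}.
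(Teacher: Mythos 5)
Your part (1) is exactly the paper's argument: the paper proves Theorem \ref{Lie2Hodge} by observing that $\Delta_n$ is the degree-$n$ term of $E[Lie^{(2)}_{\geq 2}]$, so (1) is a restatement of equation (\ref{HodgeLieSup2}), and your splitting off of the $k=n$ summand for the recursion and the low-degree checks are fine.

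In part (2), however, there is a genuine error in the step you lean on. You assert that $H^{\pm}=\sum_k(-1)^k h_k$ is the multiplicative inverse of $H=\sum_k h_k$, and conclude $H\cdot\Delta=(1-p_1)^{-1}$, i.e.\ the convolution identity $\sum_{k=0}^n(-1)^k h_k\,\Delta_{n-k}=p_1^n$. This is false: by equation (\ref{HE}) the inverse of $H$ is $E^{\pm}$, not $H^{\pm}$ (and the inverse of $H^{\pm}$ is $E$). A degree-$2$ check exposes it: with $\Delta_0=1$, $\Delta_1=0$, $\Delta_2=h_2$, one gets $h_2\Delta_0+h_1\Delta_1+\Delta_2=2h_2\neq p_1^2=h_2+e_2$, and your earlier proposed characterization $\sum_{k}(-1)^k h_k\,d_{n-k}=\delta_{n,0}$ already fails at $n=1$. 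The correct multiplicative statements coming from $\Delta=(1-p_1)^{-1}H^{\pm}$ are $(1-p_1)\Delta=H^{\pm}$ — which is nothing more than the recursion $\Delta_n=p_1\Delta_{n-1}+(-1)^n h_n$ you already have in part (1) — or equivalently $E\cdot\Delta=(1-p_1)^{-1}$, since $E\,H^{\pm}=1$. The repair is easy and makes the detour unnecessary: Reiner and Webb give the injective-words homology characteristic either by the closed formula $\sum_{k=0}^n(-1)^k p_1^{n-k}h_k$ or by the recursion $\mathrm{ch}_n=p_1\,\mathrm{ch}_{n-1}+(-1)^n h_n$ (the sign-tensored desarrangement recursion; cf.\ Corollary \ref{HRRepStab}(2) and \cite[Proposition 2.3]{RW}), so part (2) follows from part (1) by direct comparison, with no new convolution identity to verify — which is how the paper treats it.
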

\begin{proof} Clearly  $\Delta_n$ is the degree $n$ term in $E[Lie_{\geq 2}^{(2)}],$ so this is nothing but a restatement of equation (\ref{HodgeLieSup2}) above.
\end{proof}

Hanlon and Hersh showed that this homology representation has a Hodge decomposition \cite[Theorem 2.3]{HH}, by showing that the complex itself splits into a direct sum of $S_n$-invariant subcomplexes. Writing $D_n^k$ for the degree $n$ term in $h_k[Lie_{\geq 2}],$ in our terminology their result may be stated as follows:
$$\Delta_n=\sum_{k\geq 1} \omega(D_n^k).$$
In fact  the  identity $\sum_k D_n^k=\sum_{k=0}^n (-1)^k p_1^{n-k} e_k$ is simply a restatement of equation \eqref{LieHodge} in Theorem \ref{EquivPBW}.

Surprisingly, the decomposition of $\Delta_n$ given in Theorem \ref{Lie2Hodge} is different from the Hodge decomposition, i.e. the summands $\Delta_n^k$ and $\omega(D_n^k)$ do not coincide.  The first nontrivial example appears below. 
\begin{ex}  For $n=4,$ we have $\Delta_4=p_1^2h_2-p_1h_3+h_4
=(4)+(3,1)+(2^2)+(2,1^2).$ Also $\Delta_4^2=e_2[h_2]=(3,1),$ 
$\Delta_4^1=Lie_4^{(2)}=(4)+(2^2)+(2,1^2).$ 
The two Hodge pieces, however, each consist of two irreducibles:
$\omega(h_2[Lie_2])=(2^2)+(4)$ and $\omega(h_1[Lie_4])=(3,1)+(2,1^2).$
\end{ex}

This prompts the following:
\begin{qn}  Is there an algebraic complex  explaining the representation-theoretic decomposition 
$$ \Delta_n=\sum_{k\geq 0}\Delta_n^k=\sum_{k\geq 0} e_k[Lie_{\geq 2}^{(2)}]|_{{\rm deg\ } n},$$
just as the Hodge complex explains the decomposition
$$\Delta_n =\omega(\sum_{k\geq 0} h_k[Lie_{\geq 2}]|_{{\rm deg\ } n}),$$
noting (from the preceding example) that $e_k[Lie_{\geq 2}^{(2)}]|_{{\rm deg\ } n}$ is not in general equal to 
$\omega( h_k[Lie_{\geq 2}]|_{{\rm deg\ } n})$?
\end{qn}

Recall from the remarks preceding Corollary \ref{HRRepStab} that equation (\ref{EquivPBWAltExtge2b}) (from the fundamental theorem of equivalences, Theorem \ref{EquivPBW}), when tensored with the sign, can be rewritten as a formula for the alternating sum of Whitney homology modules of $\Pi_n$, when restricted to partitions with no blocks of size 1.  Define 
$W\!H_{\geq 2}^i(\Pi_n)$ to be the sum of  all the homology modules $\tilde{H}(\hat 0, x)$ where $x$ ranges over all partitions into $n-i$ blocks, with no blocks of size 1. Then 
${\rm ch\ } W\!H_{\geq 2}^i(\Pi_n)=\omega( e_{n-i}[Lie_{\geq 2}]|_{{\rm deg\ } n}) $ and so equation (\ref{EquivPBWAltExtge2b}) (and hence the Poincar\'e-Birkhoff-Witt theorem), is equivalent to 
\begin{equation} \sum_{i\geq 0} (-1)^i {\rm ch\,}W\!H_{\geq 2}^i(\Pi_n) = (-1)^{n-1} s_{(2,1^{n-2})}.
\end{equation}
In the notation of \cite{HR}, $\widehat{W}^i_n=W\!H_{\geq 2}^i(\Pi_n)$ (see Corollary 2.11). Hersh and Reiner construct an $S_n$-cochain complex $F_n(A^*)$ with nonvanishing cohomology only in degree $n-1,$ whose $S_n$-character is the irreducible indexed by $(2, 1^{n-2}),$ explicitly proving a conjecture of Wiltshire-Gordon (\cite[Conjecture 1.5, Theorem 1.6, Theorem 1.7]{HR}).

Define, in analogy with \cite{HR}, $\widehat{V_n(k)}$ to be the module with Frobenius characteristic $h_{n-k}[Lie_{\geq 2}^{(2)}]|_{{\rm deg\ }n}.$  Then it is natural to ask:

\begin{qn}  Is there an $S_n$-(co)chain complex for the representations $Lie_{\geq 2}^{(2)}$ whose Lefschetz module  
is given by equation (\ref{LieSup2cochain}) of Theorem \ref{EquivLieSup2} above, i.e. the analogue of equation (2.6)? 
Note that although the nonvanishing (co)homology would occur again only in degree $(n-1)$, affording the same irreducible indexed by $(2, 1^{n-2}),$ the modules in the alternating sum are now different (although they are once again obtained by inducing one-dimensional modules from the same centralisers of $S_n,$ and thus have the same dimensions).  More precisely, and curiously, 
$$\widehat{W}^i_n\not\simeq \widehat{V_n(i)},$$
%
 although in both cases the alternating sums collapse to the irreducible indexed by $(2, 1^{n-2})$.
 For instance, the calculation for  $n=4$ gives:
 $$\omega(e_{1}[Lie_{\geq 2}]|_{{\rm deg\ }4})=\omega(Lie_4)=Lie_4\neq h_{1}[Lie_{\geq 2}^{(2)}]|_{{\rm deg\ }4}=Lie_4^{(2)};$$
 $$\omega(e_{2}[Lie_{\geq 2}]|_{{\rm deg\ }4})=\omega(e_2[e_2])=e_2[h_2]\neq h_{2}[Lie_{\geq 2}^{(2)}]|_{{\rm deg\ }4}=h_2[h_2] .$$

\end{qn}

We summarise these facts in the following:
\begin{thm}\label{HodgeFilt}  We have
\begin{enumerate}
\item (Hodge decomposition for complex of injective words) 
\begin{equation}\label{HodgeInj}
\sum_{r\geq 1} \omega\left(h_r[Lie_{\geq 2}]|_{{\rm deg\ }n}\right)
=\sum_{k=0}^n (-1)^k p_1^{n-k} h_k 
=  \sum_{r\geq 1} e_r[Lie^{(2)}_{\geq 2}]|_{{\rm deg\ }n}  ;
\end{equation}
\item (Derived series filtration)
\begin{equation}\label{LieSupFiltration}
 \sum_{r\geq 1} (-1)^{r-1}\omega\left(e_r[Lie_{\geq 2}]|_{{\rm deg\ }n}  \right)
=s_{(2,1^{n-2})}
=\sum_{r\geq 1} (-1)^{r-1}h_r[Lie^{(2)}_{\geq 2}]|_{{\rm deg\ }n}.
\end{equation}

\end{enumerate}
\end{thm}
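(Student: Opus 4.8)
The plan is to prove Theorem~\ref{HodgeFilt} by reducing it to identities already established in Theorem~\ref{EquivLieSup2} and Theorem~\ref{EquivPBW}. Both parts have the shape ``an alternating sum of (symmetrized or exterior) plethysms of $Lie_{\geq 2}$ equals an alternating sum of the mirror construction for $Lie^{(2)}_{\geq 2}$,'' so the natural strategy is to evaluate each side independently and observe that they land on the same symmetric function.

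For part (1), the right-hand equality $\sum_{k=0}^n(-1)^k p_1^{n-k}h_k = \sum_{r\geq 1} e_r[Lie^{(2)}_{\geq 2}]|_{\deg n}$ is exactly the content of Theorem~\ref{Lie2Hodge}(1) combined with the definition of $\Delta_n$ there (equivalently, it is equation~(\ref{HodgeLieSup2}) of Theorem~\ref{EquivLieSup2}, read off the $t^n$ coefficient after writing $(1-p_1)^{-1}H^{\pm}-1$). So only the left-hand equality needs argument: I would apply $\omega$ to the generating-function identity for the Hodge/PBW filtration of $Lie$. Concretely, the analogue of (\ref{HodgeLieSup2}) for $Lie_{\geq 2}$ — i.e. the statement that $\sum_{r\geq 1}h_r[Lie_{\geq 2}]$ has degree-$n$ part $\sum_{k=0}^n(-1)^k p_1^{n-k}e_k$, which is the Hanlon--Hersh identity recorded just after the proof of Theorem~\ref{Lie2Hodge} and is part of Theorem~\ref{EquivPBW} — gives the result upon applying $\omega$, since $\omega$ is a ring involution sending $e_k\mapsto h_k$ and fixing $p_1^{n-k}$ up to the sign $(-1)^{?}$; here one must be slightly careful, because $\omega(p_1^{n-k})=p_1^{n-k}$ (as $p_1=h_1=e_1$) so in fact $\omega$ of $\sum(-1)^k p_1^{n-k}e_k$ is literally $\sum(-1)^k p_1^{n-k}h_k$, with no extra sign. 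That closes part (1).

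For part (2), the middle term $s_{(2,1^{n-2})}$ is the target. The right-hand equality $s_{(2,1^{n-2})}=\sum_{r\geq 1}(-1)^{r-1}h_r[Lie^{(2)}_{\geq 2}]|_{\deg n}$ is precisely equation~(\ref{LieSup2cochain}) of Theorem~\ref{EquivLieSup2}, after multiplying through by $(-1)^{n-1}$ and reindexing $h_{n-r}\leftrightarrow h_r$: indeed (\ref{LieSup2cochain}) says the degree-$n$ term in $\sum_{r\geq 0}(-1)^{n-r}h_{n-r}[Lie^{(2)}_{\geq 2}]$ is $(-1)^{n-1}s_{(2,1^{n-2})}$, and the $r=n$ term contributes nothing in degree $n$ (since $Lie^{(2)}_{\geq 2}$ has no degree-$0$ part, $h_{n-r}=h_0=1$ only when $r=n$, contributing $1$, which lies in degree $0$), so the sum over $r\geq 0$ may be taken over $r\geq 1$ and rewritten. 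For the left-hand equality, I would invoke the corresponding ``derived series / Cadogan'' identity for $Lie_{\geq 2}$ from Theorem~\ref{EquivPBW} — namely that $\sum_{r\geq 1}(-1)^{r-1}e_r[Lie_{\geq 2}]$ has degree-$n$ part $(-1)^{n-1}s_{(2,1^{n-2})}$ — and apply $\omega$; since $\omega(e_r[Lie_{\geq 2}]) = \omega(e_r)[\omega(Lie_{\geq 2})]$ is $h_r[\omega(Lie_{\geq 2})]$ only after one uses the plethystic identity $\omega(f[g]) = (\omega^{\deg g} f)[\omega g]$, I instead keep $\omega$ on the outside: $\omega(\sum(-1)^{r-1}e_r[Lie_{\geq 2}]|_{\deg n}) = \sum(-1)^{r-1}\omega(e_r[Lie_{\geq 2}]|_{\deg n})$, and $\omega(s_{(2,1^{n-2})})=s_{(n-1,1)}$ — wait, that is not $s_{(2,1^{n-2})}$, so I must track this carefully.

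The main obstacle, then, is exactly this bookkeeping with $\omega$ and plethysm: the identity (\ref{LieSupFiltration}) as stated has $\omega$ wrapped \emph{around} the plethysm $e_r[Lie_{\geq 2}]$, not distributed into it, and the self-conjugacy $\omega(s_{(2,1^{n-2})})=s_{(n-1,1)}$ would seem to break the symmetry. The resolution — and the step I would write out most carefully — is that one should \emph{not} apply $\omega$ to the $Lie$ identity of Theorem~\ref{EquivPBW} directly; rather, Theorem~\ref{EquivPBW} already contains both a ``$Lie$'' and an ``$\omega(Lie)$'' version of each filtration identity (this is the whole point of it being a ``fundamental theorem of equivalences''), and (\ref{LieSupFiltration})'s left side is one of those $\omega$-twisted forms verbatim. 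So the real work is: (i) cite the precise equation number within Theorem~\ref{EquivPBW} that reads $\sum_{r\geq1}(-1)^{r-1}\omega(e_r[Lie_{\geq 2}]|_{\deg n}) = s_{(2,1^{n-2})}$ — this must be one of the listed identities there, given the reference in the text to ``equation (\ref{EquivPBWAltExtge2b})'' and the Hersh--Reiner discussion — and (ii) match (\ref{LieSup2cochain}) to the right side as above. Once both sides of each of the two displayed equations are identified with items from the two prior theorems, the proof is a two-line citation. I would therefore structure the written proof as: ``Part (1): the first equality is $\omega$ applied to [cite], the second is [cite Theorem~\ref{Lie2Hodge}/equation~(\ref{HodgeLieSup2})]. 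Part (2): the first equality is [cite the $\omega$-twisted derived-series identity in Theorem~\ref{EquivPBW}], the second is equation~(\ref{LieSup2cochain}) rewritten.''
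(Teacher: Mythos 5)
Part (1) of your proposal is correct and is essentially the paper's route: the right-hand equality is (\ref{HodgeLieSup2})/Theorem \ref{Lie2Hodge}, and the left-hand equality is $\omega$ applied to the Hanlon--Hersh identity $\sum_k h_k[Lie_{\geq 2}]|_{{\rm deg\ }n}=\sum_{k=0}^n(-1)^kp_1^{n-k}e_k$, i.e. to (\ref{LieHodge}), with your observation that $\omega(p_1^{n-k}e_k)=p_1^{n-k}h_k$ carrying no extra sign.

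Part (2) is where you have a genuine gap, and it is self-inflicted. You misquote the $Lie$ identity: the degree-$n$ part of $\sum_{r\geq 1}(-1)^{r-1}e_r[Lie_{\geq 2}]$ is \emph{not} $(-1)^{n-1}s_{(2,1^{n-2})}$; by (\ref{EquivPBWAltExtge2a}) it is the degree-$n$ term of $\kappa$, namely $s_{(n-1,1)}$. Starting from the wrong Schur function, you then notice a conjugation mismatch and ``resolve'' it by asserting that Theorem \ref{EquivPBW} contains, verbatim, an $\omega$-twisted identity $\sum_{r\geq1}(-1)^{r-1}\omega\bigl(e_r[Lie_{\geq 2}]|_{{\rm deg\ }n}\bigr)=s_{(2,1^{n-2})}$. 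It does not: none of the identities listed in Theorem \ref{EquivPBW} has $\omega$ wrapped around the plethysm, so step (i) of your plan has nothing to cite. The correct argument is one line and is exactly the ``apply $\omega$'' move you abandoned: since $\omega$ is a linear involution, applying it to the degree-$n$ part of (\ref{EquivPBWAltExtge2a}) gives $\sum_{r\geq1}(-1)^{r-1}\omega\bigl(e_r[Lie_{\geq 2}]|_{{\rm deg\ }n}\bigr)=\omega\bigl(s_{(n-1,1)}\bigr)=s_{(2,1^{n-2})}$, because $(n-1,1)$ and $(2,1^{n-2})$ are conjugate partitions; there is no symmetry to break. (Your worry arose only because you fed in the wrong left-hand side.)

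A smaller point on the right-hand equality of (\ref{LieSupFiltration}): the clean citation is (\ref{Extge2}), whose degree-$n$ term is the degree-$n$ term of $\omega(\kappa)$, i.e. $s_{(2,1^{n-2})}$, with no sign bookkeeping at all. Your derivation from (\ref{LieSup2cochain}) by ``multiplying through by $(-1)^{n-1}$ and reindexing'' does not actually produce the stated sign when $n$ is odd (reindexing $j=n-r$ attaches $(-1)^j$ to $h_j$, so you pick up an overall $(-1)^n$); since Theorem \ref{EquivLieSup2} declares (\ref{Extge2}) and (\ref{LieSup2cochain}) equivalent, route your citation through (\ref{Extge2}) and the issue disappears. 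With these two repairs your proof coincides with the paper's, which presents Theorem \ref{HodgeFilt} precisely as a summary of (\ref{LieHodge}), (\ref{EquivPBWAltExtge2a}) tensored with the sign, (\ref{HodgeLieSup2}), and (\ref{Extge2}).
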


Next we see what Part (1) of Theorem \ref{Restrictge2} yields for $F=Lie^{(2)}.$

\begin{prop}\label{Restrictge2LieSup2} Let $\alpha_n=H[Lie^{(2)}_{\geq 2}]|_{{\rm deg\ }n}, n\geq 0$ We have $\alpha_0=1, \alpha_1=0.$
Then $\alpha_n=p_1\cdot \alpha_{n-1} +(-1)^n \sigma_n,$ 
where $\sigma_n=\sum_{i\geq 0}e_{n-2i}g_{2i}.$ Here  
$g_n$ is the virtual representation of dimension zero given by 
$g_n=\sum_\lambda p_\lambda,$ the sum running over all partitions $\lambda$ of $n$ with no part equal to 1, and all parts a power of 2.  In particular $\sigma_n$ is the characteristic of a one-dimensional virtual representation whose restriction to $S_{n-1}$ is $\sigma_{n-1}.$
\end{prop}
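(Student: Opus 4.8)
The plan is to specialize Theorem~\ref{Restrictge2}(1) to $F=Lie^{(2)}$ and then make the resulting error term explicit. Concretely, since $Lie^{(2)}_1=p_1$ the rule $H[X+Y]=H[X]\,H[Y]$ gives
\[
H[Lie^{(2)}]=H[p_1]\cdot H[Lie^{(2)}_{\geq 2}]=H\cdot\sum_{n\geq 0}\alpha_n,
\]
and since $H\cdot E^{\pm}=1$ this rearranges to $\sum_{n\geq 0}\alpha_n=E^{\pm}\cdot H[Lie^{(2)}]$; the whole proposition is just the degree-$n$ form of this identity once $H[Lie^{(2)}]$ is known.

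The key input is therefore that generating function. Summing the second identity of equation~(\ref{TotalCoh}) of Theorem~\ref{Compare} over all $n$ (the cases $n\leq 1$ being trivial) yields
\[
H[Lie^{(2)}]=\sum_{\lambda:\,\lambda_i=2^{a_i}}p_\lambda=\prod_{k\geq 0}(1-p_{2^k})^{-1}=\frac{\widetilde G}{1-p_1},\qquad \widetilde G:=\prod_{k\geq 1}(1-p_{2^k})^{-1},
\]
and $\widetilde G=\sum_{m\geq 0}g_{2m}$ is exactly the generating series of the $g_{2m}$, since any partition all of whose parts are powers of $2$ exceeding $1$ has even size. Multiplying $\sum_n\alpha_n=E^{\pm}H[Lie^{(2)}]$ through by $(1-p_1)$ gives $(1-p_1)\sum_n\alpha_n=E^{\pm}\widetilde G$; extracting the degree-$n$ term, the left side is $\alpha_n-p_1\alpha_{n-1}$ (with $\alpha_{-1}=0$) and the right side is $\sum_{r+2m=n}(-1)^re_rg_{2m}=(-1)^n\sum_{i\geq 0}e_{n-2i}g_{2i}=(-1)^n\sigma_n$, which is the asserted recursion. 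The values $\alpha_0=1$ and $\alpha_1=0$ are immediate because $Lie^{(2)}_{\geq 2}$ is concentrated in degrees $\geq 2$, so $H[Lie^{(2)}_{\geq 2}]=1+(\text{terms of degree}\geq 2)$.

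It remains to check the two assertions about $\sigma_n$. If $\lambda\vdash n$ with $n\geq 1$ and no part of $\lambda$ equal to $1$, then $\lambda\neq(1^n)$, so $\mathrm{ch}^{-1}(p_\lambda)$ is the virtual character equal to $z_\lambda$ on the class of type $\lambda$ and $0$ elsewhere, hence vanishing at the identity; thus $g_n$ has virtual dimension $0$ for $n\geq 1$, while $g_0=1$. Since the Frobenius characteristic of an induction product of virtual modules of dimensions $d$ and $d'$ has dimension a positive integer multiple of $dd'$, each summand $e_{n-2i}g_{2i}$ with $i\geq 1$ contributes dimension $0$ to $\sigma_n$, whereas $e_ng_0=e_n$ contributes $1$; so $\sigma_n$ is the characteristic of a one-dimensional virtual representation. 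For the restriction, recall that $\mathrm{ch}(\mathrm{Res}^{S_n}_{S_{n-1}}V)=p_1^{\perp}\,\mathrm{ch}(V)$, where $p_1^{\perp}=\partial/\partial p_1$ is a derivation on the ring of symmetric functions; it annihilates each $g_{2m}$ (a polynomial in $p_2,p_4,\dots$) and sends $e_m$ to $e_{m-1}$, so $p_1^{\perp}\sigma_n=\sum_{i\geq 0}e_{n-1-2i}g_{2i}=\sigma_{n-1}$, as claimed.

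There is no substantive obstacle beyond careful bookkeeping: one must read off $H[Lie^{(2)}]=\prod_{k\geq 0}(1-p_{2^k})^{-1}$ from Theorem~\ref{Compare} rather than the much simpler $E[Lie^{(2)}]=(1-p_1)^{-1}$ coming from~(\ref{SymExt}), and keep straight that the inverse of $H$ is $E^{\pm}$ (not $H^{\pm}$). With these in hand the proposition is a direct unwinding of Theorem~\ref{Restrictge2}(1) for $F=Lie^{(2)}$.
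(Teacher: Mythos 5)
Your proposal is correct and follows essentially the paper's own route: the paper simply invokes Theorem \ref{Restrictge2}(1) with the input $H[Lie^{(2)}]=\prod_{k\geq 0}(1-p_{2^k})^{-1}$ from Part (3) of Theorem \ref{PlInv-E}, which is exactly the computation you carry out. What you do is inline the proof of that general theorem for $F=Lie^{(2)}$ (the step $(1-p_1)\sum_n\alpha_n=E^{\pm}\widetilde G$ and the dimension/restriction checks for $\sigma_n$ via $p_1^{\perp}$), so no new idea is needed and the argument is sound.
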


The first few virtual representations (symmetric functions) $\sigma_n$ are given below:
\begin{ex}  We have 
\begin{align*} &\sigma_0=1, \sigma_2=e_2+p_2=s_{(2)},\\
&\sigma_3=e_3+e_1p_2=2s_{(1^3)}+s_{(3)},\\
&\sigma_4=2s_{(4)}-s_{(3,1)}+s_{(2^2)},\\
&\sigma_5=2s_{(5)}-s_{(3,1^2)}+s_{(2^2,1)},\\ 
&\sigma_6=2 s_{(6)}+2s_{(4,2)}-2s_{(4,1^2)} -2s_{(3^2)} +s_{(3,1^3)} +2s_{(2^3)} -s_{(2^2, 1^2)}.
\end{align*}
\end{ex}

Note that the analogous recurrence for the exterior powers $E[Lie^{(2)}_{\geq 2}]|_{{\rm deg\ }n}, n\geq 0$, namely, part (2) of Theorem \ref{Restrictge2}, has already been stated in (1) of Theorem \ref{Lie2Hodge}.

As noted at the end of Section 5, in the $Lie$ case, Hersh and Reiner use the corresponding (much simpler) recurrences for $H[Lie_{\geq 2}]|_{{\rm deg\ }n}, n\geq 0$ and $E[Lie_{\geq 2}]|_{{\rm deg\ }n}, n\geq 0$ to derive explicit formulas for the decomposition into irreducibles for each of these representations.  

There is yet another feature of the Lie representation which seems to be shared to some extent by $Lie_n^{(2)}$.
Recall that $Lie_{n-1}\otimes \textbf{ sgn}$ admits a lifting $W_{n}$ which is a true $S_n$-module, the Whitehouse module, appearing in many different contexts \cite{RWh}, \cite{Wh},  whose Frobenius characteristic  is given by 
${\rm ch\,} W_{n}=p_1 \omega(Lie_{n-1}) -\omega(Lie_{n}).$  (See also \cite[Solution to Exercise 7.88 (d)]{St4EC2} for more extensive references.)

One can ask if the same construction for $Lie_n^{(2)}$ yields a true 
$S_{n}$-module.  Clearly one obtains a possibly virtual module which restricts to $Lie_{n-1}^{(2)}$ as an $S_{n-1}$-module. We have the following conjecture, verified in Maple (with Stembridge's  SF package) up to $n=32:$

\begin{conj} The symmetric function $p_1 Lie_{n-1}^{(2)}- Lie_n^{(2)}$ 
is Schur-positive if and only if $n$ is NOT a power of 2.  Equivalently, 
$ Lie_{n-1}^{(2)}\uparrow^{S_n}- Lie_n^{(2)}$ is a true $S_n$-module which lifts $ Lie_{n-1}^{(2)},$ if and only if $n$ is not a power of 2.
\end{conj}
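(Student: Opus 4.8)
The plan is to work entirely with Frobenius characteristics and reduce the question to a statement about the power-sum expansion of $Lie_n^{(2)}$. Recall from Definition \ref{defLieSup2} and the remarks following it that the character of $Lie_n^{(2)}$ is supported on conjugacy classes whose cycles all have equal length; combined with the explicit formula \eqref{definef_n}--\eqref{definepolyf_n} (with the appropriate $\psi$), one gets $Lie_n^{(2)} = \frac{1}{n}\sum_{d\mid n}\psi(d)\,p_d^{n/d}$ for a suitable multiplicative-type function $\psi$ determined by the $2$-adic valuation. The first step is therefore to pin down $\psi$ precisely (it will be a Möbius-type function modified at the primes dividing out the $2$-part of $n$), and then to compute $p_1 Lie_{n-1}^{(2)} - Lie_n^{(2)}$ in the power-sum basis. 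The point of passing to this basis is that $p_1 p_d^{(n-1)/d}$ is itself a power-sum $p_\mu$ with $\mu = (d^{(n-1)/d},1)$, so the whole difference is an explicit integer combination $\sum_\mu c_\mu p_\mu$, and Schur-positivity becomes the statement that this virtual character is an honest character, i.e. that $\sum_\mu c_\mu z_\mu^{-1}\chi^\lambda(\mu) \ge 0$ for every $\lambda\vdash n$.

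Rather than attack that inner-product inequality directly, I would instead exploit the established machinery: by Theorem \ref{Compare}, equation \eqref{SymExt} in the form \eqref{Ext}, we have $p_1^n = \sum_{\lambda\vdash n} E_\lambda[Lie^{(2)}]$, and more to the point the "meta theorem" of Section 5 together with Theorem \ref{EquivLieSup2} expresses the relevant generating functions as products $\prod_{k\ge 0}(1-p_{2^k})^{-1}$ and the like. The key structural input is \eqref{TotalCoh}: $\sum_{\lambda\vdash n} H_\lambda[Lie^{(2)}] = \sum_{\lambda\vdash n,\ \lambda_i = 2^{a_i}} p_\lambda$, whose right side is manifestly the characteristic of a genuine (permutation-type) module. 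I would try to realize $p_1 Lie_{n-1}^{(2)} - Lie_n^{(2)}$ as a difference of two such manifestly-positive pieces, in parallel with the Whitehouse-module identity ${\rm ch}\,W_n = p_1\omega(Lie_{n-1}) - \omega(Lie_n)$ and its proof via the acyclicity of the partition lattice (equivalently \eqref{AcycEH} for $Lie^{(2)}$). Concretely: $p_1 Lie_{n-1}^{(2)}$ is the induction $Lie_{n-1}^{(2)}\uparrow_{S_{n-1}}^{S_n}$, so one needs to show that the adjoint/branching map $Lie_n^{(2)}\downarrow_{S_{n-1}}$ "almost" equals $Lie_{n-1}^{(2)}$, the defect being governed by exactly the obstruction at powers of $2$.

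The cleanest route, which I would pursue first, is an analogue of the classical fact that $Lie_n\downarrow_{S_{n-1}}$ carries the regular-minus-trivial type corrections: write $1-p_1 = H^{\pm}[Lie^{(2)}]$ from \eqref{PlInvLieSup2}, differentiate or apply the "cut a part" operator $p_1^\perp$ to this identity, and read off a recursion for $p_1^\perp Lie_n^{(2)}$ in terms of lower $Lie_k^{(2)}$'s. One should then see that $p_1 Lie_{n-1}^{(2)} - Lie_n^{(2)}$ is a non-negative integer combination of products $p_\lambda$ with all parts powers of $2$ (Schur-positive by the Pieri rule / since such $p_\lambda$ with distinct exponents are characters of Young-subgroup inductions of sign-twisted trivials), *except* that when $n$ itself is a power of $2$ the leading term $-\frac{1}{n}p_1^n = -\frac{1}{n}h_1^n$ survives with a negative coefficient and cannot be cancelled — this is where the "$n$ not a power of $2$" hypothesis enters, and conversely when $n = 2^k$ one exhibits a single $\lambda$ (e.g. $\lambda = (n)$ or $(1^n)$) on which the inner product is negative, proving the "only if" direction. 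The main obstacle is the positivity bookkeeping in the middle range: showing that after the one bad term is removed (when $n$ is not a $2$-power), every remaining coefficient in the power-sum expansion assembles into a Schur-positive combination. I expect this to require either an explicit combinatorial model for the coefficients $c_\mu$ (via counting necklaces/Lyndon-type words with a $2$-parity constraint) or an inductive argument chaining the recursion for $p_1^\perp Lie_n^{(2)}$ together with Theorem \ref{Restrictge2LieSup2}-style restriction identities; pinning down that induction, including the base cases $n=3,5,6,7$ already tabulated, is the heart of the proof.
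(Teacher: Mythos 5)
There is a genuine gap, and it is worth being clear about its nature: the statement you are addressing is stated in the paper as a \emph{conjecture}. The paper proves only the easy (``only if'') direction and reports that the Schur-positivity direction has been checked by computer up to $n=32$; it is not proved there. Your proposal does not close that direction either: the step you yourself call ``the heart of the proof'' --- the positivity bookkeeping after setting up a recursion for $p_1^{\perp}Lie_n^{(2)}$ from $H^{\pm}[Lie^{(2)}]=1-p_1$ --- is precisely the open content of the conjecture, so what you have is a plan rather than a proof. Moreover, two intermediate claims the plan leans on are incorrect. First, the coefficient of $p_1^{n}$ in $p_1\,Lie_{n-1}^{(2)}-Lie_n^{(2)}$ is $\tfrac{1}{n-1}-\tfrac{1}{n}>0$ for every $n$, so there is no ``leading term $-\tfrac{1}{n}p_1^n$ surviving when $n$ is a power of $2$''; the failure at $n=2^k$ is not visible as a negative coefficient in the power-sum basis at all, but only at the level of multiplicities of irreducibles. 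Second, your positivity mechanism is unsound: an individual $p_\lambda$, even with all parts powers of $2$, is not Schur-positive and is not the characteristic of a Young-subgroup induction (e.g. $p_m=\sum_{r}(-1)^r s_{(m-r,1^r)}$), so exhibiting nonnegative power-sum coefficients would not by itself yield Schur positivity; the sums that are Schur-positive in the paper, such as $\sum_{\lambda\vdash n,\ \lambda_i=2^{a_i}}p_\lambda$, are so because they are characteristics of genuine induced modules, not term by term.

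For the direction that can be proved, the paper's argument is short and you should adopt it (it works for $n=2^k$ with $k\geq 2$): since $n-1$ is odd, $Lie_{n-1}^{(2)}=Lie_{n-1}$, while $Lie_n^{(2)}=Conj_n={\rm ch}\,\mathbf{1}\uparrow_{C_n}^{S_n}$ contains the trivial representation exactly once; since $Lie_{n-1}$ contains no trivial constituent, Frobenius reciprocity shows $p_1\,Lie_{n-1}^{(2)}=Lie_{n-1}\uparrow^{S_n}$ contains none, so the trivial character occurs with multiplicity $-1$ in the difference. Your parenthetical alternative test $\lambda=(1^n)$ does not work: by the result of Swanson quoted in Section 4, the sign representation does not occur in $Conj_n$ for even $n$, and it does not occur in $Lie_{n-1}\uparrow^{S_n}$ for $n\geq 4$ either, so the sign coefficient in the difference is $0$, not negative; only $\lambda=(n)$ detects the failure, and that is exactly the paper's argument.
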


One direction of this conjecture is easy to verify.  Let $n=2^k.$ Then 
$n-1$ is odd, so $Lie_{n-1}^{(2)}=Lie_n.$ Also $Lie_n^{(2)}={\rm ch\,} \textbf{ 1}\uparrow_{C_n}^{S_n}= Conj_n,$ i.e. $Lie_n^{(2)}$ is just the permutation module afforded by the conjugacy action on the class of $n$-cycles of $S_n.$  Consequently it contains the trivial representation (exactly once). But it is well known that $Lie_n$ never contains the trivial representation, and hence, when $n$ is a power of 2,  the trivial module appears with negative multiplicity $(-1)$ in $p_1 Lie_{n-1}^{(2)}- Lie_n^{(2)}.$

We conclude this section with the following observation, which allows us to compute the character values of  $Lie_n^{(2)}$ directly from those of $Lie_n.$  

\begin{thm}  $Lie_n^{(2)}$ is the degree $n$ term in the plethysm
$$\sum_{k\geq 0} Lie[p_{2^k}],$$ and $Lie_n$ is the degree $n$ term in $Lie^{(2)}-Lie^{(2)}[p_2].$  In particular 
$Lie_n^{(2)}=Lie_n$ if $n$ is odd, and coincides with the sign tensored with $Lie_n$ if $n$ is twice an odd number.
\end{thm}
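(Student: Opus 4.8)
The plan is to prove the first identity $Lie^{(2)}=\sum_{k\ge 0}Lie[p_{2^k}]$ by comparing power-sum expansions, and then to deduce the second identity and the ``in particular'' clause as formal consequences. Throughout write $v(n)$ for the $2$-adic valuation of $n$ (the exponent $k_n$ of Definition~\ref{defLieSup2}). Recall the classical expansion of a character induced from the cyclic group $C_n$: writing $\chi^{(j)}$ for the character sending a chosen generator to $\exp(2\pi i j/n)$, one has
\[
\mathrm{ch}\bigl(\chi^{(j)}\uparrow_{C_n}^{S_n}\bigr)=\frac1n\sum_{d\mid n}c_d(j)\,p_d^{n/d},
\qquad c_d(j)=\sum_{\gcd(a,d)=1}\exp(2\pi i aj/d),
\]
with $c_d(j)$ the Ramanujan sum. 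Since $c_d(1)=\mu(d)$ this recovers $Lie_n=\frac1n\sum_{d\mid n}\mu(d)\,p_d^{n/d}$, and by definition $Lie_n^{(2)}=\frac1n\sum_{d\mid n}c_d(2^{v(n)})\,p_d^{n/d}$ --- a symmetric function of the form $\frac1n\sum_{d\mid n}\psi(d)p_d^{n/d}$ of the kind considered in this paper.

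The first key step is to evaluate $c_d(2^{v(n)})$ for $d\mid n$. Since $d\mid n$ forces $v(d)\le v(n)$, we have $\gcd(d,2^{v(n)})=2^{v(d)}$, so the standard closed-form evaluation of Ramanujan sums, $c_d(j)=\mu\bigl(d/\gcd(d,j)\bigr)\,\phi(d)/\phi\bigl(d/\gcd(d,j)\bigr)$, gives
\[
c_d(2^{v(n)})=\gamma(d):=\begin{cases}\mu(d),& d\text{ odd},\\ 2^{v(d)-1}\,\mu\bigl(d/2^{v(d)}\bigr),& d\text{ even}.\end{cases}
\]
The essential feature is that $\gamma(d)$ depends only on $d$, not on $n$; hence $Lie_n^{(2)}=\frac1n\sum_{d\mid n}\gamma(d)p_d^{n/d}$ for all $n$, i.e.\ $Lie_n^{(2)}$ is the module attached to the single arithmetic weight $\psi=\gamma$.

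The second step is to package these $p$-expansions into logarithmic generating functions. Summing over $n$ and reindexing $n=d\ell$ gives $Lie^{(2)}=-\sum_{d\ge1}\tfrac{\gamma(d)}{d}\log(1-p_d)$, and the identical manipulation with $\psi=\mu$ gives $Lie=-\sum_{d\ge1}\tfrac{\mu(d)}{d}\log(1-p_d)$. Since $f\mapsto f[p_{2^k}]$ is a ring endomorphism and $p_e[p_{2^k}]=p_{2^k e}$, applying it to the latter gives $Lie[p_{2^k}]=-\sum_{d\ge1}\tfrac{\mu(d)}{d}\log(1-p_{2^kd})$. Summing over $k\ge0$ and collecting the coefficient of $\log(1-p_m)$ for each $m$, the whole claim reduces to the arithmetic identity $\sum_{2^kd=m}\tfrac{\mu(d)}{d}=\tfrac{\gamma(m)}{m}$, which is immediate once one observes that only $d=m/2^{v(m)}$ and $d=2m/2^{v(m)}$ contribute a nonzero $\mu(d)$. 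This proves $\sum_{k\ge0}Lie[p_{2^k}]=-\sum_{m\ge1}\tfrac{\gamma(m)}{m}\log(1-p_m)=Lie^{(2)}$.

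The remaining statements are then formal. Applying plethysm by $p_2$ to the first identity, using associativity of plethysm, additivity of $f\mapsto f[p_2]$, and $p_{2^k}[p_2]=p_{2^{k+1}}$, gives $Lie^{(2)}[p_2]=\sum_{k\ge1}Lie[p_{2^k}]=Lie^{(2)}-Lie[p_1]=Lie^{(2)}-Lie$, i.e.\ $Lie=Lie^{(2)}-Lie^{(2)}[p_2]$, and taking the degree-$n$ term gives the corresponding statement for $Lie_n$. Finally, the degree-$n$ part of the first identity reads $Lie_n^{(2)}=\sum_{0\le k\le v(n)}Lie_{n/2^k}[p_{2^k}]$: for $n$ odd only $k=0$ survives, so $Lie_n^{(2)}=Lie_n$; for $n=2m$ with $m$ odd only $k=0,1$ survive, so $Lie_n^{(2)}=Lie_{2m}+Lie_m[p_2]$, and a short computation with $\omega(p_r)=(-1)^{r-1}p_r$ identifies this with $\omega(Lie_{2m})$, the Frobenius characteristic of $\mathrm{sgn}\otimes Lie_{2m}$. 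I expect the only real labor to be the two pieces of arithmetic bookkeeping --- the evaluation of $c_d(2^{v(n)})$ and the M\"obius identity $\sum_{2^kd=m}\mu(d)/d=\gamma(m)/m$ --- with the single load-bearing conceptual point being the $n$-independence of $c_d(2^{v(n)})$ for $d\mid n$, which is exactly what makes the logarithmic generating function for $Lie^{(2)}$ well defined and the two sides comparable coefficient by coefficient.
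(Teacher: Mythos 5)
Your proof is correct, but it takes a genuinely different route from the paper's. The paper deduces the theorem in one line from its central identity $H[Lie]=E[Lie^{(2)}]=(1-p_1)^{-1}$ (Thrall's theorem together with Theorem \ref{PlInv-E}, as packaged in Theorem \ref{Compare}), fed into the general equivalence of Proposition \ref{Pleth3}; that proposition in turn rests on Lemma \ref{pm}, Cadogan's formula, and the plethystic inverse pair $p_1-p_2$ and $\sum_{k\geq 0}p_{2^k}$ of Proposition \ref{Pleth1}. You instead argue directly on power-sum expansions: Foulkes' theorem and the closed form of the Ramanujan sum give the $n$-independent weight $\gamma(d)=\phi(2^{v(d)})\mu\bigl(d/2^{v(d)}\bigr)$ (exactly the computation the paper carries out after Definition \ref{SetofPrimes} to obtain equation (\ref{FoulkesPrimes1})), and then the logarithmic generating function of Proposition \ref{Su1Prop3.1} reduces $\sum_{k\geq 0}Lie[p_{2^k}]=Lie^{(2)}$ to the elementary M\"obius identity $\sum_{2^k d=m}\mu(d)/d=\gamma(m)/m$, which you verify correctly; the second identity and the odd/twice-odd cases then follow formally as you say, and your sign-twist computation via $\omega(p_r)=(-1)^{r-1}p_r$ does identify $Lie_{2m}+Lie_m[p_2]$ with $\omega(Lie_{2m})$ for $m$ odd. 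Your argument is close in spirit to the verification the paper relegates to Example \ref{DualityEx} (the ``see also'' in its proof), but as a primary proof it is self-contained and elementary: it uses neither the new decomposition $E[Lie^{(2)}]=(1-p_1)^{-1}$ nor Cadogan's inverse, and indeed, combined with Thrall's theorem and Proposition \ref{Pleth3}, it could be run in reverse to give an independent derivation of $E[Lie^{(2)}]=(1-p_1)^{-1}$. What the paper's route buys is brevity and the conceptual point that the statement is equivalent to the coincidence of the symmetric and exterior power generating functions of $Lie$ and $Lie^{(2)}$; what yours buys is independence from that machinery at the cost of some arithmetic bookkeeping, all of which checks out.
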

\begin{proof}  This follows from Proposition \ref{Pleth3}, Section 6, since $H[Lie]=E[Lie^{(2)}]$  by Theorem \ref{Compare}.  See also the example following Proposition \ref{Pleth4}.\end{proof}

This yields  the following curious $S_n$-module isomorphism, which gives a recursive definition of $Lie_n^{(2)}:$
\begin{prop}\label{FindingLieSup2}  When $n$ is even:
$$ Lie_n\oplus {\mathbf 1}_{S_2}[Lie_{\frac{n}{2}}^{(2)}]\uparrow_{S_2[S_{\frac{n}{2}}]}^{S_n}\ 
\simeq\,  Lie_n^{(2)} \oplus { \bf sgn}_{S_2}[Lie_{\frac{n}{2}}^{(2)}]\uparrow_{S_2[S_{\frac{n}{2}}]}^{S_n},$$ 
where $S_2[S_{\frac{n}{2}}]$ is the wreath product of $S_2$ with $S_{\frac{n}{2}}$  (i.e. the normaliser of $S_{\frac{n}{2}}\times S_{\frac{n}{2}}$).
If $n$ is odd, this identity simply reduces to the  fact that 
$Lie_n$ and $Lie_n^{(2)}$ coincide.
\end{prop}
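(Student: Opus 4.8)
The plan is to derive this from the identity $H[Lie] = E[Lie^{(2)}]$ established in Theorem \ref{Compare}, restricted to its degree-$n$ component. First I would recall the expansion $H[Lie] = \sum_{\lambda \in Par} H_\lambda[Lie]$ and $E[Lie^{(2)}] = \sum_{\lambda \in Par} E_\lambda[Lie^{(2)}]$. Isolating the degree-$n$ part and peeling off the terms coming from the partition $\lambda = (1^n)$ (which contribute $h_n[Lie_1] = h_n[p_1] = h_n$ on the left, but actually we want a cleaner split) is not quite the right move; instead the cleaner approach is to use the recursive structure of $H$ and $E$ under the substitution $p_2$, exactly as in the preceding Theorem stating $Lie_n^{(2)}$ is the degree $n$ term in $\sum_{k\geq 0} Lie[p_{2^k}]$.

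Concretely, here is the plan. One writes $H[Lie] = E[Lie^{(2)}]$ and observes that $Lie^{(2)}$ decomposes by parity: the degree-$n$ component of $Lie^{(2)}$ equals that of $Lie$ when $n$ is odd, and $Lie^{(2)}[p_2]$ captures precisely the "even" contribution via the substitution $p_1 \mapsto p_2$. The key combinatorial input is the identity $E = H \cdot (\text{correction involving } p_2)$; more usefully, one uses the standard fact that $H = E^{\pm}[-p_1]^{-1}$-type manipulations reduce everything to: among the partitions $\lambda$ of $n$, those with all parts equal to $2$ times something get matched up. So I would split the sum $\sum_{\lambda \vdash n} E_\lambda[Lie^{(2)}]$ according to whether the partition $\lambda$ has exactly one part equal to $\tfrac{n}{2}$ appearing with multiplicity $2$ and no other parts — wait, this is getting complicated; the honest route is the following.

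Use the $S_2$-plethysm decomposition: for any symmetric function $g$ of even degree $n$, one has $h_2[g_{n/2}] + e_2[g_{n/2}] = g_{n/2}^2 = p_1^2[g_{n/2}]$-type relations and, more to the point, $e_2[f] = \tfrac12(f^2 - f[p_2])$ while $h_2[f] = \tfrac12(f^2 + f[p_2])$. Applying this with $f = Lie^{(2)}$ and comparing to the PBW-style identity, the degree-$n$ term of $H[Lie]$ minus the degree-$n$ term of $E[Lie^{(2)}]$ — which is zero — unpacks, after isolating the "length one in the outer plethysm" contributions, into exactly $Lie_n - Lie_n^{(2)} + \bigl(h_2[Lie_{n/2}^{(2)}] - e_2[Lie_{n/2}^{(2)}]\bigr)$ evaluated as symmetric functions, and $h_2[f] - e_2[f] = f[p_2]$. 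Since $f[p_2] = Lie_{n/2}^{(2)}[p_2]$ has Frobenius characteristic $\mathrm{ch}$ of the induced module $\mathbf{1}_{S_2}[Lie_{n/2}^{(2)}]\uparrow - \mathbf{sgn}_{S_2}[Lie_{n/2}^{(2)}]\uparrow$ with a sign (indeed $h_2[f] = \mathrm{ch}$ of $\mathbf{1}_{S_2}[\cdot]\uparrow$ and $e_2[f] = \mathrm{ch}$ of $\mathbf{sgn}_{S_2}[\cdot]\uparrow$), rearranging gives precisely the claimed isomorphism. For $n$ odd, $p_2$ substitution produces no degree-$n$ terms, so the identity collapses to $Lie_n = Lie_n^{(2)}$.

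The main obstacle I anticipate is bookkeeping the "first-order" part of the plethysms $H[Lie]$ and $E[Lie^{(2)}]$ correctly: one must verify that all contributions of outer-plethysm length $\geq 3$ (i.e. from partitions $\lambda$ with $\ell(\lambda) \geq 3$ in the expansions of $H$ and $E$ applied to $Lie$ and $Lie^{(2)}$) cancel in pairs between the two sides, leaving only the length-$1$ and the single relevant length-$2$ term $h_2[Lie^{(2)}_{n/2}]$ versus $e_2[Lie^{(2)}_{n/2}]$. This cancellation should follow by induction on $n$ from Proposition \ref{Pleth3} together with the recursive nature of the identity $\sum_{k\ge 0}Lie[p_{2^k}]$, but making the induction airtight — in particular tracking that the degree-$n$ parts of $H[Lie_{\ge 2}]$ and $E[Lie^{(2)}_{\ge 2}]$ agree with the appropriate shifts — is where the real care is needed. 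Once that reduction is in hand, the final rearrangement into the stated $S_n$-module isomorphism is purely formal, using that $h_2 = \mathrm{ch}(\mathbf 1_{S_2})$ and $e_2 = \mathrm{ch}(\mathbf{sgn}_{S_2})$ and the definition of plethysm as induction from a wreath product.
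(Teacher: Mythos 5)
Your closing rearrangement is exactly right, and it is the heart of the matter: $h_2[f]-e_2[f]=f[p_2]$, $h_2[f]=\mathrm{ch}\,\mathbf{1}_{S_2}[\cdot]\uparrow_{S_2[S_{n/2}]}^{S_n}$, $e_2[f]=\mathrm{ch}\,\textbf{sgn}_{S_2}[\cdot]\uparrow_{S_2[S_{n/2}]}^{S_n}$, so the proposition is equivalent to the single symmetric-function identity $Lie_n=Lie_n^{(2)}-Lie_{n/2}^{(2)}[p_2]$ for $n$ even (and to $Lie_n=Lie_n^{(2)}$ for $n$ odd). The gap is in how you propose to reach that identity. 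The mechanism you describe — expand $H[Lie]|_{\deg n}=E[Lie^{(2)}]|_{\deg n}$ by outer length and argue that all contributions of length $\geq 3$ "cancel in pairs," leaving only the length-one terms and the single pair $h_2[Lie^{(2)}_{n/2}]$ versus $e_2[Lie^{(2)}_{n/2}]$ — is false as stated. Already at $n=4$ the two expansions contribute, besides $Lie_4$ and $Lie_4^{(2)}$, the terms $Lie_3p_1$, $h_2[Lie_2]=h_2[e_2]$, $Lie_2h_2$, $h_4$ on one side and $Lie_3p_1$, $e_2[Lie_2^{(2)}]=e_2[h_2]$, $Lie_2^{(2)}e_2$, $e_4$ on the other: the $(1^4)$-terms $h_4$ and $e_4$ do not cancel, and the length-two terms that actually occur are $h_2[Lie_2]$ and $e_2[Lie^{(2)}_2]$, not the pair $h_2[Lie^{(2)}_2]$, $e_2[Lie^{(2)}_2]$ that your target identity requires. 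No grading by outer length is preserved between the two sides, so no induction on $n$ along these lines will close the argument.

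The repair is the tool you name but do not actually deploy: Proposition \ref{Pleth3} (whose proof is just composition with the plethystic inverse pair $p_1-p_2$ and $\sum_{k\geq 0}p_{2^k}$ from Proposition \ref{Pleth2}) converts $H[Lie]=E[Lie^{(2)}]$ wholesale into $Lie=Lie^{(2)}-Lie^{(2)}[p_2]$, with no degree-by-degree bookkeeping and no induction; this is exactly the content of the theorem immediately preceding Proposition \ref{FindingLieSup2}, which the paper cites as the source of the proposition. Restricting that identity to degree $n$ gives $Lie_n=Lie_n^{(2)}-Lie_{n/2}^{(2)}[p_2]$ when $n$ is even (the plethysm by $p_2$ contributes nothing in odd degree, giving $Lie_n=Lie_n^{(2)}$ there), and then your final paragraph — $f[p_2]=h_2[f]-e_2[f]$ together with the wreath-product interpretation of $h_2[\cdot]$ and $e_2[\cdot]$ — finishes the proof. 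So keep the last step, discard the cancellation scheme, and route the first step through Proposition \ref{Pleth3} directly.
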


The module $Lie_n^{(2)}$ makes an appearance in the decomposition of the module $Conj_n$ of the conjugacy action on the class of $n$-cycles as well.   Again we have the following contrasting results between $Lie$ and $Lie^{(2)}$. 
\begin{thm}\label{ConjLieSup2}
\begin{equation}\label{ConjLiepowsums}\sum_n Conj_n=\sum_{k\geq 1}p_{k}[Lie],\end{equation} 
and hence the sum on the right is Schur-positive.
Equivalently,
\[Lie=\sum_{k\geq 1} \mu(k) p_{k}[Conj]  \]
\begin{equation}\label{ConjLieSup2oddpowsums}\sum_n Conj_n=\sum_{k\geq 1}p_{2k-1}[Lie^{(2)}],\end{equation} 
and hence the sum on the right is Schur-positive.
Equivalently,
\[Lie^{(2)}=\sum_{k\geq 1} \mu(2k-1) p_{2k-1}[Conj]  \]
\end{thm}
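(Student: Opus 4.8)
The plan is to pass to Frobenius characteristics and work in the completed ring $\prod_{n}\Lambda^{n}$ of symmetric functions, using that both $Conj_n$ and $Lie_n$ are instances of \eqref{definef_n}: by Klyachko's formula $\mathrm{ch}\,Lie_n=\tfrac1n\sum_{d\mid n}\mu(d)p_d^{n/d}$ (so $\psi=\mu$), while from $Conj_n\simeq{\mathbf 1}\uparrow_{C_n}^{S_n}$ and the cycle index of $C_n$ one gets $\mathrm{ch}\,Conj_n=\tfrac1n\sum_{d\mid n}\phi(d)p_d^{n/d}$ (so $\psi$ is the Euler totient). The only machinery required is that $p_k[-]$ is a ring homomorphism with $p_k[p_d]=p_{kd}$, that plethysm is associative so $p_k[p_j[F]]=p_{kj}[F]$, and the two elementary arithmetic facts $\sum_{d\mid e}\mu(d)/d=\phi(e)/e$ and unique factorisation $m=2^{a}\cdot(\text{odd})$.

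\textbf{The $Lie$ identities.} I would establish $\sum_{n}Conj_n=\sum_{k\ge1}p_k[Lie]$ by comparing power-sum coefficients. Expanding the right side as $\sum_{k\ge1}\sum_{j\ge1}\tfrac1j\sum_{d\mid j}\mu(d)p_{kd}^{\,j/d}$, a target monomial $p_e^{\,m}$ forces $kd=e$ and $j/d=m$, so $d$ runs over divisors of $e$ with $k=e/d$, $j=dm$, and its coefficient is $\tfrac1m\sum_{d\mid e}\mu(d)/d=\tfrac1m\cdot\phi(e)/e$; on the left $p_e^{\,m}$ occurs only for $n=em$, with coefficient $\tfrac1{em}\phi(e)$, and the two agree. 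Since $p_k[Lie]$ has lowest degree $k$, these sums are degreewise finite, so the comparison is legitimate. (Equivalently, both sides equal $-\sum_{e\ge1}\tfrac{\phi(e)}{e}\log(1-p_e)$ after taking logarithms, which exhibits $\sum_{d\mid e}\mu(d)/d=\phi(e)/e$ transparently.) The Möbius-inverted form $Lie=\sum_{k\ge1}\mu(k)p_k[Conj]$ follows formally: applying $\sum_{k\ge1}\mu(k)p_k[-]$ to $Conj=\sum_{k\ge1}p_k[Lie]$ and using associativity gives $\sum_{j,k\ge1}\mu(k)p_{kj}[Lie]=\sum_{n\ge1}\big(\sum_{k\mid n}\mu(k)\big)p_n[Lie]=p_1[Lie]=Lie$, and running this backwards yields the converse. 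Schur-positivity of $\sum_{k\ge1}p_k[Lie]$ is immediate because $\sum_n Conj_n$ is a sum of genuine permutation modules.

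\textbf{The $Lie^{(2)}$ identities.} Here I would reduce to the previous case via the plethystic identity $Lie^{(2)}=\sum_{a\ge0}Lie[p_{2^{a}}]=\sum_{a\ge0}p_{2^{a}}[Lie]$ proved just above (recall $p_m[F]=F[p_m]$). Writing $\sum_{k\ge1}p_{2k-1}[-]$ as a sum over odd indices,
\[
\sum_{k\text{ odd}}p_k[Lie^{(2)}]=\sum_{k\text{ odd}}\sum_{a\ge0}p_k\big[p_{2^{a}}[Lie]\big]=\sum_{k\text{ odd}}\sum_{a\ge0}p_{k2^{a}}[Lie]=\sum_{m\ge1}p_m[Lie]=\sum_n Conj_n ,
\]
the third equality being unique factorisation $m=k2^{a}$ with $k$ odd, and the last the $Lie$ identity above. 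The Möbius form $Lie^{(2)}=\sum_{k\ge1}\mu(2k-1)p_{2k-1}[Conj]$ follows exactly as before, now applying $\sum_{k\text{ odd}}\mu(k)p_k[-]$ and using that every divisor of an odd $n$ is odd, so $\sum_{k\mid n}\mu(k)=[n=1]$ still kills all terms but $p_1[Conj]$. Schur-positivity again follows from that of $\sum_n Conj_n$.

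\textbf{Expected difficulty.} I do not foresee a genuine obstacle: this is a short piece of plethystic bookkeeping together with two textbook number-theoretic facts. The points needing care are getting the orientation right in $Lie[p_{2^{a}}]=p_{2^{a}}[Lie]$ and in the associativity step $p_k[p_j[F]]=p_{kj}[F]$, and checking that all the infinite sums involved ($\sum_k p_k[Lie]$, $\sum_a p_{2^{a}}[Lie]$, and their images under $\sum_k\mu(k)p_k[-]$) are degreewise finite, so that coefficient extraction and reindexing are valid.
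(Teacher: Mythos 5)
Your proposal is correct and follows essentially the paper's own route: the first identity is the Dirichlet-convolution fact $\sum_{d\mid e}\mu(d)/d=\phi(e)/e$ read off from the power-sum expansions of $Conj_n$ and $Lie_n$ (the paper packages this as Theorem \ref{thm6.4}/Proposition \ref{prop6.6} via the log-product formula of Proposition \ref{Su1Prop3.1}, which you note parenthetically), while the second identity reduces to the first through $Lie^{(2)}=\sum_{a\geq 0}Lie[p_{2^a}]$ together with the factorisation $m=2^a\cdot(\mathrm{odd})$, exactly as in Theorem \ref{thm6.16}, and your direct M\"obius telescoping for the ``equivalently'' statements is the content of the inverse pairs (\ref{Pleth8b}), (\ref{Pleth8c}) cited by the paper. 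The only nitpick is the slip ``kills all terms but $p_1[Conj]$,'' which should read $p_1[Lie^{(2)}]$.
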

\begin{proof} See  equation (\ref{prop6.6eqn1})  of Proposition \ref{prop6.6} for the first identity, and Theorem \ref{thm6.16} for the second identity, in Section 4 of this paper.  The equivalence of the two statements in each case follows by using the plethystic inverse identities (\ref{Pleth8b}), (\ref{Pleth8c}) from Proposition \ref{Pleth8} in Section 6.
\end{proof}

\section{A class of symmetric functions indexed by subsets of primes}

In this section we generalise the definition of $Lie_n^{(2)}$ from Definition \ref{defLieSup2} to obtain  other variants of the representation $Lie_n.$  Although these variants do not share the remarkable properties of $Lie_n^{(2)}$, they do lead to new Schur positivity results; see Theorems \ref{Spositivity1},  \ref{Spositivity2}.  Our main tool here is 
the meta theorem Theorem \ref{metathm} from Section 5.

Recall two $S_n$-modules from Section 2.
In equation~(\ref{definef_n}), we take as a definition that  $Lie_n$ is  $f_n$ with the choice $\psi(d)=\mu(d)$  where $\mu$ is the number-theoretic M\"obius function.  Likewise we define $Conj_n$ to be $f_n$ with the choice $\psi(d)=\phi(d)$  where $\phi$ is the totient function, i.e. Euler's phi-function.  It is well known that $Lie_n$ is the Frobenius characteristic of  the induced representation $\exp(\frac{2i\pi}{n})\uparrow_{C_n}^{S_n},$ where $C_n$ is the cyclic group generated by an $n$-cycle in $S_n$ \cite{R}.  It is also well known that $Conj_n$ is the Frobenius characteristic of the 
induced representation $\textbf{ 1}\uparrow_{C_n}^{S_n},$ or equivalently, the conjugacy action of $S_n$ on the class of $n$-cycles.  

 More generally, a theorem of Foulkes  on the character values of representations induced from the cyclic subgroup $C_n$ of $S_n$ 
asserts Part (1) of the following (see also \cite[Ex. 7.88]{St4EC2}). We refer the reader to \cite{St4EC2} for the definition of the major index statistic on tableaux. 
\begin{thm}\label{Foulkes}   Let $\ell_n^{(r)}$ denote the Frobenius characteristic of the induced representation $\exp\left(\frac{2i\pi}{n}\cdot r\right)\big\uparrow_{C_n}^{S_n},$  $1\leq r  \leq n.$  Then 
\begin{enumerate}
\item \cite{F}
$$\ell_n^{(r)}=\dfrac{1}{n}  \sum_{d|n} \phi(d) \dfrac{\mu(\frac{d}{(d, r)})} {\phi(\frac{d}{(d, r)})} p_d^\frac{n}{d}.$$
\item (\cite{KW}, \cite{St4EC2}) The multiplicity of the Schur function $s_\lambda$ in the Schur function expansion of $\ell_n^{(r)}$ is the number of standard Young tableaux of shape $\lambda$ with major index congruent to $r$ modulo $n.$
\end{enumerate}
\end{thm}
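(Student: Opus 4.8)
The plan is to obtain both parts from a single direct computation of the Frobenius characteristic $\ell_n^{(r)}$: once $\ell_n^{(r)}$ is written explicitly in the power-sum basis, Part (1) is a classical number-theoretic identity, and Part (2) reduces by discrete Fourier inversion to a known evaluation of irreducible $S_n$-characters at powers of an $n$-cycle.

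For Part (1), first use the standard formula for the Frobenius characteristic of an induced representation: for a class function $\psi$ on a subgroup $H\le S_n$,
$$\mathrm{ch}\bigl(\psi\!\uparrow_H^{S_n}\bigr)=\frac{1}{|H|}\sum_{w\in H}\psi(w)\,p_{\rho(w)},$$
where $\rho(w)$ is the cycle type of $w$; this is proved in one line by expanding $\mathrm{ch}$ over $S_n$, substituting the definition of induction, and reindexing the resulting sum by pairs $(h,x)\in H\times S_n$. Apply this with $H=C_n=\langle c\rangle$ for $c$ an $n$-cycle and $\psi(c^j)=\zeta^{rj}$, where $\zeta=e^{2\pi i/n}$. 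Since $c^j$ has cycle type $\bigl((n/g)^{g}\bigr)$ with $g=\gcd(j,n)$, collecting the terms for which $n/g=d$ gives
$$\ell_n^{(r)}=\frac1n\sum_{d\mid n}\Bigl(\sum_{\substack{1\le k\le d\\ \gcd(k,d)=1}}e^{2\pi i rk/d}\Bigr)\,p_d^{\,n/d}.$$
The inner sum is the Ramanujan sum $c_d(r)$, and Hölder's classical evaluation $c_d(r)=\mu\bigl(d/(d,r)\bigr)\,\phi(d)\big/\phi\bigl(d/(d,r)\bigr)$ (itself a short Möbius-inversion argument) yields Part (1) verbatim.

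For Part (2), Frobenius reciprocity gives that the multiplicity of $s_\lambda$ in $\ell_n^{(r)}$ equals $\langle\chi^\lambda\!\downarrow_{C_n},\zeta^r\rangle_{C_n}$, and from the formula above, using $\langle p_\mu,s_\lambda\rangle=\chi^\lambda(\mu)$, this equals $\frac1n\sum_{j=0}^{n-1}\zeta^{rj}\chi^\lambda(c^j)=\frac1n\sum_{a=0}^{n-1}\zeta^{-ra}\chi^\lambda(c^a)$, the last step relabelling $a\equiv -j$ and using that $c^{-a}$ is conjugate to $c^a$ in $S_n$. On the other hand, writing $f^\lambda(q)=\sum_{T\in\mathrm{SYT}(\lambda)}q^{\operatorname{maj}(T)}$, inverse discrete Fourier transform expresses the target quantity $\#\{T:\operatorname{maj}(T)\equiv r\pmod n\}$ as $\frac1n\sum_{a=0}^{n-1}\zeta^{-ra}f^\lambda(\zeta^a)$. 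Comparing the two, Part (2) is \emph{equivalent} to the single identity
$$\chi^\lambda(c^a)=f^\lambda(\zeta^a)\qquad\text{for all }a,$$
i.e. the value of $\chi^\lambda$ at an element of cycle type $\bigl((n/\gcd(a,n))^{\gcd(a,n)}\bigr)$ is the fake-degree polynomial evaluated at the corresponding $n$-th root of unity.

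This last identity is the heart of the matter and the main obstacle; everything else is bookkeeping. The plan for it is to compare two closed forms. On the tableaux side, Stanley's $q$-analogue of the hook-length formula gives $f^\lambda(q)=q^{b(\lambda)}\,[n]_q!\big/\prod_{x\in\lambda}[h(x)]_q$ with $b(\lambda)=\sum_i(i-1)\lambda_i$; evaluating at $q=\zeta^a$ using the fact that $[m]_{\zeta^a}=0$ precisely when $\operatorname{ord}(\zeta^a)\mid m$ reduces the product to a monomial-times-multinomial expression. On the character side, iterating the Murnaghan--Nakayama rule (removing rim hooks of size $d=n/\gcd(a,n)$) shows $\chi^\lambda(c^a)=0$ unless the $d$-core of $\lambda$ is empty, and otherwise gives Littlewood's sign-times-multinomial formula in terms of the $d$-quotient; one then checks the two expressions coincide. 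This identity is exactly the statement that $(\mathrm{SYT}(\lambda),C_n,f^\lambda)$ exhibits the cyclic sieving phenomenon for the fake-degree polynomial; alternatively one may derive it from Springer's theory of regular elements, or simply invoke \cite{KW} and \cite{St4EC2}.
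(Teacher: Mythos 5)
The paper does not prove this theorem at all: it is quoted as a known result, with Part (1) attributed to Foulkes \cite{F} and Part (2) to Kra\'skiewicz--Weyman \cite{KW} and Stanley \cite{St4EC2}, so there is no internal argument to compare against. Your reconstruction is sound. For Part (1), the induced-character formula for $\mathrm{ch}$, the observation that $c^j$ has cycle type $\bigl((n/g)^g\bigr)$ with $g=\gcd(j,n)$, the identification of the inner sum as the Ramanujan sum $c_d(r)$, and H\"older's evaluation together give a complete and correct derivation of Foulkes' formula. For Part (2), the Frobenius-reciprocity computation of the multiplicity as $\frac1n\sum_a\zeta^{-ra}\chi^\lambda(c^a)$ and the root-of-unity filter for $\#\{T:\operatorname{maj}(T)\equiv r \bmod n\}$ correctly reduce the statement to the identity $\chi^\lambda(c^a)=f^\lambda(\zeta^a)$, which is exactly the content of the cited results.

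The one place where your write-up understates the work is the final clause ``one then checks the two expressions coincide.'' Matching Stanley's hook-length $q$-analogue at $q=\zeta^a$ with the Murnaghan--Nakayama/Littlewood evaluation is not a routine verification: both sides vanish unless the $d$-core of $\lambda$ is empty (with $d$ the order of $\zeta^a$), and when they do not vanish one must match a sign coming from the rim-hook removals (governed by the $d$-quotient) against the sign produced by $\zeta^{a\,b(\lambda)}$ together with the evaluation of the $q$-multinomial, which is a genuinely delicate bookkeeping argument (this is the Kra\'skiewicz--Weyman/Springer theorem, nowadays phrased as the cyclic sieving phenomenon). So as a fully self-contained proof your proposal is incomplete at precisely that step; however, since you also allow yourself to invoke \cite{KW} or \cite{St4EC2} there, your treatment then coincides with the paper's, which simply cites these sources.
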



\begin{defn}\label{SetofPrimes} Let $S=\{q_1, \ldots, q_k,\ldots\}$ be a set of distinct primes.  Every positive integer $n$ factors uniquely into $n=Q_n\ell_n$ where $Q_n= \prod_{q\in S} q^{a_q(n)} $ for nonnegative integers $a_q(n),$ and $(\ell_n, q)=1$ for all $q\in S $ such that $a_q(n)\geq 1.$ We associate to the set $S$ a representation $Lie_n^S$, defined via its Frobenius characteristic, as follows:
\begin{equation*}\label{SetofPrimes_a}Lie_n^S= \mathrm{ch}\exp \left(\frac{2\pi i}{n}\cdot Q_n\right){\big\uparrow}_{C_n}^{S_n}
\end{equation*} 
If $\bar{S}$ denotes the set of primes \textit{not} in $S,$ then clearly we have
\begin{equation*}\label{SetofPrimes_b}
Lie_n^{\bar{S}}= \mathrm{ch }\exp \left(\frac{2\pi i}{n}\cdot \ell_n\right)\big\uparrow_{C_n}^{S_n}.\end{equation*}
We may allow $S$ to be the empty set in this definition by interpreting $Q_n$ as the empty product; we then have $Q_n=1$ for all $n\geq 1,$ and hence 
\begin{equation*} Lie_n^\emptyset=Lie_n=\ell_n^{(1)},\quad {\rm  while }\quad
Lie_n^{\bar{\emptyset}}=Conj_n=\ell_n^{(n)};
\end{equation*} thus  $Lie_n^{\bar{\emptyset}}$ is the Frobenius characteristic of $S_n$ acting on the class of $n$-cycles by conjugation. Similarly, at the other extreme, if $S$ is the set of all primes $\mathcal{P}$, then $Q_n=n, \ell_n=1$ for all $n,$ and thus 
\begin{equation*} 
Lie_n^{\mathcal{P}}=Conj_n=\ell_n^{(n)} \quad {\rm  while }\quad Lie_n^{\bar {\mathcal{P}} }=Lie_n=\ell_n^{(1)}.
\end{equation*}
Let $P(S)$ denote the set of positive integers whose  prime divisors constitute a subset of the set of primes $S;$  note that $1\in P(S).$ Similarly let $P(\bar{S})$ be the set of positive integers whose set of prime divisors is disjoint from  $S;$ note that $P(\bar{S})$ is precisely the set of integers that are relatively prime to every prime in $S.$   Also $P(S)\cap P(\bar{S})=\{1\}.$  Equivalently, 
\begin{equation*}P(S)=\{n\geq 1: q \text{ is a prime factor of }n \Longrightarrow q\in S \},\end{equation*}
\begin{equation*}P(\bar{S})=\{n\geq 1: q \text{ is a prime factor of }n \Longrightarrow q\notin S\}.\end{equation*}

\end{defn}

When $S$ consists of a single prime $\{q\}$, we can view the modules $Lie_n^{(q)}$ as interpolating between the modules $Lie_n$ and $Conj_n=\text{ch } 1\uparrow_{C_n}^{S_n},$ since:
\begin{equation}
 Lie_n^{(q)}=Lie_n \text{ when $n$ is relatively prime to the prime $q,$}  
\end{equation}
and, at the other extreme:
\begin{equation}
Lie_n^{(q)}=Conj_n=\text{ch } 1\uparrow_{C_n}^{S_n} \text{ when $n$ is a power of the prime $q.$ }
\end{equation}

More generally, if $S$ is a nonempty set of primes, then 
\begin{equation} Lie_n^S=
\begin{cases} Lie_n, & n\in P(\bar{S})\\
                  Conj_n, &n\in P(S).
\end{cases}
\end{equation}
 
Our goal is to describe the symmetric and exterior powers of these 
modules, the analogues of the higher Lie modules in Section 2. We will apply the meta theorem, Theorem \ref{metathm}, of Section 5. In order to do this we must first determine the values $Lie_n^S(\pm 1)$ for each subset $S$ of primes.

\begin{lem}\label{Ramanujan} Let $r$ be a divisor of $n$ such that $(r, \frac{n}{r})=1.$ Consider the representation with Frobenius characteristic 
$$\ell_n^{(r)} =\mathrm{ch}\exp \left(\frac{2\pi i}{n}\cdot r\right)\big\uparrow_{C_n}^{S_n}.$$
Then $\ell_n^{(r)}(1)=$
$\begin{cases} 1, & n=r\\
                    0, &\text{ otherwise.}
\end{cases}$
%
%
\end{lem}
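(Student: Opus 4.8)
The plan is to compute $\ell_n^{(r)}(1)$ explicitly. By definition (\ref{definepolyf_n}) this is the value at $t=1$ of the one-variable polynomial attached to $\ell_n^{(r)}$, and Foulkes' formula (Theorem \ref{Foulkes}(1)) lets us take $\psi(d)=\phi(d)\,\mu\!\left(d/(d,r)\right)/\phi\!\left(d/(d,r)\right)$ in (\ref{definef_n})--(\ref{definepolyf_n}), so that
\[\ell_n^{(r)}(1)=\frac1n\sum_{d\mid n}\phi(d)\,\frac{\mu\!\left(d/(d,r)\right)}{\phi\!\left(d/(d,r)\right)}.\]
First I would use the hypothesis $\bigl(r,n/r\bigr)=1$: writing $n=rm$ with $(r,m)=1$, each divisor $d\mid n$ factors uniquely as $d=d_1d_2$ with $d_1\mid r$ and $d_2\mid m$; then $(d,r)=d_1$ (since $(d_2,r)\mid(m,r)=1$) and $\phi(d)=\phi(d_1)\phi(d_2)$, so the $d$-th summand collapses to $\phi(d_1)\mu(d_2)$ and the sum factors over the divisors of $r$ and of $m$ separately.

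Carrying this out, $\sum_{d_1\mid r}\phi(d_1)=r$, while $\sum_{d_2\mid m}\mu(d_2)$ is $1$ if $m=1$ and $0$ otherwise; hence
\[\ell_n^{(r)}(1)=\frac1{rm}\,r\sum_{d_2\mid m}\mu(d_2)=\frac1m[\,m=1\,]=[\,m=1\,],\]
which is $1$ exactly when $n=r$ and $0$ otherwise. This is essentially the whole argument; I do not foresee a serious obstacle, the only point to be careful about being that the coprimality hypothesis is precisely what both reduces Foulkes' coefficient to the clean form $\phi(d_1)\mu(d_2)$ and lets the divisor sum split multiplicatively.

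It is worth recording the conceptual reason, which also accounts for the appearance of Ramanujan sums. Applying $p_k\mapsto t$ to the Frobenius characteristic $\mathrm{ch}(V)=\tfrac1{n!}\sum_{\sigma\in S_n}\chi_V(\sigma)\,t^{c(\sigma)}$ of an $S_n$-module $V$, where $c(\sigma)$ denotes the number of cycles of $\sigma$, and then setting $t=1$ gives $\tfrac1{n!}\sum_{\sigma}\chi_V(\sigma)=\langle\chi_V,\mathbf 1_{S_n}\rangle$, the multiplicity of the trivial module. So $\ell_n^{(r)}(1)$ is the multiplicity of the trivial representation in $\exp\!\left(\tfrac{2\pi i}{n}r\right)\uparrow_{C_n}^{S_n}$, which by Frobenius reciprocity equals $\tfrac1n\sum_{j=0}^{n-1}e^{2\pi i rj/n}=[\,n\mid r\,]$, and this is $[\,n=r\,]$ since $1\le r\le n$. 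Equivalently, the coefficients $\phi(d)\mu(d/(d,r))/\phi(d/(d,r))$ are the Ramanujan sums $c_d(r)$ and $\ell_n^{(r)}(1)=\tfrac1n\sum_{d\mid n}c_d(r)$, which the classical identity $\sum_{d\mid n}c_d(r)=n\,[\,n\mid r\,]$ evaluates to the same thing.
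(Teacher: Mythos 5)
Your computation is correct and is essentially the paper's own proof: both split $n=rm$ with $(r,m)=1$, factor each divisor $d=d_1d_2$ over the divisors of $r$ and $m$, collapse Foulkes' coefficient to $\phi(d_1)\mu(d_2)$, and evaluate the two factored sums as $r$ and $[\,m=1\,]$. Your closing remark identifying $\ell_n^{(r)}(1)$ with the multiplicity of the trivial representation and finishing by Frobenius reciprocity is a valid independent justification, but the main argument coincides with the paper's.
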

\begin{proof} For ease of notation we write $n=rs$ where $r,s$ are relatively prime positive integers.  Then every divisor $d$ of $n$ factors uniquely into $d=r_1 s_1$ where $r_1$ is a divisor of $r$, 
$s_1$ is a divisor of $s,$ and $r_1,$ $s_1$ are thus relatively prime. 
In particular $(d,r)=r_1,$ and $\frac{d}{(d,r)}=s_1.$  Hence we have 
\begin{align*} \ell_n^{(r)}(1)&=\frac{1}{n}\sum_{d|n}\phi(d) \frac {\mu(\frac{d}{(d,r)})}{\phi(\frac{d}{(d, r)})}
=\frac{1}{n}\sum_{{r_1|r},{s_1|s}} \phi(r_1 s_1) 
\frac{\mu(\frac{d}{r_1})}{\phi(\frac{d}{r_1})}\\
&=\frac{1}{n}\sum_{{r_1|r},{s_1|s}} \phi(r_1 s_1) 
\frac{\mu(s_1)}{\phi(s_1)}
=\frac{1}{n}\sum_{{r_1|r},{s_1|s}} \phi(r_1) 
\mu(s_1)\\
&=\frac{1}{n}\left(\sum_{r_1|r} \phi(r_1)\right )\left(\sum_{s_1|s}\mu(s_1)\right)=\frac{r}{n}\sum_{s_1|s}\mu(s_1).
\end{align*}
The last sum is nonzero if and only if $s=\frac{n}{r}=1,$ i.e. $n=r,$ and the result follows.
\end{proof}

\begin{cor}\label{LSvalues} Let $S$ be a fixed  set of primes, and let $Lie_n^S$ and $Lie_n^{\bar{S}}$ be (the Frobenius characteristics of) the representations of Definition \ref{SetofPrimes}.  Then 
\begin{enumerate} \item $Lie_n^S(1)$ is nonzero if and only if $n=1$ or the distinct prime factors of $n$ are contained in the set $S,$ i.e. if and only if $n\in P(S),$ in which case it equals 1. 
\item $Lie_n^{\bar{S}}(1)$ is nonzero if and only if $n$ is relatively prime to every prime in the set $S,$ i.e. if and only if $n\in P(\bar{S}),$ in which case it equals 1.
\item In particular $Lie_n^\emptyset(1)=Lie_n(1)=Lie_n^{\bar{\mathcal{P}}}(1)$ is nonzero unless $n=1,$ in which case it equals 1.  Similarly $Lie_n^{\bar{\emptyset}}(1)=Conj_n(1)$ equals 1 for all $n\geq 1.$
\end{enumerate}
\end{cor}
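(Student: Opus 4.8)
The plan is to read this off directly from Lemma \ref{Ramanujan}, since by construction $L_n^S$ and $L_n^{\bar S}$ are instances of $\ell_n^{(r)}$ for a divisor $r$ of $n$ satisfying the coprimality hypothesis $(r,\tfrac nr)=1$ required by that lemma.

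First I would unwind Definition \ref{SetofPrimes}. Writing $n=Q_n\ell_n$ as there, we have $L_n^S=\ell_n^{(Q_n)}$ and $L_n^{\bar S}=\ell_n^{(\ell_n)}$. The only point to check before invoking Lemma \ref{Ramanujan} is that $Q_n$ (resp.\ $\ell_n$) is a divisor of $n$ whose cofactor is coprime to it; but this is immediate, since $Q_n$ is a product of primes of $S$, $\ell_n$ is coprime to all of those primes, and $n/Q_n=\ell_n$ (resp.\ $n/\ell_n=Q_n$). Thus Lemma \ref{Ramanujan} gives $L_n^S(1)=\ell_n^{(Q_n)}(1)$, which is $1$ if $n=Q_n$ and $0$ otherwise, and likewise $L_n^{\bar S}(1)$ is $1$ if $n=\ell_n$ and $0$ otherwise.

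Next I would translate these conditions into the stated ones. The equality $n=Q_n$ is equivalent to $\ell_n=1$, which by definition of $\ell_n$ holds exactly when every prime factor of $n$ lies in $S$, i.e.\ when $n\in P(S)$; this proves part (1). Dually, $n=\ell_n$ is equivalent to $Q_n=1$, i.e.\ to $n$ being coprime to every prime of $S$, i.e.\ to $n\in P(\bar S)$; this proves part (2).

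Finally, for part (3) I would simply specialize. Taking $S=\emptyset$ gives $Q_n=1$, $\ell_n=n$ for all $n$, so $L_n^\emptyset=Lie_n$ and $P(\emptyset)=\{1\}$, whence $Lie_n(1)=1$ for $n=1$ and $0$ otherwise; since $\bar{\mathcal P}=\emptyset$, the same holds for $L_n^{\bar{\mathcal P}}$. Taking $S=\mathcal P$ (equivalently, looking at $L_n^{\bar\emptyset}$) gives $Q_n=n$, $\ell_n=1$ for all $n$, and $P(\bar\emptyset)=P(\mathcal P)$ is the set of all positive integers, so $Conj_n(1)=1$ for every $n\geq 1$ by part (2). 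I do not expect a genuine obstacle here: all the real content sits in Lemma \ref{Ramanujan}, and the corollary amounts to matching up the notation of Definition \ref{SetofPrimes} with the hypotheses of that lemma.
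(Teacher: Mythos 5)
Your proposal is correct and follows essentially the same route as the paper: both invoke Lemma \ref{Ramanujan} with $r=Q_n$ for part (1) and $r=n/Q_n=\ell_n$ for part (2), then translate $n=Q_n$ (resp.\ $Q_n=1$) into $n\in P(S)$ (resp.\ $n\in P(\bar S)$) and obtain part (3) by specialising $S$. Your explicit verification of the coprimality hypothesis $(r,n/r)=1$ is a small addition the paper leaves implicit, but the argument is the same.
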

\begin{proof} Write $n=Q_n\ell_n$ where $Q_n$ is the product of all maximal prime powers $q_i^{a_i}$ that are divisors of $n,$ and $\ell_n$ is relatively prime to every prime $q_i$ in $S.$   In Lemma \ref{Ramanujan}, take $r=Q_n$ for Part (1) and $r=\frac{n}{Q_n}$ for Part (2). 
Lemma \ref{Ramanujan} says $Lie_n^S$ (respectively $Lie_n^{\bar{S}}$) is nonzero  (and then equal to 1) if and only if $Q_n=n$ (respectively $Q_n=1$).  Note that this applies also when $S$ is empty (respectively $S=\mathcal{P}$), since then $Q_n=1$ (respectively $Q_n=n$). The claims follow immediately from the observations succeeding Definition \ref{SetofPrimes}.
\end{proof}

The next step is to apply the meta theorem, Theorem \ref{metathm}, to the sequences of symmetric functions $f_n=Lie_n^S$ and $f_n=Lie_n^{\bar S}.$   In order to do this, we must verify that each sequence is determined by a single function $\psi$ whose value depends only on the set $S.$   (See equation~(\ref{definef_n}).) 
Using Theorem \ref{Foulkes}, we see that with $Q_n, \ell_n$ as in Definition \ref{SetofPrimes}, one has 
\begin{center}$Lie_n^S=\frac{1}{n}\sum_{d|n} \psi_n(d) p_d^{\frac{n}{d}}$\quad
with \quad
$\psi_n(d)=\phi(d) \dfrac{\mu(d/(d,Q_n))}{\phi(d/(d,Q_n))}.$\end{center}
($\psi_n(d)$ is  a \textit{Ramanujan sum}.)
We claim that this expression is in fact independent of $n.$ 
Since $d$ divides $n,$ we can factor $d$ uniquely as $d=Q_d\ell_d$ where $\ell_d$ is relatively prime to $Q_d$ as well as to all the primes in the set $S.$ 
In particular $(d, Q_n)=Q_d.$ 
Using the multiplicative property of $\phi,$ we obtain
$$\psi_n(d)=\phi(Q_d)\phi(\ell_d) \dfrac{\mu(d/Q_d)}{\phi(d/(Q_d))}
=\phi(Q_d)\phi(\ell_d) \dfrac{\mu(\ell_d)}{\phi(\ell_d)}
=\phi(Q_d)\mu(\ell_d),$$
thereby showing that $\psi_n(d)=\psi(d)$ depends only on $d$ and the set of primes $S.$ 
An entirely analogous calculation shows that 
\begin{center}$Lie_n^{\bar S}=\frac{1}{n}\sum_{d|n} {\bar \psi}(d) p_d^{\frac{n}{d}}$\quad
with\quad  $\bar\psi(d)=\phi(\ell_d)\mu(Q_d).$\end{center} 
In summary, given a set of primes $S$ and a positive integer $d,$ if $d$ is factored uniquely as $d=Q_d\ell_d$ where $\ell_d$ is relatively prime to the primes in $S$ and also to $Q_d,$ (so that $Q_d$ is a product of prime powers of elements of $S$), then 
\begin{equation}\label{FoulkesPrimes1} \text{For all }n \geq 1,\quad
Lie_n^S=\frac{1}{n}\sum_{d|n} \psi(d) p_d^{\frac{n}{d}} \quad 
\text{ with } \psi(d)=\phi(Q_d)\mu(\ell_d), \text{ and}
\end{equation}
\begin{equation}\label{FoulkesPrimes2} \text{For all }n \geq 1,\quad
Lie_n^{\bar S}=\frac{1}{n}\sum_{d|n} \bar\psi(d) p_d^{\frac{n}{d}} \quad
\text{ with } \bar\psi(d)=\phi(\ell_d)\mu(Q_d).
\end{equation}


With the calculation of $Lie_n^S(1)$ from Corollary~\ref{LSvalues}, we are ready to invoke the power of Theorem~\ref{metathm}.
From equations \eqref{metaSym} and \eqref{metaAltExt}, we have the following formula for the higher $Lie^S$-modules:
\begin{thm}\label{SymAltSymLS} Let $Lie^S=\sum_{n\geq 1} Lie_n^S. $   
 Then one has the generating functions
   \begin{equation}\label{SymLS}
 H[Lie^S](t) = \prod_{n\in P(S)}  (1-t^n p_n)^{-1}
=\sum_{\lambda\in Par:\lambda_i\in P(S)} t^{|\lambda|} p_\lambda;
\end{equation}
\begin{equation}\label{AltSymLS} H[\omega(Lie^S)^{alt}](t)=\prod_{n\in P(S)} (1+t^n p_n)
=\sum_{\lambda\in D\!Par:\lambda_i\in P(S)} t^{|\lambda|} p_\lambda;
\end{equation}
\end{thm}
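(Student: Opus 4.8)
The plan is to derive Theorem~\ref{SymAltSymLS} as a direct specialization of the meta theorem (Theorem~\ref{metathm}) to the sequence $f_n = L_n^S$, using two inputs that have already been assembled in this section: first, the uniform expression (\ref{FoulkesPrimes1}), which exhibits $L_n^S = \frac1n\sum_{d\mid n}\psi(d) p_d^{n/d}$ with a fixed function $\psi(d) = \phi(Q_d)\mu(\ell_d)$ independent of $n$; and second, Corollary~\ref{LSvalues}(1), which evaluates $L_n^S(1)$ — it is $1$ when $n\in P(S)$ and $0$ otherwise. Since the meta theorem expresses $H[L^S](t)$ and $H[\omega(L^S)^{alt}](t)$ in terms of the evaluations $f_n(1)$ (and $f_n(-1)$), the product formulas should fall out once these evaluations are plugged in.

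First I would invoke the meta theorem's symmetric-power formula (\ref{metaSym}) with $f_n = L_n^S$. The general shape of that identity is a product $\prod_{n\ge 1}(\text{factor depending on } L_n^S(1))$; feeding in $L_n^S(1) = [n\in P(S)]$ collapses every factor indexed by $n\notin P(S)$ to $1$, leaving exactly $\prod_{n\in P(S)}(1 - t^n p_n)^{-1}$. This gives the first displayed equality in (\ref{SymLS}). The second equality, $\prod_{n\in P(S)}(1-t^np_n)^{-1} = \sum_{\lambda\in Par:\,\lambda_i\in P(S)} t^{|\lambda|}p_\lambda$, is then purely formal: expand each geometric series $(1-t^np_n)^{-1} = \sum_{m\ge0} t^{nm}p_n^m$ and multiply out, recognizing that a monomial $t^{|\lambda|}p_\lambda$ arises exactly once for each partition $\lambda$ all of whose parts lie in $P(S)$ (since $P(S)$ is closed under the relevant bookkeeping — a part of size $n$ is allowed iff $n\in P(S)$). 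The same two moves, applied to the alternating-exterior formula (\ref{metaAltExt}) of the meta theorem with the evaluations at $-1$ handled via $\omega$, yield (\ref{AltSymLS}): the factors become $\prod_{n\in P(S)}(1+t^np_n)$, and expanding the finite products $(1+t^np_n)$ picks out exactly the partitions with \emph{distinct} parts, all in $P(S)$, giving $\sum_{\lambda\in D\!Par:\,\lambda_i\in P(S)} t^{|\lambda|}p_\lambda$.

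I expect the only real content to be checking that the hypotheses of Theorem~\ref{metathm} are met — specifically that $L_n^S$ genuinely has the form (\ref{definef_n}) with a single well-defined $\psi$, which is precisely what equations (\ref{FoulkesPrimes1})–(\ref{FoulkesPrimes2}) and the surrounding discussion established via the factorization $d = Q_d\ell_d$ and multiplicativity of $\phi$ — and correctly substituting the case distinction from Corollary~\ref{LSvalues}. Everything after that is the routine passage from a product over the distinguished index set $P(S)$ to the corresponding sum over partitions supported on $P(S)$ (respectively $D\!Par$ supported on $P(S)$), which is the standard translation between the $h$-side and the $p$-side of such generating functions. The main obstacle, such as it is, is bookkeeping: making sure the index set appearing in the meta theorem's conclusion is exactly $\{n : L_n^S(1)\ne 0\} = P(S)$ and not some shifted or complementary set, and that the sign bookkeeping in the $\omega(\cdot)^{alt}$ version lines up so that the exterior-power product is $\prod(1+t^np_n)$ rather than $\prod(1-t^np_n)^{-1}$. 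The commented-out analogues for $L_n^{\bar S}$ follow verbatim by replacing $S$ with $\bar S$ (equivalently $P(S)$ with $P(\bar S)$), using (\ref{FoulkesPrimes2}) and Corollary~\ref{LSvalues}(2).
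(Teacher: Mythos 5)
Your proof is correct and is essentially the paper's own argument: one first checks via Theorem \ref{Foulkes} and multiplicativity that $L_n^S$ has the form (\ref{definef_n}) with a single function $\psi(d)=\phi(Q_d)\mu(\ell_d)$ depending only on $S$, and then both identities are read directly off equations (\ref{metaSym}) and (\ref{metaAltExt}) of Theorem \ref{metathm}, the exponents being $L_n^S(1)=1$ for $n\in P(S)$ and $0$ otherwise (Corollary \ref{LSvalues}), after which the passage from the products to the sums over partitions with parts in $P(S)$ (distinct parts for (\ref{AltSymLS})) is purely formal. The one small slip is your remark about ``evaluations at $-1$'': equation (\ref{metaAltExt}) also carries the exponent $f_n(1)$, so the values $L_n^S(-1)$ play no role in (\ref{AltSymLS}) — they are needed only for the exterior-power identities of Theorem \ref{ExtAltExtLS}, via Lemma \ref{LSnegvalues}.
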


Recall the definitions of $P(S), P(\bar{S})$ from Definition \ref{SetofPrimes}.
\begin{thm}\label{Spositivity1} Fix a set $S$ of primes.  The following sums of power sums are Schur-positive:
\begin{enumerate}
\item $\sum_{\lambda\vdash n: \lambda_i\in P(S)} p_\lambda;$
\item $\sum_{\lambda\vdash n: \lambda_i\in P(\bar{S})} p_\lambda;$
\item $\sum_{\stackrel{\lambda\vdash n: \lambda_i\in P({S})}{n-\ell(\lambda) \text{ even}}} p_\lambda=\sum_{\stackrel{\lambda\vdash n: \lambda_i\in P({S})}{\lambda 
 \text{ has an even number of even parts}}} p_\lambda.$
\end{enumerate}
\begin{proof} Parts (1) and (2) are immediate since $Lie_n^S$ and $Lie_n^{\bar{S}}$ are Schur-positive, so the left side of (\ref{SymLS}) in each case is a symmetric power of  a true $S_n$-module.
For Part (3), observe first that  $n-\ell(\lambda)$ is congruent to the number of even parts of $\lambda$, and hence Part (3) coincides with Part (1) unless $2\in S.$ In that case, applying the involution $\omega$ to 
equation (\ref{SymLS}) of Theorem \ref{SymAltSymLS}, (with $t=1$), we have 
\begin{equation*} 
\sum_{\lambda\vdash n: \lambda_i\in P({S}), n-\ell(\lambda) \text{ even}} p_\lambda
=\frac{1}{2} (H[Lie^S]+\omega(H[Lie^S]))
\end{equation*}
But the Schur expansion on the left-hand side has integer coefficients, and the Schur expansion on the right-hand side is certainly positive.  
Part (3) follows.\end{proof}
\end{thm}

In the special case when the set $S$ is the set of all odd primes,  equation (\ref{SymLS})  and Part (2) of Theorem \ref{Spositivity1}   describe the sum of power sums $p_\lambda$ for all parts of $\lambda$ odd, as the symmetrised module of a representation, whereas  \cite[Theorem 4.9]{Su1} (see also Theorem \ref{ExtAltExtLS} below) gives a description as  the exterior power of the conjugacy action.

When $S=\emptyset, $   equations (\ref{SymLS}) and (\ref{AltSymLS}) applied to $S$ and $\bar{S}$ reduce to  known formulas of Thrall \cite{T}, Cadogan \cite{C}, and Solomon \cite{So}, respectively.  
See also \cite[Ex. 7.71, Ex. 7.88, Ex. 7.89]{St4EC2}
\begin{thm} \label{ThrallPBWCadoganSolomon}
\begin{equation*}\label{ThrallPBW}  (Thrall,\ PBW) \qquad H[\sum_{n\geq 1} Lie_n](t)=(1-tp_1)^{-1}
\end{equation*}
 \begin{equation*}\label{Cadogan} (Cadogan)\qquad  H[\sum_{n\geq 1} (-1)^{n-1} \omega(Lie_n)](t)
=1+tp_1. \end{equation*} 
\begin{equation*}\label{Solomon} (Solomon )\qquad H[\sum_{n\geq 1}  \mathrm{ch}
(1\big\uparrow_{C_n}^{S_n})](t)=\prod_{n\geq 1} (1-t^np_n)^{-1}
\end{equation*}
\end{thm}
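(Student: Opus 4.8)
The plan is to deduce all three identities from the already-established Theorem~\ref{SymAltSymLS} by specializing to the two extreme choices of the set of primes $S$. First I would record the two degenerate cases of Definition~\ref{SetofPrimes}. When $S=\emptyset$, one has $Q_n=1$ for every $n\geq1$, so $L_n^\emptyset=Lie_n=\ell_n^{(1)}$, and $P(\emptyset)=\{1\}$. When $S=\mathcal{P}$ is the set of all primes, one has $Q_n=n$ for every $n\geq1$, so $L_n^{\mathcal{P}}=Conj_n=\mathrm{ch}(1\uparrow_{C_n}^{S_n})=\ell_n^{(n)}$, and $P(\mathcal{P})$ is the set of all positive integers. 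Both statements are exactly the observations listed immediately after Definition~\ref{SetofPrimes}.

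With these in hand, Thrall's formula (hence PBW) is equation~(\ref{SymLS}) of Theorem~\ref{SymAltSymLS} for $S=\emptyset$: the left side is $H[\sum_{n\geq1}Lie_n](t)$ because $L_n^\emptyset=Lie_n$, and on the right the product $\prod_{n\in P(\emptyset)}(1-t^np_n)^{-1}$ collapses to the single factor $(1-tp_1)^{-1}$ since $P(\emptyset)=\{1\}$. Solomon's formula is the same equation~(\ref{SymLS}) for $S=\mathcal{P}$: now $L_n^{\mathcal{P}}=Conj_n=\mathrm{ch}(1\uparrow_{C_n}^{S_n})$, and since $P(\mathcal{P})$ is all of the positive integers the right side is the full product $\prod_{n\geq1}(1-t^np_n)^{-1}$. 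Cadogan's formula comes from equation~(\ref{AltSymLS}) for $S=\emptyset$: the right side is $\prod_{n\in\{1\}}(1+t^np_n)=1+tp_1$, while on the left, unwinding the definition of the operator $(\cdot)^{alt}$ identifies $\omega(L^\emptyset)^{alt}$ with the graded series $\sum_{n\geq1}(-1)^{n-1}t^n\,\omega(Lie_n)$, so that the identity reads $H[\sum_{n\geq1}(-1)^{n-1}\omega(Lie_n)](t)=1+tp_1$; setting $t=1$ recovers the ungraded form already recorded in~(\ref{PlInvHE}).

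I expect no genuine obstacle here: all of the analytic content has already been absorbed into Theorem~\ref{SymAltSymLS}, which itself rests on the meta theorem (Theorem~\ref{metathm}) of Section~5 together with the evaluations $L_n^S(\pm1)$ supplied by Lemma~\ref{Ramanujan} and Corollary~\ref{LSvalues}. The only points that warrant a line of care are (i) matching the sign-and-grading bookkeeping of the $(\cdot)^{alt}$ notation when reading off Cadogan's identity, and (ii) checking that the two extreme prime sets really give $P(\emptyset)=\{1\}$ and $P(\mathcal{P})$ equal to the set of all positive integers, so that the infinite products in~(\ref{SymLS}) and~(\ref{AltSymLS}) degenerate as claimed. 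Alternatively, each of the three identities can be cited directly from \cite{T}, \cite{C}, \cite{So}, but the point of the present derivation is to see them uniformly as the $S=\emptyset$ and $S=\mathcal{P}$ cases of a single family of formulas.
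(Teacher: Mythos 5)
Your derivation is correct and is essentially the paper's own: the theorem is presented there precisely as the specialization of Theorem~\ref{SymAltSymLS} (equations (\ref{SymLS}) and (\ref{AltSymLS})) to the empty prime set and its complement, with $P(\emptyset)=\{1\}$ giving Thrall and Cadogan and $L^{\bar\emptyset}=L^{\mathcal P}=Conj$ with $P(\mathcal P)$ all of $\mathbb{Z}_{>0}$ giving Solomon. Your use of $S=\mathcal P$ in place of $\bar S=\bar\emptyset$ is only a cosmetic difference, since both yield $Conj_n$.
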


Next we compute the values of $Lie_n^S(t)$ for $t=-1.$  Thanks to Proposition~\ref{metaf}, we can avoid another cumbersome Ramanujan sum computation.
\begin{lem}\label{LSnegvalues}  For the set of distinct primes 
$S:$
\begin{enumerate}
\item If $2\notin S,$ then $Lie_n^S(-1)=
\begin{cases} -1, & n\in P(S)\\
                       1, &\frac{n}{2}\in P(S)\\
                       0, &\text{ otherwise.}\\
\end{cases}$
\item If $2\in S,$ then $Lie_n^S(-1)=
\begin{cases} -1, &n \text{    odd and } n\in P(S)\\
                       0, &\text{ otherwise.}\\
\end{cases}$
\item  $Lie_n^\emptyset(-1)=Lie_n^{\bar{\mathcal{P}}}(-1)
=\begin{cases} -1, & n=1\\
                       1, &n=2\\
                       0, &\text{ otherwise.}\\
\end{cases}$
\item    $Lie_n^{\bar{\emptyset}}(-1)=Lie_n^{\mathcal{P}}(-1)=
\begin{cases} -1, & n\text{    odd }\\
                     0, &\text{ otherwise.}\\
\end{cases}$
\end{enumerate}
\end{lem}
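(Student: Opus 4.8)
The plan is to evaluate $L_n^S(-1)$ directly from the explicit formula \eqref{FoulkesPrimes1}, namely $L_n^S = \frac1n \sum_{d|n}\psi(d)\,p_d^{n/d}$ with $\psi(d)=\phi(Q_d)\mu(\ell_d)$, where $d = Q_d\ell_d$ is the factorization of $d$ into its $S$-part $Q_d$ and its $S$-coprime part $\ell_d$. Since $p_d(-1)=-1$ for all $d$, the specialization at $-1$ of the monomial $p_d^{n/d}$ is $(-1)^{n/d}$, so
\begin{equation*}
L_n^S(-1) = \frac1n \sum_{d|n}\psi(d)\,(-1)^{n/d}.
\end{equation*}
The whole computation thus reduces to a number-theoretic identity for this signed divisor sum, and the six cases of the lemma just record its value according to whether $2\in S$ or not and whether $n$, $n/2$, etc. lie in $P(S)$ or $P(\bar S)$.

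First I would split the divisor sum according to the 2-adic valuation. Write $n = 2^a m$ with $m$ odd. For a divisor $d = 2^b d'$ ($d'|m$), the sign $(-1)^{n/d}$ equals $-1$ exactly when $b=a$ (i.e. $d$ contains the full power of $2$ in $n$) and $+1$ otherwise. So the sum breaks into $\sum_{d|n,\ 2^a|d}\psi(d)(-1) + \sum_{d|n,\ 2^a\nmid d}\psi(d)(+1)$, which I can reorganize as $\sum_{d|n}\psi(d) - 2\sum_{d|n,\ 2^a|d}\psi(d)$. The first piece is $n\cdot L_n^S(1)$, whose value is already given by Corollary \ref{LSvalues}: it is $1$ if $n\in P(S)$ and $0$ otherwise (and analogously for $\bar S$). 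For the second piece I would use multiplicativity: since $\psi$ is a multiplicative function (a product of $\phi$ on the $S$-part and $\mu$ on the complementary part), the sum over divisors divisible by $2^a$ factors as (a local factor at the prime $2$) $\times$ (a sum over divisors of the $2$-free part $n/2^a$), and the latter again telescopes via the classical identities $\sum_{e|k}\phi(e)=k$ and $\sum_{e|k}\mu(e)=[k=1]$.

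The case analysis then falls out mechanically. If $2\notin S$, then in $d=Q_d\ell_d$ the prime $2$ always lands in $\ell_d$, so the local factor at $2$ in $\psi$ is $\mu(2^a)$, which vanishes for $a\ge 2$; this forces $a\le 1$, giving the trichotomy of Part (1): $-1$ when $a=0$ and $n\in P(S)$, $+1$ when $a=1$ and $n/2\in P(S)$, else $0$. If $2\in S$, then the $2$-part goes into $Q_d$, the local factor is $\phi(2^a)=2^{a-1}$ for $a\ge1$, and a short computation shows the correction term $-2\sum_{2^a|d}\psi(d)$ exactly cancels the main term $n\,L_n^S(1)$ unless $a=0$, yielding Part (2). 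Parts (3) and (4) are identical arguments with the roles of $Q_d$ and $\ell_d$ (equivalently, of $\phi$ and $\mu$, and of $S$ and $\bar S$) interchanged, and Parts (5)–(6) are the specializations $S=\emptyset$ and $S=\mathcal P$ recovering the known values for $Lie_n$ and $Conj_n$.

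The main obstacle is purely bookkeeping: keeping straight, in each of the four nontrivial cases, which prime-factor of $n$ gets weighted by $\phi$ and which by $\mu$, and correctly identifying when $\sum_{d|n,\,2^a|d}\psi(d)$ equals $0$, $\tfrac12$, or $\tfrac{n}{2}$ times the relevant indicator. Once the reduction $L_n^S(-1) = L_n^S(1) - \tfrac{2}{n}\sum_{d|n,\,2^a\|d}\psi(d)$ is in hand and the multiplicative factorization is set up, each case is a one-line evaluation, so I expect no genuine difficulty beyond organizing the casework cleanly.
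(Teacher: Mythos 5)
Your proposal is correct and follows essentially the same route as the paper: your split of $\sum_{d\mid n}\psi(d)(-1)^{n/d}$ by the $2$-adic valuation of $d$ is exactly a re-derivation of the identity $f_{2m}(-1)=f_m(1)-f_{2m}(1)$, $f_{2m+1}(-1)=-f_{2m+1}(1)$ (Proposition \ref{metaf}), and your multiplicative evaluation of the correction term via $\sum_{e\mid k}\phi(e)=k$ and $\sum_{e\mid k}\mu(e)=[k=1]$ re-proves the $t=1$ values of Corollary \ref{LSvalues}. The paper simply cites those two results and then runs the same case analysis on $2\in S$ versus $2\notin S$, so the only difference is that you carry out the ingredients inline rather than quoting them.
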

\begin{proof} It suffices to address Parts (1) and (2). 
If $n$ is odd, by Proposition \ref{metaf} and Corollary \ref{LSvalues}, we have 

$Lie_n^S(-1)=-Lie_n^S(1)=\begin{cases} -1, &n\in P(S)\\ 0, &otherwise. \end{cases}$

If $n$ is even, by Proposition \ref{metaf}, we have
 $Lie_n^S(-1)=Lie_{\frac{n}{2}}^S(1)-Lie_n^S(1).$  Invoking Part (1) of Corollary \ref{LSvalues}, we see that, when $2\in S,$  $Lie_n^S(1)=1$ if and only if $Lie_{\frac{n}{2}}^S(1)=1.$  Hence when $2\in S$ and $n$ is even, $Lie_n^S(-1)=0.$

Now assume $n$ is even and $2\notin S.$ Then $n\notin P(S)$ so $Lie_n^S(1)=0.$ In this case $Lie_{\frac{n}{2}}^S(1)\neq 0$ if and only if $\frac{n}{2}\in P(S),$ in which case $Lie_n^S(-1)=Lie_{\frac{n}{2}}^S(1)=1.$

This establishes Parts (1) and (2), and hence the remaining parts.  \end{proof}
As an immediate consequence, invoking equations (\ref{metaExt}) and (\ref{metaAltSym}) of Theorem \ref{metathm}, we obtain the analogue of Theorem \ref{SymAltSymLS}.  
\begin{thm} \label{ExtAltExtLS}Let $S$ be a set of  primes, and let $Lie_n^S$ 
be as defined in Theorem \ref{SymAltSymLS}.  Then 
 \begin{equation}\label{ExtLS} E[Lie^S](t)=
\begin{cases} \prod_{ n\in P(S)} (1-t^n p_n)^{-1} \prod_{n\, even, \frac{n}{2}\in P(S)} (1-t^np_n), & 2\notin S\\
\prod_{n\, odd, \, n\in P(S) } (1-t^np_n)^{-1}=\prod_{n\in P(S\backslash\{2\}) } (1-t^np_n)^{-1}, & 2\in S.
\end{cases}
\end{equation}
(Note $n$ is necessarily odd in the first product of the case $2\notin S.$)
\begin{equation}\label{omegaExtLS} \omega(E[Lie^S])(t)=
 \prod_{ n\in P(S)} (1-t^n p_n)^{-1}\prod_{n\, even, \frac{n}{2}\in P(S)} (1+t^np_n),  \text{ provided } 2\notin S
\end{equation}
\begin{equation}\label{AltExtLS} E[\omega(Lie^S)^{alt}](t)=
\begin{cases} \prod_{ n\in P(S)} (1+t^n p_n) \prod_{n\, even, \frac{n}{2}\in P(S)} (1+t^np_n)^{-1}, & 2\notin S\\
\prod_{n\, odd, \, n\in P(S) } (1+t^np_n), & 2\in S.
\end{cases}
\end{equation}
\end{thm}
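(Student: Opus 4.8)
The plan is to invoke the meta theorem (Theorem \ref{metathm}) directly, exactly as was done for Theorem \ref{SymAltSymLS}, but now feeding in the values $L_n^S(-1)$ computed in Lemma \ref{LSnegvalues} rather than the values $L_n^S(1)$ from Corollary \ref{LSvalues}. Recall that equations (\ref{metaExt}) and (\ref{metaAltSym}) of the meta theorem express $E[F](t)$ and $E[\omega(F)^{alt}](t)$ (respectively $\omega(E[F])(t)$) as infinite products of the form $\prod_{n\geq 1} (1 - t^n p_n)^{-f_n(-1)}$ and $\prod_{n\geq 1}(1+t^n p_n)^{\pm f_n(-1)}$, where $f_n(-1)$ is the value at $-1$ of the one-variable polynomial $f_n(t)$ associated to the sequence $f_n = L_n^S$ via (\ref{definepolyf_n}). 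Since the sequence $L_n^S$ is governed by a single function $\psi$ (equation (\ref{FoulkesPrimes1})), the polynomial $L_n^S(t)$ is well-defined and its value $L_n^S(-1)$ is precisely what Lemma \ref{LSnegvalues} records.

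Concretely, I would proceed case by case on whether $2\in S$. Consider first $2\notin S$ (Part (1) of Lemma \ref{LSnegvalues}): here $L_n^S(-1) = -1$ if $n\in P(S)$, $=+1$ if $\frac{n}{2}\in P(S)$ (which forces $n$ even, $\frac n2\in P(S)$), and $=0$ otherwise. Note these two conditions are mutually exclusive since $2\notin S$ means no element of $P(S)$ is even. Plugging the exponent $-L_n^S(-1)$ into the product formula for $E[L^S](t)$ from (\ref{metaExt}) gives a factor $(1-t^n p_n)^{+1}$ for each $n\in P(S)$ and $(1-t^n p_n)^{-1}$ for each even $n$ with $\frac n2\in P(S)$ — wait, that is backwards; one must be careful with signs. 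The meta theorem's exponent convention is that $E[F](t) = \prod_n (1-t^np_n)^{-f_n(-1)}$ (this is the exterior analogue of $H[F](t)=\prod_n(1-t^np_n)^{-f_n(1)}$ from (\ref{SymLS})), so $f_n(-1)=-1$ on $P(S)$ contributes $(1-t^np_n)^{+1}$ and $f_n(-1)=+1$ on the $\frac n2\in P(S)$ set contributes $(1-t^np_n)^{-1}$; comparing with the claimed (\ref{ExtLS}) one sees the roles are reversed, so in fact the correct reading must be $E[F](t)=\prod_n (1-t^np_n)^{f_n(-1)}$ — I would double-check the precise statement of (\ref{metaExt}) in Section 5 and align signs accordingly. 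Once the convention is pinned down, the first line of (\ref{ExtLS}) is immediate; similarly (\ref{AltExtLS}) comes from (\ref{metaAltSym}) with factors $(1+t^np_n)^{\pm f_n(-1)}$, and (\ref{omegaExtLS}) from applying $\omega$, which on the relevant products sends $(1-t^np_n)\mapsto(1-t^np_n)$ for... no: $\omega$ sends $p_n \mapsto (-1)^{n-1}p_n$, so $\omega(1-t^np_n) = 1-(-1)^{n-1}t^np_n$, which is $1+t^np_n$ for $n$ even and $1-t^np_n$ for $n$ odd; since in the $\frac n2\in P(S)$ factor $n$ is even while in the $n\in P(S)$ factor $n$ is odd (as $2\notin S$), applying $\omega$ to $E[L^S](t)$ flips exactly the even factors, yielding (\ref{omegaExtLS}).

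For the case $2\in S$ (Part (2)), $L_n^S(-1)=-1$ when $n$ is odd and $n\in P(S)$, and $=0$ otherwise; the odd elements of $P(S)$ are exactly $P(S\setminus\{2\})$, so the product formulas collapse to $\prod_{n\in P(S\setminus\{2\})}(1-t^np_n)^{\mp 1}$, giving the second lines of (\ref{ExtLS}) and (\ref{AltExtLS}); here there is no even factor to worry about, which is why no $\omega$-statement is listed. Finally I would remark, as the statement does parenthetically, that in the first product of the $2\notin S$ case every $n\in P(S)$ is automatically odd, so there is no overlap or ambiguity. The only genuine obstacle is bookkeeping: getting the exponent sign conventions of (\ref{metaExt}), (\ref{metaAltSym}) exactly right and tracking how $\omega$ acts on each product factor according to the parity of $n$; there is no conceptual difficulty beyond substituting Lemma \ref{LSnegvalues} into the meta theorem. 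I would write the proof as: "This is immediate from Lemma \ref{LSnegvalues} together with equations (\ref{metaExt}) and (\ref{metaAltSym}) of Theorem \ref{metathm}, applied to $f_n = L_n^S$, together with the observation that $\omega(p_n) = (-1)^{n-1}p_n$ governs the passage between the factors $(1-t^np_n)$ and $(1+t^np_n)$ according to the parity of $n$."
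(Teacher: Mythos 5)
Your proposal is correct and follows essentially the same route as the paper: equations (\ref{ExtLS}) and (\ref{AltExtLS}) are read off directly from (\ref{metaExt}) and (\ref{metaAltSym}) of Theorem \ref{metathm} with exponents given by the values $L_n^S(-1)$ of Lemma \ref{LSnegvalues}, and (\ref{omegaExtLS}) is obtained by applying $\omega$ to the first equation. Your sign-convention worry resolves exactly as you guessed, since (\ref{metaExt}) carries the exponent $f_m(-v)$ while (\ref{metaAltSym}) carries $-f_m(-v)$, so your final reading agrees with the paper's argument.
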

\begin{proof} The second equation is clearly a consequence of applying the involution $\omega$ to the first, which is obtained directly from equation (\ref{metaExt}) of Theorem \ref{metathm}.  Similarly the third equation  follows directly from (\ref{metaAltSym}).  
\end{proof}

As in Theorem \ref{SymAltSymLS}, when $S=\emptyset,$ the first and third equations above allow us to  recover the formulas of \cite[Corollary 5.2]{Su1} and \cite[Theorem 4.2]{Su1}, which we restate using the equivalent formulations of the second and the third equations, in order to emphasise the fact that the right-hand side may be written as a nonnegative linear combination of power sums.

Recall that we write 
$Conj_n=Lie_n^{\mathcal{P}}=\mathrm{ch} (1\big\uparrow_{C_n}^{S_n}).$ 
\begin{prop}\label{ExtLieConj} \cite[Theorem 4.2 and Corollary 5.2]{Su1} We have the generating functions:
\begin{enumerate}[itemsep=8pt]
\item $\omega(E[Lie](t))=(1+t^2 p_2)(1-tp_1)^{-1}$
\item $E[\sum_{n\geq 1} Conj_n](t)=
\prod_{n\geq 1,\, n\, odd} (1-t^np_n)^{-1}$
\item 
$\sum_{\lambda\in Par} (-1)^{|\lambda|-\ell(\lambda)} H_\lambda[Lie] (t)=\omega(E[\omega(Lie)^{alt}])(t)=(1+tp_1)(1-t^2p_2)^{-1}$
\item 
$E[\sum_{n\geq 1} (-1)^{n-1} \omega(Conj_n)](t)=
\prod_{n\geq 1,\, n\, odd} (1+t^np_n)$
\end{enumerate}
\end{prop}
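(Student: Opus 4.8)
The plan is to obtain all four generating functions as the $S=\emptyset$ and $S=\mathcal{P}$ (all primes) specializations of Theorem~\ref{ExtAltExtLS}, using the identifications recorded after Definition~\ref{SetofPrimes}: $L_n^{\emptyset}=Lie_n$, $L_n^{\bar\emptyset}=L_n^{\mathcal{P}}=Conj_n$, $P(\emptyset)=\{1\}$, and $P(\mathcal{P})=\{1,2,3,\dots\}$. With these in hand, three of the four parts are pure substitution.

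For (1), apply equation~(\ref{omegaExtLS}) with $S=\emptyset$ (legitimate since $2\notin\emptyset$): the first product runs over $n\in P(\emptyset)=\{1\}$ and contributes $(1-tp_1)^{-1}$, while the second runs over even $n$ with $n/2\in\{1\}$, i.e.\ only $n=2$, contributing $(1+t^2p_2)$; this is exactly $\omega(E[Lie](t))=(1+t^2p_2)(1-tp_1)^{-1}$. For (2), use the ``$2\in S$'' branch of~(\ref{ExtLS}) with $S=\mathcal{P}$: since $P(\mathcal{P})$ is all of $\mathbb{Z}_{\ge1}$, the index set ``$n$ odd, $n\in P(\mathcal{P})$'' is just the odd positive integers, so $E[\sum_n Conj_n](t)=\prod_{n\ \text{odd}}(1-t^np_n)^{-1}$. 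Part (4) is the same specialization of the ``$2\in S$'' branch of~(\ref{AltExtLS}), giving $\prod_{n\ \text{odd}}(1+t^np_n)$. (Equivalently, (2) and (4) follow from the complementary-set versions of Theorem~\ref{ExtAltExtLS} with $S=\emptyset$, using $P(\bar\emptyset)=\mathbb{Z}_{\ge1}$.)

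Part (3) has two equalities. The equality $\omega(E[\omega(Lie)^{alt}])(t)=(1+tp_1)(1-t^2p_2)^{-1}$ follows by applying the ``$2\notin S$'' branch of~(\ref{AltExtLS}) with $S=\emptyset$ to get $E[\omega(Lie)^{alt}](t)=(1+tp_1)(1+t^2p_2)^{-1}$, and then applying $\omega$, which fixes $p_1$ and negates $p_2$. For the first equality I would establish the formal identity, valid for any sequence $Q=\{q_n\}$ of homogeneous symmetric functions,
\begin{equation*}
\omega\!\left(E_\mu[\omega(Q)^{alt}]\right)=(-1)^{|\mu|-\ell(\mu)}\,H_\mu[Q],
\end{equation*}
and then multiply by $t^{|\mu|}$, sum over $\mu$, and set $Q=Lie$. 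To prove it, expand $E_\mu[\omega(Q)^{alt}]=\prod_i e_{m_i(\mu)}\big[(-1)^{i-1}\omega(q_i)\big]$ and use the plethystic sign rule $\omega(p_k[g])=(-1)^{(\deg g)(k-1)}p_k[\omega g]$ for homogeneous $g$ (which one checks by expanding $g$ in power sums), whence $\omega(e_m[g])=h_m[\omega g]$ when $\deg g$ is odd and $\omega(e_m[g])=e_m[\omega g]$ when $\deg g$ is even, together with $\omega^2=\mathrm{id}$ and $e_m[-g]=(-1)^m h_m[g]$. Then each odd part of $\mu$ contributes a factor $h_{m_i}[q_i]$ with no sign, each even part contributes $(-1)^{m_i}h_{m_i}[q_i]$, and since the number of even parts of $\mu$ has the same parity as $|\mu|-\ell(\mu)=\sum_i m_i(\mu)(i-1)$, the accumulated sign is $(-1)^{|\mu|-\ell(\mu)}$.

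The main obstacle is this last step: tracking how $\omega$ interacts with plethysm according to the parity of the inner degree, and checking that the signs telescope to $(-1)^{|\mu|-\ell(\mu)}$. Everything else is direct substitution into Theorem~\ref{ExtAltExtLS}. Alternatively, one can sidestep the sign computation entirely by invoking the $H^{\pm}/E^{\pm}$ formalism of Section~5, under which (3) is literally a restatement of the $S=\emptyset$ case of~(\ref{AltExtLS}).
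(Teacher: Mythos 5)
Your proposal is correct and follows essentially the same route as the paper, which obtains all four identities by specialising Theorem \ref{ExtAltExtLS} (and its complementary-set form) to $S=\emptyset$ and to the set of all primes, the result itself being imported from \cite{Su1}. The only addition is your inline verification of the sign identity $\omega(E_\mu[\omega(Q)^{alt}])=(-1)^{|\mu|-\ell(\mu)}H_\mu[Q]$, which the paper instead reads off from equation (\ref{metaAltSym}) of the meta theorem \ref{metathm}; your computation of it is accurate.
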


For any subset $T_n$ of partitions of $n,$ denote by $P_{T_n}$ the sum of power-sum symmetric functions $\sum_{\lambda\in T_n} p_\lambda.$
Note the passage from symmetric to exterior powers, for the same representation, in Part (1) below.
\begin{thm}\label{Spositivity2} For a fixed set of primes $S,$ the  sums $P_{T_n}$ are Schur-positive 
for the following choices of $T_n$:
\begin{enumerate}
\item If $2\in S,$ $T_n=\{\lambda\vdash n: \lambda_i\, odd,\, \lambda_i\in P(S)\};$ in this case the representation coincides with 
$$H[Lie^{S\backslash\{2\}}]=E[Lie^S]=\sum_{\lambda:\lambda_i \in P(S\backslash\{2\})} p_\lambda.$$
\item 
If $2\notin S,$ $T_n$ consists of all partitions $\lambda$ of $n$ such that the parts are (necessarily odd and) in $P(S),$ or the parts are twice an odd number in $P(S)$, the even parts occurring at most once.
\end{enumerate}
\end{thm}
\begin{proof}  Part (1) has also been observed in Theorem \ref{Spositivity1}, (2), taking the set of primes to be $S\backslash\{2\}$.  Here it follows from equation (\ref{ExtLS}) of Theorem \ref{ExtAltExtLS}.  In particular we have two ways to realise the representation 
$$\sum_{\lambda:\lambda_i \in P(S\backslash\{2\})} p_\lambda,$$
namely as the symmetric power $H[Lie^{S\backslash\{2\}}]$ (Theorem \ref{SymAltSymLS}) and also as the exterior power $E[Lie^S],$ the latter from Theorem \ref{ExtAltExtLS}.

Part (2) follows by extracting the degree $n$ term from the third equation in Theorem \ref{ExtAltExtLS}, noting that we now have exterior powers of Schur positive functions.
\end{proof}

In particular when the set $S$ consists of a single prime $q,$ we have:

\begin{thm}\label{OnePrime} Let $q$ be a fixed prime.  The following  multiplicity-free sums of power sums are the Frobenius characteristics of a true represention of $S_n,$ of dimension $n!$
\begin{equation*}
\sum_{{\lambda\vdash n}
\atop{ \text{all parts of } \lambda\text{\ are  powers of\ } q} }
p_\lambda
\end{equation*}
(If $q$ is an odd prime,  this representation is self-conjugate.)
\begin{equation*} 
\sum_{\substack{\lambda\vdash n\\(\lambda_i, q)=1\text{ for all }i} }p_\lambda;
\end{equation*}
\begin{equation*} 
\sum_{\substack{\lambda\vdash n,\lambda_i \text{ odd }\\(\lambda_i, q)=1\text{ for all }i}} p_\lambda, \qquad q \text{ odd}.
\end{equation*}

\end{thm}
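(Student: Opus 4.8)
The statement to prove is Theorem \ref{OnePrime}; I will obtain all three assertions by specialising the generating-function identities of Theorems \ref{SymAltSymLS} and \ref{ExtAltExtLS} to the singleton set of primes $S=\{q\}$ and to its complement $\bar S=\mathcal P\setminus\{q\}$. The plan rests on two preliminary observations. First, by Definition \ref{SetofPrimes}, for \emph{any} set of primes $T$ the symmetric function $L_n^T$ is the Frobenius characteristic of the genuine induced module $\exp(\tfrac{2\pi i}{n}Q_n)\uparrow_{C_n}^{S_n}$, which is induced from a one-dimensional character of $C_n$ and hence is Schur-positive. Second, plethysm of a Schur-positive symmetric function with $h_m$, resp.\ $e_m$, is Schur-positive (as already used in the proof of Corollary \ref{Spositivity1}), so each symmetric and exterior power $H[L^T]$, $E[L^T]$ is Schur-positive; thus it suffices to realise each of the three power-sum sums as a graded piece of some $H[L^T]$ or $E[L^T]$.

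For the first sum I would take $S=\{q\}$: here $P(S)$ is precisely the set of powers of $q$ (including $q^0=1$), so extracting the degree-$n$ term of equation (\ref{SymLS}) of Theorem \ref{SymAltSymLS} gives $H[L^{\{q\}}]|_{\deg n}=\sum_{\lambda\vdash n,\ \text{all }\lambda_i\text{ powers of }q}p_\lambda$. For the second sum I would instead use $\bar S$ with $S=\{q\}$: then $P(\bar S)$ is the set of positive integers coprime to $q$, and the degree-$n$ term of $H[L^{\bar S}]$ is $\sum_{\lambda\vdash n,\ (\lambda_i,q)=1\ \forall i}p_\lambda$. For the third sum, with $q$ odd, the key move is to pass to an \emph{exterior} power: the set $\bar S=\mathcal P\setminus\{q\}$ contains the prime $2$, so the ``$2\in S$'' case of equation (\ref{ExtLS}) in Theorem \ref{ExtAltExtLS} applies to $\bar S$ and yields $E[L^{\bar S}](t)=\prod_{n\text{ odd},\,n\in P(\bar S)}(1-t^np_n)^{-1}$, whose degree-$n$ term is $\sum_{\lambda\vdash n,\ \lambda_i\text{ odd},\ (\lambda_i,q)=1\ \forall i}p_\lambda$. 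In all three cases Schur-positivity is now automatic from the preliminary observations.

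To finish, I would compute dimensions and check self-conjugacy. If $F=\sum_\lambda c_\lambda p_\lambda$ is the Frobenius characteristic of an $S_n$-module $W$ then $\dim W=n!\,c_{1^n}$, because $\langle p_\lambda,p_1^n\rangle=z_{1^n}\delta_{\lambda,1^n}=n!\,\delta_{\lambda,1^n}$. In each of the three sums the partition $1^n$ meets the defining condition (its parts are powers of $q$, are coprime to $q$, and are odd, respectively) and occurs with coefficient $1$; hence each module has dimension $n!$. Finally, for the first sum with $q$ odd: every part of an admissible $\lambda$ is an odd number, so $\lambda_i-1$ is even and $|\lambda|-\ell(\lambda)=\sum_i(\lambda_i-1)$ is even; since $\omega(p_\lambda)=(-1)^{|\lambda|-\ell(\lambda)}p_\lambda$, the involution $\omega$ fixes every term and the module is self-conjugate.

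The argument uses no new ideas beyond the meta theorem of Section 5 and its consequences already recorded; the one point requiring care is the bookkeeping, namely matching each of the three sums to the correct one of the four generating functions and to the correct set of primes — in particular recognising that the ``odd parts coprime to $q$'' sum is produced by the exterior-power identity (\ref{ExtLS}) with $2$ in the set of primes rather than by any symmetric-power identity — together with the remark that $1\in P(T)$ for every $T$, which is exactly what pins the dimension to $n!$.
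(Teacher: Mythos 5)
Your proposal is correct and follows essentially the paper's own route: the paper obtains Theorem \ref{OnePrime} exactly by specialising Theorems \ref{SymAltSymLS} and \ref{ExtAltExtLS} (equivalently Corollaries \ref{Spositivity1} and \ref{Spositivity2}) to the single prime $q$ and to its complementary set of primes, the third sum coming from the exterior-power identity (\ref{ExtLS}) in the case $2\in\bar S$. Your added dimension count via the coefficient of $p_{1^n}$ and the self-conjugacy check via $\omega(p_\lambda)=(-1)^{|\lambda|-\ell(\lambda)}p_\lambda$ are correct and consistent with what the paper leaves implicit.
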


With $\ell_n^{(k)}$ denoting the characteristic ${\rm ch}\, (\exp(\frac{2\pi i k}{n})\uparrow_{C_n}^{S_n})$ of the Foulkes character, we have the following contrasting result:
\begin{thm}\label{Su1Thm5.6}\cite[Lemma 5.5, Theorem 5.6]{Su1} If $k$ is any fixed positive integer, then $\ell_n^{(k)}(1)$ is nonzero if and only if $n$ divides $k,$ in which case it equals 1.  Hence one has the Schur-positive sum 
\begin{equation*}\sum_{\lambda\vdash n: \lambda_i|k} p_\lambda
=H[\sum_{m\geq 1} \ell_m^{(k)} ]\vert_{{\rm\, deg\,} n}.
\end{equation*}
\end{thm}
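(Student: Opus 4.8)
The plan is to evaluate $\ell_n^{(k)}(1)$ by recognising it as the multiplicity of the trivial module in an induced representation, and then to feed the resulting $0/1$ values into the meta theorem. First I would record the elementary fact that for a symmetric function $F$ homogeneous of degree $n$, writing $F=\sum_{\lambda\vdash n}c_\lambda p_\lambda$, the evaluation of $F$ under the substitution $p_d\mapsto 1$ (for all $d$) equals $\sum_{\lambda\vdash n}c_\lambda$, and this is exactly $\langle F,h_n\rangle$, since $h_n=\sum_{\lambda\vdash n}z_\lambda^{-1}p_\lambda$ and $\langle p_\lambda,p_\mu\rangle=z_\lambda\delta_{\lambda\mu}$ (with $z_\lambda$ the order of the centraliser of a permutation of cycle type $\lambda$). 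In the notation of (\ref{definef_n})--(\ref{definepolyf_n}) this says $f_n(1)=\langle f_n,h_n\rangle$; in particular $\ell_n^{(k)}(1)=\bigl\langle \mathrm{ch}\bigl(\exp(\tfrac{2\pi i k}{n})\uparrow_{C_n}^{S_n}\bigr),\,h_n\bigr\rangle$ is the multiplicity of the trivial $S_n$-module in $\exp(\tfrac{2\pi i k}{n})\uparrow_{C_n}^{S_n}$.

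By Frobenius reciprocity this multiplicity equals $\bigl\langle \exp(\tfrac{2\pi i k}{n}),\,{\mathbf 1}_{C_n}\bigr\rangle_{C_n}$, which is $1$ exactly when the linear character of $C_n$ sending a generator to $e^{2\pi i k/n}$ is trivial, i.e. when $n\mid k$, and is $0$ otherwise. This proves the first assertion. (Alternatively one can compute directly from Foulkes' formula, Theorem \ref{Foulkes}(1): $n\,\ell_n^{(k)}(1)=\sum_{d\mid n}\phi(d)\frac{\mu(d/(d,k))}{\phi(d/(d,k))}=\sum_{d\mid n}c_d(k)$, where $c_d(k)$ denotes the Ramanujan sum; substituting $c_d(k)=\sum_{e\mid(d,k)}e\,\mu(d/e)$ and interchanging the order of summation gives $\sum_{e\mid(n,k)}e\sum_{d'\mid n/e}\mu(d')=\sum_{e\mid(n,k)}e\,[\,n/e=1\,]$, which is $n$ if $n\mid k$ and $0$ otherwise.)

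For the Schur-positive sum I would invoke the meta theorem, Theorem \ref{metathm}. Foulkes' formula (applied to the residue of $k$ modulo $m$, and using that $(d,k)$ depends on $k$ only modulo $d$) shows that the sequence $f_m:=\ell_m^{(k)}$ has the canonical form (\ref{definef_n}) for the single function $\psi(d)=\phi(d)\frac{\mu(d/(d,k))}{\phi(d/(d,k))}$, which depends only on $d$ and the fixed $k$, so the hypotheses of Theorem \ref{metathm} are met. Each $\ell_m^{(k)}$ is a genuine $S_m$-module and, by the above, $\ell_m^{(k)}(1)\in\{0,1\}$ with $\ell_m^{(k)}(1)=1$ precisely for $m\mid k$; hence equation (\ref{metaSym}) gives
\[
H\Bigl[\,\sum_{m\geq 1}\ell_m^{(k)}\Bigr](t)=\prod_{m\mid k}(1-t^mp_m)^{-1}=\sum_{\lambda\in Par:\ \lambda_i\mid k}t^{|\lambda|}p_\lambda .
\]
Extracting the coefficient of $t^n$ gives $\sum_{\lambda\vdash n:\ \lambda_i\mid k}p_\lambda=H[\sum_{m\geq 1}\ell_m^{(k)}]\big|_{{\rm deg\ }n}$, which is a symmetric power of a genuine $S_n$-module and therefore Schur-positive.

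I do not expect a real obstacle here: the substantive inputs — Foulkes' formula and the meta theorem — are already available, and the only points needing care are the identity $f_n(1)=\langle f_n,h_n\rangle$ (so that the trivial-multiplicity interpretation is legitimate) and the verification that the function $\psi$ above is genuinely independent of $m$ (so that the meta theorem applies), both of which are immediate.
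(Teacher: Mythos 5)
Your proposal is correct, and it is worth noting that the paper does not reprove this statement at all: Theorem \ref{Su1Thm5.6} is simply quoted from \cite[Lemma 5.5, Theorem 5.6]{Su1}, so the only comparison available is with the surrounding machinery, which your second half matches exactly — you verify that $\ell_m^{(k)}$ has the canonical form (\ref{definef_n}) with a $\psi$ independent of $m$ (via Foulkes' formula and the observation that $(d,k)$ depends on $k$ only modulo $d$), compute the values $\ell_m^{(k)}(1)$, and feed them into equation (\ref{metaSym}) of Theorem \ref{metathm}, precisely the route the paper itself takes for the analogous statements (Lemma \ref{Ramanujan}, Corollary \ref{LSvalues}, Theorems \ref{SymAltSymLS} and \ref{PlInv-E}). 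Where you genuinely differ is in the evaluation of $\ell_n^{(k)}(1)$: the paper's analogue (Lemma \ref{Ramanujan}, and presumably \cite{Su1}) proceeds by a direct divisor-sum manipulation of the Foulkes/Ramanujan expression, whereas you first observe that $f_n(1)=\langle f_n,h_n\rangle$ and then read off the answer by Frobenius reciprocity as the multiplicity of the trivial character, which makes the divisibility criterion $n\mid k$ conceptually transparent and essentially computation-free; you also include the Ramanujan-sum computation as a fallback, so nothing is lost. Both steps you flag as needing care (the identity $f_n(1)=\langle f_n,h_n\rangle$ and the $m$-independence of $\psi$) are handled correctly, and the final Schur-positivity conclusion — the degree-$n$ term of $H[\sum_m\ell_m^{(k)}]$ is a sum of characteristics of induced modules built from the genuine modules $\ell_m^{(k)}$ — is sound.
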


We conclude this section by recording the results for the important special case of the single prime 2:

\begin{thm}\label{PlInv-E} Let $Lie^{(2)}_n$ be the Frobenius characteristic of the induced representation $\exp(\frac{2i\pi}{n}\cdot 2^k)\large\uparrow_{C_n}^{S_n},$  where $k$ is the largest power of 2 which divides $n.$ Then we have the following generating functions:
\begin{enumerate}[itemsep=8pt]
\item \label{ExtLieSup2Reg}\qquad\qquad\qquad\qquad\qquad
$ E\left[\sum_{n\geq 1}  Lie^{(2)}_n\right ](t) = (1-tp_1)^{-1}.$

\text{Equivalently,}
\qquad\quad
$H^{\pm}\left[\sum_{n\geq 1} Lie^{(2)}_n\right](t)=1-tp_1.$

\item\label{PlInv-ELieSup2}$E\left[\sum_{n\geq 1} (-1)^{n-1}\omega(Lie^{(2)}_n)\right](t)=1+tp_1.$

That is, the  plethystic inverse of $\sum_{n\geq 1} e_n$ is given by 
\begin{center}$\sum_{n\geq 1} (-1)^{n-1}\omega(Lie^{(2)}_n).$
\end{center}

\item\label{SymLieSup2} $H\left[\sum_{n\geq 1}  Lie^{(2)}_n\right](t)
=\prod_{n=2^k, k\geq 0} (1-t^n p_n)^{-1}
=\sum_{{\lambda\in Par}\atop{\text{every part is a power of } 2}} t^{|\lambda|}p_\lambda;$
\item\label{SymAltLieSup2} $\sum_{\lambda\in Par} (-1)^{|\lambda|-\ell(\lambda)} \omega(E_\lambda[Lie^{(2)}]) (t) $

$=H\left[\sum_{n\geq 1} (-1)^{n-1}\omega(Lie^{(2)}_n)\right](t)=\prod_{n=2^k, k\geq 0} (1+t^n p_n)$

$ =\sum_{{\lambda\in D\!Par}\atop{ \text{every part is a power of } 2}}t^{|\lambda|} p_\lambda$
\end{enumerate}
\end{thm}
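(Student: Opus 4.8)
The plan is to prove Theorem \ref{PlInv-E} by specialising the general machinery of Theorem \ref{ExtAltExtLS} and Theorem \ref{SymAltSymLS} to the set of primes $S=\{2\}$, so that $P(S)=\{1,2,4,8,\ldots\}=\{2^k:k\geq 0\}$ and $L_n^S=Lie_n^{(2)}$ by Definition \ref{SetofPrimes} (which agrees with Definition \ref{defLieSup2}). I would handle the four parts in the order (3), (4), (1), (2), since the symmetric-power statements fall out immediately from the generating functions already established, and the plethystic-inverse statement in (2) is then a short formal consequence.

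First, for part (3): apply equation (\ref{SymLS}) of Theorem \ref{SymAltSymLS} with $S=\{2\}$. Since $P(S)=\{2^k:k\geq 0\}$, the product $\prod_{n\in P(S)}(1-t^np_n)^{-1}$ is exactly $\prod_{k\geq 0}(1-t^{2^k}p_{2^k})^{-1}$, and expanding each geometric factor gives $\sum_{\lambda} t^{|\lambda|}p_\lambda$ over all partitions $\lambda$ whose parts are powers of $2$; this is precisely the claimed identity, and extracting the degree-$n$ term recovers the second equation in (\ref{SymLieSup2}). For part (4): apply (\ref{AltSymLS}) of Theorem \ref{SymAltSymLS} with $S=\{2\}$, giving $H[\omega(L^S)^{alt}](t)=\prod_{k\geq 0}(1+t^{2^k}p_{2^k})=\sum_{\lambda\in D\!Par,\lambda_i=2^{a_i}}t^{|\lambda|}p_\lambda$; one then identifies $\omega(L^S)^{alt}=\sum_{n\geq 1}(-1)^{n-1}\omega(Lie_n^{(2)})$ from the definition of the superscript "$alt$" (this is the definition $Q^{alt}(t)=\sum_n(-1)^{n-1}t^nq_n$ given after equation (\ref{defhrofQ})), and the remaining equality $H[\cdots](t)=\sum_{\lambda\in Par}(-1)^{|\lambda|-\ell(\lambda)}\omega(E_\lambda[Lie^{(2)}])(t)$ is a bookkeeping identity: expanding $H[Q](t)$ with $Q=\sum_n(-1)^{n-1}\omega(Lie_n^{(2)})$ produces $\sum_\lambda t^{|\lambda|}\prod_i h_{m_i}[(-1)^{i-1}\omega(Lie_i^{(2)})]$, and pulling the signs out of the plethysm (using $h_m[-g]=(-1)^m e_m[g]$) converts this to the stated alternating sum of $\omega(E_\lambda[Lie^{(2)}])$.

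Next, parts (1) and (2): apply equation (\ref{ExtLS}) of Theorem \ref{ExtAltExtLS} in the case $2\in S$. With $S=\{2\}$ we have $S\setminus\{2\}=\emptyset$, so $P(S\setminus\{2\})=\{1\}$ and the formula reads $E[L^S](t)=\prod_{n\in P(\emptyset)}(1-t^np_n)^{-1}=(1-tp_1)^{-1}$, which is exactly part (1); the equivalent form $H^{\pm}[Lie^{(2)}](t)=1-tp_1$ follows because $H^{\pm}$ and $E$ are plethystic inverses of each other as the constant-term-$1$ series $\sum_r(-1)^rh_r$ and $\sum_r e_r$ (equivalently $H^{\pm}E=1$, already used in the excerpt). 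For part (2): apply equation (\ref{AltExtLS}) of Theorem \ref{ExtAltExtLS} with $2\in S$, giving $E[\omega(L^S)^{alt}](t)=\prod_{n\text{ odd},n\in P(S)}(1+t^np_n)=1+tp_1$ since the only odd element of $P(\{2\})$ is $1$; combining with the identification $\omega(L^S)^{alt}=\sum_n(-1)^{n-1}\omega(Lie_n^{(2)})$ yields the first display of (2), and the "plethystic inverse of $\sum e_n$" reformulation is then immediate by applying $\omega$ (which sends $E-1$ to $H-1$ is false — rather one uses that $\omega$ is a ring involution commuting with plethysm in the appropriate sense; more cleanly, part (2) is simply the statement that $\sum_n(-1)^{n-1}\omega(Lie_n^{(2)})$ composed with $\sum_n e_n$ gives $1+tp_1$, i.e. $p_1$ in degree $\geq 1$, which is the definition of being the plethystic inverse of $E-1=\sum_{n\geq1}e_n$).

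The main obstacle I anticipate is not any of the individual specialisations — those are routine once Theorems \ref{SymAltSymLS} and \ref{ExtAltExtLS} are in hand — but rather the careful handling of signs and the $\omega$ involution inside plethysms when rewriting the "$alt$" generating functions as alternating sums over $E_\lambda[Lie^{(2)}]$ (parts (2) and (4)). One must be scrupulous that $\omega$ does not simply pass through a plethysm $h_m[g]$ as $h_m[\omega g]$ — instead $\omega(h_m[g])=\omega(h_m)[g]$ only when one is careful about which argument $\omega$ acts on, and the identity actually needed is the standard $\omega(h_m[g])=h_m[\omega(g)]$ when $g$ has even degree and a sign twist otherwise; combined with $h_m[-g]=(-1)^m e_m[g]$ this produces the factor $(-1)^{|\lambda|-\ell(\lambda)}$. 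Verifying this sign bookkeeping matches the stated formulas, and confirming the edge cases (degree $0$, the partition with a single part equal to $1$) is where I would spend the bulk of the care; everything else is direct substitution into results proved earlier in the paper.
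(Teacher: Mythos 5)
Your proposal is correct and follows essentially the same route as the paper: both specialise Theorem \ref{SymAltSymLS} and Theorem \ref{ExtAltExtLS} to $S=\{2\}$, where $P(S)=\{2^k:k\geq 0\}$ and $L_n^S=Lie_n^{(2)}$, and then read off the four identities (the paper additionally invokes Lemma \ref{pmalt} for the equivalences (1)$\leftrightarrow$(2) and (3)$\leftrightarrow$(4), while you obtain (2) and (4) directly from equations (\ref{AltExtLS}) and (\ref{AltSymLS}), a route the paper also notes via (\ref{metaAltExt})). The sign bookkeeping you anticipate for rewriting the ``alt'' series as $\sum_{\lambda}(-1)^{|\lambda|-\ell(\lambda)}\omega(E_\lambda[Lie^{(2)}])$ is exactly the content of equation (\ref{metaAltExt}) in Theorem \ref{metathm}, so it need not be reverified by hand.
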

\begin{proof}  Recall from Section 1 that $DPar$ is the set of partitions with all parts distinct.  The identities follow respectively from  Theorem \ref{ExtAltExtLS} and Theorem \ref{SymAltSymLS}.  It suffices to observe that, because $P(S)=\{2^k:k\geq 0\}:$
\begin{enumerate}
\item Corollary \ref{LSvalues} specialises in this case to give $Lie_n^{(2)}(1)=1 $ if and only if $n$ is a power of 2, and zero otherwise;
\item Part (2) of Lemma \ref{LSnegvalues} specialises to give $Lie_n^{(2)} (-1) = -1$ if $n=1$ and zero otherwise.
\end{enumerate}
For the equivalence of the second equation in (1) and the equation (2), we invoke Lemma \ref{pmalt},  which also gives the equivalence of (3) and (4).  The latter equation also follows by applying \ref{metaAltExt} in Theorem \ref{metathm} directly.
\end{proof}

We can now supply proofs for the new theorems in Section 2.
\vskip .2in
\noindent
{\bf Proof of Theorem \ref{Compare}:}

\begin{proof} Specialise Theorem \ref{SymAltSymLS} and Theorem \ref{ExtAltExtLS} to the cases $S=\emptyset$ (for the $Lie$ identities) and $S=\{2\}$ (for the $Lie^{(2)}$ identities).

The $Lie^{(2)}$  identities are all restatements of Theorem \ref{PlInv-E}, using the definition of $H_\lambda$ and $E_\lambda.$  Likewise the $Lie$ identities are all restatements of Theorem \ref{ThrallPBWCadoganSolomon} and Theorem \ref{EquivPBW}.

The first equation in (\ref{TotalCoh}) is, for instance,  a restatement of the first equation of Proposition \ref{ExtLieConj}. 

The statement about the equivalence of the $Lie$ (respectively, $Lie^{(2)}$) identities is a consequence of  Proposition \ref{metaequiv}.
%
\end{proof}

\vskip .2in
\noindent
{\bf Proof of Theorem \ref{EquivLieSup2}:}
\begin{proof}  Equation (\ref{ExtLieSup2}) is the identity  $E[Lie^{(2)}]=(1-p_1)^{-1}$ of Theorem \ref{PlInv-ELieSup2}, and hence by Lemma \ref{pm}, we obtain $H^{\pm}[Lie^{(2)}]=1-p_1,$ which is equation (\ref{PlInvLieSup2}) after removing the constant term and adjusting signs.

Now invoke (\ref{metage2d}) of Theorem \ref{metage2}, Section 5.  We have 
$$H^{\pm}[Lie_{\geq 2}^{(2)} ]=E\cdot (1-p_1)
=1+ \sum_{n\geq 2} (e_n-e_{n-1} p_1)
= 1-\omega(\kappa), $$
and this is precisely equation (\ref{Extge2}), again after cancelling the constant term and adjusting signs.  The remaining identities and equivalences follow exactly as in Theorem \ref{EquivPBW}, Section 5.
\end{proof}
\vskip.2in
\noindent 
{\bf Proof of Proposition \ref{AltHLieAltELieSup2}:}
\begin{proof}
Part (1) follows from Part 3 of Proposition \ref{ExtLieConj}, and Part (2) follows from Part 4 of Theorem \ref{PlInv-E}.
\end{proof}

\noindent
{\bf Proof of Proposition \ref{Restrictge2LieSup2}:}
\begin{proof}  We apply Theorem \ref{Restrictge2} (1), Section 5, using Part (3)  of Theorem \ref{PlInv-E}.  Then clearly $g_n$ is as stated.
\end{proof}

\section{A class of symmetric functions indexed by subsets of primes}

In this section we consider variants of the representation $Lie_n.$  We take as a definition that  $Lie_n$ is  $f_n$ with the choice $\psi(d)=\mu(d)$ in equation (\ref{definef_n}), where $\mu$ is the number-theoretic M\"obius function.  Likewise we define $Conj_n$ to be $f_n$ with the choice $\psi(d)=\phi(d)$  where $\phi$ is the totient function, i.e. Euler's phi-function. It is well known that $Lie_n$ is the Frobenius characteristic of the action of $S_n$ on the multilinear component of the free Lie algebra, and also of the induced representation $\exp(\frac{2i\pi}{n})\uparrow_{C_n}^{S_n},$ where $C_n$ is the cyclic group generated by an $n$-cycle in $S_n$ (see \cite{R} for background and history.) It is also well known that $Conj_n$ is the Frobenius characteristic of the 
induced representation $\textbf{ 1}\uparrow_{C_n}^{S_n},$ or equivalently, the conjugacy action of $S_n$ on the class of $n$-cycles.

Finally we address the third property of $Lie_n^{(2)}$ mentioned at the beginning of Section 2. It  can be deduced from the following more general fact:

Let $\omega_n=\exp\frac{2\pi i}{n},$ and let $\chi^r$ be the representation of $C_n$ afforded by $\omega_n^r.$
\begin{prop} The following isomorphisms hold: 
\begin{enumerate}
\item
$${\bf sgn}_{S_n}\otimes (\chi^r\uparrow_{C_n}^{S_n})\simeq \chi^{r+{n\choose 2}}\uparrow_{C_n}^{S_n}.$$
\item
$$\chi^{r+{n\choose 2}}\uparrow_{C_n}^{S_n}\simeq \chi^{{n\choose 2}-r}\uparrow_{C_n}^{S_n}.$$
\end{enumerate}
Hence $\omega(Lie_{2(2k+1)}^{(2)})=Lie_{2(2k+1)}.$
\begin{proof}  Let $\chi^\mu$ denote the $S_n$-irreducible indexed by the partition $\mu$ of $n.$ Then $\chi^\mu\otimes \textbf{sgn}=\chi^{\mu'}.$  Also note that the sign representation restricted to the cyclic group $C_n$ sends the generator to $(-1)^{n-1}.$  Thus we have 
\begin{align*}
\langle \chi^\mu, \textbf{sgn}_{S_n}\otimes (\chi^r\uparrow_{C_n}^{S_n})\rangle &=\langle \chi^\mu\otimes \textbf{sgn},  (\chi^r\uparrow_{C_n}^{S_n})\rangle \\
&=\langle (\chi^\mu\otimes \textbf{ sgn})\downarrow_{C_n},  \chi^r\rangle_{C_n} \\
&=\langle \chi^\mu\downarrow_{C_n}\otimes \textbf{ sgn}\downarrow_{C_n},  \chi^r\rangle_{C_n} \\
&  =\langle \chi^\mu\downarrow_{C_n},  \textbf{ sgn}\downarrow_{C_n}\otimes  \chi^r\rangle_{C_n} \\
&=\langle \chi^\mu\downarrow_{C_n},  (-1)^{n-1}\cdot \chi^r\rangle_{C_n} \\
&=\langle \chi^\mu,  ((-1)^{n-1}\cdot \chi^r)\uparrow^{S_n}\rangle_{S_n}.
\end{align*}
Hence we have 
$$\textbf{sgn}_{S_n}\otimes (\chi^r\uparrow_{C_n}^{S_n})\simeq 
((-1)^{n-1}\cdot \chi^r)\uparrow^{S_n}.$$
But $(-1)^{n-1}=\exp\pi\cdot (n-1),$ and thus 
$$(-1)^{n-1}\cdot \chi^r=\exp\left(\frac{2\pi i}{n} {\cdot (r+(n-1)\frac{n}{2}})\right),$$ 
as claimed.

For Part (2), observe that 
$$\chi^r\uparrow_{C_n}^{S_n}\simeq \chi^{-r}\uparrow_{C_n}^{S_n}$$
this follows, for example, from the formula for an induced representation, using the fact that  a permutation and its inverse are conjugate in $S_n$.  More precisely one shows the character value equality 
$$\chi^{-r}\uparrow^{S_n}(\sigma)=\chi^{r}\uparrow^{S_n}(\sigma^{-1})
=\chi^{r}\uparrow^{S_n}(\sigma).$$
Hence we have 
$$\chi^{r+{n\choose 2}}\uparrow_{C_n}^{S_n}\simeq \chi^{-{n\choose 2}-r}\uparrow_{C_n}^{S_n}\simeq \chi^{-{n\choose 2}-r}
\uparrow_{C_n}^{S_n}.$$
But $\chi^{-{n\choose 2}-r}=\chi^{-{n\choose 2}-r}\otimes \chi^{n(n-1)}
=\chi^{{n\choose 2}-r}$ since $\chi^{n(n-1)}=1.$
\end{proof}
\end{prop}

\section{A formula for $\prod_{n\in T} (1-p_n)^{-1}$} 

In this section we explore, for a fixed subset $T$ of positive integers,   the sum of power sums resulting from the product 
$\prod_{n\in T} (1-p_n)^{-1}.$
(This product is 1 if $T$ is the empty set.)

The results of this section were announced in \cite{SuFPSAC2019}.

\begin{defn}\label{def6.1} Fix a nonempty subset $T$ of the positive integers.  For each positive integer $d,$ define a function $\psi^T$ by $\psi^T(d)=\sum_{m|d,\, m\in T} m\,\mu\left(\frac{d}{m}\right).$
\end{defn}
\begin{defn}\label{def6.2} For each nonempty subset $T$ of positive integers, 
 define a sequence of (possibly virtual) representations  indexed by the subset $T,$ with Frobenius characteristic 
$$
f_n^T=\dfrac{1}{n}\sum_{d|n} \psi^T(d) p_d^{\frac{n}{d}}.$$
Set $F^T=\sum_{n\geq 1} f_n^T.$  Finally let 
 $p^T=\sum_{n\in T} p_n.$
\end{defn}

These definitions imply:
\begin{lem}\label{lem6.3} $f_n^T(1)=1$ if and only if $n\in T,$ and $f_n^T(1)=0$ otherwise.
\end{lem}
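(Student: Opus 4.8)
The plan is to evaluate $f_n^T(1)$ directly from the definition by setting every power sum equal to $1$, which reduces the claim to an elementary Möbius-inversion identity. Recall from equation (\ref{definepolyf_n}) that $f_n(1)$ is obtained by the substitution $p_d \mapsto 1$, so that from Definition \ref{def6.2} we have $f_n^T(1) = \frac{1}{n}\sum_{d \mid n} \psi^T(d)$. Thus the whole lemma comes down to showing that $\sum_{d \mid n} \psi^T(d)$ equals $n$ when $n \in T$ and $0$ otherwise.

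The key step is to unfold $\psi^T$ and switch the order of summation. Using Definition \ref{def6.1},
\begin{equation*}
\sum_{d \mid n} \psi^T(d) = \sum_{d \mid n} \sum_{\substack{m \mid d \\ m \in T}} m\,\mu\!\left(\tfrac{d}{m}\right)
= \sum_{\substack{m \mid n \\ m \in T}} m \sum_{e \mid (n/m)} \mu(e),
\end{equation*}
where in the last equality I have written $d = me$ and used that, as $d$ runs over divisors of $n$ with $m \mid d$, the cofactor $e = d/m$ runs exactly over the divisors of $n/m$. Now I invoke the standard fact that $\sum_{e \mid k}\mu(e)$ is $1$ if $k = 1$ and $0$ otherwise; applying it with $k = n/m$ kills every term except the one with $m = n$, which survives only if $n \in T$. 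Hence $\sum_{d\mid n}\psi^T(d) = n$ if $n \in T$ and $0$ if $n \notin T$, and dividing by $n$ gives the statement.

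I do not anticipate any genuine obstacle here: the only point requiring a moment's care is the bookkeeping in the double sum — confirming that the reindexing $d \mapsto (m,e)$ with $e = d/m$ is a bijection between $\{d : d\mid n,\ m\mid d\}$ and $\{e : e \mid n/m\}$ for each fixed admissible $m$ — and this is immediate. The identity $\sum_{e\mid k}\mu(e) = [k=1]$ is the defining property of the Möbius function and may be quoted without proof, exactly as it was used in the proof of Lemma \ref{Ramanujan}.
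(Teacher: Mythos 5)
Your proof is correct and is essentially the paper's argument: the paper recognises $\psi^T$ as the M\"obius convolution of $d\mapsto d\,\delta(d\in T)$ and quotes M\"obius inversion to get $\sum_{d\mid n}\psi^T(d)=n\,\delta(n\in T)$, while you prove that same identity by unfolding the double sum and using $\sum_{e\mid k}\mu(e)=[k=1]$, which is just M\"obius inversion written out. No gap; the bookkeeping in your reindexing $d=me$ is exactly right.
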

\begin{proof} Let us write $\delta(m\in T)$ for the indicator function of the set $T,$ so that $\delta(m\in T)=1$ if and only if $m\in T,$ and is zero otherwise.

By defnition of $\psi^T$, we have 
$$\psi^T(n)=\sum_{d|n}\mu(\tfrac{n}{d})\ d\,\delta(d\in T).$$
Hence M\"obius inversion gives 
$$n\delta(n\in T)=\sum_{d|n}\psi^T(d)
=n\, f_n^T(1),$$
i.e. $f_n^T(1)=\delta(n\in T)$ as claimed.
\end{proof}

With this lemma, we can now prove:
\begin{thm}\label{thm6.4} Let $T$ be a nonempty subset of the positive integers.  Then:
\begin{equation}\label{thm6.4eqn1} H[F^T]=\prod_{n\in T} (1-p_n)^{-1}
\end{equation}
\begin{equation}\label{thm6.4eqn2} F^T=p^T[Lie]=\sum_{m\in T} Lie[p_m],  \ \mathrm{or\ equivalently\ } 
f_n^T=\sum_{\stackrel{m\in T}{m|n}} Lie_{\frac{n}{m}}[p_m].
\end{equation}
If $G^T=\sum_{k\geq 0}\sum_{m\in T} Lie[p_{m\cdot 2^k}],$ then 
\begin{equation}\label{thm6.4eqn3}
E[G^T]=\prod_{n\in T} (1-p_n)^{-1}=H[F^T].
\end{equation}
\end{thm}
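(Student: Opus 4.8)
The plan is to recognise equation~(\ref{thm6.4eqn3}) as an instance of the general correspondence between $H$-plethysms and $E$-plethysms that already underlies the relationship between $Lie_n$ and $Lie_n^{(2)}$. Concretely, I would first use equation~(\ref{thm6.4eqn2}) of the theorem, which identifies $F^T=p^T[Lie]=\sum_{m\in T}Lie[p_m]$, and then show that the series $G^T$ appearing in~(\ref{thm6.4eqn3}) is precisely $\sum_{k\ge 0}F^T[p_{2^k}]$. Granting this, equation~(\ref{thm6.4eqn3}) is immediate from equation~(\ref{thm6.4eqn1}) together with the $H$-$E$ plethystic identity of Section~6 (Proposition~\ref{Pleth3}): for any series $A$ of symmetric functions whose homogeneous components all have positive degree, $E\bigl[\sum_{k\ge 0}A[p_{2^k}]\bigr]=H[A]$.

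The identification $G^T=\sum_{k\ge 0}F^T[p_{2^k}]$ is a routine manipulation with plethysm, using only that plethysm is additive in its first argument, is associative, and satisfies $p_m[p_{2^k}]=p_{m2^k}$. Indeed,
\begin{align*}
\sum_{k\ge 0}F^T[p_{2^k}]
&=\sum_{k\ge 0}\Bigl(\sum_{m\in T}Lie[p_m]\Bigr)[p_{2^k}]
=\sum_{k\ge 0}\sum_{m\in T}\bigl(Lie[p_m]\bigr)[p_{2^k}]\\
&=\sum_{k\ge 0}\sum_{m\in T}Lie\bigl[p_m[p_{2^k}]\bigr]
=\sum_{k\ge 0}\sum_{m\in T}Lie[p_{m2^k}]
=G^T .
\end{align*}
(Collecting the sum over $k$ first gives, alternatively, $\sum_{k\ge 0}Lie[p_{m2^k}]=\bigl(\sum_{k\ge 0}Lie[p_{2^k}]\bigr)[p_m]=Lie^{(2)}[p_m]$, so that $G^T=\sum_{m\in T}Lie^{(2)}[p_m]=p^T[Lie^{(2)}]$, tying the statement back to the module $Lie^{(2)}$ of Section~2.) Applying Proposition~\ref{Pleth3} with $A=F^T$ — legitimate since $F^T=\sum_{n\ge 1}f_n^T$ has no term in degree $0$ — then yields $E[G^T]=E\bigl[\sum_{k\ge 0}F^T[p_{2^k}]\bigr]=H[F^T]$, and equation~(\ref{thm6.4eqn1}) gives $H[F^T]=\prod_{n\in T}(1-p_n)^{-1}$, completing the proof.

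I expect the only point that genuinely requires care is the invocation of the $H$-$E$ correspondence: one must check it is applied to a well-defined series, which holds here because each $f_n^T$ is homogeneous of degree $n\ge 1$, so $\sum_{k\ge 0}F^T[p_{2^k}]$ converges in the completed ring. If one prefers to avoid citing Proposition~\ref{Pleth3} outright, the correspondence can be proved on the spot by taking plethystic logarithms: with $L=\log H=\sum_{k\ge 1}p_k/k$ one has $\log E=L-L[p_2]$, and since $L$ is additive, for $Y=\sum_{k\ge 0}A[p_{2^k}]$ one computes $(\log E)[Y]=L[Y]-L\bigl[Y[p_2]\bigr]=L[Y]-L\bigl[\sum_{k\ge 1}A[p_{2^k}]\bigr]=L[A]=(\log H)[A]$, and exponentiating gives $E[Y]=H[A]$. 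The possible overlap among the integers $m2^k$ for distinct $m\in T$ causes no trouble, as $G^T$ is matched with $\sum_{k\ge 0}F^T[p_{2^k}]$ term by term and all remaining steps are formal plethystic identities.
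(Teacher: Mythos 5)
Your argument for equation (\ref{thm6.4eqn3}) is correct and is essentially the paper's own step for that part: the identification $G^T=\sum_{k\ge 0}F^T[p_{2^k}]=p^T[Lie^{(2)}]$ is right, Proposition \ref{Pleth3} applies because $F^T$ has no constant term, and your plethystic-logarithm derivation of the $H$--$E$ correspondence is a sound substitute for citing that proposition. The genuine gap is that this is only one third of the statement. Equations (\ref{thm6.4eqn1}) and (\ref{thm6.4eqn2}) are assertions of the theorem itself, not prior results, yet your proposal uses both as inputs (you take (\ref{thm6.4eqn2}) to identify $F^T=p^T[Lie]$ and then declare (\ref{thm6.4eqn3}) immediate from (\ref{thm6.4eqn1})) without ever proving them; in fact (\ref{thm6.4eqn2}) is where the substance of the theorem lies.

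To close the gap you must derive (\ref{thm6.4eqn2}) from Definition \ref{def6.2}, i.e.\ pass from $\psi^T(d)=\sum_{m\mid d,\,m\in T}m\,\mu(d/m)$ to $F^T=\sum_{m\in T}Lie[p_m]$. The paper does this by applying Proposition \ref{Su1Prop3.1} to write $\exp(F^T)=\prod_{d\ge 1}(1-p_d)^{-\psi^T(d)/d}$, splitting the exponent over the divisors $m\in T$ of $d$, substituting $d=rm$ to obtain $\prod_{m\in T}\prod_{r\ge 1}(1-p_{rm})^{-\mu(r)/r}$, and recognising each inner factor as $\exp(Lie[p_m])$ (Proposition \ref{Su1Prop3.1} again, now with $\psi=\mu$); alternatively one can verify directly, by expanding $Lie_{n/m}[p_m]$ in power sums and collecting the coefficient of $p_d^{n/d}$ with $d=em$, that $\sum_{m\in T,\,m\mid n}Lie_{n/m}[p_m]=\frac{1}{n}\sum_{d\mid n}\psi^T(d)p_d^{n/d}=f_n^T$. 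Equation (\ref{thm6.4eqn1}) then needs its own justification: either Lemma \ref{lem6.3} (namely $f_n^T(1)=\delta(n\in T)$, by M\"obius inversion) combined with equation (\ref{metaSym}) of Theorem \ref{metathm}, as in the paper, or, once (\ref{thm6.4eqn2}) is in hand, the short computation $H[F^T]=\prod_{m\in T}\bigl(H[Lie]\bigr)[p_m]=\prod_{m\in T}(1-p_m)^{-1}$ using Thrall's identity $H[Lie]=(1-p_1)^{-1}$ from Theorem \ref{ThrallPBWCadoganSolomon}. As written, your proposal proves only (\ref{thm6.4eqn3}).
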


\begin{proof} 
Equation (\ref{thm6.4eqn1}) is immediate from Theorem \ref{metathm} and Lemma \ref{lem6.3}. 
 From Proposition \ref{Su1Prop3.1}, we have 
\begin{align*}\exp (F^T)&= \prod_{d\geq 1} (1-p_d)^{-\frac{1}{d}\psi^T(d)} =\prod_{d\geq 1} (1-p_d)^{-\sum_{m|d,\, m\in T} \frac{m}{d}\mu(\frac{d}{m})} \\
&=\prod_{d\geq 1} \prod_{m|d,\, m\in T} (1-p_d)^{- \frac{m}{d}\mu(\frac{d}{m})}
=\prod_{ m\in T}\prod_{r\geq 1}(1-p_{rm})^{- \frac{1}{r}\mu(r)}, \mathrm{\ putting\ }d=rm\\
&=\prod_{ m\in T}\prod_{r\geq 1}(1-p_{r})^{- \frac{1}{r}\mu(r)}[p_m]
=\prod_{ m\in T} \exp(Lie[p_m] )=\exp\sum_{m\in T} Lie[p_m],
\end{align*}
where we have used the first equation in Proposition \ref{Su1Prop3.1} for $F=Lie.$  Equation (\ref{thm6.4eqn2}) follows.
Equation (\ref{thm6.4eqn3})  is a consequence of Proposition \ref{Pleth3}.
\end{proof}

\begin{cor}\label{cor6.5} If either $F^T$ or $G^T$ is Schur-positive, then so is 
$$\prod_{n\in T} (1-p_n)^{-1}=\sum_{\stackrel{\lambda\in Par}{\lambda_i\in T}}p_\lambda.$$
\end{cor}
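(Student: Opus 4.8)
The plan is to read off the result from Theorem~\ref{thm6.4}, using only the dictionary between plethysm and induced representations recalled in Section~1.1.

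First I would record the purely combinatorial identity
\[
\prod_{n\in T}(1-p_n)^{-1}=\prod_{n\in T}\Big(\sum_{k\geq 0}p_n^{\,k}\Big)
=\sum_{\stackrel{\lambda\in Par}{\lambda_i\in T}}p_\lambda,
\]
obtained by expanding each factor as a geometric series: a choice of exponent $k_n\geq 0$ for each $n\in T$ (all but finitely many zero) corresponds to the partition $\lambda$ having exactly $k_n$ parts equal to $n$. This justifies the displayed equality in the statement and shows that each homogeneous component of the product is a finite nonnegative combination of power sums.

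Next, assume $F^T$ is Schur-positive, i.e.\ each $f_n^T$ is the Frobenius characteristic of a genuine $S_n$-module $M_n$. By equation~(\ref{thm6.4eqn1}) we have $\prod_{n\in T}(1-p_n)^{-1}=H[F^T]$, whose degree-$n$ component is $\sum_{\lambda\vdash n}H_\lambda[F^T]=\sum_{\lambda\vdash n}\prod_i h_{m_i(\lambda)}[f_i^T]$ by (\ref{HigherQ}) and (\ref{defhrofQ}). Since $h_m=\mathrm{ch}(\mathbf 1_{S_m})$, each factor $h_m[f_i^T]$ is the characteristic of the module $\mathbf 1_{S_m}[M_i]\!\uparrow_{S_m[S_i]}^{S_{mi}}$ and is therefore Schur-positive; as a product (over $i$) and sum (over $\lambda\vdash n$) of Schur-positive symmetric functions is Schur-positive, $H[F^T]$ is Schur-positive in every degree. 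If instead $G^T$ is Schur-positive, the identical argument applied to equation~(\ref{thm6.4eqn3}), $\prod_{n\in T}(1-p_n)^{-1}=E[G^T]$, goes through with $h_m$ replaced by $e_m=\mathrm{ch}(\mathrm{sgn}_{S_m})$ and the trivial character of $S_m$ in the wreath product replaced by the sign character.

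There is no real obstacle here: the only ingredient beyond the two displayed identities of Theorem~\ref{thm6.4} is the standard fact, recalled in Section~1.1, that plethysm by $h_m$ (resp.\ $e_m$) carries the characteristic of an honest module to the characteristic of an honest module induced from the wreath product $S_m[S_i]$, together with the trivial observation that Schur-positivity is closed under sums and products. So the write-up amounts to assembling these pieces.
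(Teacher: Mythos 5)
Your proposal is correct and is essentially the paper's own (implicit) argument: the corollary is read off from Theorem \ref{thm6.4} exactly as you do, using the fact recalled in Section 1.1 that $h_m[\,\cdot\,]$ and $e_m[\,\cdot\,]$ applied to characteristics of true modules give characteristics of modules induced from wreath products, just as in the proof of Corollary \ref{Spositivity1}. Nothing further is needed.
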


The following remarks will explain the connection between the functions $f_n^T$ and the work of Section 3.  
\begin{enumerate}
\item If $T=\{1\},$ then $\psi^T(d)=\mu(d)$ and $f_n^T$ corresponds to the representation $Lie_n.$
\item If $T$ is the set of all positive integers, then $\psi^T(d)=\phi(d)$ by M\"obius inversion of the well-known identity $m=\sum_{d|m}\phi(d).$  Thus $f_n^T$ is the characteristic of the conjugacy action
$\textbf{ 1}\uparrow_{C_n}^{S_n},$ i.e. $f_n^T=Conj_n.$
\item Fix a set $S$ of primes.  Let $T$ be the set of all integers whose prime factors are all in $S.$  Then clearly if $d\in T,$ $\psi^T(d)=\phi(d)$ by the identity used in (2). Otherwise $d=Q_d\ell_d$ with $Q_d\in T$ and $\ell_d$ relatively prime to $Q_d$ and also relatively prime to all integers in $T.$ Hence, since  $\mu$ is multiplicative,
Definition \ref{def6.1} gives 
$$\psi^T(d)=\sum_{m\in T} m \mu(Q_d/m) \mu(\ell_d)
=\mu(\ell_d)\cdot \sum_{m\in T} m \mu(Q_d/m)=\mu(\ell_d)\psi^T(Q_d),$$ 
 Since $Q_d\in T,$  we obtain  $\psi^T(d)=\mu(\ell_d)\phi(Q_d),$ which is precisely the formula given by (\ref{FoulkesPrimes1}). Thus $f_n^T=Lie_n^S.$
\item Fix a set $S$ of primes. Now let $T$ be the set of all integers relatively prime to every element of $S$ (so $T$ is the set of integers none of whose prime factors is in $S$). Exactly as above, we see that Definition \ref{def6.1} reduces to (\ref{FoulkesPrimes2}), and hence $f_n^T$ is the characteristic $Lie_n^{\bar S}.$
\end{enumerate}

From Theorem \ref{thm6.4} and the preceding observations, we have the following decompositions of the representations $Conj_n,$ $Lie_n^{(q)}.$ Only ~\eqref{prop6.6eqn4a}, recorded here for completeness, requires comment; it follows from ~\eqref{prop6.6eqn4} and Proposition~\ref{Pleth1} in Section~\ref{SecPleth}. 
\begin{prop}\label{prop6.6}
 \begin{equation}\label{prop6.6eqn1}\sum_{m\geq 1} p_m[Lie]=\sum_{n\geq 1} Conj_n;
\end{equation}
\begin{equation}\label{prop6.6eqn2}\sum_{m\geq 1} p_m=\sum_{n\geq 1}Conj_n[\sum_{r\geq 1}(-1)^{r-1} e_r]
\end{equation}
The plethystic inverse of $ Conj$ is 
\begin{equation}\label{prop6.6eqn3} (\sum_{n\geq 1} Conj_n)^{\langle -1\rangle}
= \sum_{r\geq 1}(-1)^{r-1} e_r[\sum_{n\geq 1} \mu(n) p_n].
\end{equation}
Let $q$ be prime, and let $n=\ell q^k $ 
where $(\ell, q)=1.$ Then 
\begin{equation}\label{prop6.6eqn4} Lie_n^{(q)} =\sum_{r=0}^k Lie_{\ell q^{k-r}}[p_{q^r}].\end{equation}
\begin{equation}\label{prop6.6eqn4a} Lie^{(q)} =\sum_{r\ge 0} Lie[p_{q^r}] \text{ and hence } Lie=Lie^{(q)}[p_1-p_q].
\end{equation}
The plethystic inverse of $Lie^{(q)}$ is 
\begin{equation}\label{prop6.6eqn5}\sum_{n\geq 1}(Lie_n^{(q)})^{\langle -1\rangle}
=Lie^{\langle -1\rangle}[p_1-p_q]=(\sum_{r\geq 1} (-1)^{r-1} e_r)[p_1-p_q]. 
\end{equation}

\end{prop}

The construct of Definition \ref{def6.2} allows us to remove the restriction that $q$ be prime, as follows.  Let $k\geq 2$ be any positive integer, and take $T$ to be the set of all nonnegative powers of $k.$ In this case Theorem \ref{thm6.4} gives
\begin{equation}\label{eqn6.9} H[\sum_{n\geq 1} f_n^T]=\prod_{r\geq 0} (1-p_{k^r})^{-1}, \qquad \sum_{n\geq 1} f_n^T=\sum_{r\geq 0} p_{k^r}[Lie].
\end{equation}
By inverting this equation plethystically, we obtain the recurrence 
\begin{equation}\label{eqn6.10} \text{For } k\geq 2, \qquad f_n^T=\begin{cases}  Lie_n+f_{\frac{n}{k}}[p_k], & k|n;\\
                          Lie_n, &\mathrm{otherwise},\\
\end{cases}
\end{equation}

However computations show that for $k=4,$ $f_n^T$ is not Schur-positive when $n=4, 16,$ and the degree 16 term in the product 
$\prod_{r\geq 0} (1-p_{4^r})^{-1}$ is not Schur-positive.  In both cases it is the sign representation that appears with coefficient $(-1).$

\begin{conj} For any ODD positive integer $k,$ $f_n^T$ as defined above is Schur-positive.  
\end{conj}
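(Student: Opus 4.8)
The plan is to prove that every $f_n^T$ is the Frobenius characteristic of a genuine $S_n$-module, where $T=\{1,k,k^2,\dots\}$ with $k$ odd; the case $k=1$ is $Lie_n$ and the prime case is Theorem~\ref{OnePrime}, so the real content is composite odd $k$. A first reduction: by Lemma~\ref{lem6.3} and~\eqref{eqn6.10} we have $f_n^T=Lie_n$ when $k\nmid n$, which is Schur-positive, so we may assume $k\mid n$ and use $f_n^T=Lie_n+f_{n/k}^T[p_k]$, equivalently $f_n^T=\sum_{s\ge 0:\,k^s\mid n}Lie_{n/k^s}[p_{k^s}]$ by~\eqref{thm6.4eqn2}. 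Schur-positivity of all the $f_n^T$ is the same as Schur-positivity of $F^T$, and would also yield, by Corollary~\ref{cor6.5}, the a priori weaker Schur-positivity of $\prod_{n\in T}(1-p_n)^{-1}=\sum_{\lambda_i\in T}p_\lambda$.

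Next I would isolate why the prime case succeeds and why it cannot be imitated verbatim. For $k=q$ prime, $f_n^T=\ell_n^{(Q_n)}$ with $Q_n$ the $q$-part of $n$ (equation~\eqref{FoulkesPrimes1}), so by Theorem~\ref{Foulkes}(2) the multiplicity of $s_\lambda$ in $f_n^T$ counts standard Young tableaux of shape $\lambda$ of major index $\equiv Q_n\pmod{n}$, hence is nonnegative. For composite $k$ this breaks: $f_n^T$ is not a single Foulkes character (for $k=9$, $f_9^T=Lie_9+p_9$, and the coefficient $9$ of $p_9$ exceeds $\phi(9)$, so it is not a Ramanujan sum), and the plethystic summands $Lie_{n/k^s}[p_{k^s}]$ with $s\ge 1$ need not be Schur-positive (for instance $Lie_2[p_3]=s_{(3,3)}-s_{(3,2,1)}+s_{(3,1^3)}+s_{(2^3)}-s_{(2,1^4)}+s_{(1^6)}$). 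So the conjecture really asserts that the genuine module $Lie_n$ absorbs every negative contribution of the $Lie_{n/k^s}[p_{k^s}]$, $s\ge 1$, and parity enters exactly here: by Proposition~\ref{metaf} and Lemma~\ref{lem6.3} the multiplicity of the sign representation in $f_n^T$ equals $(-1)^n f_n^T(-1)$, i.e.\ $\delta(n/2\in T)-\delta(n\in T)$ for $n$ even and $\delta(n\in T)$ for $n$ odd, which is $\ge 0$ for all $n$ when $k$ is odd but equals $-1$ at $n=k$ when $k$ is even (matching the $k=4$ failure).

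To treat composite odd $k$, I would induct on the $k$-adic valuation $v_k(n)$ through the step $f_n^T=Lie_n+f_{n/k}^T[p_k]$, with a strengthened hypothesis controlling, Schur-coefficient by Schur-coefficient, the negative part of $f_{n/k}^T[p_k]$. Concretely: expand each $s_\mu[p_k]$, $\mu\vdash n/k$, by the classical $k$-core/$k$-quotient rule for plethysm of a Schur function by a power sum --- a signed, multiplicity-free expansion $\sum_\nu \pm\, s_\nu$ over $\nu\vdash n$ with empty $k$-core, the signs being governed by $k$-ribbon heights --- then combine with the inductive Schur expansion of $f_{n/k}^T$ to bound the total negative coefficient of each $s_\lambda$ in $f_{n/k}^T[p_k]$, and bound it above by the multiplicity of $s_\lambda$ in $Lie_n$, which by Theorem~\ref{Foulkes}(2) equals $\#\{\text{SYT of shape }\lambda:\ \mathrm{maj}\equiv 1\pmod{n}\}$; this would give Schur-positivity of $f_n^T$ and re-establish the hypothesis at the next valuation. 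Oddness of $k$ would be used through the sign calculus --- for $k$ odd, $\omega(g[p_k])=(\omega g)[p_k]$, and $k$-ribbon heights transform predictably under conjugation of partitions --- which is what keeps the negative part of $f_{n/k}^T[p_k]$ in check.

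A parallel route would be to realize $f_n^T$ as the Frobenius characteristic of the multilinear component of some $\mathbb{Z}/k$-graded or ``$k$-colored'' free Lie algebra, a color analogue of the oddly generated free Lie superalgebra of Gessel--Restivo--Reutenauer \cite{GRR}: the series $\sum_{s\ge 0}Lie[p_{k^s}]$ is exactly the Lie part one expects after adjoining a formal primitive $k$-th root of unity, and genuineness of the resulting $S_n$-module would then be automatic, with $k$ odd making the color sign conventions consistent (compare the role of oddness in Theorem~\ref{ConjLieSup2}). On either route the principal obstacle is sign bookkeeping: the inductive route needs a sharp comparison of the signed $k$-quotient expansion of $s_\mu[p_k]$ with major-index statistics on standard tableaux, and I know of no closed form for either side clean enough to make the inequality transparent; the superalgebra route needs one to pin down the correct colored Lie algebra and prove an $S_n$-equivariant Poincar\'e--Birkhoff--Witt theorem for it reproducing $\sum_s Lie[p_{k^s}]$ on the nose.
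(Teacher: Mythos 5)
You should first be clear about the status of this statement: in the paper it is a \emph{conjecture} (supported only by computations, together with the observed failure for $k=4$, where the sign representation occurs with coefficient $-1$), and no proof is given there. What you have written is likewise not a proof but a program, and you say so yourself at the end. The parts you do establish are correct and consistent with the paper: the reduction via (\ref{eqn6.10}) and (\ref{thm6.4eqn2}) to $f_n^T=\sum_{s\ge 0,\,k^s\mid n}Lie_{n/k^s}[p_{k^s}]$; the identification of $f_n^T$ with a single Foulkes character when $k$ is prime (so that Theorem \ref{Foulkes}(2) gives positivity); the observation, via Proposition \ref{metaf} and Lemma \ref{lem6.3}, that the multiplicity of the sign representation in $f_n^T$ is $\delta(n\in T)$ for $n$ odd and $\delta(n/2\in T)-\delta(n\in T)$ for $n$ even, which is nonnegative for odd $k$ and explains the $k=4$ failure. (Minor slip: for $k=2$ the value at $n=k$ is $0$, not $-1$, and indeed $f_n^T=Lie_n^{(2)}$ is genuinely Schur-positive there; the obstruction only appears for even $k\ge 4$.)

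The genuine gap is the central inequality, which is never established and is in fact the whole content of the conjecture: that for $k\mid n$ the negative part of the signed expansion of $f_{n/k}^T[p_k]$ (equivalently of $\sum_{s\ge 1}Lie_{n/k^s}[p_{k^s}]$) is dominated, Schur coefficient by Schur coefficient, by the multiplicities in $Lie_n$. The sign-character computation controls exactly one irreducible out of all of them, so it is only a consistency check, not evidence that the induction on the $k$-adic valuation can be closed; you give no bound comparing the $k$-ribbon (core/quotient) signs arising from $s_\mu[p_k]$ with the major-index counts of Theorem \ref{Foulkes}(2), and you acknowledge you do not know how to produce one. The alternative route via a $\mathbb{Z}/k$-colored analogue of the free Lie superalgebra is also left entirely speculative: no algebra is constructed and no equivariant Poincar\'e--Birkhoff--Witt statement is proved that would yield $\sum_{s\ge 0}Lie[p_{k^s}]$ as a genuine module. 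So the proposal correctly frames the problem, reproves the easy reductions, and isolates where oddness of $k$ must enter, but it does not prove the statement; as in the paper, it remains open, and Corollary \ref{cor6.5} would only transfer a proof (once found) to the weaker product statement, not supply one.
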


\begin{conj} The product $\prod_{r\geq 0} (1-p_{k^r})^{-1}$ is Schur-positive for any ODD positive integer $k.$
\end{conj}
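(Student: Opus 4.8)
The plan is to reduce the statement to the Schur-positivity of a single explicit symmetric function, and then attack that by character theory. Put $T=\{k^r:r\ge 0\}$. By equation~(\ref{thm6.4eqn1}) of Theorem~\ref{thm6.4}, $\prod_{r\ge 0}(1-p_{k^r})^{-1}=H[F^T]$ with $F^T=p^T[Lie]=\sum_{r\ge 0}Lie[p_{k^r}]$ by~(\ref{thm6.4eqn2}), and also $=E[G^T]$ by~(\ref{thm6.4eqn3}), where, using $Lie^{(2)}=\sum_{r\ge 0}Lie[p_{2^r}]$ (Section~2), one may rewrite $G^T=\sum_{r\ge 0}Lie^{(2)}[p_{k^r}]$. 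Hence, by Corollary~\ref{cor6.5}, it suffices to prove that $F^T$ is Schur-positive, equivalently that each degree-$n$ piece
\[
f^T_n=\sum_{r\ge 0,\ k^r\mid n}Lie_{n/k^r}[p_{k^r}]
\]
is Schur-positive. For $k$ an odd prime this is known: $f^T_n=Lie^{(k)}_n=\mathrm{ch}\,\mathrm{Ind}_{C_n}^{S_n}(\chi)$ for a linear character $\chi$ (Theorem~\ref{OnePrime}, equation~(\ref{prop6.6eqn4})), whose Schur coefficients are the nonnegative major-index counts of Theorem~\ref{Foulkes}(2). The content of the conjecture is the case of composite odd $k$, where $f^T_n$ is \emph{not} an induced linear character from $C_n$ --- one checks this already at $n=q_1^2q_2$.

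I would then induct on $n$ using the recursion~(\ref{eqn6.10}): $f^T_n=Lie_n+f^T_{n/k}[p_k]$ when $k\mid n$, and $f^T_n=Lie_n$ (Schur-positive) otherwise. For the inductive step, invoke the classical factorization of Schur functions through $k$-cores and $k$-quotients: for a symmetric function $g$ of degree $n/k$ and $\lambda\vdash n$,
\[
\langle g[p_k],s_\lambda\rangle=\varepsilon_k(\lambda)\,\bigl\langle g,\ s_{\lambda^{(0)}}s_{\lambda^{(1)}}\cdots s_{\lambda^{(k-1)}}\bigr\rangle,
\]
which vanishes unless $\lambda$ has empty $k$-core; here $(\lambda^{(0)},\dots,\lambda^{(k-1)})$ is the $k$-quotient of $\lambda$ and $\varepsilon_k(\lambda)=\pm1$ is the sign of its $k$-ribbon tiling. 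Since $f^T_{n/k}$ is Schur-positive by induction and a product of Schur functions is Schur-positive, $\langle f^T_{n/k}[p_k],s_\lambda\rangle=\varepsilon_k(\lambda)\cdot(\text{nonnegative integer})$; thus $\langle f^T_n,s_\lambda\rangle\ge 0$ is automatic unless $\lambda$ has empty $k$-core with $\varepsilon_k(\lambda)=-1$, in which case it reduces to the inequality
\[
\langle Lie_n,s_\lambda\rangle\ \ge\ \bigl\langle f^T_{n/k},\ s_{\lambda^{(0)}}s_{\lambda^{(1)}}\cdots s_{\lambda^{(k-1)}}\bigr\rangle.
\]
Oddness of $k$ should enter both on the left --- via the distribution mod $n$ of $\mathrm{maj}$ on standard tableaux of shape $\lambda$, which governs $\langle Lie_n,s_\lambda\rangle$ by Theorem~\ref{Foulkes}(2) --- and on the right, via the constraint that a $k$-ribbon with $k$ odd has height and width of equal parity.

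The step I expect to be the main obstacle is this domination inequality. Bounding the right-hand side requires propagating a quantitative strengthening of the inductive hypothesis (control of $\langle f^T_{n/k},s_\mu\rangle$ together with the relevant Littlewood--Richardson coefficients), and bounding the left-hand side requires a uniform lower estimate for the number of standard Young tableaux of prescribed shape whose major index lies in a fixed residue class mod $n$; neither has an obvious bijective or module-theoretic shortcut once $k$ is composite. A plausible alternative is to prove the stronger structural statement that $f^T_n$ is itself the Frobenius characteristic of a genuine $S_n$-module, built by an iterated induction guided by $f^T_n=Lie_n+f^T_{n/k}[p_k]$ (in the spirit of the construction of $Lie^{(q)}_n$ and of the $W_i$ above); a further alternative is to establish Schur-positivity of all partial sums $\sum_{r=0}^{j}Lie_{n/k^r}[p_{k^r}]$ directly via a sign-reversing involution on the Murnaghan--Nakayama fillings computing the two extreme terms. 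Every route I see still hinges on controlling the signs $\varepsilon_{k^r}$ uniformly in $r$ and $\lambda$, so that is the crux of the problem.
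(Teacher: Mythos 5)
Be aware that the statement you are proving is stated in the paper only as a conjecture: the paper supplies no proof, just the reduction you also use and computational evidence (together with the observation that the analogous product for $k=4$ already fails in degree $16$). So there is nothing in the paper to compare your argument to, and your write-up must stand on its own as a proof --- which it does not. Your reduction is correct but is exactly the paper's own: by Theorem \ref{thm6.4} and the recursion (\ref{eqn6.10}) one has $\prod_{r\geq 0}(1-p_{k^r})^{-1}=H[F^T]$ with $f_n^T=\sum_{r\geq 0,\ k^r\mid n}Lie_{n/k^r}[p_{k^r}]$, and by Corollary \ref{cor6.5} Schur positivity of $F^T$ would suffice; your identification of $f_n^T$ with $Lie_n^{(k)}$ for $k$ prime is also consistent with the paper. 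But Schur positivity of $f_n^T$ for odd $k$ is itself the paper's immediately preceding conjecture, equally open, so at this point you have only traded one open statement for another.

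The genuine gap is the inductive step. After applying the $k$-quotient/$k$-core expansion of $f^T_{n/k}[p_k]$, everything hinges on the domination inequality
$\langle Lie_n,s_\lambda\rangle\geq \bigl\langle f^T_{n/k},\,s_{\lambda^{(0)}}\cdots s_{\lambda^{(k-1)}}\bigr\rangle$
for every $\lambda$ with empty $k$-core and negative ribbon sign $\varepsilon_k(\lambda)$, and you explicitly acknowledge you cannot establish it: no lower bound on the major-index counts governing $\langle Lie_n,s_\lambda\rangle$ (Theorem \ref{Foulkes}) and no upper bound on the right-hand side (which requires controlling Littlewood--Richardson coefficients of the quotient shapes against the inductively known $f^T_{n/k}$) is provided, and the suggested alternatives (an explicit module construction, a sign-reversing involution on Murnaghan--Nakayama fillings) are not carried out. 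Note also that Corollary \ref{cor6.5} is only a sufficiency criterion: as the paper's discussion of $T=\{1,k\}$ with $k$ even shows (there $Lie_k+p_k$ is not Schur-positive although $H[F^{\{1,k\}}]$ is), the product can be Schur-positive even when the underlying $F^T$ is not, so the route through $f_n^T$ is strictly stronger than the conjecture itself and a failure of the domination inequality would not even be informative. As it stands, your text is a reasonable research plan, but the conjecture remains unproven.
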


Fix $k\geq 2$ and consider the subset $T=\{1,k\}.$ It was shown in \cite[Theorem 4.23]{Su1} that the symmetric function 
$$W_{n,k}=\sum_{\mu\vdash n, \mu_i=1\, or \, k} p_\mu$$ is Schur-positive.  Define $W_{0,k}=1.$ Then
$$\sum_{n\geq 0} W_{n,k}=\prod_{n\in T}(1-p_n)^{-1}=(1-p_1)^{-1} (1-p_k)^{-1}.$$
For $k=1$ we set $W_{n,1}=p_1^n$ for all $n\geq 0,$ so that the  preceding equation reduces, as expected, to
\begin{center} 
$\sum_{n\geq 0} W_{n,1}=\prod_{n\in T}(1-p_n)^{-1}=(1-p_1)^{-1}.$\end{center}

\begin{prop}\label{prop6.7} If $T=\{1,k\}$ and $k\geq 2,$ then 
\begin{equation}\label{eqn6.11}f_n^T
=\begin{cases}  Lie_n+Lie_{\frac{n}{k}}[p_k], & k|n;\\
                          Lie_n, &\mathrm{otherwise},\\
\end{cases}
\end{equation}
and hence $\sum_{n\geq 0} W_{n,k}=H[\sum_{n\geq 0} f_n^T].$

If $k$ is  prime, then 
$f_n^{\{1,k\}}=\mathrm{ch} (\exp\frac{2ki\pi}{n})\big\uparrow_{C_n}^{S_n}=\ell_n^{(k)},$ 
and hence the symmetric function defined by (\ref{eqn6.11}) is  
Schur-positive. \end{prop}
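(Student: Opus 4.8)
The plan is to read off all three assertions from Theorem \ref{thm6.4} together with Foulkes' formula, Theorem \ref{Foulkes}(1). For the formula (\ref{eqn6.11}) itself, apply the second equation of (\ref{thm6.4eqn2}) with $T=\{1,k\}$: the divisors $m$ of $n$ lying in $T$ are $m=1$ (always) and $m=k$ (precisely when $k\mid n$), so $f_n^{\{1,k\}}=Lie_n[p_1]+[\,k\mid n\,]\,Lie_{n/k}[p_k]$, and since plethysm by $p_1$ is the identity this is exactly (\ref{eqn6.11}). The generating-function identity $\sum_{n\geq 0}W_{n,k}=H[\sum_{n\geq 0}f_n^{\{1,k\}}]$ is then immediate from (\ref{thm6.4eqn1}), which gives $H[F^{\{1,k\}}]=(1-p_1)^{-1}(1-p_k)^{-1}$, together with the identity $\sum_{n\geq 0}W_{n,k}=(1-p_1)^{-1}(1-p_k)^{-1}$ recorded just before the proposition (the constant terms match, $W_{0,k}=1$, with $f_0^{\{1,k\}}$ read as $0$).

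The one step with genuine content is the claim, for $k$ prime, that $f_n^{\{1,k\}}$ equals the Foulkes characteristic $\ell_n^{(k)}=\mathrm{ch}\bigl(\exp(\tfrac{2\pi i k}{n})\uparrow_{C_n}^{S_n}\bigr)$; granting this, $\ell_n^{(k)}$ is the characteristic of an honest induced $S_n$-module, hence Schur-positive, and Schur-positivity of (\ref{eqn6.11}) follows. I would prove the identification by comparing the coefficients of $p_d^{n/d}$ over the divisors $d$ of $n$. By Definition \ref{def6.2} and Definition \ref{def6.1} the coefficient in $f_n^{\{1,k\}}$ is $\tfrac1n\psi^{\{1,k\}}(d)$ with $\psi^{\{1,k\}}(d)=\mu(d)+k\,\mu(d/k)\,[\,k\mid d\,]$, while by Theorem \ref{Foulkes}(1) the coefficient in $\ell_n^{(k)}$ is $\tfrac1n\,\phi(d)\,\mu(d/(d,k))/\phi(d/(d,k))$. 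Since $k$ is prime, $(d,k)\in\{1,k\}$: when $k\nmid d$ both coefficients equal $\tfrac1n\mu(d)$, and when $k\mid d$ it remains to verify
\[
\mu(d)+k\,\mu(d/k)\;=\;\phi(d)\,\frac{\mu(d/k)}{\phi(d/k)}.
\]

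Writing $d=k^a m$ with $a\geq 1$ and $(k,m)=1$, the right-hand side equals $\bigl(\phi(k^a)/\phi(k^{a-1})\bigr)\mu(k^{a-1}m)$, which is $(k-1)\mu(m)$ when $a=1$ and $k\,\mu(k^{a-1}m)$ when $a\geq 2$; a short case check on $a=1$, $a=2$, $a\geq 3$ (using $\mu(km)=-\mu(m)$ and $\mu(k^jm)=0$ for $j\geq 2$) matches this against the left-hand side. I do not expect a real obstacle here: the whole argument is a reorganisation of Foulkes' formula, and the only mild care needed is the bookkeeping in the $a\geq 2$ cases of this displayed identity. If one prefers to avoid that computation entirely, an alternative route is: for $k$ prime the divisors of $k$ are exactly $1$ and $k$, so Theorem \ref{Su1Thm5.6} gives $H[\sum_m\ell_m^{(k)}]=\sum_{\lambda:\,\lambda_i\mid k}p_\lambda=\sum_n W_{n,k}=H[F^{\{1,k\}}]$; and since by (\ref{defhrofQ}) the degree-$n$ part of $H[\sum_i g_i]$ equals $g_n$ plus a polynomial in $g_1,\dots,g_{n-1}$, the operator $H[\,\cdot\,]$ is injective on sequences of homogeneous symmetric functions, forcing $\ell_n^{(k)}=f_n^{\{1,k\}}$ for every $n$.
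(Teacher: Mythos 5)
Your proposal is correct, and its first two claims (formula (\ref{eqn6.11}) and the identity $\sum_{n\geq 0}W_{n,k}=H[\sum_n f_n^{\{1,k\}}]$) are handled exactly as in the paper, namely as immediate consequences of Theorem \ref{thm6.4}. For the one substantive step — the identification $f_n^{\{1,k\}}=\ell_n^{(k)}$ for $k$ prime — your primary argument is genuinely different from the paper's. The paper never compares coefficients: it invokes Theorem \ref{Su1Thm5.6}, which for prime $k$ gives $H[\sum_m \ell_m^{(k)}]=\sum_{\lambda:\lambda_i\in\{1,k\}}p_\lambda=\sum_n W_{n,k}=H[F^{\{1,k\}}]$, and then cancels $H[\,\cdot\,]$ using the plethystic invertibility of $H-1$; this is precisely the "alternative route" you sketch at the end, including your triangularity justification of injectivity via (\ref{defhrofQ}). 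Your main route instead verifies equality of the coefficients of $p_d^{n/d}$ directly from Definitions \ref{def6.1}--\ref{def6.2} and Foulkes' formula, Theorem \ref{Foulkes}(1), reducing to the identity $\mu(d)+k\,\mu(d/k)=\phi(d)\,\mu(d/k)/\phi(d/k)$ for $k\mid d$; your case analysis with $d=k^a m$, $(k,m)=1$, does close it, both sides being $(k-1)\mu(m)$ for $a=1$, $-k\mu(m)$ for $a=2$, and $0$ for $a\geq 3$. What each approach buys: your computation is self-contained within this paper (it does not lean on the result imported from \cite{Su1}) and is in the same Ramanujan-sum spirit as the evaluations (\ref{FoulkesPrimes1})--(\ref{FoulkesPrimes2}) carried out for the modules $L_n^S$, so it makes the multiplicativity mechanism explicit; the paper's argument is shorter, avoids all casework, and reuses machinery already in place, at the price of quoting Theorem \ref{Su1Thm5.6}. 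Either way the Schur-positivity conclusion follows as you say, since $\ell_n^{(k)}$ is the characteristic of an honest induced module.
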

\begin{proof} Equation (\ref{eqn6.11}) is immediate from Theorem \ref{thm6.4}.  Now let $k$ be prime.   Recall Theorem \ref{Su1Thm5.6};  that equation  now becomes 
$$\sum_{\lambda\vdash n: \lambda_i=1, k} p_\lambda
=H[\sum_{m\geq 1} \text{ch}\, (\exp(\frac{2\pi i k}{n})\uparrow_{C_n}^{S_n})]\vert_{{\rm\, deg\,} n},$$ and thus the left-hand side 
is precisely $p^T$ for $k$ prime and $T=\{1,k\}.$ But $H-1$ is invertible with respect to plethysm, so  $H[F]=H[G]$ if and only if $F=G.$ Hence $f_n^T$ must coincide with $\ell_n^{(k)}$.
\end{proof}

Computations  indicate  that 
\begin{conj}  $f_n^{\{1,k\}}$ is Schur-positive for $k=2$  and for all odd $k\geq 3.$  (This is trivially true if $k=1.$)
\end{conj}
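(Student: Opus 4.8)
The plan is to reduce at once, via Proposition~\ref{prop6.7}, to a statement about one fixed multiple of $k$: if $k\nmid n$ then $f_n^{\{1,k\}}=Lie_n$, which is Schur-positive by Foulkes' theorem (Theorem~\ref{Foulkes}), so it suffices to show that
\[
f_{km}^{\{1,k\}}=Lie_{km}+Lie_m[p_k]
\]
is Schur-positive for every $m\geq 1$ when $k$ is odd. (The case $k=2$, and indeed every prime $k$, is already covered by Proposition~\ref{prop6.7}, so the real substance is the odd \emph{composite} case.) I would argue coefficient-by-coefficient. By Theorem~\ref{Foulkes}(2) (Kraskiewicz--Weyman), $\langle Lie_{km},s_\lambda\rangle$ equals the number $a_\lambda\geq 0$ of standard Young tableaux of shape $\lambda$ with major index $\equiv 1\pmod{km}$; hence Schur-positivity can only fail through the Schur functions occurring with a negative coefficient in the plethysm $Lie_m[p_k]$, and the whole problem is to show that $a_\lambda$ dominates that negative contribution for every $\lambda$.

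For the plethysm I would invoke the classical expansion of $s_\nu[p_k]$ in terms of $k$-cores and $k$-quotients (Littlewood): $s_\nu[p_k]=\sum_\lambda \varepsilon_k(\lambda)\,s_\lambda$ is a \emph{signed, multiplicity-free} sum over partitions $\lambda$ of $k|\nu|$ with empty $k$-core whose $k$-quotient is determined by $\nu$, with $\varepsilon_k(\lambda)\in\{+1,-1\}$ an explicit sign. Writing $Lie_m=\sum_\nu\langle Lie_m,s_\nu\rangle s_\nu$, whose coefficients are themselves Kraskiewicz--Weyman counts (major index $\equiv 1\pmod m$), this expresses $\langle Lie_m[p_k],s_\lambda\rangle$ as a signed sum of such counts indexed by the $k$-quotient data of $\lambda$. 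The target inequality $a_\lambda+\langle Lie_m[p_k],s_\lambda\rangle\geq 0$ then becomes purely combinatorial: for each $\lambda$ with empty $k$-core, the number of SYT of $\lambda$ with $\mathrm{maj}\equiv 1\pmod{km}$ must be at least the total ``deficit'' coming from the terms with $\varepsilon_k=-1$. I expect this to follow from an explicit injection built out of the bijection realizing the empty-$k$-core correspondence: given a tuple of SYT on the components of a $k$-quotient of $\lambda$ carrying the correct residues modulo $m$, the standard ``stacking of runners / beta-sets'' construction produces an SYT of $\lambda$, and one checks that its major index lands in the residue class $1\pmod{km}$; injectivity is inherited from the bijection on shapes.

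The step I expect to be the genuine obstacle is precisely this major-index bookkeeping, together with locating where the oddness of $k$ is used — and that it is needed is already visible in the extreme case $m=1$. There $f_k^{\{1,k\}}=Lie_k+p_k=Lie_k+\sum_{j=0}^{k-1}(-1)^j s_{(k-j,1^j)}$, so the coefficient of $s_{(k-j,1^j)}$ is $\langle Lie_k,s_{(k-j,1^j)}\rangle+(-1)^j$. Since $Lie_k$ contains no copy of the sign representation for $k\geq 3$, the $j=k-1$ coefficient is $(-1)^{k-1}$, which is negative precisely when $k$ is even; when $k$ is odd the negative signs fall on the hooks $(k-j,1^j)$ with $j$ odd, and for those a short count suffices: the descent set of a hook SYT is $\{a-1:a\text{ in the leg (the column of the hook)}\}$, so its major index is $(\text{sum of the leg entries})-(\text{leg length})$, and one checks directly that some $j$-subset of $\{2,\dots,k\}$ has the sum needed to make $\langle Lie_k,s_{(k-j,1^j)}\rangle\geq 1$. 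A correct proof must isolate the analogue of this dichotomy for all $\lambda$ and all $m$: when $k$ is odd the signs $\varepsilon_k$ are arranged so that every negative contribution is matched inside the injection by a tableau of major index $\equiv 1\pmod{km}$, whereas for even $k$ the ``sign flip at $(1^k)$'' already destroys positivity. As a possible alternative, one might instead try to write $f_n^{\{1,k\}}$ as a nonnegative integer combination of the (finitely many) distinct Foulkes characters $\ell_n^{(g)}$, $g\mid n$ — both $f_n^{\{1,k\}}$ and all the $\ell_n^{(r)}$ are class functions supported on the classes all of whose cycles have equal length — which would give Schur-positivity for free; but the example $n=k=9$, where $f_9^{\{1,9\}}=Lie_9+p_9$, shows that the coefficients in such a combination, if they exist, are not the naive ones, so I would expect the combinatorial injection to be the more tractable route.
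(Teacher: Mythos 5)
You should know at the outset that this statement is presented in the paper as a \emph{conjecture}: the paper offers no proof, only computational evidence, together with the preceding Proposition~\ref{prop6.7}, which settles the case of prime $k$ (and hence $k=2$) by identifying $f_n^{\{1,k\}}$ with the Foulkes character $\ell_n^{(k)}$. So there is no proof in the paper to compare against, and your proposal does not close the question either. Your reduction is correct and matches the paper's framework: by Proposition~\ref{prop6.7}, $f_n^{\{1,k\}}=Lie_n$ when $k\nmid n$ (Schur-positive by Theorem~\ref{Foulkes}), so the content is the Schur-positivity of $Lie_{km}+Lie_m[p_k]$ for odd $k$ and all $m$; your $m=1$ hook computation for $Lie_k+p_k$ with $k$ odd is also sound. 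But the central step for general $m$ --- the injection showing that the Kraskiewicz--Weyman count at each $\lambda$ dominates the negative contributions of $Lie_m[p_k]$ --- is exactly what you yourself flag as ``the genuine obstacle,'' and it is never carried out. As written, this is a plan, not a proof.

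Two concrete difficulties in the plan deserve naming. First, the Littlewood/SXP expansion you invoke is not multiplicity-free: $\langle s_\nu[p_k],s_\lambda\rangle=\pm\,c^{\nu}_{\lambda^{(0)},\ldots,\lambda^{(k-1)}}$, a signed Littlewood--Richardson coefficient attached to the $k$-quotient of $\lambda$ (the quotient is not ``determined by $\nu$''); after summing over $\nu$ against the multiplicities of $Lie_m$, the total deficit at a fixed $\lambda$ can well exceed $1$, so the proposed injection must operate on LR fillings together with tableaux on the quotient components, not merely on shapes. Second, the major-index bookkeeping under the core/quotient correspondence is genuinely delicate: the major index of the assembled standard tableau of shape $\lambda$ is not the naive sum of the major indices on the runners (there are residue shifts coming from the abacus positions), and this is precisely where the parity of $k$ would have to enter, since the statement fails for even $k\geq 4$ (the paper notes $f_n^T$ already fails to be Schur-positive for $k=4$, $n=4,16$ in the related power-of-$k$ setting, and $Lie_{4m}+p_{4m}$ loses the sign representation). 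The key lemma you would need --- that for odd $k$ every $\varepsilon_k=-1$ contribution is matched by a standard tableau of $\lambda$ with major index $\equiv 1\pmod{km}$ --- is an open combinatorial assertion essentially equivalent to the conjecture itself, so the gap is not a technicality but the heart of the problem.
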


When $k$ is even and not equal to 2, this fails.  For instance, if $n=k=4m,$ it is easy to see that $Lie_{4m}+p_{4m}$ contains the sign representation with coefficient $(-1).$ However 
we have $H[F^{\{1,k\}}]=(1-p_1)^{-1}(1-p_k)^{-1},$ which we know to be Schur-positive from \cite[Proposition 4.23]{Su1}.  This example shows that it is not always possible to write a Schur-positive sum of power sums as a symmetrised module over a sequence of true $S_n$-modules, since $Lie_k+p_k$ fails to be Schur-positive when $k$ is even.

\begin{prop}\label{prop6.8} Let $k\geq 2.$ Then $\omega (E[F^{\{1,k\}}])=(1-p_1)^{-1}(1-(-1)^{k-1}p_k)^{-1} (1+p_2)(1+p_{2k}).$  If $k$ is prime, this is Schur-positive.
\end{prop}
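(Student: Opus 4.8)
The plan is to follow the same strategy used throughout Section 4: compute the values $f_n^{\{1,k\}}(-1)$ and $f_n^{\{1,k\}}(1)$, feed these into the meta theorem (Theorem \ref{metathm}), and then read off the generating function for $E[F^{\{1,k\}}]$ and apply $\omega$. First I would recall from Proposition \ref{prop6.7} that $f_n^{\{1,k\}} = Lie_n + Lie_{n/k}[p_k]$ when $k \mid n$, and $f_n^{\{1,k\}} = Lie_n$ otherwise; and from Lemma \ref{lem6.3} that $f_n^{\{1,k\}}(1) = 1$ if $n \in \{1,k\}$ and $0$ otherwise. To get the values at $-1$, I would either use Proposition \ref{metaf} (the recursion $f_{2m}^T(-1) = f_m^T(1) - f_{2m}^T(1)$, $f_{2m+1}^T(-1) = -f_{2m+1}^T(1)$), or compute directly from $f_n^{\{1,k\}}(t) = \frac1n \sum_{d\mid n}\psi^{\{1,k\}}(d)\, t^{n/d}$. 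The upshot should be: $f_1^{\{1,k\}}(-1) = -1$; $f_2^{\{1,k\}}(-1) = f_1(1) - f_2(1)$, which is $1$ if $k \neq 2$ and $0$ if $k = 2$; $f_{2k}^{\{1,k\}}(-1) = f_k(1) - f_{2k}(1) = 1 - 0 = 1$; $f_k^{\{1,k\}}(-1) = -f_k(1) = -1$ if $k$ is odd, while if $k$ is even $f_k^{\{1,k\}}(-1) = f_{k/2}(1) - f_k(1)$, which is $0$ since $k/2 \notin\{1,k\}$ unless $k=2$; and $f_n^{\{1,k\}}(-1) = 0$ for all other $n$. One must handle the small-$k$ overlaps ($k=2$, where the indices $1,2,k,2k$ collide) carefully; the stated formula $(1-p_1)^{-1}(1-(-1)^{k-1}p_k)^{-1}(1+p_2)(1+p_{2k})$ should be checked to specialize correctly when $k=2$ to $(1-p_1)^{-1}(1+p_2)^2(1+p_4)$.

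Next I would invoke equation (\ref{metaExt}) of Theorem \ref{metathm}, which expresses $E[F]$ as $\prod_n (1 - p_n)^{-f_n(-1)}$ type product — more precisely the meta theorem gives $E[F^{\{1,k\}}]$ as a product over $n$ of factors $(1\pm p_n)^{\pm 1}$ governed by $f_n(-1)$ (following the pattern of Theorem \ref{ExtAltExtLS}). Plugging in the nonzero values $f_1(-1) = -1$, $f_2(-1) = 1$ (when $k$ odd), $f_k(-1) = -1$ (when $k$ odd), $f_{2k}(-1) = 1$ yields $E[F^{\{1,k\}}] = (1-p_1)^{-1}(1-p_k)^{-1}(1+p_2)(1+p_{2k})$ — wait: I must track signs, since for $k$ odd $f_k(-1) = -1$ gives a factor $(1-p_k)^{-1}$ as with $p_1$, whereas for $k$ even the $p_k$ factor disappears from $E$ and instead the structure mirrors the even case of Theorem \ref{ExtAltExtLS}. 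Applying $\omega$ then sends $p_j \mapsto (-1)^{j-1}p_j$, so $(1-p_1)^{-1}\mapsto(1-p_1)^{-1}$, $(1+p_2)\mapsto(1-p_2)$... hmm, that does not match the stated answer, so the correct reading must be that $\omega$ is applied and then the identity is rearranged; I would carefully match $\omega(E[F])$ against the claimed $(1-p_1)^{-1}(1-(-1)^{k-1}p_k)^{-1}(1+p_2)(1+p_{2k})$, using $\omega(p_k) = (-1)^{k-1}p_k$ and $\omega(p_{2k}) = -p_{2k}$, $\omega(p_2) = -p_2$ — so in fact one should start from $E[F^{\{1,k\}}]$ having factors $(1-p_2)^{-1}$ and $(1-p_{2k})^{-1}$ (or $(1+\cdot)$) and then $\omega$ flips them. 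The bookkeeping of which sign appears where is the step I would do most carefully on a scratch sheet before writing it out.

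For the final sentence, that $\omega(E[F^{\{1,k\}}])$ is Schur-positive when $k$ is prime: when $k$ is an odd prime, $(1-p_1)^{-1}(1-p_k)^{-1} = \sum_{\lambda:\ \lambda_i\in\{1,k\}} p_\lambda$ is Schur-positive by \cite[Theorem 4.23]{Su1} (restated before Proposition \ref{prop6.7}), and I would show that multiplying by $(1+p_2)(1+p_{2k})$ preserves Schur-positivity. The cleanest route is representation-theoretic: $\omega(E[F^{\{1,k\}}])$ is, up to the $\omega$, an exterior-power construction $E[F^{\{1,k\}}]$ applied to a genuine $S_n$-module, since Proposition \ref{prop6.7} says $f_n^{\{1,k\}} = \ell_n^{(k)}$ is a true module when $k$ is prime; hence $E[F^{\{1,k\}}]$ is Schur-positive as an exterior power of a true module, and $\omega$ of a Schur-positive function is Schur-positive. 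So in fact Schur-positivity is immediate from Proposition \ref{prop6.7} together with the meta theorem's statement that $E[\,\cdot\,]$ of a true module sequence is Schur-positive, and the explicit product formula is just the bonus computation. The main obstacle I anticipate is purely clerical: getting every sign and every small-$k$ degeneracy right in the product formula, in particular reconciling the $(-1)^{k-1}$ in the stated exponent of $p_k$ with the two genuinely different cases ($k$ odd prime versus $k = 2$), and making sure the $k=2$ specialization of the claimed formula is internally consistent.
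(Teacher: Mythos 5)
Your overall strategy (compute $f_n^{\{1,k\}}(\pm1)$ and feed them into equation (\ref{metaExt}) of Theorem \ref{metathm} at $v=1$, then apply $\omega$) is viable, but as written the central computation is both unfinished and contains a concrete error. From Proposition \ref{metaf}, $f_k^{\{1,k\}}(-1)=f_{k/2}^{\{1,k\}}(1)-f_k^{\{1,k\}}(1)$ for even $k$; since $k\in\{1,k\}$, Lemma \ref{lem6.3} gives $f_k^{\{1,k\}}(1)=1$, so the value is $-1$ for every even $k\geq 4$ (and $0$ only for $k=2$), not $0$ as you claim. With your value the product $\prod_m(1-p_m)^{f_m(-1)}$ would lose the factor $(1-p_k)^{-1}$, i.e.\ after $\omega$ the factor $(1-(-1)^{k-1}p_k)^{-1}$, and you would contradict the stated formula for even $k\geq 4$. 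Relatedly, your sign bookkeeping is never actually carried out: the meta theorem gives $E[F^{\{1,k\}}]=(1-p_1)^{-1}(1-p_2)(1-p_k)^{-1}(1-p_{2k})$ for $k\geq 3$ (you first wrote $(1+p_2)(1+p_{2k})$, noticed the mismatch, and deferred the resolution to ``a scratch sheet''), and your proposed $k=2$ sanity check is a misreading: at $k=2$ the stated formula is $(1-p_1)^{-1}(1+p_2)^{-1}(1+p_2)(1+p_4)=(1-p_1)^{-1}(1+p_4)$, not $(1-p_1)^{-1}(1+p_2)^2(1+p_4)$. Once the values are corrected, applying $\omega$ (a ring homomorphism, $\omega(p_j)=(-1)^{j-1}p_j$) does give the claimed identity uniformly in $k$, so the gap is in execution rather than in the idea.

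For comparison, the paper avoids all parity case analysis: since $H[F^{\{1,k\}}]=(1-p_1)^{-1}(1-p_k)^{-1}$ is already known, Proposition \ref{Pleth5} gives $E[F^{\{1,k\}}]=G/G[p_2]$ with $G=(1-p_1)^{-1}(1-p_k)^{-1}$, i.e.\ $(1-p_2)(1-p_{2k})(1-p_1)^{-1}(1-p_k)^{-1}$ in one line, and then $\omega$ yields the formula. Your third paragraph's Schur-positivity argument is essentially the paper's: for $k$ prime, $f_n^{\{1,k\}}=\ell_n^{(k)}$ is a true module by Proposition \ref{prop6.7}, so $E[F^{\{1,k\}}]$ is Schur-positive and $\omega$ preserves Schur-positivity; your alternative suggestion of ``multiplying by $(1+p_2)(1+p_{2k})$ preserves Schur-positivity'' should be dropped, since those factors are not Schur-positive and no such multiplicativity is available.
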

\begin{proof}  We calculate $E[F^{\{1,k\}}]$ using Proposition \ref{Pleth5}.  Since the series $F^{\{1,k\}}$ is Schur-positive when $k$ is prime, the claim follows.\end{proof}

The next three propositions are also clear from Theorem \ref{thm6.4}.

\begin{prop}\label{prop6.9}  Let $k\geq 2$ and $T=\{n:n\leq k\}.$ Then 
$$\prod_{n=1}^k (1-p_n)^{-1}=H[\sum_{n} f_n^T]$$ 
for 
\begin{equation}\label{eqn6.12} f_n^T=\sum_{\stackrel{m=1}{m|n}}^k Lie_{\frac{n}{m}}[p_m].\end{equation}
\end{prop}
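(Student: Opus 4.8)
The plan is simply to specialize Theorem~\ref{thm6.4} to the finite subset $T=\{1,2,\dots,k\}$ of the positive integers. First I would check that this $T$ is a legitimate choice: it is nonempty (indeed $1\in T$ since $k\geq 2$), so Definitions~\ref{def6.1} and \ref{def6.2}, and hence all of Theorem~\ref{thm6.4}, apply verbatim with this $T$ and with $F^T=\sum_{n\geq 1}f_n^T$.

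With this choice, equation~(\ref{thm6.4eqn1}) of Theorem~\ref{thm6.4} reads
$$H[F^T]=\prod_{n\in T}(1-p_n)^{-1}=\prod_{n=1}^{k}(1-p_n)^{-1},$$
which is exactly the left-hand side of the asserted identity. It then remains only to identify $f_n^T$ with the expression in (\ref{eqn6.12}). For that I would invoke the ``equivalently'' form of equation~(\ref{thm6.4eqn2}), namely
$$f_n^T=\sum_{\stackrel{m\in T}{m\mid n}}Lie_{\frac{n}{m}}[p_m].$$
Since $T=\{1,\dots,k\}$, the condition $m\in T$ is the same as $1\leq m\leq k$, so the sum runs over exactly the divisors $m$ of $n$ with $m\leq k$; this is precisely $\sum_{\stackrel{m=1}{m\mid n}}^{k}Lie_{\frac{n}{m}}[p_m]$, the right-hand side of (\ref{eqn6.12}). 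Substituting back yields $H[\sum_{n} f_n^T]=\prod_{n=1}^{k}(1-p_n)^{-1}$, which completes the proof.

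There is essentially no obstacle here: the entire content is already packaged in Theorem~\ref{thm6.4}, and the only point requiring care is the elementary bookkeeping translation between ``$m\in T$'' and ``$1\leq m\leq k$'' in the index of summation, together with noting that $1\in T$ forces $Lie_n=Lie_n[p_1]$ to appear as the $m=1$ term. (If desired, one could append a remark that, by Corollary~\ref{cor6.5}, Schur-positivity of $\prod_{n=1}^{k}(1-p_n)^{-1}$ would follow from Schur-positivity of $F^T$; but this is not part of the present statement, and the finite products $\prod_{n=1}^{k}(1-p_n)^{-1}$ need not in general be treated here.)
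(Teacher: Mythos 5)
Your proposal is correct and matches the paper's own treatment: the paper simply states that Proposition \ref{prop6.9} is clear from Theorem \ref{thm6.4}, and your specialization of equations (\ref{thm6.4eqn1}) and (\ref{thm6.4eqn2}) to $T=\{1,\ldots,k\}$ is exactly that argument, with the index bookkeeping spelled out.
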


\begin{cor}\label{cor6.10}   Let $T=\{n:n\leq k\}, k\geq 2.$  If $n$ is prime, or $n\le k,$ or $n>k$ and $n$ is such that the greatest proper divisor of $n$ is at most $k,$ then $f_n^T$ is Schur-positive.
\end{cor}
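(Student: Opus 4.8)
The plan is to exploit the explicit plethystic formula (\ref{eqn6.12}) from Proposition \ref{prop6.9}, namely $f_n^T=\sum_{m\mid n,\, 1\le m\le k} Lie_{\frac{n}{m}}[p_m]$, and to argue that in each of the three listed cases the only summands that survive are either $Lie_n$ alone or $Lie_n$ together with a single extra term of the form $Lie_1[p_n]=p_n$ or $Lie_{n/d}[p_d]$ with very small quotient. First I would recall that $Lie_n$ itself is always Schur-positive (it is a genuine $S_n$-module, the multilinear component of the free Lie algebra), so $f_n^T=Lie_n$ is immediately Schur-positive; this handles the case where no proper divisor of $n$ lies in $T$, i.e. where $n$ is prime (its only proper divisor is $1$, but then $m=1$ always contributes $Lie_n$, and if $n\le k$ there is also the term $m=n$ giving $p_n$ — see below).

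Next I would treat the three cases separately. \emph{Case $n$ prime:} the divisors of $n$ are $1$ and $n$; the term $m=1$ gives $Lie_n$, and the term $m=n$ appears only if $n\le k$, in which case $f_n^T=Lie_n+Lie_1[p_n]=Lie_n+p_n$, which is Schur-positive precisely because $n$ is an \emph{odd} prime or equals $2$ — wait, this requires care: $Lie_n+p_n$ is Schur-positive for $n$ prime since, by Foulkes (Theorem \ref{Foulkes}) and Proposition \ref{prop6.7}, $f_n^{\{1,n\}}=\ell_n^{(n)}=Conj_n$ when $n$ is prime, and $Conj_n$ is a genuine permutation module. More generally, if $n\le k$ is prime, the relevant summation collapses so that $f_n^T$ equals the characteristic of an induced linear character from $C_n$ (this is exactly the content invoked in Proposition \ref{prop6.7}'s proof), hence Schur-positive. \emph{Case $n=k$:} here the term $m=n=k$ is included, $m=1$ gives $Lie_n$, and every other divisor $m$ of $n$ with $1<m<k$ also satisfies $m\le k$, so all divisor terms are present and $f_n^T=\sum_{m\mid n} Lie_{n/m}[p_m]=Conj_n$ by remark (2) following Corollary \ref{cor6.5} (since $T\supseteq\{\text{all divisors of }n\}$), which is Schur-positive. \emph{Case $n>k$ with greatest proper divisor $\le k$:} then every proper divisor $m$ of $n$ satisfies $m\le k$, so again \emph{all} divisor terms $m\mid n$, $m<n$, appear (and $m=n$ does not, since $n>k$); hence $f_n^T=\sum_{m\mid n,\, m<n} Lie_{n/m}[p_m]$. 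I would then observe that this sum is $f_n^{T'}$ for $T'$ the set of \emph{all} divisors of $n$ except $n$ itself, and by the remark that $\sum_{m\mid n} Lie_{n/m}[p_m]=Conj_n$, we get $f_n^T=Conj_n - Lie_1[p_n]=Conj_n-p_n$; Schur-positivity of this then follows because $Conj_n$ is a permutation module containing $p_n/z_n\cdot z_n$... — more cleanly, $Conj_n=\frac1n\sum_{d\mid n}\phi(d)p_d^{n/d}$ and removing the $d=n$ term $\tfrac{\phi(n)}{n}p_n$ leaves $\tfrac{1}{n}p_n+(\text{terms with }d<n)$, and one checks directly that the resulting class function is still a nonnegative combination of Foulkes characters, hence Schur-positive.

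The main obstacle I anticipate is the third case: controlling $f_n^T$ when it is \emph{not} simply $Lie_n$ and not simply $Conj_n$, i.e. showing that $Conj_n$ minus the single power-sum $p_n$ (or whatever residual term arises) remains Schur-positive. The cleanest route is probably to re-derive, via Theorem \ref{thm6.4} equation (\ref{thm6.4eqn2}), that $f_n^T=\sum_{m\in T,\, m\mid n} Lie_{n/m}[p_m]$ and then, under the divisor hypothesis, recognize the sum as $p^{T}[Lie]$ restricted to degree $n$ where $T$ contains every proper divisor of $n$; comparing with the full conjugacy sum and using that $p_n[Lie_1]=p_n=\ell_n^{(n)}\cdot$(scalar) is itself a genuine character, one concludes the difference is a true character. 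An alternative, and perhaps the safest fallback, is to note that in each listed case $T$ restricted to divisors of $n$ is an interval $\{1,2,\dots,j\}$ or all of the divisor set, so $f_n^T$ coincides on degree $n$ with a Foulkes character $\ell_n^{(r)}$ for suitable $r$ by the argument in Proposition \ref{prop6.7}; Theorem \ref{Foulkes}(2) then gives Schur-positivity outright, since the multiplicities are counts of standard Young tableaux by major index.
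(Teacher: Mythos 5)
Your reduction of the three cases agrees with the paper's: for $n$ prime you get $Lie_n$ (if $n>k$) or $Lie_n+p_n=Conj_n$ (if $n\le k$), for $n=k$ you get $Conj_n$, and for $n>k$ with largest proper divisor at most $k$ you get $f_n^T=Conj_n-p_n$. The first two cases are fine. The gap is in the third case: Schur-positivity of $Conj_n-p_n$ is the real content of the corollary, and neither of the two justifications you offer for it works. The claim that $Conj_n-p_n$ is ``a nonnegative combination of Foulkes characters'' is false in general. Take $n=6$ (with, say, $k=3$): using Foulkes' formula one has $Lie_6=\ell_6^{(1)}=\tfrac16(p_1^6-p_2^3-p_3^2+p_6)$, $\ell_6^{(2)}=\tfrac16(p_1^6+p_2^3-p_3^2-p_6)$, $\ell_6^{(3)}=\tfrac16(p_1^6-p_2^3+2p_3^2-2p_6)$, $Conj_6=\ell_6^{(6)}=\tfrac16(p_1^6+p_2^3+2p_3^2+2p_6)$, and these four symmetric functions are linearly independent, so the expansion $Conj_6-p_6=\tfrac16(p_1^6+p_2^3+2p_3^2-4p_6)=-\,\ell_6^{(1)}+\ell_6^{(2)}+\ell_6^{(3)}$ is the unique one; it has a negative coefficient. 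The same example defeats your ``safest fallback'': by Corollary \ref{cor6.12}, a Foulkes character $\ell_n^{(r)}$ equals $\sum_{m\mid (r,n)}Lie_{n/m}[p_m]$, i.e.\ the index set of the sum must be the full divisor set of a single divisor of $n$; in case (iii) the index set is the set of all proper divisors of $n$, which is of that form only when $n$ is a prime power. For $n=6$ the set $\{1,2,3\}$ is not the divisor set of any integer, and indeed $Conj_6-p_6$ is not equal to any $\ell_6^{(r)}$. So Theorem \ref{Foulkes}(2) cannot be invoked, and the intermediate computation you sketch (``removing the $d=n$ term leaves $\tfrac1n p_n+\cdots$'') is also off: subtracting all of $p_n$ leaves the coefficient $\tfrac{\phi(n)-n}{n}<0$ on $p_n$, so positivity is genuinely not visible from the power-sum expansion.

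What the paper does at this point is different and is the step you are missing: it writes $p_n=\sum_{r\ge 0}(-1)^r s_{(n-r,1^r)}$ and invokes Swanson's theorem \cite{Sw}, which says that $Conj_n$ contains every hook Schur function except $(n-1,1)$ for $n\ge 2$, $(2,1^{n-2})$ for odd $n\ge 3$, and $(1^n)$ for even $n$. One then checks that each of these exceptional hooks occurs with coefficient $-1$ in $p_n$ (the relevant $r$ is odd in each case), so it contributes $+1$ to $Conj_n-p_n$, while every hook occurring with coefficient $+1$ in $p_n$ is present in $Conj_n$ and so is not driven negative; non-hook irreducibles are untouched. Without this (or some equivalent lower bound on the hook multiplicities of $Conj_n$), your argument does not close case (iii).
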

\begin{proof} If $n$ is prime it is easy to see that 
$f_n^T=\begin{cases} Lie_n, & n>k\\
                                 Lie_n+p_n, & \mathrm{otherwise.}
\end{cases}$
Since in this case, $\mu(n)=-1$ and $\phi(n)=n-1,$ and thus $Lie_n=\frac{1}{n}(p_1^n-p_n),$ we conclude that 
$Lie_n+p_n=\frac{1}{n}(p_1^n +\phi(n)p_n) =Lie_n^{\mathcal{P}}.$

If $n\le k$ then the sum in equation (\ref{eqn6.12}) ranges over all divisors of $n$ and hence by (\ref{prop6.6eqn1}) we have $f_n^T=Conj_n.$ 

If $n>k$ and  the largest proper divisor of $n$ is at most $k,$ we have $f_n^T=Conj_n-p_n,$ again by (\ref{prop6.6eqn1}).  But it is well known that 
$p_n=\sum_{r\geq 0} (-1)^r s_{(n-r, 1^r)}.$  Also by a  result of 
\cite{Sw}, $Conj_n$ contains all hooks except for the following: $(n-1,1)$ for all $n\geq 2,$ 
$(2, 1^{n-2})$ for odd $n\geq 3$, and $(1^n)$ for even $n.$ In all three cases, the corresponding Schur function appears with coefficient $-1$ in $p_n,$ hence with coefficient  
$+1$ in $Conj_n-p_n.$ This finishes the proof. \end{proof}

\begin{conj} (See also \cite[Conjecture 1]{Su1}.) $f_n^{\{1,\ldots,k\}}$ is Schur positive for all $n$ and $k,$ and hence so is $\prod_{n=1}^k (1-p_n)^{-1}.$
\end{conj}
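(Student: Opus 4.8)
The plan is to reduce the conjecture to a Schur-positivity statement about a single symmetric function, and then attack that either by exhibiting an $S_n$-module or by a tableau criterion. By Corollary~\ref{cor6.5} it suffices to show that each $f_n^{\{1,\dots,k\}}$ is Schur-positive. If $n\le k$ then every divisor of $n$ lies in $\{1,\dots,k\}$, so by (\ref{eqn6.12}) and (\ref{prop6.6eqn1}) we get $f_n^{\{1,\dots,k\}}=\sum_{m\mid n}Lie_{n/m}[p_m]=Conj_n$, a genuine character; so assume $n>k$. Here (\ref{eqn6.12}) and (\ref{prop6.6eqn1}) give $f_n^{\{1,\dots,k\}}=Conj_n-\sum_{m\mid n,\ m>k}p_m[Lie_{n/m}]$, and, for each $j\mid n$, applying (\ref{prop6.6eqn1}) in degree $n/j$ yields $p_j[Conj_{n/j}]=\sum_{m:\,j\mid m\mid n}p_m[Lie_{n/m}]$; M\"obius inversion over the divisors of $n/j$ then writes each $p_m[Lie_{n/m}]$ as an integer combination of the $p_j[Conj_{n/j}]$, so the ``tail'' $\sum_{m\mid n,\ m>k}p_m[Lie_{n/m}]$ is a controlled combination of power-sum plethysms of the genuine characters $Conj_{n/j}$. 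A downward induction on $k$, with base $k=n-1$ (where the tail is just $p_n$ and the statement is already covered by Corollary~\ref{cor6.10}), then reduces the whole conjecture to a single step: assuming $f_n^{\{1,\dots,k\}}$ is Schur-positive and $k\mid n$, deduce Schur-positivity of $f_n^{\{1,\dots,k-1\}}=f_n^{\{1,\dots,k\}}-Lie_{n/k}[p_k]$.

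For the inductive step two routes look promising. The first is module-theoretic: construct an $S_n$-stable filtration of the regular representation (or of a suitable free-Lie-type algebra) whose successive subquotients have characters $f_n^{\{1,\dots,k\}}$, in the spirit of the derived-series filtration of Theorem~\ref{EquivPBW}; concretely one might try to realize $f_n^{\{1,\dots,k\}}$ on the span of those generalized bracketings whose nested block sizes are all $\le k$, paralleling the way $\ell_n^{(j)}$ arises for $T$ equal to the divisor set of $j$ in Theorem~\ref{Su1Thm5.6}. Any such model settles the conjecture outright. The second is combinatorial: find an explicit nonnegative formula for $\langle f_n^{\{1,\dots,k\}},s_\lambda\rangle$ as a count of standard Young tableaux of shape $\lambda$ satisfying a descent- or major-index-type condition depending on $k$, refining the Kra\'skiewicz--Weyman description $\langle\ell_n^{(r)},s_\lambda\rangle=\#\{T:\ \mathrm{maj}\,T\equiv r\ (\mathrm{mod}\ n)\}$ of Theorem~\ref{Foulkes}(2) and the necklace interpretations of $Lie_n$ and $Conj_n$; one would then bound $\langle f_n^{\{1,\dots,k\}},s_\lambda\rangle$ between $\langle Lie_n,s_\lambda\rangle$ and $\langle Conj_n,s_\lambda\rangle$, using the Gessel--Reutenauer formula for the former and the description of which hooks occur in $Conj_n$ invoked in the proof of Corollary~\ref{cor6.10}.

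The main obstacle is that none of the plethystic summands $Lie_{n/m}[p_m]$ with $m\ge 2$ is Schur-positive --- already $Lie_1[p_m]=p_m=\sum_{r\ge 0}(-1)^r s_{(m-r,1^r)}$ --- so the conjecture is genuinely a cancellation statement, and the symmetric- and exterior-power machinery of Section~5 (including the meta-theorem) applies only to $H[F^T]$, where it merely restates the claim. Consequently the downward induction cannot be closed by positivity of the deleted summand $Lie_{n/k}[p_k]$ alone: one must show that every negative Schur coefficient of $Lie_{n/k}[p_k]$ is dominated termwise by a positive coefficient already present in $f_n^{\{1,\dots,k-1\}}$. This termwise domination --- exactly what the computations cited in the paper verify --- is where a new idea is needed, and I expect the part-size-bounded refinement of the major-index statistic (compatible with the recursion $f_n^{\{1,\dots,k\}}=Lie_n+\sum_{2\le m\le k,\ m\mid n}p_m[Lie_{n/m}]$) to be the most tractable way in.
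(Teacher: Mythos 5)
There is no proof to compare against: in the paper this statement is \emph{Conjecture}~(citing \cite[Conjecture 1]{Su1}) and is left open, supported only by the structural results of Section 4 (Proposition \ref{prop6.9}, Corollary \ref{cor6.10}) and computations. Your text is likewise a proposal rather than a proof, and as it stands it does not establish the statement. The preliminary reductions you make are correct: for $n\le k$ one indeed gets $f_n^{\{1,\dots,k\}}=Conj_n$, the identity $p_j[Conj_{n/j}]=\sum_{j\mid m\mid n}p_m[Lie_{n/m}]$ follows from (\ref{prop6.6eqn1}) by associativity of plethysm, and the case $k=n-1$ (more generally, greatest proper divisor of $n$ at most $k$) is exactly Corollary \ref{cor6.10}, proved there via Swanson's hook result for $Conj_n$. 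But these steps only reformulate the problem.

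The genuine gap is the inductive step itself. Your downward induction on $k$ removes the summand $Lie_{n/k}[p_k]$, which is not Schur-positive, so the inductive hypothesis that $f_n^{\{1,\dots,k\}}$ is Schur-positive gives no leverage on $f_n^{\{1,\dots,k-1\}}$: the required ``termwise domination'' of the negative Schur coefficients of $Lie_{n/k}[p_k]$ by those of the remaining sum $Lie_n+\sum_{2\le m\le k-1,\,m\mid n}Lie_{n/m}[p_m]$ is exactly the conjecture again, not a smaller statement, and the M\"obius inversion expressing each $p_m[Lie_{n/m}]$ through the $p_j[Conj_{n/j}]$ produces signed combinations that do not control positivity. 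Similarly, the two routes you sketch for closing the step --- an explicit $S_n$-module (a filtration of the regular representation, or a bracketing model with block sizes bounded by $k$) whose character is $f_n^{\{1,\dots,k\}}$, or a Kra\'skiewicz--Weyman/Gessel--Reutenauer-type tableau statistic refining the major-index description of $\ell_n^{(r)}$ --- are plausible directions (note $f_n^{\{1,\dots,k\}}$ is \emph{not} of the form $\ell_n^{(r)}$ in general, unlike the two-element case $T=\{1,k\}$ of Proposition \ref{prop6.7}, so Theorem \ref{Foulkes}(2) does not apply directly), but neither is carried out, and you say so yourself. So the proposal correctly locates the difficulty but leaves the statement as open as the paper does.
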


\begin{prop}\label{prop6.11} Let $k\geq 2$ and $T=\{n: n|k\}.$ Then 
\begin{equation}\label{eqn6.13} \prod_{n|k} (1-p_n)^{-1} = H[\sum_n f_n^T]
\end{equation} 
for \begin{equation}\label{eqn6.14} f_n^T=\sum_{m|(k,n)} Lie_{\frac{n}{m}}[p_m]
\end{equation}
\end{prop}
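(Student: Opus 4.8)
The plan is to obtain both halves of the statement directly from Theorem \ref{thm6.4} by specialising to the set $T=\{n:n|k\}$ of positive divisors of $k$. First I would note that this $T$ is a nonempty (indeed finite) subset of the positive integers, since it contains both $1$ and $k$, so Theorem \ref{thm6.4} applies with this choice. Equation (\ref{thm6.4eqn1}) then gives at once $H[F^T]=\prod_{n\in T}(1-p_n)^{-1}=\prod_{n|k}(1-p_n)^{-1}$, and since $F^T=\sum_{n\geq 1}f_n^T=\sum_n f_n^T$, this is precisely the asserted identity (\ref{eqn6.13}).

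It then remains only to identify the individual characteristics $f_n^T$ with the claimed formula (\ref{eqn6.14}). By the equivalent form of equation (\ref{thm6.4eqn2}), one has $f_n^T=\sum_{m\in T,\, m|n} Lie_{n/m}[p_m]$. The single point to check is that the index set $\{m: m\in T \text{ and } m|n\}$ equals $\{m: m|(k,n)\}$; but $m\in T$ means exactly $m|k$, and $m|k$ together with $m|n$ is equivalent to $m$ dividing $\gcd(k,n)=(k,n)$. Substituting this reindexing gives $f_n^T=\sum_{m|(k,n)} Lie_{n/m}[p_m]$, which is (\ref{eqn6.14}).

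I expect no real obstacle here: the entire content is packaged in Theorem \ref{thm6.4}, and the only work is the elementary translation of the divisibility condition. (If one preferred a self-contained argument one could instead verify from Definition \ref{def6.1} that $\psi^T(d)=\sum_{m|(d,k)} m\,\mu(d/m)$ and feed this into Definition \ref{def6.2}, but this merely unwinds the same identity, so I would present the short deduction from Theorem \ref{thm6.4} above.)
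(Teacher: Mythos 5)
Your proposal is correct and is exactly the argument the paper intends: the paper dispatches this proposition (together with its neighbours) with the remark that it is ``clear from Theorem \ref{thm6.4},'' i.e.\ one specialises (\ref{thm6.4eqn1}) and (\ref{thm6.4eqn2}) to $T=\{m:m|k\}$ and observes that $m\in T$ and $m|n$ is equivalent to $m|(k,n)$. Your reindexing step is precisely the elementary translation the paper leaves implicit, so there is nothing to add.
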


 In particular from Theorem \ref{Su1Thm5.6} we immediately have 
 (since $H-1$ is Schur-positive) that  
\begin{cor}\label{cor6.12} $$\ell_n^{(k)} ={\rm ch } \exp(2i\pi\cdot k/n)\uparrow_{C_n}^{S_n}= \sum_{m|(k,n)} Lie_{\frac{n}{m}}[p_m],$$
and hence $f_n^T$ is Schur-positive when $T$ is the set of all divisors of $k.$
\end{cor}
Hence:
\begin{cor}\label{cor6.13} We have the following decomposition of the regular representation into virtual representations:
\begin{equation}\label{eqn6.15} p_1^n=\sum_{k=1}^n \sum_{m|(k,n)} Lie_{\frac{n}{m}}[p_m]=\sum_{d|n}d\, Lie_d[p_{\frac{n}{d}}].
\end{equation}
\end{cor}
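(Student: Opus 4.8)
The plan is to derive this directly from Corollary \ref{cor6.12}, combined with the standard fact that the regular representation of $S_n$ is induced from the regular representation of its cyclic subgroup $C_n$. First I would note the $S_n$-isomorphism $\mathbf{1}\uparrow_{S_1}^{S_n}\cong \big(\mathrm{Reg}\,C_n\big)\uparrow_{C_n}^{S_n}$, which holds because $\mathbb{C}[S_n]\otimes_{\mathbb{C}[C_n]}\mathbb{C}[C_n]\cong\mathbb{C}[S_n]$. Decomposing $\mathrm{Reg}\,C_n$ into its $n$ distinct irreducible characters, namely $\exp\!\big(\tfrac{2\pi i k}{n}\big)$ for $k=1,\dots,n$ (with $k=n$ giving the trivial character), taking Frobenius characteristics, and invoking Theorem \ref{Foulkes} to name each induced summand, I would obtain
$$p_1^n=\sum_{k=1}^n \ell_n^{(k)}.$$

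I would then substitute the formula $\ell_n^{(k)}=\sum_{m\mid (k,n)}Lie_{n/m}[p_m]$ of Corollary \ref{cor6.12} into this display, which yields the first equality of the statement. For the second equality I would interchange the order of summation: $n$ being fixed, the condition $m\mid (k,n)$ is just $m\mid n$ together with $m\mid k$, so for each divisor $m$ of $n$ the term $Lie_{n/m}[p_m]$ appears once for each $k\in\{1,\dots,n\}$ divisible by $m$, i.e. exactly $n/m$ times. Hence
$$\sum_{k=1}^n\sum_{m\mid (k,n)}Lie_{n/m}[p_m]=\sum_{m\mid n}\frac{n}{m}\,Lie_{n/m}[p_m]=\sum_{d\mid n}d\,Lie_d[p_{n/d}],$$
the final step being the reindexing $d=n/m$.

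There is no serious obstacle here; the argument is essentially bookkeeping once Corollary \ref{cor6.12} is available. The only points deserving a sentence of justification are that $\{\exp(2\pi i k/n):k=1,\dots,n\}$ is exactly the complete set of irreducible characters of $C_n$, so that inducing term by term reproduces $p_1^n$ with no over- or under-counting, and the multiplicity count $n/m$ in the double sum. It is worth remarking, as the statement does, that the individual summands $d\,Lie_d[p_{n/d}]$ are in general only virtual characters, even though their weighted sum is the genuine regular representation; a purely generating-function argument (checking the identity after the length specialization $p_\mu\mapsto t^{\ell(\mu)}$, via M\"obius inversion) is also available but less illuminating.
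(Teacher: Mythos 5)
Your proposal is correct and follows essentially the same route as the paper: the paper also starts from the decomposition $p_1^n=\sum_{k=1}^n \ell_n^{(k)}$ (cited from Reutenauer, which you instead rederive by inducing the regular representation of $C_n$), substitutes Corollary \ref{cor6.12}, and regroups the double sum by counting that each divisor $m$ of $n$ contributes $n/m$ times. Your multiplicity count and reindexing $d=n/m$ are exactly the paper's concluding step, so there is nothing further to add.
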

\begin{proof} This follows from the decomposition (see   \cite[Theorem 8.8]{R}) 
$$ p_1^n=\sum_{k=1}^n \ell_n^{(k)}$$
and the preceding corollary, because the first sum can be rewritten as $$\sum_{m|n} \sum_{\stackrel{r=1}{k=rm\leq n}}^{ \frac{m}{n}} Lie_{\frac{m}{n}}[p_m]=\sum_{m|n}\frac{m}{n}\ Lie_{\frac{m}{n}}[p_m].$$
\end{proof}

One can also arrive at the decomposition of equation (\ref{eqn6.15}) directly by using the expansion of $Lie_n$ into power sums.

\begin{prop}\label{prop6.14} Let $T=\{n:n\equiv 1\,\mathrm{mod}\, k\}.$
Then 
$$\prod_{n\equiv 1\,\mathrm{mod}\, k} (1-p_n)^{-1}=H[\sum_{n} f_n^T]$$ 
for $$f_n^T=\sum_{\stackrel{m\equiv 1\,\mathrm{mod}\, k}{ m|n}} Lie_{\frac{n}{m}}[p_m].$$
\end{prop}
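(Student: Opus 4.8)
The plan is to recognize Proposition~\ref{prop6.14} as a direct specialization of Theorem~\ref{thm6.4}, applied to the particular subset $T=\{n\geq 1: n\equiv 1 \pmod k\}$. There is no number-theoretic obstacle to overcome here: the set $T$ is simply fed into the general machinery already established, and the content of the proof is bookkeeping, checking that $1\in T$ so that the product and sums are well-behaved, and that the definitions specialize correctly.

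First I would invoke equation~(\ref{thm6.4eqn1}) of Theorem~\ref{thm6.4}, which says that for any nonempty subset $T$ of the positive integers one has $H[F^T]=\prod_{n\in T}(1-p_n)^{-1}$, where $F^T=\sum_{n\geq 1}f_n^T$ and $f_n^T=\frac{1}{n}\sum_{d|n}\psi^T(d)p_d^{n/d}$. Since $1\equiv 1\pmod k$, the set $T=\{n\equiv 1\bmod k\}$ is nonempty (indeed contains $1$), so the hypothesis of Theorem~\ref{thm6.4} is met. This immediately gives
\[
\prod_{n\equiv 1\,\mathrm{mod}\,k}(1-p_n)^{-1}=H\Bigl[\sum_{n\geq 1}f_n^T\Bigr].
\]
Then I would substitute the plethystic formula~(\ref{thm6.4eqn2}) from Theorem~\ref{thm6.4}, namely $f_n^T=\sum_{m\in T,\ m|n}Lie_{n/m}[p_m]$, which upon unwinding the membership condition $m\in T$ becomes precisely
\[
f_n^T=\sum_{\substack{m\equiv 1\,\mathrm{mod}\,k\\ m|n}}Lie_{\tfrac{n}{m}}[p_m],
\]
as claimed in the statement.

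The only point requiring a word of care is the passage from ``$m\in T$'' to ``$m\equiv 1\pmod k$ and $m|n$'': this is just the definition of $T$, but I would state it explicitly so the reader sees that the summation index in Theorem~\ref{thm6.4eqn2} literally becomes the one displayed in the proposition. No further verification is needed, since Theorem~\ref{thm6.4} was proved for an arbitrary nonempty $T$; in particular the main engine, Theorem~\ref{metathm} together with Lemma~\ref{lem6.3} (which pins down $f_n^T(1)=\delta(n\in T)$) and Proposition~\ref{Su1Prop3.1} (used to obtain the plethystic identity $F^T=p^T[Lie]$), all apply verbatim. Thus the proof is a two-line citation: ``Immediate from Theorem~\ref{thm6.4}, taking $T=\{n:n\equiv 1\,\mathrm{mod}\,k\}$, which contains $1$ and is therefore nonempty.'' I do not anticipate any obstacle; if anything, the only thing to double-check is that one does not want to also record the $E$-side or $\omega$-side identities (as was done for $Lie^{(2)}$), but the proposition as stated asks only for the $H[F^T]$ form, so this is unnecessary.
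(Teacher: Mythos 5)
Your proposal is correct and matches the paper exactly: the paper disposes of Proposition \ref{prop6.14} (along with Propositions \ref{prop6.9} and \ref{prop6.11}) with the remark that it is clear from Theorem \ref{thm6.4}, i.e.\ by specializing equations (\ref{thm6.4eqn1}) and (\ref{thm6.4eqn2}) to $T=\{n: n\equiv 1\,\mathrm{mod}\,k\}$, precisely as you do. Your explicit check that $1\in T$ (so $T$ is nonempty) is a harmless extra detail the paper leaves implicit.
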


After seeing a preprint  of \cite{Su1}, Richard Stanley made the following conjecture, and verified it for $n\leq 24$ and $k\leq 6$.
\begin{conj} (R. Stanley, 2015)  $\prod_{n\equiv 1\,\mathrm{mod}\, k} (1-p_n)^{-1}$ is Schur-positive for all $k.$ 
\end{conj}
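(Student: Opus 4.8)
The plan is to reduce the conjecture to its equivalent row-sum form and then attack that. Applying $\mathrm{ch}^{-1}$, the degree-$n$ component of $\prod_{n\in T}(1-p_n)^{-1}$, with $T=\{n\ge 1:n\equiv 1\bmod k\}$, is the class function that takes the value $z_\mu$ on a cycle type $\mu$ all of whose parts are $\equiv 1\bmod k$ and $0$ otherwise; hence Schur-positivity is equivalent to $\sum_{\mu\vdash n:\ \mu_i\equiv 1\,(k)}\chi^\lambda(\mu)\ge 0$ for every $\lambda\vdash n$. Two cases are free. For $k=1$ this is Solomon's identity $\prod_{n\ge 1}(1-p_n)^{-1}=H[\sum_n Conj_n]$ (Theorem \ref{ThrallPBWCadoganSolomon}). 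For $k=2$ the set $T$ of odd positive integers equals $P(S)$ with $S$ the set of odd primes, so $\prod_{n\ \mathrm{odd}}(1-p_n)^{-1}=H[L^S]$ with $L^S_n$ a genuine $S_n$-module (Theorem \ref{SymAltSymLS}). Thus $k=3$ is the first open case.

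The next step is to record why the toolkit of Sections 3--5 does not suffice, since this pins down what a proof must supply. By Proposition \ref{prop6.14} and Theorem \ref{thm6.4}, $\prod_{n\in T}(1-p_n)^{-1}=H[F^T]=E[G^T]$, and because $H-1$ and $E-1$ are invertible for plethystic composition the preimages $F^T=\sum_{m\in T}Lie[p_m]$ and $G^T=\sum_{j\ge 0}\sum_{m\in T}Lie[p_{m2^j}]$ are unique. So the Section 3 mechanism --- realising the product as a symmetrised or anti-symmetrised module over representations induced from centralisers of $S_n$ --- can work only if $F^T$ or $G^T$ is Schur-positive, and this already fails for $k=3$: one computes $F^T|_{4}=Lie_4+p_4=s_{(4)}+2s_{(2,1^2)}-s_{(1^4)}$ and $G^T|_{4}=Lie_4+Lie_2[p_2]+2p_4=2s_{(4)}-s_{(3,1)}+s_{(2^2)}+2s_{(2,1^2)}-s_{(1^4)}$, neither Schur-positive (the same obstruction as in Proposition \ref{prop6.7} for $T=\{1,k\}$ with $k$ even). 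Structurally, $T$ is multiplicatively closed but does not factor uniquely into monoid-irreducibles --- for $k=3$, $100=4\cdot 25=10\cdot 10$ --- so the unique-factorisation-into-prime-powers argument behind Section 3, and with it the meta-theorem of Section 5, do not apply.

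So the positivity must come from genuine cancellation, and I would pursue two lines. First, a combinatorial attack on the row sum: expand $\sum_{\mu:\ \mu_i\equiv 1(k)}\chi^\lambda(\mu)$ by iterated Murnaghan--Nakayama and translate to the $k$-runner abacus, where removing a rim hook of size $\equiv 1\bmod k$ shifts a bead by a whole number of rows plus exactly one runner; the hypothesis that every part of $\mu$ is $\equiv 1\bmod k$ then forces the sequence of bead-moves to cycle through the runners in a controlled way, and the aim is to use this rigidity to build a sign-reversing involution on signed rim-hook sequences whose surviving fixed points are manifestly nonnegative. Second, a search for a new (virtual) module with a positivity-forcing structure --- a filtration or an acyclic complex --- \emph{not} of the symmetrised-centraliser-module type: small cases are encouraging, e.g. for $k=3$, $n=4$ the product is $p_1^4+p_4=2\,(Conj_4+Lie_4)$, genuinely Schur-positive although $F^T$ and $G^T$ are not, suggesting that $\sum_{\mu\vdash n:\ \mu_i\equiv 1(k)}p_\mu$ always admits a cancellation-free rewriting as a nonnegative combination of characters induced from centralisers, and the task is to produce it uniformly in $n$ and $k$.

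The hard part --- and the reason the conjecture remains open --- is exactly this last step in either line. Because the module-theoretic route provably fails, one cannot fall back on the acyclicity or Cohen--Macaulay arguments used elsewhere (as in Theorem \ref{betas}) or on the plethystic-inverse calculus that drives the rest of the paper, so a genuinely new, cancellation-exploiting positivity mechanism --- combinatorial in nature --- appears to be required. A reasonable intermediate target is the case of prime $k$, where $T$ is the congruence submonoid of the free monoid on the primes $\ne k$ cut out by reduction modulo $k$; but even there the non-uniqueness of factorisation within $T$ shows the proof cannot simply imitate Section 3.
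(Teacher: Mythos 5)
The statement you are working on is not a theorem of the paper but an open conjecture (attributed to Stanley, 2015, and only verified numerically for $n\le 24$, $k\le 6$); the paper supplies no proof, only the surrounding structure: Proposition \ref{prop6.14} writes $\prod_{n\equiv 1\,\mathrm{mod}\,k}(1-p_n)^{-1}=H[F^T]$ with $f_n^T=\sum_{m\equiv 1\,\mathrm{mod}\,k,\ m|n}Lie_{\frac{n}{m}}[p_m]$, Corollary \ref{cor6.5} records that Schur-positivity of $F^T$ or $G^T$ would suffice, and the case $k=2$ is settled via Corollary \ref{Spositivity2} and Proposition \ref{prop6.15}. Within that frame, what you verified is correct: the reduction to nonnegativity of the row sums $\sum_{\mu\vdash n,\ \mu_i\equiv 1\,\mathrm{mod}\,k}\chi^\lambda(\mu)$, the cases $k=1,2$, and the computations $F^T|_{\mathrm{deg}\,4}=Lie_4+p_4$ and $G^T|_{\mathrm{deg}\,4}=Lie_4+e_2[p_2]+2p_4$ for $k=3$, neither Schur-positive. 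You also correctly locate the obstruction the paper itself notes after Proposition \ref{prop6.7}: since $H-1$ and $E-1$ have unique plethystic inverses, the only candidate series $F^T,G^T$ for a symmetrised or exterior-power realisation are already ruled out, so a Schur-positive sum of power sums need not be a symmetrised module over any sequence of true $S_n$-modules.

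However, there is no proof here: everything after that diagnosis is a programme, not an argument. The Murnaghan--Nakayama/abacus sign-reversing involution is never constructed (and this is precisely where the difficulty lives), and the hoped-for cancellation-free rewriting as a nonnegative combination of induced characters is asserted on the strength of a single example ($k=3$, $n=4$, where the product equals $2(Conj_4+Lie_4)$). So the gap is the entire positivity step, for every $k\ge 3$ and already for $k=3$; the conjecture remains exactly as open as the paper leaves it. Two smaller corrections to your framing: the meta theorem (Theorem \ref{metathm}) \emph{does} apply to $f_n^T$ --- that is how Theorem \ref{thm6.4} is proved --- what fails for such $T$ is only the Schur-positivity of $f_n^T$, not the applicability of the generating-function machinery; and ``the module-theoretic route provably fails'' is too strong, since what is excluded is only a realisation as $H$ or $E$ of a single series, not, say, products of several such plethysms or modules not induced from centralisers.
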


As noted in equation (1) of Theorem \ref{Spositivity2} and other places, Conjecture 7 holds  for $k=2.$  We  have 
$$ \prod_{n\equiv 1\,\mathrm{mod}\, 2} (1-p_n)^{-1} =E[Conj]=H[L^{\overline{{(2)}}}].$$ In this case, writing $p^{\mathrm{ odd}}$ for 
$\sum_{n\, \mathrm{odd}} p_n, $ we have the identity
$$ p^{\mathrm {odd}}[Lie]=L^{\overline{{(2)}}},$$ and hence:

\begin{prop}\label{prop6.15}  $\sum_{\stackrel{m\,\mathrm{odd}}{ m|n}} Lie_{\frac{n}{m}}[p_m]=p^{\mathrm {odd}}[Lie]\vert_{{\rm deg\ } n}$ is  Schur-positive; it is the Frobenius characteristic  $Lie_n^{\overline{{(2)}}}$ of the representation $\exp(2i\pi\ell/n)\uparrow_{C_n}^{S_n},$ where 
$n=2^k\cdot\ell$ and $\ell$ is odd.
\end{prop}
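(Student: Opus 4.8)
The plan is to obtain all three assertions as the specialization of Theorem \ref{thm6.4} to the set $T$ of all odd positive integers, combined with remark (4) in the list of observations following Corollary \ref{cor6.5}.

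First I would note that $T=\{n\geq 1: n\text{ odd}\}$ is exactly the set of positive integers relatively prime to every prime in $S=\{2\}$; in the notation of Definition \ref{SetofPrimes} this is $P(\bar S)$ for $S=\{2\}$. For this $T$ one has $p^T=\sum_{n\in T}p_n=p^{\mathrm{odd}}$, and equation (\ref{thm6.4eqn2}) of Theorem \ref{thm6.4} gives
\[
f_n^T=\sum_{\stackrel{m\in T}{m\mid n}}Lie_{\frac{n}{m}}[p_m]
      =\sum_{\stackrel{m\text{ odd}}{m\mid n}}Lie_{\frac{n}{m}}[p_m]
      =p^{\mathrm{odd}}[Lie]\big\vert_{{\rm deg\ }n},
\]
which is the displayed identity of the proposition.

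Next I would invoke remark (4) after Corollary \ref{cor6.5}: for a set of primes $S$ and $T$ the set of integers prime to every element of $S$, the function $\psi^T$ of Definition \ref{def6.1} reduces to $\bar\psi$ of (\ref{FoulkesPrimes2}), whence $f_n^T=L_n^{\bar S}$. Taking $S=\{2\}$ identifies $p^{\mathrm{odd}}[Lie]\vert_{{\rm deg\ }n}=f_n^T=L_n^{\overline{{(2)}}}$, the Frobenius characteristic of $\exp(\tfrac{2\pi i}{n}\cdot\ell)\uparrow_{C_n}^{S_n}$ with $n=2^k\ell$, $\ell$ odd, as in Definition \ref{SetofPrimes}. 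Schur-positivity is then immediate, since $L_n^{\overline{{(2)}}}$ is by construction the characteristic of an honest induced module — the induction to $S_n$ of a one-dimensional character of the cyclic group $C_n$ — hence a nonnegative integer combination of Schur functions. (Alternatively one may quote Corollary \ref{Spositivity1}(2) together with Corollary \ref{cor6.5} for the symmetrized statement $H[L^{\overline{{(2)}}}]=\prod_{n\text{ odd}}(1-p_n)^{-1}$.)

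The only computation needed anywhere is the bookkeeping behind remark (4): checking that $\sum_{m\mid d,\,m\in T}m\,\mu(d/m)=\phi(\ell_d)\mu(Q_d)$ when $T$ is the set of integers prime to $2$, which is the short multiplicativity argument already given there, using that $\mu$ is multiplicative and $m=\sum_{e\mid m}\phi(e)$. So there is no genuine obstacle: the content is recognizing which special case of the general construction is in force, and the one pitfall to watch is not confusing $P(S)$ with $P(\bar S)$ — equivalently $Q_n$ with $\ell_n$ — when $S=\{2\}$.
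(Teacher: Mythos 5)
Your proposal is correct, but it identifies $p^{\mathrm{odd}}[Lie]\vert_{{\rm deg\ }n}$ with $L_n^{\overline{(2)}}$ by a different mechanism than the paper. Both arguments start from equation (\ref{thm6.4eqn2}) of Theorem \ref{thm6.4}, which gives $F^T=p^{\mathrm{odd}}[Lie]$ for $T$ the odd integers. The paper then finishes by comparing symmetric powers: $H[F^T]$ (from equation (\ref{thm6.4eqn1})) and $H[L^{\overline{(2)}}]$ (from Theorem \ref{SymAltSymLS} applied to $\bar S$ with $S=\{2\}$) are both equal to $\prod_{n\ \mathrm{odd}}(1-p_n)^{-1}$, and the two series are then equal because $H-1$ is plethystically invertible (the cancellation step made explicit in the proof of Proposition \ref{prop6.7}). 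You instead invoke remark (4) following Corollary \ref{cor6.5}, i.e.\ the termwise computation $\psi^T(d)=\sum_{m\mid d,\ m\ \mathrm{odd}}m\,\mu(d/m)=\phi(\ell_d)\mu(Q_d)=\bar\psi(d)$, so that $f_n^T=L_n^{\overline{(2)}}$ directly via Foulkes' formula (\ref{FoulkesPrimes2}) and Definition \ref{SetofPrimes}. What each buys: the paper's route is a one-line deduction from two product formulas already displayed, at the cost of an implicit appeal to injectivity of $F\mapsto H[F]$; your route is coefficientwise and self-contained modulo the short Ramanujan-sum/M\"obius bookkeeping (which the paper indeed records in its Section 4 remarks), and it makes the Schur-positivity conclusion transparent since $L_n^{\overline{(2)}}$ is by definition an induced character. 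Your closing caution about not confusing $P(S)$ with $P(\bar S)$ is apt, and your parenthetical alternative via Corollary \ref{Spositivity1} only yields the symmetrized statement, as you note, so it is correctly kept as a side remark.
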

\begin{proof} This is clear since the symmetric powers of the two modules coincide, both being equal to  $\prod_{n\, \mathrm{ odd}}(1-p_n)^{-1}.$
\end{proof}

Proposition \ref{prop6.6} gives us several different ways to decompose $Conj_n$, which we collect in the following:

\begin{thm}\label{thm6.16} 
 For any prime $q,$ we have 
\begin{equation}\label{eqn6.17} \sum_n Conj_n=\sum_{\stackrel{n}{ q \text{ does not divide } n}} p_n[Lie^{(q)}],
\end{equation}
and hence the sum on the right is Schur-positive.
In fact for any positive integer $q$ we have 
\begin{equation}\label{eqn6.18} \sum_n Conj_n=\sum_{\stackrel{n}{q \text{ does not divide } n}} p_n[\sum_{k\geq 0} Lie[p_{q^k}]].
\end{equation}

\end{thm}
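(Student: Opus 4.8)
The plan is to derive both identities from equation (\ref{prop6.6eqn1}) of Proposition \ref{prop6.6}, namely $\sum_{n\geq 1}Conj_n=\sum_{m\geq 1}p_m[Lie]$, by regrouping the sum $\sum_{m\geq 1}p_m$ according to the exact power of $q$ dividing each index. I will establish the general identity (\ref{eqn6.18}) first and then obtain (\ref{eqn6.17}) as the special case in which $q$ is prime. Throughout, $q\geq 2$ is fixed (the statement is vacuous for $q=1$), and all manipulations are understood degree by degree in the completed ring of symmetric functions.

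First I would record the elementary fact that every positive integer $m$ factors uniquely as $m=q^k\ell$, where $k\geq 0$ is the largest exponent with $q^k\mid m$, so that $q\nmid\ell$. Summing over this decomposition, and using the plethystic identities $p_a[p_b]=p_{ab}$ and the additivity of the operator $g\mapsto p_\ell[g]$ (which is a ring homomorphism), one obtains
\[
\sum_{m\geq 1}p_m=\sum_{\ell:\,q\nmid\ell}\ \sum_{k\geq 0}p_{q^k\ell}=\sum_{\ell:\,q\nmid\ell}\ \sum_{k\geq 0}p_\ell[p_{q^k}]=\sum_{\ell:\,q\nmid\ell}p_\ell\Bigl[\sum_{k\geq 0}p_{q^k}\Bigr],
\]
the inner sums converging degree by degree since $p_\ell[p_{q^k}]=p_{q^k\ell}$ has degree $q^k\ell$.

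Next I would apply the operator $-[Lie]$ to both sides, using additivity of plethysm in the outer argument, associativity of plethysm, and the identity $p_n[Lie]=Lie[p_n]$ (both sides equal $\sum_\mu c_\mu\,p_{n\mu}$ when $Lie=\sum_\mu c_\mu p_\mu$, since plethysm by $p_n$ multiplies every part by $n$). This gives
\[
\sum_{n\geq 1}Conj_n=\Bigl(\sum_{m\geq 1}p_m\Bigr)[Lie]=\sum_{\ell:\,q\nmid\ell}p_\ell\Bigl[\Bigl(\sum_{k\geq 0}p_{q^k}\Bigr)[Lie]\Bigr]=\sum_{\ell:\,q\nmid\ell}p_\ell\Bigl[\sum_{k\geq 0}Lie[p_{q^k}]\Bigr],
\]
which is exactly (\ref{eqn6.18}). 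When $q$ is prime, the bracketed series $\sum_{k\geq 0}Lie[p_{q^k}]$ equals $F^{T}$ for $T=\{q^k:k\geq 0\}=P(\{q\})$, and by remark (3) following Theorem \ref{thm6.4}, applied with $S=\{q\}$, its degree-$n$ term is $L_n^{\{q\}}=Lie_n^{(q)}$; substituting yields (\ref{eqn6.17}). Schur positivity of the right-hand sides is then immediate, since each equals $\sum_{n\geq 1}Conj_n$ and $Conj_n$ is the Frobenius characteristic of the genuine permutation module ${\mathbf 1}\uparrow_{C_n}^{S_n}$.

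I do not expect a serious obstacle: the whole argument is the $q$-adic regrouping of indices together with the standard plethysm identities (outer additivity, inner additivity of $p_\ell$, associativity, $p_a[p_b]=p_{ab}$) and the observation $p_n[Lie]=Lie[p_n]$. The one point that should be stated with care is the convergence of the infinite plethystic sums, which is unproblematic because only finitely many terms contribute in each fixed degree.
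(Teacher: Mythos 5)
Your proposal is correct, and at bottom it performs the same $q$-adic regrouping as the paper, starting from the identity $\sum_{n}Conj_n=\sum_{m}p_m[Lie]$ of Proposition \ref{prop6.6}. The difference is in mechanism: the paper writes $Lie=(p_1-p_q)\bigl[\sum_{k\geq 0}Lie[p_{q^k}]\bigr]$ using the plethystic-inverse pair of Proposition \ref{Pleth1}, pushes this inside each $p_n[\cdot]$ by associativity, and then collapses $\sum_{n\geq 1}(p_n-p_{nq})$ to $\sum_{q\nmid n}p_n$; you instead regroup the outer series $\sum_m p_m$ directly via the unique factorization $m=q^k\ell$ with $q\nmid\ell$, factor it as $\sum_{q\nmid\ell}p_\ell\bigl[\sum_{k\geq 0}p_{q^k}\bigr]$, and only then apply plethysm with $Lie$. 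Your route is slightly more self-contained (it never needs Proposition \ref{Pleth1}, only $p_a[p_b]=p_{ab}$, additivity, associativity, and $p_n[Lie]=Lie[p_n]$), while the paper's phrasing showcases the inverse-pair machinery it has already built and reuses elsewhere. Your passage from (\ref{eqn6.18}) to (\ref{eqn6.17}) by identifying $\sum_{k\geq 0}Lie[p_{q^k}]$ with $Lie^{(q)}$ via Theorem \ref{thm6.4} and remark (3) (equivalently equation (\ref{prop6.6eqn4})) is exactly the identification the paper relies on, and the degree-by-degree convergence remark disposes of the only formal issue.
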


\begin{proof} We start with equation (\ref{prop6.6eqn1}) of Proposition \ref{prop6.6}, and use the plethystic inverse formula established in Section 6,  Proposition \ref{Pleth1}.
We have, by associativity of plethysm,
\begin{align*} \sum_{n\geq 1} Conj_n&= \sum_{n\geq 1} p_n[Lie]\\
&=\sum_{n\geq 1} (p_n[p_1-p_q] )[\sum_{k\geq 0} p_{q^k}[Lie]]\\
&=\sum_{n\geq 1} (p_n-p_{nq} )[\sum_{k\geq 0} p_{q^k}[Lie]]\\
&=(\sum_{n\geq 1} p_n -\sum_{n\geq 1} p_{nq})[\sum_{k\geq 0} Lie[p_{q^k}]];
\end{align*}
invoking Theorem \ref{thm6.4}, this establishes equation (\ref{eqn6.18}).\end{proof}

\begin{rk}  In Section 2 it was conjectured that $Lie_n^{(2)}\uparrow_{S_n}^{S_{n+1}}-Lie_{n+1}^{(2)}$ is a true $S_{n+1}$ module which lifts $Lie_n^{(2)},$ if $n$ is not a power of 2.
One can ask if this holds for the odd primes $q.$

If $n$ is a power of an odd prime $q,$ it follows that 
$Lie_n^{(q)}={\rm ch\,} \textbf{ 1}\uparrow_{C_n}^{S_n}= Conj_n,$ while $Lie_{n-1}^{(q)}=Lie_n$ since $n-1\equiv q-1 \mod q,$ so that $n-1$ is relatively prime to $q.$ Now $Lie_n$ does not contain the sign representation (for $n\neq 2$) and never contains the trivial representation; however both appear once in the conjugacy action on the $n$-cycles, for odd $n.$ It follows that both these representations appear with multiplicity $(-1)$ in $p_1 Lie_{n-1}^{(q)}- Lie_n^{(q)}.$ 

For $q=3$ we have verified, up to $n=32,$ that $p_1 Lie_{n-1}^{(3)}-Lie_n^{(3)}$ is Schur-positive except for $n=3,6,9,10, 18, 27.$

For $q=5, n\leq 32,$ $p_1 Lie_{n-1}^{(5)}-Lie_n^{(5)}$ is Schur-positive except for $n=5,6,10, 25, 26.$
\end{rk}

\section{Meta theorems }\label{Secmetathms}
In this section we review the meta theorem of \cite{Su1} giving formulas for symmetric and exterior powers of modules induced from centralisers, 
and also further develop these tools in a general setting.  Some of the results of this section were announced in \cite{SuFPSAC2018}.

We begin by recalling the following results regarding the sequence of symmetric functions $f_n$ defined in equation (\ref{definef_n}) of Section 1:
\begin{prop}\label{Su1Prop3.1}\cite[Proposition 3.1]{Su1}  Define $F(t)=\sum_{i\geq 1} t^i f_i,$   and define 
$(\omega F)^{alt}(t)=\sum_{i\geq 1} (-1)^{i-1} t^i\omega(f_i).  $  Then 
\begin{align} &F(t)= \log \prod_{d\geq 1} (1-t^d p_d)^{-\frac{\psi(d)}{d}}\\ 
&(\omega F)^{alt}(t)= \log \prod_{d\geq 1} (1+t^d p_d)^{\frac{\psi(d)}{d}}\end{align}
\end{prop}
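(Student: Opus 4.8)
The plan is to prove both identities by a direct formal power series computation in $t$ with coefficients in the ring of symmetric functions. First I would observe that each infinite product $\prod_{d\geq 1}(1\mp t^d p_d)^{\mp\psi(d)/d}$ is a well-defined element of $\Lambda_{\reals}[[t]]$ with constant term $1$, since the $d$-th factor contributes only terms of $t$-degree $\geq d$; hence its formal logarithm is defined, and all the rearrangements of sums below are legitimate in the $t$-adic topology.

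For the first identity, I would expand
\begin{equation*}
\log \prod_{d\geq 1}(1-t^d p_d)^{-\frac{\psi(d)}{d}}
= \sum_{d\geq 1} -\frac{\psi(d)}{d}\log(1-t^d p_d)
= \sum_{d\geq 1}\sum_{k\geq 1}\frac{\psi(d)}{dk}\, t^{dk} p_d^{\,k},
\end{equation*}
and then collect the terms with $dk = n$. The coefficient of $t^n$ is $\sum_{d\mid n}\frac{\psi(d)}{d\cdot(n/d)}p_d^{\,n/d} = \frac1n\sum_{d\mid n}\psi(d)p_d^{\,n/d}$, which is exactly $f_n$ by (\ref{definef_n}). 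This proves the first displayed equation.

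For the second, I would use $\log(1+x)=\sum_{k\geq1}\frac{(-1)^{k-1}}{k}x^k$ to get that the coefficient of $t^n$ in $\log\prod_{d\geq1}(1+t^dp_d)^{\psi(d)/d}$ equals $\frac1n\sum_{d\mid n}(-1)^{n/d-1}\psi(d)p_d^{\,n/d}$. On the other hand, applying $\omega(p_d)=(-1)^{d-1}p_d$ termwise gives $\omega(f_n)=\frac1n\sum_{d\mid n}\psi(d)(-1)^{n-n/d}p_d^{\,n/d}$, which is precisely the formula for $\omega(f_n)$ recorded just after (\ref{definef_n}). Multiplying by $(-1)^{n-1}$ and simplifying the exponent $n-1+n-n/d\equiv n/d-1 \pmod 2$, I get that the coefficient of $t^n$ in $(\omega F)^{alt}(t)$ is indeed $(-1)^{n-1}\omega(f_n)$, establishing the second equation.

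The whole argument is routine; there is no real obstacle. The only point demanding a bit of care is the sign bookkeeping in the second identity, namely reconciling the parity $(-1)^{n/d-1}$ coming from $\log(1+x)$ with the parity $(-1)^{n-n/d}$ coming from $\omega$, and then absorbing the overall factor $(-1)^{n-1}$. I would also remark that these two formulas are, in effect, the statement that $F(t)$ and $(\omega F)^{alt}(t)$ are the logarithms of plethystic-exponential-type products, which is exactly why the $f_n$ behave so well under plethysm in the subsequent sections.
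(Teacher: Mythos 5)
Your computation is correct, and it is the standard argument: the paper itself states this proposition without proof, citing \cite[Proposition 3.1]{Su1}, where it is established by essentially the same routine expansion of the logarithm of the product and matching coefficients of $t^n$ (with the same parity bookkeeping $(-1)^{n-1+n-\frac{n}{d}}=(-1)^{\frac{n}{d}-1}$ for the $\omega$-twisted alternating series). Nothing further is needed.
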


\begin{thm}\label{metathm} \cite[Theorem 3.2]{Su1} Let $F=\sum_{n\geq 1}  f_n$ where $f_n$ is of the form (\ref{definef_n}), $H(v)=\sum_{n\geq 0} v^n h_n$ and 
$E(v)=\sum_{n\geq 0}  v^n e_n.$  
We have the following plethystic generating functions:

\noindent
(Symmetric powers) 
\begin{equation}\label{metaSym}H(v)[F] = \sum_{\lambda\in Par} v^{\ell(\lambda)}  H_\lambda[F]=\prod_{m\geq 1} (1-p_m)^ {-f_m(v)}\end{equation}
(Exterior powers) 
\begin{equation}\label{metaExt}E(v)[F]=\sum_{\lambda\in Par} v^{\ell(\lambda)} E_\lambda[F] =\prod_{m\geq 1} (1- p_m)^{f_m(-v)}\end{equation}
 (Alternating exterior powers)
\begin{center}$\sum_{\lambda\in Par}  (-1)^{|\lambda|-\ell(\lambda)}v^{\ell(\lambda)} \omega(E_\lambda[F])$\end{center}
\begin{equation}\label{metaAltExt}=\sum_{\lambda \in Par} v^{\ell(\lambda)}H_\lambda[\omega(F)^{alt}]
=H(v)[\omega(F)^{alt}]
= \prod_{m\geq 1} (1+ p_m)^{f_m(v)}\end{equation}
(Alternating symmetric powers) 
\begin{center}$\sum_{\lambda\in Par} (-1)^{|\lambda|-\ell(\lambda)}v^{\ell(\lambda)} \omega(H_\lambda[F])$\end{center}
\begin{equation}\label{metaAltSym}=\sum_{\lambda \in Par} v^{\ell(\lambda)}E_\lambda[\omega(F)^{alt}]
=E(v)[\omega(F)^{alt}]
= \prod_{m\geq 1} (1+ p_m)^{-f_m(-v)}.\end{equation}
\end{thm}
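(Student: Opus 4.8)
The plan is to derive all four identities from a single computation, taking Proposition~\ref{Su1Prop3.1} as the starting point. The engine is the pair of power-sum exponential formulas
\[
H(v)=\exp\Bigl(\sum_{k\geq 1}\tfrac{v^k}{k}p_k\Bigr),\qquad E(v)=\exp\Bigl(\sum_{k\geq 1}\tfrac{(-1)^{k-1}v^k}{k}p_k\Bigr),
\]
together with the observation that, since $f_1=\psi(1)p_1$ has no constant term, $F=\sum_{n\geq1}f_n$ has zero constant term, so plethystic substitution $g\mapsto g[F]$ is a ring homomorphism that extends continuously to the graded completion of the symmetric function ring and therefore commutes with $\exp$ and $\log$. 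Applying it to the displayed formulas gives $H(v)[F]=\exp\bigl(\sum_k\tfrac{v^k}{k}\,p_k[F]\bigr)$ and $E(v)[F]=\exp\bigl(\sum_k\tfrac{(-1)^{k-1}v^k}{k}\,p_k[F]\bigr)$, and likewise with $\omega(F)^{alt}$ in place of $F$.

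First I would pin down $p_k[F]$. The power-sum plethysm $p_k$ is the ring homomorphism sending $p_j\mapsto p_{jk}$, so applying it to the product expansion $\exp(F)=\prod_{d\geq1}(1-p_d)^{-\psi(d)/d}$ of Proposition~\ref{Su1Prop3.1} (at $t=1$) yields $\exp(p_k[F])=\prod_{d\geq1}(1-p_{dk})^{-\psi(d)/d}$, i.e.\ $p_k[F]=\sum_{d\geq1}\tfrac{\psi(d)}{d}\sum_{j\geq1}\tfrac{p_{dk}^{\,j}}{j}$; the same computation on the second formula of Proposition~\ref{Su1Prop3.1} gives $p_k[\omega(F)^{alt}]=\sum_{d\geq1}\tfrac{\psi(d)}{d}\sum_{j\geq1}\tfrac{(-1)^{j-1}p_{dk}^{\,j}}{j}$. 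The key step is then a reindexing: for any scalar $w$,
\[
\sum_{k\geq1}\frac{w^k}{k}\,p_k[F]=\sum_{k,d,j}\frac{w^k\psi(d)}{k\,d\,j}\,p_{dk}^{\,j}=\sum_{m\geq1}\frac1m\Bigl(\sum_{d\mid m}\psi(d)w^{m/d}\Bigr)\sum_{j\geq1}\frac{p_m^{\,j}}{j}=-\sum_{m\geq1}f_m(w)\log(1-p_m),
\]
where the middle equality sets $m=dk$ and uses $f_m(w)=\tfrac1m\sum_{d\mid m}\psi(d)w^{m/d}$ from \eqref{definef_n}; the parallel identity $\sum_{k\geq1}\tfrac{w^k}{k}p_k[\omega(F)^{alt}]=\sum_{m\geq1}f_m(w)\log(1+p_m)$ is proved the same way.

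Assembling: $H(v)[F]=\exp\bigl(-\sum_m f_m(v)\log(1-p_m)\bigr)=\prod_m(1-p_m)^{-f_m(v)}$, which is \eqref{metaSym}; since $\tfrac{(-1)^{k-1}v^k}{k}=-\tfrac{(-v)^k}{k}$, the lemma applied with $w=-v$ gives $E(v)[F]=\exp\bigl(\sum_m f_m(-v)\log(1-p_m)\bigr)=\prod_m(1-p_m)^{f_m(-v)}$, which is \eqref{metaExt}; and running the same two lines with $\omega(F)^{alt}$ produces the right-hand sides $\prod_m(1+p_m)^{f_m(v)}$ and $\prod_m(1+p_m)^{-f_m(-v)}$ of \eqref{metaAltExt} and \eqref{metaAltSym}. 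The equalities $H(v)[F]=\sum_\lambda v^{\ell(\lambda)}H_\lambda[F]$ and its $E$-, $\omega$-analogues, as well as $H(v)[\omega(F)^{alt}]=\sum_\lambda v^{\ell(\lambda)}H_\lambda[\omega(F)^{alt}]$ and $E(v)[\omega(F)^{alt}]=\sum_\lambda v^{\ell(\lambda)}E_\lambda[\omega(F)^{alt}]$, are immediate from \eqref{defhrofQ}.

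The one place demanding care is the remaining equality in \eqref{metaAltExt}, namely $\sum_\lambda(-1)^{|\lambda|-\ell(\lambda)}v^{\ell(\lambda)}\,\omega(E_\lambda[F])=\sum_\lambda v^{\ell(\lambda)}H_\lambda[\omega(F)^{alt}]$, and its sibling in \eqref{metaAltSym}. I would prove these termwise: writing $(-1)^{|\lambda|-\ell(\lambda)}=\prod_i(-1)^{(i-1)m_i}$ and $\omega(E_\lambda[F])=\prod_i\omega(e_{m_i}[f_i])$, it suffices to check $(-1)^{(i-1)m}\,\omega(e_m[f_i])=h_m\bigl[(-1)^{i-1}\omega(f_i)\bigr]$, since $(-1)^{i-1}\omega(f_i)$ is exactly the degree-$i$ homogeneous component of $\omega(F)^{alt}$. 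This follows from the parity-dependent plethysm rule $\omega(e_m[g])=h_m[\omega g]$ when $\deg g$ is odd and $\omega(e_m[g])=e_m[\omega g]$ when $\deg g$ is even (itself obtained by expanding $e_m$ in power sums and using $\omega(p_j[g])=(-1)^{(j-1)\deg g}p_j[\omega g]$), combined with $h_m[-X]=(-1)^m e_m[X]$; the analogous bookkeeping, with $e\leftrightarrow h$ and $e_m[-X]=(-1)^m h_m[X]$, handles \eqref{metaAltSym}. I expect this sign-chasing, rather than any analytic subtlety, to be the main obstacle; everything else is a routine reorganization of Proposition~\ref{Su1Prop3.1}.
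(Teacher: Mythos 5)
Your proof is correct, and since the paper states Theorem \ref{metathm} only by citation to \cite[Theorem 3.2]{Su1}, your derivation—exponentiating Proposition \ref{Su1Prop3.1}, using that plethysm by $F$ is a continuous ring homomorphism with $p_k[F]$ obtained via $p_j\mapsto p_{jk}$, reindexing $m=dk$ to produce $f_m(\pm v)$, and the parity rules $\omega(e_m[g])$, $h_m[-X]=(-1)^m e_m[X]$ for the alternating versions—is essentially the same argument as in the cited source. No gaps; the sign-chasing for \eqref{metaAltExt} and \eqref{metaAltSym} checks out in both parity cases.
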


\begin{prop}\label{metaf} \cite[Lemma 3.3]{Su1} The numbers $f_n(1)$ and $f_n(-1)$ determine each other according to the equations 
$f_{2m+1}(-1)=-f_{2m+1}(1)$ for all $m\geq 0,$ and 
$f_{2m}(-1)=f_m(1) -f_{2m}(1)$ for all $m\geq 1.$  In fact, 
the symmetric functions 
$f_n = \dfrac{1}{n} \sum_{d|n} \psi(d) p_d^{\frac{n}{d}}$
are  determined by the numbers 
 $f_n(1)=\dfrac{1}{n} \sum_{d|n}\psi(d) ,$
or by the numbers $f_n(-1)=\dfrac{1}{n} \sum_{d|n}\psi(d) (-1)^{\frac{n}{d}}.$
\end{prop}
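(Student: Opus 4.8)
The plan is to work directly with the defining formula $f_n = \frac1n \sum_{d\mid n}\psi(d)\, p_d^{n/d}$ and its associated polynomial $f_n(t) = \frac1n\sum_{d\mid n}\psi(d)\, t^{n/d}$ from \eqref{definepolyf_n}, reducing everything to elementary divisor bookkeeping together with M\"obius inversion. No symmetric function machinery is needed beyond the observation that knowing $\psi$ is equivalent to knowing all the $f_n$.

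First I would prove the two stated identities. For $n = 2m+1$ odd, every divisor $d$ of $n$ is odd and hence $n/d$ is odd, so $(-1)^{n/d} = -1$ in every term of $n f_n(-1) = \sum_{d\mid n}\psi(d)(-1)^{n/d}$; summing gives $f_n(-1) = -f_n(1)$ at once. For $n = 2m$ even I would instead add the two evaluations:
\[
n\bigl(f_n(1) + f_n(-1)\bigr) = \sum_{d\mid n}\psi(d)\bigl(1 + (-1)^{n/d}\bigr) = 2\!\!\sum_{\substack{d\mid n\\ n/d\ \mathrm{even}}}\!\!\psi(d).
\]
The key point is that for $d\mid 2m$ the quotient $2m/d$ is even precisely when $d\mid m$; hence the right-hand side equals $2\sum_{d\mid m}\psi(d) = 2m\, f_m(1)$, and rearranging gives $f_{2m}(-1) = f_m(1) - f_{2m}(1)$.

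Next I would establish the ``determination'' claims. Since $n f_n(1) = \sum_{d\mid n}\psi(d)$, M\"obius inversion yields $\psi(n) = \sum_{d\mid n}\mu(n/d)\, d\, f_d(1)$, so the data $\{f_n(1)\}_{n\ge 1}$ recovers $\psi$, while conversely $\psi$ obviously determines each symmetric function $f_n$ and each value $f_n(1)$. To see that $\{f_n(-1)\}_{n\ge 1}$ also suffices, I would recover the $f_n(1)$ by induction on $n$ using the two identities just proved: for odd $n$, $f_n(1) = -f_n(-1)$; for even $n = 2m$, $f_{2m}(1) = f_m(1) - f_{2m}(-1)$, with $f_m(1)$ already available since $m < 2m$. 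Combined with the previous step this shows $\{f_n(-1)\}$ determines $\psi$ and hence all $f_n$, and the converse is again immediate from the definition.

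There is no genuine obstacle here; the only place demanding a little care is the even case, namely the equivalence ``$d\mid 2m$ and $2m/d$ even $\iff d\mid m$,'' and checking that the inductive recovery of $f_n(1)$ from the $f_n(-1)$ is well-founded, which it is since the even-index relation only refers back to the strictly smaller index $m$.
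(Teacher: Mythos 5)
Your proof is correct, and it follows essentially the route the paper intends: the paper defers this statement to \cite[Lemma 3.3]{Su1}, where (as in the sketch hidden in the source) the determination of $\psi$ from the values $f_n(1)$ is exactly the recursion in the divisor lattice that your M\"obius inversion formula $\psi(n)=\sum_{d\mid n}\mu(n/d)\,d\,f_d(1)$ encodes, and the two parity identities are the same elementary divisor bookkeeping, with the even case hinging precisely on the equivalence $d\mid 2m$ with $2m/d$ even $\iff d\mid m$ that you verify. Nothing further is needed.
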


Recall from Section 1.1 that we define  $H^{\pm}=\sum_{r\geq 0} (-1)^r h_r$ and $E^{\pm}=\sum_{r\geq 0} (-1)^r e_r.$ Thus 
$H^{\pm}=1-H^{alt},$ where $H^{alt}=\sum_{r\geq 1} (-1)^{r-1} h_r,$ and likewise $E^{\pm}=1-E^{alt}.$  The following identity is well known (see \cite[(2.6)]{M}, \cite[Section 7.6]{St4EC2}).
\begin{equation}\label{HE}\left(\sum_{n\geq 0} t^n h_n\right) \left( \sum_{n\geq 0} (-t)^n e_n\right) =1. \text{ Equivalently, } H^{\pm}\cdot E=1=H\cdot E^{\pm}.\end{equation}
This identity is generalised in Lemma \ref{pm} below.

\begin{lem}\label{pm} Let $F=\sum_{n\geq 1} f_n,$  $G=1+\sum_{n\geq 1} g_n$ and $K=1+\sum_{n\geq 1} k_n$ be arbitrary formal series of symmetric functions, as usual with $f_n, g_n, k_n$ being of homogeneous degree $n.$
\begin{enumerate}
\item 
$H[F]=G\iff E^{\pm}[F]=\dfrac{1}{G} \iff \sum_{r\geq 1} (-1)^{r-1} e_r[F] =\dfrac{G-1}{G}.$
\item $E[F]=K\iff H^{\pm}[F]=\dfrac{1}{K} \iff \sum_{r\geq 1} (-1)^{r-1} h_r[F] =\dfrac{K-1}{K}.$
\end{enumerate}
\end{lem}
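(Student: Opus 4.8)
The plan is to reduce everything to the single classical identity $H^{\pm}\cdot E = 1 = H\cdot E^{\pm}$ from equation (\ref{HE}), using only the fact that plethysm $-[F]$ with a fixed $F$ (having no constant term) is a ring homomorphism on the ring of symmetric functions. First I would observe that for \emph{any} series $A = \sum_{n\ge 0} a_n$ with $a_0=1$ one has, upon applying the endomorphism $-[F]$, the identity $(A\cdot B)[F] = A[F]\cdot B[F]$ and $(1/A)[F] = 1/(A[F])$; the latter makes sense because $A[F]$ again has constant term $1$ and is therefore invertible in the ring of formal power series (graded by degree). Applying this to (\ref{HE}) gives the ``universal'' identities $H^{\pm}[F]\cdot E[F] = 1$ and $H[F]\cdot E^{\pm}[F] = 1$, valid for every $F$ with zero constant term. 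These two identities are the engine of the whole lemma.

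Now for part (1): if $H[F] = G$, then since $H\cdot E^{\pm} = 1$ we get $G\cdot E^{\pm}[F] = H[F]\cdot E^{\pm}[F] = 1$, i.e. $E^{\pm}[F] = 1/G$. Conversely, if $E^{\pm}[F] = 1/G$, multiply by $H[F]$ and use $H[F]\cdot E^{\pm}[F]=1$ to recover $H[F] = G$. For the third equivalent form, just note $E^{\pm} = 1 - \sum_{r\ge 1}(-1)^{r-1}e_r$, so $E^{\pm}[F] = 1 - \sum_{r\ge1}(-1)^{r-1}e_r[F]$ (using that $-[F]$ is additive and that the constant term $e_0[F]=1$ is unaffected); hence $E^{\pm}[F] = 1/G$ is equivalent to $\sum_{r\ge1}(-1)^{r-1}e_r[F] = 1 - 1/G = (G-1)/G$. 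Part (2) is obtained by the same argument verbatim, swapping the roles of $H$ and $E$ throughout and using instead the identity $H^{\pm}[F]\cdot E[F] = 1$ together with $H^{\pm} = 1 - \sum_{r\ge1}(-1)^{r-1}h_r$.

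There is essentially no obstacle here; the only point requiring a word of care is the justification that $-[F]$ commutes with multiplicative inversion of a series with constant term $1$, which is immediate from the homomorphism property once one notes that $G\cdot(1/G) = 1$ is preserved under any ring endomorphism, and that the grading guarantees $1/G$ is a well-defined element of the completed ring. I would state this as a one-line preliminary remark and then carry out the four short implications as above.
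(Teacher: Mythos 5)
Your proposal is correct and follows essentially the same route as the paper: both rest on the classical identity $H\cdot E^{\pm}=1=H^{\pm}\cdot E$ of equation (\ref{HE}) together with the fact that plethysm with $F$ is a ring endomorphism, so that $E^{\pm}[F]=1/H[F]$ (respectively $H^{\pm}[F]=1/E[F]$), after which the third formulation follows by writing $E^{\pm}=1-\sum_{r\geq 1}(-1)^{r-1}e_r$. Your extra remark justifying that $-[F]$ commutes with inversion of a series with constant term $1$ is a harmless elaboration of what the paper leaves implicit.
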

\begin{proof}\begin{enumerate}
\item By definition,  $E^{\pm}=\sum_{r\geq 0} (-1)^r e_r=1/H,$ 
and hence the first equality follows. For the second equality, note that $\sum_{r\geq 1} (-1)^{r-1} e_r= 1-E^{\pm}=1 -1/H,$ and hence 
$\sum_{r\geq 1} (-1)^{r-1} e_r[F]= 1 -1/H[F]$ as claimed. The reverse direction is clear.
\item This follows exactly as above, since $H^{\pm}=\sum_{r\geq 0} (-1)^r h_r=1/E.$
\end{enumerate}
\end{proof}

\begin{lem}\label{pmalt} Let $G=\sum_{n\geq 1} g_n,$ $K=\sum_{n\geq 0} k_n,$ where $g_n, k_n$ are symmetric functions of homogeneous degree $n$ for $n\geq 1,$ and $k_0=1.$   Let $K^\pm$ denote the sum 
$\sum_{n\geq 0} (-1)^n k_n.$ Then 
\begin{align} 
H[\sum_{n\geq 1} (-1)^{n-1}\omega(g_n)]=K&\iff 
H[\sum_{n\geq 1} g_n] = \dfrac{1}{K[-p_1]}=\dfrac{1}{\omega(K)^\pm}\\
&\iff E^{\pm}[\sum_{n\geq 1} g_n] = \omega(K)^\pm.
\end{align}
\begin{align} 
E[\sum_{n\geq 1} (-1)^{n-1}\omega(g_n)]=K &\iff 
E[\sum_{n\geq 1} g_n] =  \dfrac{1}{K[-p_1]}=\dfrac{1}{\omega(K)^\pm}\\
&\iff H^{\pm}[\sum_{n\geq 1} g_n] = \omega(K)^\pm.
\end{align}

\end{lem}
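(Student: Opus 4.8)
The plan is to reduce both displayed chains of equivalences to a single, well-behaved operation: the plethystic substitution $\theta\colon f\mapsto f[-p_1]$ on the completed ring of symmetric functions. The preliminary facts I would record are that $\theta$ is a ring homomorphism (plethysm in the outer argument is an endomorphism, \cite{M}), that it is degree-preserving and an involution (since $\theta^{2}(f)=f[\,(-p_1)[-p_1]\,]=f[p_1]=f$), and that, because $p_\lambda[-p_1]=\prod_i(-p_{\lambda_i})=(-1)^{\ell(\lambda)}p_\lambda$ whereas $\omega(p_\lambda)=(-1)^{|\lambda|-\ell(\lambda)}p_\lambda$, one has $f[-p_1]=(-1)^{\deg f}\,\omega(f)$ for homogeneous $f$. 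Writing $\tilde G=\sum_{n\geq1}(-1)^{n-1}\omega(g_n)$, this identity gives the two observations that drive the proof: $\tilde G=-\sum_{n\geq1}g_n[-p_1]=-\theta(G)$, and $K[-p_1]=\theta(K)=\sum_{n\geq0}(-1)^{n}\omega(k_n)=\omega(K)^{\pm}$ (so the two denominators on the right-hand side of the lemma are literally equal, as asserted).

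For the first chain I would apply $\theta$ to the equation $H[\tilde G]=H[-\theta(G)]=K$. Using that $\theta$ is a homomorphism and that plethysm is associative, $\theta\bigl(H[-\theta(G)]\bigr)=H\bigl[\,\theta(-\theta(G))\,\bigr]=H[-\theta^{2}(G)]=H[-G]$, while $\theta(K)=\omega(K)^{\pm}$; since $\theta$ is invertible, $H[\tilde G]=K$ is then equivalent to $H[-G]=\omega(K)^{\pm}$. Next I would invoke the standard $\lambda$-ring identity $h_{n}[-X]=(-1)^{n}e_{n}[X]$ — one line, expanding $h_n$ in power sums and using $p_k[-X]=-p_k[X]$ — to rewrite $H[-G]=\sum_{n\geq0}(-1)^{n}e_{n}[G]=E^{\pm}[G]$. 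Thus $H[\tilde G]=K\iff E^{\pm}[G]=\omega(K)^{\pm}$, which is the last equivalence in the first chain. Finally, $E^{\pm}=1/H$ by \eqref{HE}, and plethysm by $G$ is a ring homomorphism, so $E^{\pm}[G]=1/H[G]$; hence $E^{\pm}[G]=\omega(K)^{\pm}$ is equivalent to $H[G]=1/\omega(K)^{\pm}=1/K[-p_1]$, closing the first chain. (This last step is also just Lemma \ref{pm}(1) applied to $F=G$.)

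The second chain is obtained verbatim with $H$ and $E$ interchanged: applying $\theta$ to $E[\tilde G]=K$ yields $E[-G]=\omega(K)^{\pm}$; the companion identity $e_{n}[-X]=(-1)^{n}h_{n}[X]$ turns the left-hand side into $H^{\pm}[G]$; and $H^{\pm}=1/E$ from \eqref{HE} gives the remaining equivalence $E[G]=1/\omega(K)^{\pm}$. The argument is entirely formal, so I do not anticipate a substantive obstacle; the one place that demands care is bookkeeping the \emph{two different} negations in play — the plethystic substitution $[-p_1]$, governed by $p_k[-p_1]=-p_k$ and therefore tied to $\omega$ only up to the sign $(-1)^{\deg}$, versus the genuine $\lambda$-ring negation occurring in $h_n[-X]=(-1)^ne_n[X]$ — together with the overall sign in $\tilde G=-\theta(G)$, which is exactly where the alternating factor $(-1)^{n-1}$ in the statement is consumed.
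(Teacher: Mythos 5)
Your proposal is correct and follows essentially the same route as the paper: both arguments rest on the identity $f[-g]=(-1)^{\deg f}\omega(f)[g]$ (hence $K[-p_1]=\omega(K)^{\pm}$ and $\sum_{n\geq1}(-1)^{n-1}\omega(g_n)=-G[-p_1]$), associativity of plethysm to strip off the substitution $[-p_1]$, the sign rule $h_n[-X]=(-1)^n e_n[X]$ (resp.\ $e_n[-X]=(-1)^n h_n[X]$), and the relation $H\cdot E^{\pm}=1=E\cdot H^{\pm}$ of equation (\ref{HE})/Lemma \ref{pm}. Packaging the substitution as the involutive homomorphism $\theta(f)=f[-p_1]$ is just a cleaner bookkeeping of the paper's own computation, not a different argument.
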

\begin{proof} We use the fact that for any symmetric functions $f_1, f_2$ of homogeneous degree, $f_1[-f_2]=(-1)^{\mathrm{deg} f_1} \omega(f_1)[f_2].$ In particular this implies $K[-p_1]=\omega(K)^{\pm}$ whenever $K$ is a series of symmetric functions $k_n$ of homogeneous degree $n.$
Hence, using associativity of plethysm,
$$K=H[-\sum_{n\geq 1} (-1)^n  \omega(g_n)]=H[-G[-p_1]]
=(H[-G])[-p_1],$$
 or equivalently
$$K[-p_1]=E^{\pm}[G]=(\frac{1}{H})[G]=\dfrac{1}{H[G]},$$
and finally
$$H[G]=\dfrac{1}{K[-p_1]}=\dfrac{1}{\omega(K)^{\pm}}.$$
The equivalence of the first two equations  is a consequence of the fact that $H[G]=(\dfrac{1}{E^{\pm}})[G] =\dfrac{1}{E^{\pm}[G]}.$
The   equivalences of the second pair are obtained in a similar manner. 
\end{proof}

Lemma (\ref{pmalt}) explains, in greater generality,  the connection between equations (\ref{metaSym}) and (\ref{metaAltExt}) (resp. (\ref{metaExt}) and (\ref{metaAltSym})).  In fact these lemmas give us  the following observation. Let $F,$ $H,$ and $E$ be as defined in Theorem \ref{metathm}.  Then

\begin{prop}\label{metaequiv}  The identities of Theorem \ref{metathm} are all equivalent, and are also equivalent to 
\begin{equation}E^{\pm}(v)[F] 
=\prod_{m\geq 1} (1- p_m)^ {f_m(v)}\end{equation}

\begin{equation}H^{\pm}(v)[F]=\prod_{m\geq 1} (1- p_m)^{-f_m(-v)}\end{equation}
\end{prop}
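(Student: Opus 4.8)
The plan is to derive the two new identities in Proposition \ref{metaequiv} directly from the formulas already established in Theorem \ref{metathm}, using only the algebraic relations $H \cdot E^{\pm} = 1 = H^{\pm}\cdot E$ from equation (\ref{HE}) together with associativity of plethysm. The point is that $E^{\pm}$ and $H^{\pm}$ are, as plethystic operators, the inverses of $H$ and $E$ respectively: since $\left(\sum_n t^n h_n\right)\left(\sum_n (-t)^n e_n\right) = 1$ holds as an identity of symmetric functions, plethystically substituting $F$ (or $vF$, tracking the grading variable) into both factors gives $H(v)[F]\cdot E^{\pm}(v)[F] = 1$, because plethysm by a fixed symmetric function is a ring endomorphism (see the Preliminaries, citing \cite[(8.3)]{M}). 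Hence $E^{\pm}(v)[F] = 1/H(v)[F]$, and likewise $H^{\pm}(v)[F] = 1/E(v)[F]$.

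Concretely, I would proceed as follows. First, invoke equation (\ref{metaSym}) of Theorem \ref{metathm}, which gives
\begin{equation*}
H(v)[F] = \prod_{m\geq 1}(1-p_m)^{-f_m(v)}.
\end{equation*}
Taking reciprocals and using $E^{\pm}(v)[F] = 1/H(v)[F]$ from the observation above yields
\begin{equation*}
E^{\pm}(v)[F] = \prod_{m\geq 1}(1-p_m)^{f_m(v)},
\end{equation*}
which is the first displayed identity. Symmetrically, equation (\ref{metaExt}) gives $E(v)[F] = \prod_{m\geq 1}(1-p_m)^{f_m(-v)}$, and then $H^{\pm}(v)[F] = 1/E(v)[F] = \prod_{m\geq 1}(1-p_m)^{-f_m(-v)}$, which is the second displayed identity. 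The equivalence of all six identities (the four in Theorem \ref{metathm} plus these two) then follows from Lemma \ref{pm} and Lemma \ref{pmalt}: Lemma \ref{pm}(1) says $H[F]=G \iff E^{\pm}[F] = 1/G$, connecting (\ref{metaSym}) with the first new identity; Lemma \ref{pm}(2) connects (\ref{metaExt}) with the second; and Lemma \ref{pmalt} (which I may invoke with $G = F$ and the appropriate product as $K$, using $K[-p_1] = \omega(K)^\pm$) ties the "alternating" identities (\ref{metaAltExt}) and (\ref{metaAltSym}) into the same web of equivalences. Since each implication in these lemmas is reversible, starting from any one identity one recovers all the others.

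I do not expect a serious obstacle here: this proposition is essentially a bookkeeping corollary packaging the relations $H^\pm E = 1 = HE^\pm$ with the meta theorem. The only mild subtlety worth spelling out is the justification that the scalar identity (\ref{HE}) survives plethystic substitution of a formal series $F = \sum_{n\geq 1} f_n$ with $f_1 = \psi(1)\,p_1$ possibly of positive degree — but this is exactly the endomorphism property of plethysm by a fixed argument, valid at the level of formal power series in the grading variable $v$ since $F$ has no degree-zero term, so there are no convergence issues. I would state this once and then let the two reciprocal computations above finish the proof, closing with the remark that the claimed equivalences are now immediate from Lemmas \ref{pm} and \ref{pmalt}.
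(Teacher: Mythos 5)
Your proposal is correct and is essentially the paper's own argument: the paper presents Proposition \ref{metaequiv} as an immediate consequence of Lemmas \ref{pm} and \ref{pmalt} applied to the product formulas of Theorem \ref{metathm}, which is exactly your route of using $H(v)[F]\cdot E^{\pm}(v)[F]=1$ (and its mirror) to take reciprocals of (\ref{metaSym}) and (\ref{metaExt}). No substantive difference in method, and no gap beyond what the paper itself leaves implicit.
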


Now let $F=\sum_{n\geq 1} f_n$ be an arbitrary series of symmetric functions $f_n$ homogeneous of degree $n.$   In particular $f_n$ need not be of the form (\ref{definef_n}). We write $F_{\geq 2}$ for the series $\sum_{n\geq 2} f_n.$  

\begin{thm}\label{metage2}\cite[Proposition 2.3, Corollary 2.4]{Su1} Assume that $F=\sum_{n\geq 1} f_n$ is any series of symmetric functions $f_n$ homogeneous of degree $n.$ Also assume $f_1=p_1.$  Then we have the following  identities:
 \begin{equation}\label{metage2a}
H(v)[F_{\geq 2}]
=E(-v) \cdot H(v)[F].
\end{equation}
Equivalently,
\begin{equation}\label{metage2b}
E^{\pm}(v)[F_{\geq 2}]=E^{\pm}(v)[\sum_{n\geq 2} f_n] 
=\dfrac{H(v)}{ H(v)[F]}
\end{equation}
\begin{equation}\label{metage2c}
E(v)[F_{\geq 2}]
=H(-v) \cdot E(v)[F].
\end{equation}
Equivalently,
\begin{equation}\label{metage2d}
H^{\pm}(v)[F_{\geq 2}]=H^{\pm}(v)[\sum_{n\geq 2} f_n] 
=\dfrac{E(v)}{ E(v)[F]}
\end{equation}

\end{thm}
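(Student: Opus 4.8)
The plan is to derive all four displayed identities from a single multiplicativity statement, namely $H(v)[F]=H(v)\cdot H(v)[F_{\geq 2}]$ together with its elementary-symmetric analogue, and then to read off the remaining formulas as purely formal consequences of the classical relations $H(v)\,E(-v)=1=H(-v)\,E(v)$ from equation (\ref{HE}).

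First I would isolate two structural properties of plethysm. The first is that, for a fixed inner symmetric function $g$, the map $q\mapsto q[g]$ is a ring homomorphism of the (degree-completed) ring of symmetric functions, as quoted from \cite[(8.3)]{M} in Section 1.1; being a continuous ring homomorphism, it commutes with $\exp$ and $\log$ of series with zero constant term. The second is that the total series $H(v)$ and $E(v)$ are multiplicative under plethystic addition: $H(v)[A+B]=H(v)[A]\,H(v)[B]$ and $E(v)[A+B]=E(v)[A]\,E(v)[B]$. I would deduce this from the first property together with $\log H(v)=\sum_{n\geq1}\tfrac{v^n}{n}p_n$ and the additivity $p_n[A+B]=p_n[A]+p_n[B]$: applying the homomorphism $q\mapsto q[A+B]$ to $\log H(v)$ gives
\[
\log\bigl(H(v)[A+B]\bigr)=\sum_{n\geq1}\tfrac{v^n}{n}\bigl(p_n[A]+p_n[B]\bigr)=\log\bigl(H(v)[A]\bigr)+\log\bigl(H(v)[B]\bigr),
\]
and exponentiating yields the claim; the $E(v)$ case is identical using $\log E(v)=-\sum_{n\geq1}\tfrac{(-v)^n}{n}p_n$. (Alternatively this is just the standard convolution identity $h_n[A+B]=\sum_{i+j=n}h_i[A]h_j[B]$, and likewise for $e_n$.)

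Now the argument proper. Since $f_1=p_1$ we have $F=p_1+F_{\geq2}$, and plethysm by $p_1$ is the identity, so $H(v)[p_1]=H(v)$. Applying the multiplicativity property with $A=p_1$ and $B=F_{\geq2}$ gives $H(v)[F]=H(v)\cdot H(v)[F_{\geq2}]$, and since $H(v)^{-1}=E(-v)$ this is exactly equation (\ref{metage2a}). For (\ref{metage2b}), apply the ring homomorphism $q\mapsto q[F_{\geq2}]$ to the identity $H(v)\,E(-v)=1$ to get $H(v)[F_{\geq2}]\cdot E(-v)[F_{\geq2}]=1$; since $E^{\pm}(v)=E(-v)$ by definition, this reads $E^{\pm}(v)[F_{\geq2}]=1/H(v)[F_{\geq2}]$, and substituting the rearrangement $H(v)[F_{\geq2}]=H(v)[F]/H(v)$ of (\ref{metage2a}) gives $E^{\pm}(v)[F_{\geq2}]=H(v)/H(v)[F]$. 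Identities (\ref{metage2c}) and (\ref{metage2d}) are obtained in exactly the same way with $H$ and $E$ swapped: the $E$-multiplicativity gives $E(v)[F]=E(v)\cdot E(v)[F_{\geq2}]$, and $E(v)^{-1}=H(-v)$ turns this into (\ref{metage2c}); applying $q\mapsto q[F_{\geq2}]$ to $H(-v)\,E(v)=1$ and using $H^{\pm}(v)=H(-v)$ then yields (\ref{metage2d}).

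The only place where anything actually happens is the multiplicativity of $H(v)$ and $E(v)$ under plethystic addition; everything else is bookkeeping with the ring-homomorphism property of plethysm and the $H$--$E$ duality of (\ref{HE}). The sole point requiring a word of care is that $F_{\geq2}$ is an infinite series, so all identities live in the degree-completed ring and plethysm is applied term by term; this is legitimate because every summand is homogeneous of strictly positive degree, so each fixed graded component of each identity above is a finite statement.
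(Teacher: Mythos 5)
Your proof is correct and follows essentially the same route as the paper: the paper also reduces everything to the multiplicativity $H[G_1+G_2]=H[G_1]\,H[G_2]$ (via the convolution identity $h_n[G_1+G_2]=\sum_k h_k[G_1]h_{n-k}[G_2]$, which you mention as your alternative), applied to $F=p_1+F_{\geq 2}$, combined with the duality $H(v)E(-v)=1$ from (\ref{HE}). Your extra $\exp/\log$ derivation of the multiplicativity and the explicit completion remark are harmless additions, not a different method.
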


\begin{proof} The equivalence of (\ref{metage2a}) and (\ref{metage2b}) follows because of the identity (\ref{HE}).  Consider now equation (\ref{metage2a}). By standard properties of the skewing operation and the plethysm operation (see, e.g. 
\cite[(8.8)]{M}), we know that 
$h_n[G_1+G_2]=\sum_{k=0}^n h_k[G_1] h_{n-k}[G_2].$
This in turn gives $$H[G_1+G_2]= H[G_1] \, H[G_2].$$  
Taking $G_1=f_1$ and $G_2=F- f_1,$ we have
$$ H[F] =H[f_1]\, H[\sum_{n\geq 2} f_n ].$$
But $H[ f_1]=H[ p_1]=\sum_{n\geq 0} h_n.$   
Hence, using (\ref{HE}), $$\dfrac{1}{H(v)[ f_1]}= \sum_{n\geq 0} (-v)^n e_n.$$
The equations (\ref{metage2c}) and (\ref{metage2d}) are obtained in  entirely analogous fashion.
\end{proof}

An important consequence of Theorem \ref{metage2} is worth pointing out.  Denote by $Lie_n$ the Frobenius characteristic of the $S_n$-representation afforded by the multilinear component of the free Lie algebra on $n$ generators.  Let $Lie=\sum_{n\geq 1} Lie_n.$ A old theorem of Thrall \cite{T}, equivalent by Schur-Weyl duality to the Poincar\'e-Birkhoff-Witt theorem (see \cite{R} for background) states that 
\begin{equation*} H[Lie]=(1-p_1)^{-1} \end{equation*}
This identity is the special case of equation (\ref{metaSym}),  Theorem \ref{metathm}, obtained by taking, in \ref{definef_n}, $\psi(d)=\mu(d)$ and hence $f_n=\text{ch }(Lie_n).$  See 
Theorem \ref{ThrallPBWCadoganSolomon} and more generally\cite{R}. 
Define a symmetric function $\kappa=\sum_{n\geq 2} s_{(n-1,1)},$ where $s_{(n-1,1)}$ is the Schur function indexed by the partition $(n-1,1).$ (This is the Frobenius characteristic of the standard  representation of $S_n.$) 

Lemma \ref{pm} and Theorem \ref{metage2} now imply that the theorems of Thrall and Poincar\'e-Birkhoff-Witt are in fact {\it equivalent} to the derived series decomposition of the free Lie algebra \cite{R}.  More precisely, the following identities are equivalent:
\begin{thm}\label{EquivPBW}  (Equivalence of Poincar\'e-Birkhoff-Witt and derived series for free Lie algebra)
\begin{equation}\label{EquivPBWSym}  (H-1)[Lie]=(\sum_{r\geq 1} h_r)[Lie]=\sum_{n\geq 1} p_1^n. \end{equation}
\begin{equation}\label{PlInv-HCadogan}  (H-1)[\sum_{i\geq 1} (-1)^{i-1}\omega(Lie_i)]=p_1,
\end{equation}
(the plethystic inverse of the sum of homogeneous symmetric functions $\sum_{r\geq 1} h_r$).
\begin{equation}\label{EquivPBWAltExt}(1-E^{\pm})[Lie]= (\sum_{r\geq 1} (-1)^{r-1} e_r)[Lie]=p_1,\end{equation}
(the plethystic inverse of the sum $\sum_{r\geq 1} Lie_r$).
\begin{equation}\label{EquivPBWAltExtge2a}(1-E^{\pm})[Lie_{\geq 2}] =(\sum_{r\geq 1} (-1)^{r-1} e_r)[Lie_{\geq 2}]=\kappa \end{equation}
Equivalently,
\begin{equation}\label{EquivPBWAltExtge2b}  \text{ The degree $n$ term in }\sum_{r\geq 0} (-1)^{n-r} e_{n-r}[Lie_{\geq 2}] \text{ is }(-1)^{n-1}  s_{(n-1,1)},
\end{equation}
\begin{equation}\label{LieFilta}   Lie_{\geq 2}=Lie[\kappa]\end{equation}
\begin{equation} \label{LieFiltb}
 Lie_{\geq 2}=\kappa+\kappa[\kappa]+\kappa[\kappa[\kappa]]+\ldots   \text{ (The derived series filtration of the free Lie algebra)  } \end{equation}
 \begin{equation}\label{LieHodge} (H-1)[Lie_{\geq 2}]=(\sum_{r\geq 1} h_r)[Lie_{\geq 2}]=(1-p_1)^{-1}\cdot E^{\pm}-1
 =\sum_{n\geq 2}\sum_{k=0}^n (-1)^k p_1^{n-k}e_k. \end{equation}
\end{thm}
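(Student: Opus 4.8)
The plan is to show that every identity in the list is equivalent to the single relation $H[Lie]=(1-p_1)^{-1}$ of (\ref{EquivPBWSym}), which itself is immediate from Theorem \ref{metathm}: taking $\psi(d)=\mu(d)$ so that $f_n=Lie_n$, one has $f_n(1)=\frac1n\sum_{d\mid n}\mu(d)=\delta_{n,1}$, and (\ref{metaSym}) at $v=1$ collapses to $H[Lie]=\prod_m(1-p_m)^{-f_m(1)}=(1-p_1)^{-1}$. Since each link below is a genuine ``if and only if,'' transitivity yields the asserted equivalence of the whole family.

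First I would dispose of the three statements phrased in terms of all of $Lie$. Equation (\ref{EquivPBWAltExt}) comes from (\ref{EquivPBWSym}) by Lemma \ref{pm}(1) with $F=Lie$ and $G=(1-p_1)^{-1}$: the lemma turns $H[Lie]=G$ into $E^{\pm}[Lie]=1/G=1-p_1$, equivalently $(\sum_{r\ge1}(-1)^{r-1}e_r)[Lie]=(G-1)/G=p_1$. Equation (\ref{PlInv-HCadogan}) comes from (\ref{EquivPBWSym}) by Lemma \ref{pmalt} with $g_n=Lie_n$ and $K=1+p_1$: since $K[-p_1]=1-p_1$, the lemma reads $H[\sum_{n\ge1}(-1)^{n-1}\omega(Lie_n)]=1+p_1\iff H[Lie]=1/(1-p_1)$, and $H[X]=1+p_1$ is just $(H-1)[X]=p_1$. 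Both lemmas are stated as equivalences, so these steps are reversible.

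Next I would pass to the $Lie_{\ge2}$ statements using Theorem \ref{metage2}, which applies since $Lie_1=p_1$. Putting $v=1$ in (\ref{metage2a}) gives $H[Lie_{\ge2}]=E(-1)\,H[Lie]=E^{\pm}\,H[Lie]$, so (\ref{EquivPBWSym}) is equivalent to $H[Lie_{\ge2}]=(1-p_1)^{-1}E^{\pm}$; subtracting the constant term and expanding the product of $\sum_{m\ge0}p_1^m$ with $\sum_{k\ge0}(-1)^ke_k$ degree by degree gives exactly (\ref{LieHodge}). Likewise $v=1$ in (\ref{metage2b}) is the identity $E^{\pm}[Lie_{\ge2}]=H/H[Lie]$; under (\ref{EquivPBWSym}) the right side is $H(1-p_1)$, and a short Pieri computation shows $H(1-p_1)=1-\sum_{n\ge1}s_{(n,1)}=1-\kappa$, so $(1-E^{\pm})[Lie_{\ge2}]=\kappa$, which is (\ref{EquivPBWAltExtge2a}); and (\ref{EquivPBWAltExtge2b}) is the degree-$n$ component of this after reindexing $e_{n-r}$ and discarding the terms too small to reach degree $n$. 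For the filtration statements I would use that (\ref{EquivPBWAltExt}) exhibits $\sum_{r\ge1}(-1)^{r-1}e_r$ and $Lie$ as mutually plethystic-inverse: applying $Lie[\,\cdot\,]$ to both sides of $(\sum_{r\ge1}(-1)^{r-1}e_r)[Lie_{\ge2}]=\kappa$ and using associativity turns (\ref{EquivPBWAltExtge2a}) into $Lie_{\ge2}=Lie[\kappa]$, which is (\ref{LieFilta}); and since $Lie=p_1+Lie_{\ge2}$ with $p_1$ the plethystic identity, $Lie[\kappa]=\kappa+Lie_{\ge2}[\kappa]$, so (\ref{LieFilta}) is the fixed-point equation $X=\kappa+X[\kappa]$. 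Iterating this equation (its solution being unique because in each fixed degree only finitely many of $\kappa,\kappa[\kappa],\dots$ contribute, the $k$-fold iterate having lowest degree $2^k$) produces the derived-series expansion (\ref{LieFiltb}), and conversely summing that series and factoring one $\kappa$ out of all but the first term recovers the fixed-point equation.

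The only place needing care is the bookkeeping, not any real idea: keeping track of which plethysm sits inside which (plethysm is not commutative), verifying $H(1-p_1)=1-\kappa$ by Pieri's rule, and making explicit the uniqueness and convergence argument for $X=\kappa+X[\kappa]$ so that the iteration in (\ref{LieFiltb}) is justified. I would also check the sign in (\ref{EquivPBWAltExtge2b}) against the explicit form $E^{\pm}[Lie_{\ge2}]=1-\kappa$ when writing out the degree-$n$ reduction. Everything else is a direct appeal to Theorem \ref{metathm}, Theorem \ref{metage2}, Lemma \ref{pm}, and Lemma \ref{pmalt}.
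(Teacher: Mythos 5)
Your proposal is correct and follows essentially the same route as the paper: reduce everything to $H[Lie]=(1-p_1)^{-1}$, pass to the other global identities via Lemma \ref{pm} and Lemma \ref{pmalt}, use Theorem \ref{metage2} at $v=1$ together with the Pieri computation $H\cdot(1-p_1)=1-\kappa$ for the $Lie_{\geq 2}$ statements, apply the plethystic inverse from (\ref{EquivPBWAltExt}) to get $Lie_{\geq 2}=Lie[\kappa]$, and iterate the fixed-point equation for the derived-series expansion. No substantive differences from the paper's argument.
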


\begin{proof} We specialise the preceding identities to $v=1.$  Equation (\ref{EquivPBWSym}) is equivalent to $H[Lie]=(1-p_1)^{-1},$ and hence by Lemma \ref{pm}, we obtain $E^{\pm}[Lie]=1-p_1,$ which is equation (\ref{EquivPBWAltExt}) after removing the constant term and adjusting signs.

Now invoke (\ref{metage2b}) of Theorem \ref{metage2}.  We have 
$$E^{\pm}[Lie_{\geq 2} ]=H\cdot (1-p_1)
=1+ \sum_{n\geq 2} (h_n-h_{n-1} p_1)
= 1-\kappa, $$
and this is precisely equation (\ref{EquivPBWAltExtge2a}), again after cancelling the constant term and adjusting signs.  Since these steps are clearly reversible, we see that (\ref{EquivPBWAltExt}) and (\ref{EquivPBWAltExtge2a}) are in fact equivalent.

The equivalence of (\ref{EquivPBWAltExtge2a}) and (\ref{EquivPBWAltExtge2b})-(\ref{LieFilta}) follows by applying the plethystic inverse of $\sum_{r\geq 1} (-1)^{r-1}e_r, $ which is given by (\ref{EquivPBWAltExt}). 

It is clear by iteration that (\ref{LieFilta}) gives (\ref{LieFiltb}).  In the reverse direction, we can rewrite (\ref{LieFiltb}) as 
\begin{center}
$Lie=p_1+\kappa+\kappa[\kappa]+\kappa[\kappa[\kappa]]+\ldots,$\end{center} 
and hence 
$Lie[\kappa]=Lie-p_1=Lie_{\geq 2},$
which is (\ref{LieFilta}).
Finally the equivalence of (\ref{EquivPBWSym}) and ((\ref{LieHodge})) follows from equation (\ref{metage2a}) of Theorem \ref{metage2} and equation (\ref{HE}).  
\end{proof} 

\begin{thm}\label{Restrictge2}  Let $F,H,E$ be as in Theorem \ref{metathm}, and assume $f_1=p_1.$ 
\begin{enumerate}
\item Let $\prod_{m\geq 2} (1-p_m)^{-f_m(1)}=\sum_{n\geq 0} g_n$ for homogeneous symmetric functions $g_n$ of degree $n,$ $g_0=1.$  (Note that $g_1=0.$)
Also define $\sigma_n=\sum_{i\geq 0} (-1)^i e_{n-i} g_i.$ Then $\sigma_n, n\geq 1,$ is the characteristic of a one-dimensional, possibly  virtual representation, with the property that its restriction to $S_{n-1}$ is $\sigma_{n-1}.$   Let $\alpha_n$ be the degree $n$ term in $H[F_{\geq 2}], n\geq 0.$ (Note that $\alpha_0=1$ and $\alpha_1=0.$ )  Then we have the recurrence
\begin{equation} 
\alpha_n=p_1\alpha_{n-1}+(-1)^n\sigma_n.
\end{equation}
\item  Let $\prod_{m\geq 2} (1-p_m)^{f_m(-1)}=\sum_{n\geq 0} k_n$ for homogeneous symmetric functions $k_n$ of degree $n,$ $k_0=1.$  (Note that $k_1=0.$)
Also define $\tau_n=\sum_{i\geq 0} (-1)^i h_{n-i} k_i.$ Then $\tau_n, n\geq 1, $ is the characteristic of a one-dimensional, possibly  virtual representation, with the property that its restriction to $S_{n-1}$ is $\tau_{n-1}.$ Let $\beta_n$ be the degree $n$ term in $E[F_{\geq 2}], n\geq 0.$ (Note that $\beta_0=1$ and $\beta_1=0.$ )  Then we have the recurrence
\begin{equation} 
\beta_n=p_1\beta_{n-1}+(-1)^n\tau_n.
\end{equation}
\end{enumerate}
\end{thm}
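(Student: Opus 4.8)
The plan is to deduce both recurrences, and the asserted properties of $\sigma_n$ and $\tau_n$, by specialising the meta-identities of Theorem~\ref{metathm} and Theorem~\ref{metage2} to $v=1$ and reading off homogeneous components.

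For Part~(1): since $f_1 = p_1$ gives $f_1(1) = 1$, equation~(\ref{metaSym}) at $v=1$ reads $H[F] = (1-p_1)^{-1}\prod_{m\geq 2}(1-p_m)^{-f_m(1)} = (1-p_1)^{-1}G$ with $G = \sum_{n\geq 0}g_n$. Feeding this into (\ref{metage2a}) at $v=1$, and noting $E(-1) = \sum_{n\geq 0}(-1)^n e_n = E^{\pm}$, I get $H[F_{\geq 2}] = E^{\pm}\cdot H[F] = (1-p_1)^{-1}(E^{\pm}\cdot G)$. The degree-$n$ component of $E^{\pm}\cdot G$ equals $\sum_{r+i=n}(-1)^r e_r g_i = (-1)^n\sum_{i\geq 0}(-1)^i e_{n-i}g_i = (-1)^n\sigma_n$, so $(1-p_1)\sum_{n\geq 0}\alpha_n = \sum_{n\geq 0}(-1)^n\sigma_n$; equating degree-$n$ terms yields $\alpha_n = p_1\alpha_{n-1} + (-1)^n\sigma_n$.

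For the properties of $\sigma_n$, the key remark is that $G = \prod_{m\geq 2}(1-p_m)^{-f_m(1)}$ does not involve $p_1$, whereas the skewing operator $h_1^\perp = \partial/\partial p_1$, which computes $\mathrm{ch}(\mathrm{Res}^{S_n}_{S_{n-1}}V)$, satisfies $\partial e_r/\partial p_1 = e_{r-1}$ and hence $\partial E^{\pm}/\partial p_1 = -E^{\pm}$. Thus $\partial(E^{\pm}G)/\partial p_1 = -E^{\pm}G$, and comparing degree-$(n-1)$ terms on both sides of this identity gives $\partial\sigma_n/\partial p_1 = \sigma_{n-1}$, i.e. $\mathrm{Res}^{S_n}_{S_{n-1}}\sigma_n = \sigma_{n-1}$. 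Since $\sigma_1 = e_1 g_0 = e_1$ (as $g_1 = 0$) is one-dimensional, induction via this restriction property shows every $\sigma_n$ is the characteristic of a one-dimensional (virtual) representation.

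Part~(2) is the exact mirror image: (\ref{metaExt}) at $v=1$ (using $f_1(-1) = -1$) gives $E[F] = (1-p_1)^{-1}K$, and (\ref{metage2c}) at $v=1$ (with $H(-1) = H^{\pm}$) gives $E[F_{\geq 2}] = (1-p_1)^{-1}(H^{\pm}K)$; the degree-$n$ component of $H^{\pm}K$ is $(-1)^n\tau_n$, producing $\beta_n = p_1\beta_{n-1} + (-1)^n\tau_n$. As $K$ is free of $p_1$ and $\partial H^{\pm}/\partial p_1 = -H^{\pm}$ (because $\partial h_r/\partial p_1 = h_{r-1}$), the same computation gives $\mathrm{Res}^{S_n}_{S_{n-1}}\tau_n = \tau_{n-1}$, and $\tau_1 = h_1$ is one-dimensional, so all $\tau_n$ are. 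I do not foresee a genuine obstacle here; the only delicate point is keeping the signs straight when extracting homogeneous pieces of the $\pm$-series $E^{\pm}G$ (resp. $H^{\pm}K$), together with the identification of $\mathrm{Res}^{S_n}_{S_{n-1}}$ with $\partial/\partial p_1$ — granted those, both asserted features of $\sigma_n$ (resp. $\tau_n$) fall out of the single relation $\partial(E^{\pm}G)/\partial p_1 = -E^{\pm}G$.
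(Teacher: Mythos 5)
Your proposal is correct and follows essentially the same route as the paper: specialise equations (\ref{metaSym}), (\ref{metaExt}) and (\ref{metage2a}), (\ref{metage2c}) at $v=1$, extract the degree-$n$ component of $E^{\pm}G$ (resp. $H^{\pm}K$) to get the recurrence, and use $\partial/\partial p_1$ as the restriction operator together with the fact that $G$ (resp. $K$) is free of $p_1$. The only cosmetic difference is that you obtain one-dimensionality of $\sigma_n$ and $\tau_n$ by induction from $\sigma_1=e_1$, $\tau_1=h_1$ via the restriction relation, whereas the paper pairs directly with $p_1^n$ to see that the terms with $i\geq 1$ have dimension zero; both arguments are fine.
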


\begin{proof}  \begin{enumerate} 
\item The hypothesis that the degree one term $f_1$ in $F$ equals $p_1$ implies that $f_1(1)=1.$ 
 From equation (\ref{metaSym}) of Theorem \ref{metathm} and (\ref{metage2a}) of Theorem \ref{metage2}, we now have 
\begin{align*}
(1-p_1) H[F_{\geq 2}]&= E^{\pm} \cdot \prod_{m\geq 2} (1-p_m)^{-f_m(1)}\\
&=\sum_{r\geq 0} (-1)^r e_r\sum_{n\geq 0} g_n=\sum_{n\geq 0} \sum_{i=0}^n (-1)^{n-i} e_{n-i} g_i\\
&=\sum_{n\geq 0} (-1)^n \sigma_n,
\end{align*}
from which the recurrence is clear.  It remains to establish the statement about $\tau_n.$ First observe that since $p_1$ does not appear in the power-sum expansion of $g_n,$ for $i\geq 1,$ $e_{n-i}g_i$ is the Frobenius characteristic of a zero-dimensional (hence virtual) representation (dimension is computed, for example,  by taking the scalar product with $p_1^n$).  The dimension of $\sigma_n$ is therefore that of $e_n,$ and is thus one.  To verify the statement about the restriction, we use the fact that the Frobenius characteristic of the restriction is the partial derivative $\dfrac{\partial \sigma}{\partial p_1}.$ The partial derivative of $e_n$ with respect to $p_1$ is clearly $e_{n-1}, n\geq 1,$ and that of $g_n$ with respect to $p_1$ is clearly 0.  The claim follows.

\item Again, $f_1=p_1$ implies $1=-f_1(-1).$ The argument is identical, but now use equation (\ref{metaExt}) and equation (\ref{metage2c}).
\end{enumerate}
\end{proof}

Note that, with $F$ as in Theorem \ref{metathm},  the dimension of the representation whose characteristic is $h_j[F]|_{{\rm deg\ }n} $ (respectively $e_j[F]|_{{\rm deg\ }n} $) is the number $c(n,j)$ of permutations in $S_n$ with $j$ disjoint cycles. 
Similarly  the dimension of the representation whose characteristic is $h_j[F_{\geq 2}]|_{{\rm deg\ }n} $ 
(respectively $e_j[F_{\geq 2}]|_{{\rm deg\ }n} $) is the number $d(n,j)$ of fixed-point-free permutations, or derangements, in $S_n$ with $j$ disjoint cycles, and hence the dimension of $\alpha_n $ (respectively $\beta_n$) is the total number of derangements $d_n.$ Hence, taking dimensions in either of the above recurrences, we recover the well-known recurrence $d_n=nd_{n-1} +(-1)^n, n\geq 2.$   On the other hand, the recurrence (\ref{Restrict2}) below is the symmetric function analogue of the recurrence 
$$d(n,j)=n(d(n-1,j)+d(n-2, j-1)),$$ while (\ref{Restrict1}) is the analogue of 
$$c(n,j)=c(n-1,j)+n c(n-1, j-1).$$

From Theorem \ref{metage2} we can also deduce interesting recurrences for the restrictions of the symmetric and exterior powers of $F$ from $S_n$ to $S_{n-1},$ for an arbitrary formal sum $F$ of homogeneous symmetric functions $f_n$ having the following key property: each $f_n$ is the Frobenius characteristic of an $S_n$-representation (possibly virtual) which restricts to the regular representation of $S_{n-1}.$ See also \cite[Proposition 3.5]{Su0}.  

\begin{thm}\label{Restrict}  Let $F=\sum_{n\geq 1} f_n.$  Assume $f_n$ is a symmetric function of homogeneous degree $n$ with the following property: $\dfrac{\partial}{\partial p_1} f_n=p_1^{n-1}, n\geq 1.$ 
\begin{enumerate}
\item Let ${G}^j_n$ equal either $h_j[F]|_{{\rm deg\ }n}$  or  $e_j[F]|_{{\rm deg\ }n}.$ Then for $n\geq 1$ and $0\leq j\leq n$ we have
\begin{equation} \label{Restrict1}
\dfrac{\partial}{\partial p_1} {G}^{n-j}_{n}={G}^{n-1-j}_{n-1}+p_1\dfrac{\partial}{\partial p_1} {G}^{n-1-(j-1)}_{n-1} 
\end{equation}
\item 
 Let $\hat{G}^j_n$ equal either $h_v[F_{\geq 2}]|_{{\rm deg\ }n}$   or $e_j[F_{\geq 2}]|_{{\rm deg\ }n}.$ Then for $n\geq 2$ and $1\leq j\leq n-1,$  we have
\begin{equation}\label{Restrict2}
\dfrac{\partial}{\partial p_1} \hat{G}^{n-j}_n=p_1(\dfrac{\partial}{\partial p_1} \hat{G}^{(n-1)-(j-1)}_{n-1} +\hat{G}^{n-2-(j-1)}_{n-2})
\end{equation}
\end{enumerate}
\end{thm}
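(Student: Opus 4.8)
The plan is to use the operator $\partial/\partial p_1$, which on symmetric functions is the skewing operator $p_1^{\perp}=h_1^{\perp}$ and which carries the Frobenius characteristic of an $S_n$-module to that of its restriction to $S_{n-1}$; note that the hypothesis $\partial f_n/\partial p_1=p_1^{n-1}$ says precisely that each $f_n$ restricts to the regular representation of $S_{n-1}$ (and in particular forces $f_1=p_1$, so $F$ meets the hypotheses of Theorem \ref{metage2}). The key algebraic input is a chain rule for $\partial/\partial p_1$ with respect to plethysm: for any symmetric function $g$ and any formal series $G$ of homogeneous symmetric functions, since $p_k[G]$ involves only the variables $p_k,p_{2k},p_{3k},\dots$ one has $\partial(p_k[G])/\partial p_1=0$ for $k\geq 2$ while $\partial(p_1[G])/\partial p_1=\partial G/\partial p_1$, and since $\partial/\partial p_1$ is a derivation, expanding $g$ in the power sums gives
$$\frac{\partial}{\partial p_1}\bigl(g[G]\bigr)=\Bigl(\frac{\partial g}{\partial p_1}\Bigr)[G]\cdot\frac{\partial G}{\partial p_1}.$$
Combined with the classical identities $\partial h_n/\partial p_1=h_{n-1}$ and $\partial e_n/\partial p_1=e_{n-1}$ — equivalently $\partial H(v)/\partial p_1=vH(v)$ and $\partial E(v)/\partial p_1=vE(v)$, read off from the exponential formulas $H(v)=\exp\sum_k v^kp_k/k$ and $E(v)=\exp\sum_k(-1)^{k-1}v^kp_k/k$ — this lets us differentiate $H(v)[F]$ and $E(v)[F]$ in closed form.

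First I would compute, from the hypothesis, $\partial F/\partial p_1=\sum_{n\geq 1}p_1^{n-1}=(1-p_1)^{-1}$ and $\partial F_{\geq 2}/\partial p_1=\sum_{n\geq 2}p_1^{n-1}=p_1(1-p_1)^{-1}$, all in the degree completion of the ring of symmetric functions (on which $\partial/\partial p_1$ is continuous and lowers degree by one). Writing $\Phi$ for either $H$ or $E$, the chain rule then yields the two functional equations
$$(1-p_1)\,\frac{\partial}{\partial p_1}\,\Phi(v)[F]=v\,\Phi(v)[F],\qquad (1-p_1)\,\frac{\partial}{\partial p_1}\,\Phi(v)[F_{\geq 2}]=v\,p_1\,\Phi(v)[F_{\geq 2}].$$

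The final step is coefficient extraction. Taking the coefficient of $v^{\,n-j}$ in the first equation gives $(1-p_1)\,\partial\bigl(\Phi_{n-j}[F]\bigr)/\partial p_1=\Phi_{n-j-1}[F]$ with $\Phi_k\in\{h_k,e_k\}$, and then taking the degree-$(n-1)$ homogeneous component — using that the degree-$(n-1)$ part of $\partial g/\partial p_1$ is $\partial(g|_{\deg n})/\partial p_1$ and the degree-$(n-1)$ part of $p_1\,\partial g/\partial p_1$ is $p_1\,\partial(g|_{\deg(n-1)})/\partial p_1$ — converts it into
$$\frac{\partial}{\partial p_1}G_n^{\,n-j}-p_1\,\frac{\partial}{\partial p_1}G_{n-1}^{\,n-j}=G_{n-1}^{\,n-j-1},$$
which is (\ref{Restrict1}) once one rewrites $n-j=(n-1)-(j-1)$ and $n-j-1=(n-1)-j$. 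The identical manipulation applied to the second functional equation produces, upon extracting $v^{\,n-j}$ and the degree-$(n-1)$ part,
$$\frac{\partial}{\partial p_1}\hat G_n^{\,n-j}-p_1\,\frac{\partial}{\partial p_1}\hat G_{n-1}^{\,n-j}=p_1\,\hat G_{n-2}^{\,n-j-1},$$
which is (\ref{Restrict2}) after the substitution $n-j-1=(n-2)-(j-1)$. Since only $\partial\Phi(v)/\partial p_1=v\Phi(v)$ was used, the cases $\Phi=H$ and $\Phi=E$ are handled simultaneously, proving both parts.

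The routine ingredients — the plethystic chain rule above and the identities $\partial h_n/\partial p_1=h_{n-1}$, $\partial e_n/\partial p_1=e_{n-1}$ — are standard, so the only genuine obstacle is bookkeeping. One must keep straight that $\partial/\partial p_1$ shifts the degree, so that (\ref{Restrict1}) and (\ref{Restrict2}) are really statements about degree-$(n-1)$ symmetric functions carrying a separate "number of parts" grading (tracked by $v$), and one must check the boundary cases — $j=0$ or $j=n$ in part (1), $j=1$ or $j=n-1$ in part (2) — against the natural conventions $G^0_0=1$, $G^{-1}_m=0$, and $\hat G^k_m=0$ for $m<2k$.
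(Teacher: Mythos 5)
Your proof is correct and follows essentially the same route as the paper: the plethystic chain rule together with $\frac{\partial}{\partial p_1}H(v)=vH(v)$ and $\frac{\partial}{\partial p_1}E(v)=vE(v)$ yields the functional equations $(1-p_1)\frac{\partial}{\partial p_1}\Phi(v)[F]=v\,\Phi(v)[F]$ and $(1-p_1)\frac{\partial}{\partial p_1}\Phi(v)[F_{\geq 2}]=v\,p_1\,\Phi(v)[F_{\geq 2}]$ for $\Phi\in\{H,E\}$, after which extracting the coefficient of $v^{n-j}$ and the degree-$(n-1)$ component gives exactly (\ref{Restrict1}) and (\ref{Restrict2}). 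The only (harmless) divergence is in part (2), where you differentiate $F_{\geq 2}$ directly via $\frac{\partial}{\partial p_1}F_{\geq 2}=p_1(1-p_1)^{-1}$ (using that the hypothesis forces $f_1=p_1$), whereas the paper reaches the same functional equation through the identity $H(v)[F_{\geq 2}]=E^{\pm}(v)\cdot H(v)[F]$ of Theorem \ref{metage2} and the product rule.
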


\begin{proof}  The hypothesis about the $f_n$ implies that  derivative of $F$ with respect to $p_1$ is $\sum_{n\geq 1} p_1^{n-1}=(1-p_1)^{-1}.$ Also note that 
$$\dfrac{\partial}{\partial p_1}H(v)=vH(v), \qquad
\dfrac{\partial}{\partial p_1}E(v)=vE(v).$$
\begin{enumerate}
\item  The chain rule gives $$\dfrac{\partial}{\partial p_1}\left(H(v)[F]\right)=v\cdot H(v)[F]\cdot (1-p_1)^{-1},$$ 
and hence 
\begin{equation*}v\cdot H(v)[F]=(1-p_1)\dfrac{\partial}{\partial p_1}\left(H(v)[F]\right).\qquad (A)\end{equation*}
Similarly \begin{equation*}v\cdot E(v)[F]=(1-p_1)\dfrac{\partial}{\partial p_1}\left(E(v)[F]\right).\qquad(B) \end{equation*}
If ${G}^j_n=h_j[F]|_{{\rm deg\ }n}, $ then 
$H(v)[F]=\sum_{j\geq 0}\sum_{n\geq 0} G^j_n.$
The result follows by extracting the symmetric function of degree $n-1$ on each side of $(A)$, and the coefficient of $v^{n-j}.$ 

The  recurrence for $e_j[F]$ is identical in view of equation $(B)$; now use the expansion $E(v)[F]=\sum_{j\geq 0}\sum_{n\geq 0} G^j_n.$

\item  Now we use the identity (\ref{metage2a}) of Theorem \ref{metage2}.  We have 
$$H(v)[F_{\geq 2}]=E^{\pm}(v)\cdot H(v)[F].$$
Using the fact that $$\dfrac{\partial}{\partial p_1}E^{\pm}(v)=-v\cdot E^{\pm}(v),$$ and the chain rule, we obtain
\begin{align*} 
\dfrac{\partial}{\partial p_1} (H(v)[F_{\geq 2}]) &=-v\cdot E^{\pm}(v) \cdot H(v)[F_{\geq 2}]
+E^{\pm}\cdot \dfrac{\partial}{\partial p_1} (H(v)[F_{\geq 2}])\\
&= -v\cdot H(v)[F_{\geq 2}] +E^{\pm}\cdot v\cdot H(v)[F]\cdot (1-p_1)^{-1},
\end{align*}
where we have used the computation in (1).  It follows that 
$$(1-p_1)\dfrac{\partial}{\partial p_1} (H(v)[F_{\geq 2}])
= v\cdot p_1 (H(v)[F_{\geq 2}]),$$
and hence
\begin{equation*}\dfrac{\partial}{\partial p_1} (H(v)[F_{\geq 2}])
=p_1\left( \dfrac{\partial}{\partial p_1} (H(v)[F_{\geq 2}])+
v\cdot \dfrac{\partial}{\partial p_1} (H(v)[F_{\geq 2}])\right).\end{equation*}
 Let $H(v)[F_{\geq 2}]= \sum_{i\geq 0} \sum_{n\geq 1} v^j \hat{G}^j_n.$
The recurrence follows by extracting the symmetric function of degree $n-1$ on each side, and the coefficient of $v^{n-j}.$  
Similarly, in view of the identity (\ref{metage2c}) of Theorem \ref{metage2} and the fact that $$\dfrac{\partial}{\partial p_1}H^{\pm}(v)=-v\cdot H^{\pm}(v),$$
we obtain 
\begin{equation*}\dfrac{\partial}{\partial p_1} (E(v)[F_{\geq 2}])
=p_1\left( \dfrac{\partial}{\partial p_1} (E(v)[F_{\geq 2}])+
v\cdot \dfrac{\partial}{\partial p_1} (E(v)[F_{\geq 2}])\right).\end{equation*}
Hence it is clear that the same recurrence holds for $e_j[F_{\geq 2}].$
\end{enumerate}
\end{proof}

In particular, Theorem \ref{Restrict} applies to the family of representations whose characteristic $f_n$ is defined by equation (\ref{definef_n}), provided $\psi(1)=1.$ The latter condition guarantees that each $f_n$ restricts to the regular representation of $S_{n-1}.$ 

In the recent paper \cite{HR}, Hersh and Reiner derive several identities and recurrences for what are essentially the symmetric and exterior powers of $Lie.$  The connection between the work of \cite{HR} and the specialisation of our results to $F=Lie$, is the well-known fact ( see \cite{LS}, \cite{Su1}) that the exterior power of $Lie,$ when tensored with the sign representation, coincides with the Whitney homology of the lattice of set partitions.  Here we record the conclusions for the special setting of $F=Lie$.   In the notation of \cite{HR}, we have ${\rm ch\ }\widehat{Lie}^i_n=h_{n-i}[Lie_{\geq 2}]\vert_{{\rm deg\ }n},$ 
while ${\rm ch\ }\omega(\widehat{W}^i_n)=e_{n-i}[Lie_{\geq 2}]\vert_{{\rm deg\ }n}.$  Also let $\widehat{Lie}_n=\sum_{i\geq 0}^{n-1}\widehat{Lie}^i_n,$ and $\widehat{W}_n=\sum_{i\geq 0}^{n-1}\widehat{W}^i_n.$

\begin{cor}\label{HRRepStab} (The case $F=Lie$)
\begin{enumerate}
\item (\cite[Theorem 1.7]{HR})  
$$\sum_{i\geq 0}(-1)^i {\rm ch\ }\omega(\widehat{W}^i_n)
=(-1)^{n-1} s_{(2, 1^{n-2})}.$$
\item (\cite[Theorem 1.2]{HR})  
 $$ {\rm ch\ }\widehat{Lie}_n=
(H-1)[Lie_{\geq 2}]|_{{\rm deg\ }n}= p_1 \cdot  {\rm ch\ }\widehat{Lie}_{n-1}+(-1)^n e_n 
$$
and 
$$ {\rm ch\ }\widehat{W}_n=
(E-1)[Lie_{\geq 2}]|_{{\rm deg\ }n}= p_1 \cdot {\rm ch\ }\widehat{W}_{n-1} +(-1)^n \tau_n , $$
where $\tau_n= s_{(n-2, 1^2)}-s_{(n-2,2)},n \geq 4,$ and $\tau_3=s_{(n-2, 1^2)}.$
 
 \item  \cite[Theorem 1.4]{HR} $$\dfrac{\partial}{\partial p_1} {\rm ch\ }\widehat{Lie}^j_n=p_1(\dfrac{\partial}{\partial p_1} {\rm ch\ }\widehat{Lie}^{(j-1)}_{n-1} +{\rm ch\ }\widehat{Lie}^{(j-1)}_{n-2})$$
 $$\dfrac{\partial}{\partial p_1} {\rm ch\ }\widehat{W}^j_n=p_1(\dfrac{\partial}{\partial p_1} {\rm ch\ }\widehat{W}^{(j-1)}_{n-1} +{\rm ch\ }\widehat{W}^{(j-1)}_{n-2})$$
\end{enumerate}
\end{cor}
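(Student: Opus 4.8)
The plan is to obtain all three parts by specialising results already proved in this section to the sequence $f_n = Lie_n$, and then reading them off in the notation of \cite{HR} via the dictionary $\mathrm{ch}\,\widehat{Lie}^i_n = h_{n-i}[Lie_{\geq 2}]\vert_{\deg n}$ and $\mathrm{ch}\,\omega(\widehat{W}^i_n) = e_{n-i}[Lie_{\geq 2}]\vert_{\deg n}$ recalled above. The only arithmetic inputs are the two scalar evaluations of $Lie_m$: Lemma \ref{Ramanujan} (equivalently Corollary \ref{LSvalues}(3) with $S=\emptyset$) gives $Lie_m(1)=1$ for $m=1$ and $Lie_m(1)=0$ for $m\geq 2$, while Lemma \ref{LSnegvalues}(5) gives $Lie_m(-1)=-1,1,0$ according as $m=1$, $m=2$, $m\geq 3$; one also uses $\frac{\partial}{\partial p_1}Lie_n = p_1^{n-1}$, valid because $Lie_n$ is the case $\psi=\mu$ of (\ref{definef_n}) and $\mu(1)=1$, so $Lie_n$ restricts to the regular representation of $S_{n-1}$ (the remark after Theorem \ref{Restrict}). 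With these in hand, Part (1) is just Theorem \ref{EquivPBW}(\ref{EquivPBWAltExtge2b}) — equivalently the identity $(1-E^{\pm})[Lie_{\geq 2}]=\kappa=\sum_{n\geq 2}s_{(n-1,1)}$ of (\ref{EquivPBWAltExtge2a}) — with the degree $n$ part extracted, the summation rewritten in the index $i=n-r$, the involution $\omega$ applied, and the shape $s_{(n-1,1)}$ replaced by its conjugate $s_{(2,1^{n-2})}$; the dictionary identifies the resulting summands with $\mathrm{ch}\,\omega(\widehat{W}^i_n)$.

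For Part (2) I would invoke Theorem \ref{Restrictge2} with $F=Lie$ (its hypothesis $f_1=p_1$ holds since $Lie_1=p_1$, and $\mathrm{ch}\,\widehat{Lie}_n = H[Lie_{\geq 2}]\vert_{\deg n}=\alpha_n$, $\mathrm{ch}\,\widehat{W}_n=E[Lie_{\geq 2}]\vert_{\deg n}=\beta_n$). For the first recurrence, Theorem \ref{Restrictge2}(1) needs the $g_n$ with $\prod_{m\geq 2}(1-p_m)^{-Lie_m(1)}=\sum_{n\geq 0}g_n$; since every exponent vanishes this product is $1$, so $g_0=1$ and $g_i=0$ for $i\geq 1$, whence $\sigma_n=\sum_{i\geq 0}(-1)^i e_{n-i}g_i=e_n$ and $\alpha_n=p_1\alpha_{n-1}+(-1)^n e_n$, the asserted recurrence. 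For the second, Theorem \ref{Restrictge2}(2) needs $\prod_{m\geq 2}(1-p_m)^{Lie_m(-1)}=\sum_{n\geq 0}k_n$; only $m=2$ contributes, with exponent $+1$, so the product is $1-p_2$, i.e. $k_0=1$, $k_2=-p_2$, all other $k_i=0$. Then $\tau_n=\sum_{i\geq 0}(-1)^i h_{n-i}k_i = h_n-p_2 h_{n-2}$, and a one-line Pieri computation using $p_2=s_{(2)}-s_{(1,1)}$ yields $\tau_n=s_{(n-2,1^2)}-s_{(n-2,2)}$ for $n\geq 4$ and $\tau_3=s_{(1^3)}$, giving $\beta_n=p_1\beta_{n-1}+(-1)^n\tau_n$.

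For Part (3) I would specialise Theorem \ref{Restrict}(2), equation (\ref{Restrict2}), to $F=Lie$ (the hypothesis $\frac{\partial}{\partial p_1}f_n=p_1^{n-1}$ was checked above). Writing $\hat G^a_m = h_a[Lie_{\geq 2}]\vert_{\deg m}$ one has $\hat G^a_m=\mathrm{ch}\,\widehat{Lie}^{m-a}_m$, and the substitution $a=n-j$ turns (\ref{Restrict2}) into precisely the displayed recurrence for $\frac{\partial}{\partial p_1}\mathrm{ch}\,\widehat{Lie}^j_n$; taking instead $\hat G^a_m=e_a[Lie_{\geq 2}]\vert_{\deg m}=\mathrm{ch}\,\widehat{W}^{m-a}_m$ gives the one for $\widehat{W}^j_n$. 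I do not anticipate a real obstacle: everything is a substitution into theorems already proved. The only points needing care — and the places a sign or index slip could creep in — are keeping the two index conventions aligned (the superscript in \cite{HR} counts down from the top of the filtration, the plethystic exponent counts up from the bottom), handling the reindexing and the $\omega$ in Part (1), and the short Pieri identification of $\tau_n$ in Part (2).
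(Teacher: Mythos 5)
Your proposal is correct and follows essentially the same route as the paper: Part (1) is equation (\ref{EquivPBWAltExtge2b}) of Theorem \ref{EquivPBW} tensored with the sign, Part (2) is Theorem \ref{Restrictge2} specialised to $F=Lie$ (with $g_n=0$ for $n\geq 1$, $k_2=-p_2$, all other $k_n=0$, giving $\sigma_n=e_n$ and $\tau_n=h_n-p_2h_{n-2}=s_{(n-2,1^2)}-s_{(n-2,2)}$), and Part (3) is Theorem \ref{Restrict} applied to $Lie$, exactly as in the paper's proof. The only microscopic point to keep straight is that in the $\widehat{W}$ case the exterior-power specialisation of Theorem \ref{Restrict} gives the recurrence for $\omega(\mathrm{ch}\,\widehat{W}^j_n)$, and one then uses that $\omega$ commutes with $p_1\cdot$ and $\partial/\partial p_1$ to drop the $\omega$, which is how the paper handles it as well.
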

\begin{proof} Clearly (1) is just equation (\ref{EquivPBWAltExtge2b}) tensored with the sign.

For (2),  apply Theorem \ref{Restrictge2}  to $F=Lie=\sum_{n\geq 1} Lie_n.$  
It is clear that in this case we have $g_n=0, n\geq 2,$ and $k_2=-p_2, k_n=0, n\geq 3.$ Hence Theorem \ref{Restrictge2} gives the following:
  \begin{equation*}
(H-1)[Lie_{\geq 2}]|_{{\rm deg\ }n}= p_1 \cdot (H-1)[Lie_{\geq 2}]|_{{\rm deg\ }n-1} +(-1)^n e_n 
\end{equation*}
and 
\begin{equation*}
(E-1)[Lie_{\geq 2}]|_{{\rm deg\ }n}= p_1 \cdot (E-1)[Lie_{\geq 2}]|_{{\rm deg\ }n-1} +(-1)^n \tau_n , 
\end{equation*}
where $\tau_n=h_n-h_{n-2} p_2 = s_{(n-2, 1^2)}-s_{(n-2,2)},n \geq 4,$ and $\tau_3=s_{(n-2, 1^2)}.$

Part (3) is immediate from Theorem \ref{Restrict}, which applies since it is well known that $Lie_n$ restricts to the regular representation of $S_{n-1}.$  When $F=Lie_n,$ the functions $G_n^j$ become ${\rm ch\ }\widehat{Lie}_n^{n-j}$ when applied to the symmetric powers $H$, and $\omega({\rm ch\ }\widehat{W}_n^{n-j})$  when applied to the exterior powers $E$.
\end{proof}

  Hersh and Reiner use the second recurrence in (2) above  to establish a remarkable formula for the decomposition into irreducibles for  $(E-1)[Lie_{\geq 2}]|_{{\rm deg\ }n},$ in terms of certain standard Young tableaux that they call {\it Whitney-generating tableaux} \cite[Theorem 1.3]{HR}.   The decomposition into irreducibles of $(H-1)[Lie_{\geq 2}]|_{{\rm deg\ }n}$ is similarly given as a sum of {\it desarrangement  tableaux} \cite[Section 7]{HR}. This was established from the first recurrence in (2) above, for the sign-tensored version, in \cite[Proposition 2.3]{RW}. 

   In previous sections of this paper we  saw how these general theorems, especially Theorems \ref{metathm} and \ref{metage2}-\ref{Restrict},  can be applied to other families of representations, yielding analogues of all the results for $Lie_n.$


\section{A compendium of plethystic inverses}\label{SecPleth}
This section contains some useful plethystic identities and manipulations.



\begin{prop}\label{Pleth1} We have 
\begin{enumerate}
\item For $q\geq 2,$ $p_1-p_q$ and $\sum_{k\geq 0}p_{q^k}$ are plethystic inverses. 
\item For $q\geq 2,$  $p_1+p_q$ and $\sum_{k\geq 0}(-1)^k p_{q^k}$ are plethystic inverses. 
\item For $q\ge 2, H[p_1-p_q]=\dfrac{H}{H[p_q]}.$
\item $\dfrac{H}{H[p_2]}=H[p_1-p_2]=E,$  and hence $(H-1)[p_1-p_2]=E-1.$
\item Suppose $F=\sum_{n\geq 1} f_n$ and $G=1+\sum_{n\geq 1} g_n$ are formal series of symmetric functions,  with $f_n, g_n$ being of homogeneous degree $n.$ Then $H[F]=G\iff G
=(\tfrac{H}{H[p_q]})[\sum_{k\ge 0} F[p_{q^k}]].$  In particular 
\[H[F]=G\iff  G=E[\sum_{k\ge 0} F[p_{2^k}]].\]
\end{enumerate}
\end{prop}

\begin{proof} Part (1) follows by calculating 
$$(p_1-p_q)[\sum_{k\geq 0}p_{q^k}]=\sum_{k\geq 0}p_{q^k}-
\sum_{k\geq 0}p_q[p_{q^k}]
=\sum_{k\geq 0}p_{q^k}-\sum_{k\geq 0}p_{q^{k+1}}=p_1.$$
Part (2) follows in the same manner.
For Part (3), use the exponential generating function (see \cite{M})
$H=\exp\sum_{i\geq 0} \frac{p_i}{i}.$  Then 
$$H[p_1-p_q]=\exp\sum_{i\geq 0} \frac{p_i}{i}[p_1-p_q]
=\exp\sum_{i\geq 0} \tfrac{1}{i}(p_1-p_q)[{p_i}]
=\frac{\exp\sum_{i\geq 0} \frac{p_i}{i}}{\exp\sum_{i\geq 0}\frac{p_{qi}}{i}}=\frac{H}{H[p_q]}.$$
Now Part (4) follows by observing that 
$\sum_{i\geq 0} \frac{p_i}{i}-\sum_{i\geq 0}\frac{p_{2i}}{i}
=\sum_{i\geq 0} (-1)^{i-1}\frac{p_i}{i}, $ and hence $H[p_1-p_2]=E,$ using the exponential generating function for $E.$

 
 For Part (5), use Part (1) to rewrite  
 $H[F]=(H[p_1-p_q][\sum_{k\geq 0}p_{q^k}])[F],$ and then use Part (3) and the fact that plethysm is associative.
 \end{proof}

\begin{prop}\label{Pleth3} Let $F$ and $G$ be two series of symmetric functions with no constant term.  Then  the following are equivalent:
\[H[F]=E[G]\iff E^{\pm}[F]=H^{\pm}[G]\iff F=G-G[p_2]\iff G=\sum_{k\geq 0} F[p_{2^k}].\]
In particular, any  symmetric power of modules is also expressible as an exterior power of modules.  Also, if $F$ is Schur positive, then so is $G-G[p_2].$
\end{prop}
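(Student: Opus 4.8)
The plan is to prove the cycle of equivalences $(1)\Leftrightarrow(2)$, $(1)\Leftrightarrow(3)$, and $(3)\Leftrightarrow(4)$, after which the final assertion is immediate. First I would dispose of $(1)\Leftrightarrow(2)$: since $H\cdot E^{\pm}=1=E\cdot H^{\pm}$ by equation (\ref{HE}), and plethystic substitution is a ring homomorphism, applying the plethystic operation in the argument $F$ (resp.\ $G$) gives $H[F]\cdot E^{\pm}[F]=1$ and $E[G]\cdot H^{\pm}[G]=1$. Hence $H[F]=E[G]$ holds if and only if their (plethystic) reciprocals agree, i.e.\ $E^{\pm}[F]=H^{\pm}[G]$. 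This uses only that $H[F], E[G]$ are invertible formal series (constant term $1$), which holds because $F,G$ have no constant term.

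Next, for $(1)\Leftrightarrow(3)$ I would use the exponential generating function $H=\exp\sum_{i\geq 1}\frac{p_i}{i}$ and the analogous $E=\exp\sum_{i\geq 1}(-1)^{i-1}\frac{p_i}{i}$, exactly as in the proof of Proposition \ref{Pleth1}(3). Since $p_i[g]=p_i\circ g$ acts by $p_i[g]=g$ evaluated with each $p_j\mapsto p_{ij}$ (more precisely $p_i[g]$ is obtained from $g$ by the power-sum substitution $p_j\mapsto p_{ij}$), and plethysm is linear and multiplicative in the inner argument here, we get
\begin{align*}
H[F]&=\exp\sum_{i\geq 1}\tfrac1i\, p_i[F],\qquad
E[G]=\exp\sum_{i\geq 1}\tfrac{(-1)^{i-1}}i\, p_i[G].
\end{align*}
Taking logarithms, $H[F]=E[G]$ is equivalent to $\sum_{i\geq 1}\frac1i p_i[F]=\sum_{i\geq 1}\frac{(-1)^{i-1}}i p_i[G]$. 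Now I would split the right-hand sum into odd and even $i$: for even $i=2j$ the sign is $-1$, and $p_{2j}[G]=p_j[p_2[G]]$, so $\sum_{i\text{ even}}\frac{(-1)^{i-1}}{i}p_i[G]=-\sum_{j\geq 1}\frac1{2j}p_j[G[p_2]] = \sum_{j\ge1}\tfrac1{2j}p_{2j}[G] $ handled via the identity $\sum_i\frac1i p_i - 2\sum_i\frac1{2i}p_i=\sum_i\frac{(-1)^{i-1}}i p_i$. Collecting terms, the equation becomes $\sum_{i\geq 1}\frac1i p_i[F]=\sum_{i\geq 1}\frac1i p_i[G]-\sum_{i\geq 1}\frac1i p_i[G[p_2]]=\sum_{i\geq 1}\frac1i p_i[G-G[p_2]]$. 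Since the maps $g\mapsto \sum_{i\geq 1}\frac1i p_i[g]$ is injective on series without constant term (it is $\log H[\,\cdot\,]$, and $H-1$ is plethystically invertible), this is equivalent to $F=G-G[p_2]$, which is $(3)$.

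Finally, $(3)\Leftrightarrow(4)$ is a telescoping/geometric-series manipulation in the plethysm ring: iterating $F=G-G[p_2]$ gives $\sum_{k\geq 0}F[p_{2^k}]=\sum_{k\geq 0}(G[p_{2^k}]-G[p_{2^{k+1}}])=G$, using $p_2[p_{2^k}]=p_{2^{k+1}}$ and associativity of plethysm; convergence is in the sense of formal series graded by degree, since $p_{2^k}[g]$ raises degree. Conversely, if $G=\sum_{k\geq 0}F[p_{2^k}]$ then $G[p_2]=\sum_{k\geq 1}F[p_{2^k}]$, so $G-G[p_2]=F$. For the last sentence of the statement: given any symmetric power $H[Q]$ of a genuine module series $Q$ (so $Q=F$ with $F$ Schur-positive), set $G=\sum_{k\geq 0}F[p_{2^k}]$; then $(4)\Rightarrow(1)$ yields $H[F]=E[G]$, exhibiting the symmetric power as an exterior power. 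I expect the main obstacle to be bookkeeping the odd/even split and the injectivity claim in the $(1)\Leftrightarrow(3)$ step cleanly — everything else is formal — so I would state the injectivity of $g\mapsto\log H[g]$ (equivalently, that $H-1$ has a plethystic inverse, namely Cadogan's $\sum_n(-1)^{n-1}\omega(Lie_n)$ from Theorem \ref{ThrallPBWCadoganSolomon}) as the key lemma and lean on it.
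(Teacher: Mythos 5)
Your proposal is correct, but it takes a genuinely different route from the paper's for the equivalences with (3) and (4). The paper handles $(1)\Leftrightarrow(2)$ by Lemma \ref{pm} (essentially your observation that $H\cdot E^{\pm}=1=E\cdot H^{\pm}$ and that $q\mapsto q[F]$ is a ring homomorphism), and then gets (3) and (4) in one stroke: rewriting $H[F]=E[G]$ as $(H-1)[F]=(E-1)[G]$ and composing with plethystic inverses gives $F=\bigl((H-1)^{\langle-1\rangle}[E-1]\bigr)[G]$ and $G=\bigl((E-1)^{\langle-1\rangle}[H-1]\bigr)[F]$, after which it simply quotes Proposition \ref{Pleth2}, parts (3) and (4), which identify these compositions as $p_1-p_2$ and $\sum_{k\geq 0}p_{2^k}$ (facts resting on Cadogan's formula, Theorem \ref{PlInv-E}, and Proposition \ref{Pleth1}). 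You instead re-derive those two inputs by hand: $(1)\Leftrightarrow(3)$ by taking logarithms of $H=\exp\sum_i p_i/i$ and $E=\exp\sum_i(-1)^{i-1}p_i/i$ and peeling off the even power sums via $p_{2j}[G]=p_j[G[p_2]]$, and $(3)\Leftrightarrow(4)$ by a telescoping computation that is exactly Proposition \ref{Pleth1}(1) for $q=2$. Both arguments are complete; yours is more self-contained and elementary (its only external input is the invertibility of $H-1$, which you could even obtain degree by degree from the fact that its degree-one term is $p_1$, rather than citing Cadogan), while the paper's is shorter given its already-assembled toolkit of plethystic inverses. Two minor remarks: in your even/odd split the intermediate equality $-\sum_j\tfrac{1}{2j}p_j[G[p_2]]=\sum_j\tfrac{1}{2j}p_{2j}[G]$ has a dropped minus sign, though the collected identity you actually use is correct; and in the closing sentence, note that (as in the paper itself) $G=\sum_{k\geq 0}F[p_{2^k}]$ need not be Schur-positive for an arbitrary Schur-positive $F$ (already $h_2[p_2]=s_{(4)}-s_{(3,1)}+s_{(2,2)}$), so the reading "exterior power of modules" is literal only when $G$ happens to be a true module sequence, as for $Lie^{(2)}=\sum_k Lie[p_{2^k}]$.
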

\begin{proof}  The equivalence of the first two statements follows  from Lemma \ref{pm}.  We have $(H-1)[F]=(E-1)[G],$ and hence, using associativity of plethysm, 
$F=((H-1)^{\langle-1\rangle}[E-1])[G],$
$G=((E-1)^{\langle-1\rangle}[H-1])[F].$  

Using Proposition~\ref{Pleth1}, we have 
$H[F]=E[G]=(H[p_1-p_2])[G]$ and hence $F= (p_1-p_2)[G]=G-G[p_2].$

Similarly, using the plethystic inverse of $p_1-p_2,$ we have 
$H[F]=H[p_1[F]]=(H[p_1-p_2])([\sum_{k\geq 0} p_{2^k}[F]])
=E[\sum_{k\geq 0} p_{2^k}[F]]=E[\sum_{k\geq 0} F[p_{2^k}]],$ and hence $G=\sum_{k\geq 0} F[p_{2^k}].$  In both cases we use the fact that $H-1, E-1$ are invertible with respect to plethysm.
\end{proof}

Equations $(A)$ and $(B)$ in the following example illustrate  Proposition \ref{Pleth3} and Theorem \ref{Spositivity2}.
\begin{ex}\label{DualityEx}  Let $S=\{2\}.$ 
\begin{equation*}
(1-p_1)^{-1}=H[Lie^\emptyset]=H[Lie]=E[Lie^{S}]=E[Lie^{(2)}],\qquad (A)
\end{equation*}
 from equations  (\ref{SymLS}) and Part (1) of Theorem \ref{PlInv-E}.
 
 Proposition \ref{Pleth3} gives $Lie=Lie^{(2)}-Lie^{(2)}[p_2].$

%

\vskip .05in
Now let $S$ be the set of odd primes. Then
\begin{equation*}
\prod_{ n {\rm \, odd}} (1-p_n)^{-1}=H[Lie^{\overline{\{2\}}}]=E[Lie^{\bar{\emptyset}}]
=E[Conj], \qquad (B)
\end{equation*}
 from Theorem \ref{SymLS} and Proposition \ref{ExtLieConj}. 
In this case, the preceding proposition asserts that  $Lie^{\overline{\{2\}}}=Conj-Conj[p_2];$ in particular the latter is Schur positive.
 

\end{ex}

For arbitrary primes $q$ we have:

\begin{prop}\label{Pleth4} Let $g_n=\dfrac{1}{n}\sum_{d|n} \psi(d) p_d^{\frac{n}{d}},$ and let $G=\sum_{n\geq 1} g_n.$ 

Let $F=(p_1\pm p_q)[G]=\sum_{n\geq 1} f_n.$ 

Then
$f_n=\dfrac{1}{n}\sum_{d|n} \bar{\psi}(d) p_d^{\frac{n}{d}},$
where 
$\bar\psi(d)=
\begin{cases} \psi(d)\pm q\psi(\frac{d}{q}), &q|d,\\
                    \psi(d), & \mathrm{otherwise}.
\end{cases}$
\end{prop}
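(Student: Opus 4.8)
The plan is to compute directly with power-sum expansions, using the fact that plethysm is additive in the ``outer'' argument and that $p_q[p_d] = p_{qd}$, together with $p_q[G]$ being obtained from $G$ by the substitution $p_d \mapsto p_{qd}$.

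First I would expand $G = \sum_{n\geq 1} g_n$ with $g_n = \frac{1}{n}\sum_{d\mid n}\psi(d)p_d^{n/d}$, and regroup the double sum $\sum_n g_n$ by the common value $d$: writing $n = dm$, one gets $G = \sum_{d\geq 1}\psi(d)\sum_{m\geq 1}\frac{1}{dm}p_d^{m}$, i.e.\ $G = \sum_{d\geq 1}\frac{\psi(d)}{d}\bigl(-\log(1-p_d)\bigr)$. (This is essentially the first identity of Proposition~\ref{Su1Prop3.1} with $t=1$, so I may simply quote $G = \log\prod_{d\geq 1}(1-p_d)^{-\psi(d)/d}$.) Next I would apply $p_q[-]$: since $p_q[p_d] = p_{qd}$ and $p_q$ is additive-linear in its argument when that argument is a sum of power sums with scalar coefficients, $p_q[G] = \sum_{d\geq 1}\frac{\psi(d)}{d}\bigl(-\log(1-p_{qd})\bigr)$. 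Reindexing $e = qd$ gives $p_q[G] = \sum_{e:\,q\mid e}\frac{q\,\psi(e/q)}{e}\bigl(-\log(1-p_e)\bigr)$, where the factor $q$ arises from $\frac{1}{d} = \frac{q}{e}$.

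Then $F = (p_1 \pm p_q)[G] = G \pm p_q[G] = \sum_{e\geq 1}\frac{\bar\psi(e)}{e}\bigl(-\log(1-p_e)\bigr)$, where $\bar\psi(e) = \psi(e)$ if $q\nmid e$ and $\bar\psi(e) = \psi(e) \pm q\,\psi(e/q)$ if $q\mid e$, exactly as claimed. Finally, running the opening regrouping step backwards (or invoking Proposition~\ref{Su1Prop3.1} in the other direction, since the map $\psi \mapsto \sum_{d}\frac{\psi(d)}{d}(-\log(1-p_d))$ is injective), one recovers $f_n = \frac{1}{n}\sum_{d\mid n}\bar\psi(d)p_d^{n/d}$.

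I do not anticipate a serious obstacle here; the only point requiring a little care is the bookkeeping of the scalar $q$ when reindexing $e = qd$ (making sure it lands on $\psi(e/q)$ and not elsewhere), and the observation that plethysm $p_q[\,\cdot\,]$ genuinely commutes with the infinite $\mathbb{Q}$-linear combination of power sums appearing in the logarithmic expansion of $G$ — this is justified because plethysm with a fixed symmetric function is a continuous ring endomorphism (\cite[(8.3)]{M}), so it acts degree by degree. Alternatively, to avoid any convergence scruples one can work purely formally: compare the coefficient of $p_d^{n/d}$ on both sides of $F = G \pm p_q[G]$ degree by degree, using only $p_q[p_d^{k}] = p_{qd}^{k}$, which makes the argument a finite computation in each degree $n$.
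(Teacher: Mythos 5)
Your proposal is correct and follows essentially the same route as the paper: both write $F=G\pm p_q[G]$, invoke Proposition \ref{Su1Prop3.1} to express $G$ as $\log\prod_{d\geq 1}(1-p_d)^{-\psi(d)/d}$, use that plethysm by $p_q$ substitutes $p_d\mapsto p_{qd}$, and reindex by $e=qd$ to read off $\bar\psi$ (you phrase it as a sum of logarithms, the paper as a merged product, which is only a cosmetic difference).
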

\begin{proof} We will do the case $F=(p_1-p_q)[G],$ since the other case is identical. Note that $F=(p_1-p_q)[G]= G-p_q[G]=G-G[p_q].$ 
It is easiest to use Proposition \ref{Su1Prop3.1}:
 \begin{align*}
F &=\log \prod_{d\geq 1} (1- p_d)^{-\frac{\psi(d)}{d}}
-\log \prod_{d\geq 1} (1- p_{dq})^{-\frac{\psi(d)}{d}}\\
&=\log \dfrac{\prod_{d\geq 1} (1- p_d)^{-\frac{\psi(d)}{d}}}
{\prod_{d\geq 1} (1- p_{dq})^{-\frac{\psi(d)}{d}}}
=\log \left(\prod_{d\geq 1,\, q \not{|} d} (1- p_d)^{-\frac{\psi(d)}{d}} \prod_{m\geq 1} (1- p_{mq})^{-\frac{\psi(mq)}{mq}+\frac{\psi(m)}{m}}\right)
\end{align*}
\end{proof}

We collect the plethystic inverses, from this paper and from other authors, that are of homological and representation-theoretic significance, and indeed, were first derived in that context. Let $g$ be a symmetric function without constant term, and with nonzero term of degree 1. Denote by $g^{\langle-1\rangle} $ the plethystic inverse of $g. $ Equations (\ref{Pleth8a}) and (\ref{Pleth8b}) first appeared in \cite{CHR} and \cite{CHS} respectively.   
\begin{prop}\label{Pleth8}   The following pairs are plethystic inverses:
\begin{equation}\label{Pleth8a}
 \dfrac{p_1}{1+p_1} {\rm\ and\ } \dfrac{p_1}{1-p_1} ;\end{equation}
\begin{equation}\label{Pleth8b}\sum_{n\geq 1} g(n) p_n {\rm\ and\ }\sum_{n \geq 1}g(n) \mu(n) p_n, \end{equation}
for any function $g(n)$ defined on the positive integers, such that $g(mn)=g(m)g(n);$
\begin{equation}\label{Pleth8c}\sum_{n\text{ odd}} g(n) p_n {\rm\ and\ }\sum_{n\text{ odd}} g(n) \mu(n) p_n, \end{equation}
for any function $g(n)$ defined on the positive integers, such that $g(mn)=g(m)g(n);$
 \begin{equation}\label{Cadogan} \sum_{i\geq 1} (-1)^{i-1} \omega(Lie_i) \text{ and } H-1; \end{equation}  
 \begin{equation}\label{Sheila} \sum_{i\geq 1} (-1)^{i-1} \omega(Lie_i^{(2)})
 \text{ and } E-1; \end{equation}
\begin{equation} \label{Pleth8d}
\sum_{n\geq 1} Lie_n  {\rm\ and\ }\dfrac{H-1}{H}=\sum_{n\geq 1} (-1)^{n-1} e_n;
\end{equation}
\begin{equation} \label{Pleth8e}
\sum_{n\geq 1} Lie_n^{(2)} {\rm\ and\ } \dfrac{E-1}{E}=\sum_{n\geq 1} (-1)^{n-1} h_n,
\end{equation}
and hence $\omega(Lie_n^{\langle-1\rangle})
=(Lie_n^{(2)})^{\langle-1\rangle}.$

\begin{equation}\label{Pleth8f}
\sum_{n\equiv 1 {\rm\, mod\, }k} h_n {\rm\ and\ }\sum_{n\geq 0} (-1)^n \beta_{nk+1}; 
\end{equation}
here $\beta_{nk+1} $ is the Frobenius characteristic of the homology representation of $S_{nk+1},$   for the pure Cohen-Macaulay set partition poset of $nk+1$ with block sizes congruent to 1 modulo $k$   \cite[Theorem 4.7]{CHR}, from which one deduces that
\begin{equation}\label{Pleth8f2} 
\sum_{n\equiv 1 {\rm\, mod\, }k} e_n {\rm\ and\ }\sum_{n\geq 0} (-1)^n \omega(\beta_{nk+1}) ; 
\end{equation}
when $k$ is \textbf{even}, and in particular when $\mathbf{k=2},$ and  $\beta_{nk+1} $ is as above.

\begin{equation}  \label{Pleth8g}
\sum_{n\geq 0} \eta_{2n+1} {\rm\ and\ }  \sum_{m\geq 0} (-1)^m \dfrac{\partial}{\partial p_1} \beta_{2m};
\end{equation}
here $\eta_{n}$ is the Frobenius characteristic of the $S_n$-action on the multilinear component of the free Jordan algebra on $n$ generators \cite[Proposition 3.5]{CHS}, while $\beta_{2m}$ is the homology representation of $S_{2m}$ for the set partition poset of ${2m}$ with even block sizes \cite[Theorem 4.7]{CHR}, from which one deduces that

\begin{equation} \label{Pleth8g2}
\sum_{n\geq 0} \omega(\eta_{2n+1}) {\rm\ and\ }  \sum_{m\geq 0} (-1)^m \dfrac{\partial}{\partial p_1} \omega(\beta_{2m}).
\end{equation}

\end{prop}

\begin{proof}  Using the property of plethysm that  $p_n[f]=f[p_n]$ for all symmetric functions $f,$ the first three pairs are easily checked to be plethystic inverses  by direct computation and the defining property of the M\"obius function in the divisor lattice.

Equation (\ref{Pleth8d}) is well known  as a consequence of the acyclicity of the (sign-twisted) Whitney homology for the partition lattice e.g.\cite[Theorem 1.8 and Remark 1.8.1]{Su0}.  The derivation we give here shows that it is  in fact equivalent to  Thrall's decomposition  of the regular representation, namely  $H[Lie]=(1-p_1)^{-1}.$ Applying (1) of Lemma \ref{pm} to the latter identity immediately gives (\ref{Pleth8d}).  Similarly, (\ref{Pleth8e}) is equivalent to our decomposition $E[Lie^{(2)}]=(1-p_1)^{-1}$ of the regular representation, see (the first equation in) Theorem \ref{PlInv-E},  as can be seen by invoking (2) of Lemma \ref{pm}. 

The inverse pair (\ref{Pleth8f}) is a result of \cite[Theorem 4.7 (a), p. 297]{CHR}.   It  also reflects the acyclicity of Whitney homology for the poset of partitions of a set of size $nk+1$ into blocks of size congruent to 1 modulo $k$  (see \cite[Example 3.7]{SuJer}). 
The pair (\ref{Pleth8f2}) follows by applying the involution $\omega$ to (\ref{Pleth8f}), since, when $k$ is even, all the symmetric functions involved have odd degree, and we have $\omega(f[g])
=\omega(f)[\omega[g]$ when $g$ is homogeneous of odd degree.  

Finally, the inverse pair (\ref{Pleth8g}) is a result of \cite[Proposition 3.5]{CHS}, with ~\eqref{Pleth8g2} following as in the preceding case.
\end{proof}

We conclude with two questions.

\begin{qn} Is there a formula describing, for each partition  $\lambda,$ the irreducible decomposition of $H_\lambda[Lie_n^{S}],$ the higher $Lie_n^{S}$-module for arbitrary subsets $S$ of primes?   This is open for all $S,$   including  the classical $Lie$ case (Thrall's problem) when $S=\emptyset.$   Note that Theorem~\ref{Foulkes} gives a nice combinatorial formula, in terms of standard Young tableaux and the major index, for the irreducible decomposition of $Lie_n^S$ itself.
\end{qn}

\begin{qn}  A theorem of Gessel and Reutenauer states that the higher Lie module $H_\lambda[Lie_n]$ is the fundamental quasi-symmetric function corresponding to the conjugacy class indexed by $\lambda$ \cite[Theorem 3.6]{GR}. Is there an analogous theory  corresponding to $H_\lambda[Lie_n^{(q)}]$ or $E_\lambda[Lie_n^{(q)}]$  when $q$ is prime? The case $q=2$ is of particular interest in view of the curious properties of $Lie_n^{(2)}.$
\end{qn}




\bibliographystyle{amsplain.bst}

\end{document}